\DeclareMathOperator{\sign}{sign}
\numberwithin{equation}{section}
\newtheorem{theorem}{Theorem}[section]
\newtheorem{lemma}[theorem]{Lemma}
\newtheorem{corol}[theorem]{Corollary}
\newtheorem{prop}[theorem]{Proposition}
\newtheorem{rem}[theorem]{Remark}
\newtheorem{claim}[theorem]{Claim}
\date{}
\providecommand{\keywords}[1]
{
  \small	
  \textbf{\textit{Keywords--}} #1
}
\title{\textbf{Spatial decay properties for a model in shear flows posed on the cylinder}}
\author{Ricardo A. Pastr\'an\thanks{Universidad Nacional de Colombia, Bogot\'a,
E-mail: {\tt rapastranr@unal.edu.co}}\,  and Oscar Ria\~no \thanks{Universidad Nacional de Colombia, Bogot\'a,
E-mail: {\tt ogrianoc@unal.edu.co}}
}
\begin{document}

\maketitle 

\begin{abstract}  
We study spatial decay properties for solutions of the Pelinovski-Stepanyants equation posed on the cylinder. We establish the maximum polynomial decay admissible for solutions of such a model. It is verified that the equation on the cylinder propagates polynomial weights with different restrictions than the model set in $\mathbb{R}^2$.  For example, a local well-posedness theory is deduced which contains the line solitary wave provided by solutions of the Benjamin-Ono equation extended to the cylinder. Key results for our analysis are obtained from a detailed study of the dispersive effects of the linear equation in weighted Sobolev spaces. The results in this paper appear to be one of the first studies of polynomial decay for nonlocal models in the cylinder.  
\end{abstract}

\keywords{Two-dimensional Benjamin-Ono equation, Well-posedness, Weighted Sobolev spaces, Unique continuation principles.}

{\small \textbf{\textit{Subject--}} 35Q35, 35B05, 35B60 }

\tableofcontents

%%%%%%%%%%%%%%%%%%%%%%%%%%%%%%%%%%%%%%

\section{Introduction}

This work concerns the Cauchy problem associated to the following generalization of the Pelinovsky-Stepanyants equation
\begin{equation}\label{SHeq}
\begin{cases}
  \partial_t u -\mathcal{H}_x\partial_x^2u-\mathcal{H}_x\partial_y^2u+\sum\limits_{k=1}^K \nu_k u^k\partial_x u=0,\hskip 15pt (x,y)\in \mathbb{R}\times \mathbb{T}, \,  t\in \mathbb{R}, \\
  u(x,y,0)= u_0(x,y),
  \end{cases}
\end{equation}
with $K\geq 1$ being an integer number, constants $\nu_k$ not all null, where $\mathcal{H}_x$ denotes the Hilbert transform operator acting only on the $x$-variable, which for a sufficiently regular function $\phi(x,y)$, $x\in \mathbb{R}$, $y\in \mathbb{T}$ is defined as
\begin{equation*}
\mathcal{H}_x\phi(x,y)=\frac{1}{\pi}\textit{p.v.}\int_{\mathbb{R}}\frac{\phi(z,y)}{x-z}\, dz,  
\end{equation*}
or equivalently, by using the Fourier transform, $\widehat{\mathcal{H}_x\phi}(\xi,\eta)=-i\sign(\xi)\widehat{\phi}(\xi,\eta)$, $\xi \in \mathbb{R}$, $\eta \in \mathbb{Z}$.

When $K=1$, $\nu_1\neq 0$, i.e., \eqref{SHeq} with nonlinearity $\nu_1 (u \partial_x u)$, the equation in \eqref{SHeq} appears to describe two-dimensional weakly nonlinear long-wave perturbation on the background of a boundary-layer type plane-parallel shear flow, we refer to  \cite{PelinovskyShrira1995,PelinovskyStepanyants1994,AbramyanStepanyantsShrira1992,DyachenkoKuznetsov1994}. Originally, the model in \eqref{SHeq} was introduced by Pelinovsky and Stepanyants \cite{PelinovskyStepanyants1994} as a simplification of the \emph{Shrira equation}:
\begin{align}\label{E:HBO}
\partial_t u- \mathcal{R}_1 \partial_x^2 u- \mathcal{R}_1 \partial_y^2 u+  u\partial_x u=0, \qquad (x,y)\in \mathbb{R}^2, \, \, t\in \mathbb{R},
\end{align}
where $\mathcal{R}_1$ denotes the Riesz transform operator, which is defined via the singular integral by 
\begin{equation*}
\mathcal{R}_1 f%\phi
(x,y)=\frac{1}{2\pi} \, \textit{p.v.} \int \frac{(x-z_1) \, %\phi
f(z_1,z_2)}{\big((x-z_1)^2+(y-z_2)^2 \big)^{3/2}}\, dz_1 \, dz_2.
\end{equation*}
Shrira \cite{Shrira1989} was one of the first to mention \eqref{E:HBO} in the analysis of 2D shear flows. For recent investigation of \eqref{E:HBO}, we refer to \cite{HickmanLinaresRiano2019,RianoRoudenko2022,OscSvetKai2021,Riano2020}. In the context of stability of nonlinear waves of \eqref{E:HBO}, in \cite{PelinovskyStepanyants1994}, equation in \eqref{SHeq} is proposed as a model to study the spectrum and eigenfunctions of a simplified eigenvalue problem associated with \eqref{E:HBO}. For more details, see Sections 2 and 3 of \cite{PelinovskyStepanyants1994}. On a similar note, in \cite{PelinovskyShrira1995}, equation in \eqref{SHeq} appears when considering almost one-dimension waves in \eqref{E:HBO} (formally, it is assumed $|\partial_y^2 u|\ll |\partial_x^2 u|$), then in its dimensionless form, one obtains that \eqref{E:HBO} reduces to the equation in \eqref{SHeq}, which is the object of study of the present work. 

The model in \eqref{SHeq} also serves as a two-dimensional extension of the Benjamin-Ono equation (see, \cite{IfrimTata2019,KenigIonescu2007,KillipLaurensVisan2023,MolinetPilodBO2012,Ponce2016,RoudenkoWangYang2021,Saut2018,Tao2004} and reference therein):
\begin{equation}\label{BO}
    \partial_tu -\mathcal{H}_x\partial_x^2u +\sum\limits_{k=1}^K \nu_k u^k\partial_x u=0, \qquad x\in \mathbb{R}, \, t\in \mathbb{R}.
\end{equation}
Concerning invariants of the equation, real solutions of the IVP \eqref{SHeq} formally conserve at least the following quantities 
\begin{align} 
    M[u]&=\int_{\mathbb{R}\times \mathbb{T}} u^2(x,y,t)\, dx dy, \label{mass}\\
    E[u]&=\frac{1}{2} \int_{\mathbb{R}\times \mathbb{T}}\Big( |D_x^{\frac{1}{2}}u(x,y,t)|^2+ |D_x^{-\frac{1}{2}}\partial_y u(x,y,t)|^2-\sum_{k=1}^K\frac{2\nu_k}{(k+1)(k+2)}u^{k+2}(x,y,t)\Big) \, dx dy, \label{Energy}
\end{align}
 where $D_x^{\pm \frac{1}{2}}$ is the fractional derivative operator in the $x$-variable, which is defined through the Fourier transform as $\mathcal{F}(D_x^{\pm \frac{1}{2}}u)(\xi,\eta)=|\xi|^{\pm \frac{1}{2}}\widehat{u}(\xi,\eta)$.

In this paper, we seek to study the maximum spatial decay measured by a fractional scale $\theta>0$ in the space $L^2(|x|^{2\theta}\, dx dy)$, such that initial conditions in such space generate solutions of the Cauchy problem \eqref{SHeq} that persist in the same space.  Such questions are of interest to measure the effects of periodic transversal perturbations on the existence and behavior of solutions of dispersive partial differential equations. In addition to deducing that solutions of \eqref{SHeq} do not propagate weights of arbitrary size and that such restrictions are strongly influenced by the presence of the periodic variable, we will see that the propagation of weights for the equation \eqref{SHeq} set on the cylinder differs from the behavior of the equation posed in $\mathbb{R}^2$ (cf., \cite{Riano2021}). Note that in the periodic case, where the domain is restricted to an interval, the propagation of polynomial weights in the spatial variable $y$ does not lead to interesting results.

Studies of persistence in weighted spaces such as the ones presented here have been investigated for different models, having their origins in the Benjamin-Ono equation, see \cite{FonsecaPonce2011,FonsecaLinaresPonce2012,Iorio1986,Iorio1991,FonsecaLinaresPonce2013}, and for higher dimensional models, see \cite{CunhaPastor2021,CunhaPastor2014,Riano2020,Riano2021} and the references therein. However, this work seems to be the first one addressing the propagation of fractional polynomial weights for a nonlocal model in the cylinder.

Concerning studies of the Cauchy problem \eqref{SHeq} defined on spatial variables $\mathbb{R}^2$ and $\mathbb{T}^2$,  we refer to \cite{Schippa2020,BustamanteJimenezMejia2019II,Nascimento2023,EsfahaniPastor2018}. By well-posedness of a given initial value problem, we mean the existence, uniqueness, and continuous dependence of the map data-solution. In this regard, in \cite{Schippa2020,Riano2021,BustamanteJimenezMejia2019} the local well-posedness for \eqref{SHeq} with nonlinearity $u\partial_x u$ has been established in $H^s(\mathbb{K}^2)$, $s>\frac{3}{2}$, in the cases $\mathbb{K}=\mathbb{R}$ and $\mathbb{K}=\mathbb{T}$. We remark that in \cite{Riano2021}, it was established local well-posedness in a space $X^s(\mathbb{K}^2)$, $s>\frac{3}{2}$, $\mathbb{K}=\mathbb{R}$ and $\mathbb{K}=\mathbb{T}$, which is adapted to the energy of the equation \eqref{Energy}. However, there are few studies on the Cauchy problem \eqref{SHeq} posed on the cylinder $\mathbb{R}\times \mathbb{T}$. In general, for the combined nonlinearity cases $K\geq 2$ in \eqref{SHeq}, it seems that many questions including minimum regularity in $H^s$ or in anisotropic spaces $H^{s_1,s_2}$ seem to be open for investigation for any of the settings $\mathbb{R}^2$, $\mathbb{T}^2$, and $\mathbb{R}\times \mathbb{T}$.

 Furthermore, our motivation also comes from the study of \emph{solitary waves} and \emph{line solitary waves} solutions of \eqref{SHeq}.  Since the most studied case is $K=1$ and $\nu_1=1$, i.e., $u\partial_x u$, let us focus our discussion on this nonlinearity.  By a solitary wave, we mean a non-trivial solution of \eqref{SHeq} of the form $u(x,y,t)=Q_c(x-ct,y)$, where $c>0$, $x\in \mathbb{R}$, $y\in \mathbb{T}$, we have that $Q_c(x,y)=cQ(cx,y)$ is a solution of
\begin{equation}\label{Soleq}
    \mathcal{H}_x\partial_x Q+\partial_x^{-1}\mathcal{H}_x\partial_y^2Q+Q-\frac{1}{2}Q^2=0.
\end{equation}
For a study of existence of solutions for \eqref{Soleq} with variables $x,y\in \mathbb{R}$, we refer to \cite{EsfahaniPastor2018}. Concerning line solitary waves of \eqref{SHeq}, we recall that the Benjamin-Ono equation \eqref{BO} with $u\partial_x u$ has solitary wave solutions $u(x,t)=Q_{c,\text{BO}}(x-ct)$, where $c>0$, and $Q_{c,\text{BO}}(x)=cQ_{\text{BO}}(cx)$ solves
\begin{equation}\label{grStateBO}
    \mathcal{H}_x\partial_x Q_{\text{BO}}+Q_{\text{BO}}-\frac{1}{2}Q^2_{\text{BO}}=0.
\end{equation}
It is known, see \cite{Albert1994,AmickToland1991,Benjamin1967}, that there is a unique (up to translations) solution of \eqref{grStateBO}, which is
\begin{equation*}
    Q_{\text{BO}}(x)=\frac{4}{1+x^2}.
\end{equation*}
Consequently, setting $\widetilde{Q}_c(x,y)=Q_{c,\text{BO}}(x)$, $x\in \mathbb{R}$, $y\in \mathbb{T}$, one can regard soliton solutions of \eqref{grStateBO} as a line solitary wave of \eqref{Soleq}. A natural question then is the study of the stability/instability of line solitary waves $\widetilde{Q}_c(x,y)$ with respect to periodic perturbations in the transversal direction. As far as we know,  there are no rigorous proofs of such results. In this paper, we do not pursue the question of stability and instability. Here we seek to analyze the maximum polynomial decay propagated by solutions of \eqref{SHeq} in the cylinder. Such a question is motivated by the spatial decay of the solitary wave and the line solitary wave introduced above, which shows that it is natural to analyze the Cauchy problem \eqref{SHeq} in Sobolev spaces with polynomial weights. In a way, one can expect some difference between the initial value problem \eqref{SHeq} in the cylinder case and the real case as the line soliton generates a solution of \eqref{SHeq} in $\mathbb{R}\times \mathbb{T}$, but this is not an initial datum admissible for the polynomial decay theory in $L^2(\mathbb{R}^2)$ (see, \cite{Riano2021}). 

\subsection{Main results} 

We will show that dispersive effects are primarily responsible for determining spatial decay properties of solutions of \eqref{SHeq}. For this, we first detail all the admissible polynomial decay propagated by solutions of the linear equation associated with \eqref{SHeq}. We denote by $\{U(t)\}$ the unitary group of solutions determined by the linear equation $\partial_t v-\mathcal{H}_x\partial_x^2v-\mathcal{H}_x\partial_y^2v=0$ (for an explicit definition, see \eqref{Group} below). Our first result establishes a complete study of the fractional polynomial decay for the group $\{U(t)\}$ in the cylinder.  Above, given $s_1,s_2\in \mathbb{R}$, $H^{s_1,s_2}(\mathbb{R}\times \mathbb{T})$ denotes the usual anisotropic Sobolev space, for a precise definition see Section \ref{Sectiprel} below.

\begin{theorem}\label{LinearEst}

(i) Let $0<\theta <\frac{1}{2}$. Assume $f\in H^{\theta,0}(\mathbb{R}\times \mathbb{T})\cap L^2(|x|^{2\theta}\, dx dy)$, then
\begin{equation}\label{weightineq}
    \|\langle x\rangle^{\theta} U(t)f\|_{L^2}\lesssim \langle t \rangle^{\theta}\big(\|\langle x\rangle^{\theta} f\|_{L^2}+\|f\|_{H^{\theta,0}}\big).
\end{equation}
(ii) If $\theta=\frac{1}{2}$, assume $f\in H^{\frac{1}{2},0}(\mathbb{R}\times \mathbb{T})\cap L^2(|x|\, dx dy)$ and  $\mathcal{H}_xf \in L^2(|x|\, dx dy)$, then it follows
\begin{equation}\label{weightineq2}
    \|\langle x\rangle^{\frac{1}{2}} U(t)f\|_{L^2}\lesssim \langle t \rangle^{\frac{1}{2}}\big(\|\langle x\rangle^{\frac{1}{2}} f\|_{L^2}+\|\langle x\rangle^{\frac{1}{2}}\mathcal{H}_x f\|_{L^2}+\|f\|_{H^{\frac{1}{2},0}}\big).
\end{equation}
(iii) Let $\frac{1}{2}<\theta<\frac{3}{2}$, $f\in H^{\theta,0}(\mathbb{R}\times \mathbb{T})\cap L^2(|x|^{2\theta}\, dx dy)$. Assume that 
\begin{equation}\label{hiphote1}
\int_{\mathbb{T}}\int_{\mathbb{R}}e^{-i\eta y}f(x,y)\, dxdy =0\, \, \text{ for all } \eta\in \mathbb{Z}\setminus\{0\}.
\end{equation}
Then \eqref{weightineq} is also valid in this case.
\\ \\
(iv) Let $k+\frac{1}{2}\leq \theta<k+\frac{3}{2}$, $k\in \mathbb{Z}^{+}$, $f\in H^{\theta,0}(\mathbb{R}\times \mathbb{T})\cap L^2(|x|^{2\theta}\, dx dy)$.
Assume that 
\begin{equation}\label{hiphote2}
 \int_{\mathbb{T}}\int_{\mathbb{R}}e^{-i\eta y}x^{l} f(x,y)\, dxdy =0\, \text{ for all } \eta\in\mathbb{Z}\setminus\{0\}, \text{ and } l=0,1,\dots, k-1. 
\end{equation}
Additionally, if $k\geq 2$,  assume that
\begin{equation}\label{hiphote2.1}
 \int_{\mathbb{T}}\int_{\mathbb{R}}x^{l} f(x,y)\, dxdy =0\, \text{ for all } \, \, l=0,1,\dots, k-2. 
\end{equation}
If $\theta=k+\frac{1}{2}$, in addition to \eqref{hiphote2} and \eqref{hiphote2.1}  assume further that 
\begin{equation}\label{hiphote3}
\mathcal{H}_x (x^{k} f)\in L^2(|x|\, dx).
\end{equation}
Then
 \begin{equation}
    \|\langle x\rangle^{k+\frac{1}{2}} U(t)f\|_{L^2}\lesssim \langle t \rangle^{k+\frac{1}{2}}\big(\|\langle x\rangle^{k+\frac{1}{2}} f\|_{L^2}+\|\langle x\rangle^{\frac{1}{2}}\mathcal{H}_x(x^k f)\|_{L^2}+\|f\|_{H^{k+\frac{1}{2},0}}\big).
\end{equation}   
If $k+\frac{1}{2}<\theta<k+\frac{3}{2}$ in addition to \eqref{hiphote2} and \eqref{hiphote2.1}  assume that 
\begin{equation}\label{hiphote4}
 \int_{\mathbb{T}}\int_{\mathbb{R}}e^{-i\eta y}x^{k} f(x,y)\, dxdy =0\, \text{ for all } \eta\in \mathbb{Z}\setminus\{0\}. 
\end{equation}
Then \eqref{weightineq} is also valid in this case.

\end{theorem}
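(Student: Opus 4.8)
The plan is to pass to the Fourier side and to estimate the group mode by mode in the transversal frequency $\eta$. Since $\widehat{U(t)f}(\xi,\eta)=e^{it\phi(\xi,\eta)}\widehat f(\xi,\eta)$ with phase $\phi(\xi,\eta)=\sign(\xi)(\xi^2+\eta^2)$, and since the weight $\langle x\rangle^\theta$ acts only in $x$, I would use Plancherel in $y$ together with the identification of multiplication by $\langle x\rangle^\theta$ with the fractional derivative $\langle D_\xi\rangle^\theta$ on the Fourier transform to write
\[
\|\langle x\rangle^\theta U(t)f\|_{L^2}^2\approx\sum_{\eta\in\mathbb{Z}}\big\|\langle D_\xi\rangle^\theta\big(e^{it\phi(\cdot,\eta)}G_\eta\big)\big\|_{L^2_\xi}^2,\qquad G_\eta(\xi):=\widehat f(\xi,\eta).
\]
The inhomogeneous part $\langle D_\xi\rangle^\theta\sim 1+D_\xi^\theta$ contributes $\|f\|_{L^2}^2$, so everything reduces to the homogeneous derivative $D_\xi^\theta$ of the product $e^{it\phi(\cdot,\eta)}G_\eta$. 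The decisive structural step is the splitting of the phase
\[
e^{it\phi(\xi,\eta)}=e^{it\xi|\xi|}\,e^{it\eta^2\sign(\xi)}=e^{it\phi_0(\xi)}\big(\cos(t\eta^2)+i\sin(t\eta^2)\sign(\xi)\big),
\]
which isolates the smooth, $\eta$-independent factor $e^{it\phi_0}$ (with $\phi_0(\xi)=\xi|\xi|$ of class $C^{1,1}$ and second derivative jumping at $\xi=0$) from the purely discontinuous factor carrying the jump of size $2\eta^2$.

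I would then estimate $D_\xi^\theta$ of the product through the pointwise/Gagliardo characterization of the homogeneous derivative (for $\theta\ge1$ after peeling off integer derivatives so the remaining fractional order lies in $(0,1)$), splitting the difference of the product into a \emph{$G$-difference} piece and a \emph{phase-difference} piece. The $G$-difference piece reproduces $\|D_\xi^\theta G_\eta\|_{L^2}\approx\|\,|x|^\theta\mathcal{F}_yf(\cdot,\eta)\|_{L^2}$, which sums to $\|\,|x|^\theta f\|_{L^2}$. For the piece where the derivative falls on $e^{it\phi_0}$, I would exploit $|e^{it\phi_0(\xi)}-e^{it\phi_0(\zeta)}|\le\min\big(2,|t|(|\xi|+|\zeta|)|\xi-\zeta|\big)$ and optimize the splitting scale $|\xi-\zeta|\sim(|t||\zeta|)^{-1}$; this produces exactly the factor $\langle t\rangle^\theta$ together with a surviving weight $|\zeta|^\theta$, which upon summing $\sum_\eta|G_\eta(\zeta)|^2$ becomes $\langle t\rangle^{2\theta}\|f\|_{H^{\theta,0}}^2$. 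This is precisely why only $x$-derivatives, and no transversal derivatives, appear on the right-hand side.

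The heart of the matter, and the source of every hypothesis, is the jump factor. Its contribution reduces to a weighted integral of the form $\sin^2(t\eta^2)\int|G_\eta(\zeta)|^2|\zeta|^{-2\theta}\,d\zeta$, obtained by integrating $|\xi-\zeta|^{-1-2\theta}$ over the side opposite to $\zeta$. Bounding $\sin^2\le1$ annihilates both the $t$-growth and the $\eta$-growth, leaving $\sum_\eta\|\,|\zeta|^{-\theta}G_\eta\|_{L^2}^2$. For $0<\theta<\tfrac12$ this is controlled by the Hardy inequality $\|\,|\zeta|^{-\theta}G_\eta\|_2\lesssim\|D_\xi^\theta G_\eta\|_2\approx\|\,|x|^\theta\mathcal{F}_yf(\cdot,\eta)\|_2$, yielding part (i) with no conditions. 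At $\theta=\tfrac12$ Hardy fails logarithmically, and here I would instead use the explicit decomposition $\cos(t\eta^2)G_\eta+i\sin(t\eta^2)\sign(\xi)G_\eta$, noting $\sign(\xi)G_\eta=i\,\widehat{\mathcal{H}_xf}(\cdot,\eta)$, so the borderline term is controlled by $\|\langle x\rangle^{1/2}\mathcal{H}_xf\|_{L^2}$, which is the extra hypothesis of part (ii). For $\theta>\tfrac12$ the integral $\int|G_\eta|^2|\zeta|^{-2\theta}$ diverges unless $G_\eta$ vanishes at $\xi=0$ to sufficient order; the conditions \eqref{hiphote1}, \eqref{hiphote2}, \eqref{hiphote4} force $\partial_\xi^lG_\eta(0)=0$ for $\eta\neq0$ up to the order making a generalized Hardy inequality valid in the range $\theta<k+\tfrac32$, while the milder hypothesis \eqref{hiphote2.1} (two orders fewer, and only for $k\ge2$) suffices for the $\eta=0$ mode precisely because the phase $\phi_0$ is two orders smoother there. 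The endpoints $\theta=k+\tfrac12$ are again borderline for Hardy and are resolved by \eqref{hiphote3} through the same $\mathcal{H}_x(x^kf)$ mechanism as in part (ii).

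I expect the main obstacle to be the higher-weight regime of part (iv): carrying out the integer-derivative Leibniz expansion of $\partial_\xi^m\big(e^{it\phi(\cdot,\eta)}G_\eta\big)$ while keeping the $t$-powers down to $\langle t\rangle^\theta$ (balancing each factor $t\phi_0'$ against a compensating weight), and establishing the generalized Hardy inequalities for functions vanishing to prescribed order at the origin with the two distinct vanishing orders dictated by the $\eta=0$ versus $\eta\neq0$ phase regularities, together with the clean endpoint analysis at $\theta=k+\tfrac12$. Verifying that this bookkeeping never demands transversal ($y$) derivatives or $\eta$-weights on the right-hand side — that is, that $\sin^2(t\eta^2)\le1$ and the phase splitting always absorb the $\eta^2$ jump — is the delicate point that makes the cylinder estimate differ from the $\mathbb{R}^2$ theory of \cite{Riano2021}.
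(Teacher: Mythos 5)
Your proposal is correct in outline and shares the skeleton of the paper's proof: the factorization $e^{it\omega}=e^{it\xi|\xi|}e^{it\sign(\xi)\eta^2}$, the Stein--Gagliardo derivative $\mathcal{D}^{\theta}_{\xi}$ with its product rule, the two pointwise multiplier bounds $\mathcal{D}^{\theta}_{\xi}\big(e^{it\xi|\xi|}\big)\lesssim\langle t\rangle^{\theta}\langle x\rangle^{\theta}$ and $\mathcal{D}^{\theta}_{\xi}\big(e^{it\sign(\xi)\eta^2}\big)\lesssim\min\{1,|t|\eta^2\}|x|^{-\theta}$ (these are exactly Lemma \ref{derivexp}), and the absorption of the surviving negative weight by a Hardy-type inequality for functions vanishing at $\xi=0$ (Proposition \ref{decaywight}), with the $\eta=0$ versus $\eta\neq 0$ dichotomy explaining \eqref{hiphote2.1} versus \eqref{hiphote2}/\eqref{hiphote4}. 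Where you genuinely depart from the paper is in parts (i)--(ii): the paper does not invoke the fractional Hardy inequality below $\theta=\tfrac12$; it instead proves a separate weighted estimate for the transversal group $U_1(t)$ (Proposition \ref{decaysimplyeq}) via a PDE energy identity --- exact conservation of $\|\langle x\rangle_N^{\theta}u\|_{L^2}^2+\|\langle x\rangle_N^{\theta}\mathcal{H}_xu\|_{L^2}^2$ --- combined with the $A_2$-weighted boundedness of $\mathcal{H}_x$ (Proposition \ref{a2condH}). Your substitute, namely $\sin^2(t\eta^2)\le 1$ plus fractional Hardy for $0<\theta<\tfrac12$, and at $\theta=\tfrac12$ the algebraic identity $e^{it\sign(\xi)\eta^2}=\cos(t\eta^2)+i\sign(\xi)\sin(t\eta^2)$ together with $\sign(\xi)\widehat f=i\,\widehat{\mathcal{H}_xf}$, stays entirely on the Fourier side, avoids Muckenhoupt weights, and in fact re-proves Proposition \ref{decaysimplyeq} in two lines (the factors $\cos(t\eta^2)$, $\sin(t\eta^2)$ commute with $x$-weights, so $\|\langle x\rangle^{\theta}U_1(t)f\|_{L^2}\le\|\langle x\rangle^{\theta}f\|_{L^2}+\|\langle x\rangle^{\theta}\mathcal{H}_xf\|_{L^2}$); that is a legitimate simplification. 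Likewise, your classical Leibniz-after-peeling plan replaces the paper's distributional identity (Proposition \ref{generalderiv}), in which the jumps show up as Dirac terms $\sin(\eta^2t)\,\partial_{\xi}^{l}\widehat f(0,\eta)\,\delta_{\xi}^{(m)}$ and $\cos(\eta^2t)\,\partial_{\xi}^{l}\widehat f(0,\eta)\,\delta_{\xi}^{(m)}$ annihilated by the moment hypotheses --- same content, different bookkeeping. One caveat for part (iv): \eqref{hiphote2.1} is not consumed solely in killing the deltas produced by the (two-orders-smoother) phase $\xi|\xi|$, which would only require orders $l\le k-3$; the paper also needs the order $l=k-2$ to commute weights past the Hilbert transform, $\mathcal{H}_x(x^{l}f)=x^{l}\mathcal{H}_xf$ via $[\mathcal{H}_x,x]g=0$ when $\widehat g(0,\eta)\equiv 0$ (see \eqref{conmm}--\eqref{conmuthilber} and Remark \ref{remarkHilbertL2}), so as to control the odd powers of $\sign(\xi)$ (equivalently the terms $U(t)\mathcal{H}_x(x^{k-2m}f)$) arising in the expansion, and it uses interpolation (Propositions \ref{interpo} and \ref{InterpLemma}) to reduce every intermediate term to the two norms $\|\langle x\rangle^{\theta}f\|_{L^2}$ and $\|f\|_{H^{\theta,0}}$. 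Your toolkit --- $\mathcal{D}^{\theta}\big(\sign(\xi)G\big)\lesssim|\xi|^{-\theta}|G|+\mathcal{D}^{\theta}G$ plus Hardy-with-vanishing --- does cover these terms, but this is precisely the bookkeeping you flagged as the main obstacle, and it is where that extra order of vanishing in \eqref{hiphote2.1} actually gets used.
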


\begin{rem}
(i) Let us explain the relationship between the dispersions in \eqref{SHeq} and the hypothesis \eqref{hiphote1}, \eqref{hiphote2},  \eqref{hiphote2.1},  \eqref{hiphote3} and  \eqref{hiphote4}. The conditions \eqref{hiphote1}, \eqref{hiphote2},  \eqref{hiphote3} and  \eqref{hiphote4} are due to the dispersive term $\mathcal{H}_x\partial_y^2$. This can be seen from the cases where  $\mathcal{H}_x$  establishes a bounded operator in $L^2(|x|^{2\theta}\,dx)$. For example, it is known that the Hilbert transform (see \cite{HuntMuckenhouptWheeden1973,Petermichl2007}) is a bounded operator in such a space whenever $0<\theta<\frac{1}{2}$. This partially justifies why we do not need conditions in (i) Theorem \ref{LinearEst}. When $\theta=\frac{1}{2}$, we cannot deduce the boundedness of the Hilbert transform, so we need to assume $\mathcal{H}_xf\in L^2(|x|\, dx)$ (see \eqref{hiphote3}). Nevertheless, \cite[Theorem 4.3]{Yafaev1999} shows that the hypothesis $\mathcal{H}_x f\in L^2(|x|\, dx dy)$ can be replaced by the assumption that for all $\eta$ the map $\xi \mapsto \widehat{f}(\xi,\eta)$ belongs to the $L^2(\mathbb{R})$-closure of the space of square-integrable continuous odd functions. Finally, when $\frac{1}{2}<\theta<1$, to have that  $\mathcal{H}_x f\in L^2(|x|^{2\theta}\, dx)$, we impose the zero mean condition on $f$ (see Proposition \ref{decaywight} below, and \cite[Proposition 3.1]{Yafaev1999}), which in the periodic setting are \eqref{hiphote1}, \eqref{hiphote2} and \eqref{hiphote4}.
\\ \\ 
On the other hand, the condition \eqref{hiphote2.1} comes from the dispersive term $\mathcal{H}_x\partial_x^2$, which is the symbol of the Benjamin-Ono equation \eqref{BO}. To see this, we compare with the results in \cite{FonsecaPonce2011},  which show that when the weight is greater or equal than $\frac{5}{2}$, the propagation of such weights for BO holds assuming the one-dimensional version of \eqref{hiphote2.1} (i.e., without integration in the periodic variable). We give a more detailed description of the previous discussion in Table \ref{table1} below.
\\ \\
(ii) The theory in weighted spaces for solutions of \eqref{SHeq} posed on the cylinder presents different results from those in the case of the equation set in $\mathbb{R}^2$. For example, we see that the line solitary wave provided from the Benjamin-Ono, $\widetilde{Q}(x,y)=Q_{BO}(x)\in L^2(|x|^{2\theta}\, dx dy)$ for all $0<\theta<\frac{3}{2}$, and it satisfies \eqref{hiphote1}. Thus, Theorem \ref{LinearEst} shows that $U(t)(\widetilde{Q}(x,y))\in L^2(|x|^{2\theta}\, dx dy)$. However, the results for the equation posed on the Euclidean plane in \cite{Riano2021} show that $U(t)(Q_{BO}(x)\phi(y))\notin L^2(|x|^{2\theta}\, dx dy)$ for all $\frac{1}{2}<\theta<\frac{3}{2}$, and all $\phi(y)$ be a smooth function compactly supported with $\int_{\mathbb{R}} \phi(y)\, dy \neq 0$. Therefore, the theory in the cylinder propagates weights that are not admissible in the $\mathbb{R}^2$ case.
\end{rem}

\begin{table}[!ht]
    \centering
\begin{tabular}{ |c|c|c| } 
 \hline
{\bf Size of the weight $\theta$ } & {\bf Condition due}  & {\bf  Condition due}  \\
 {\bf in space $L^2(|x|^{2\theta}\, dxdy)$} & {\bf to dispersive term $\mathcal{H}_x\partial_y^2$} & {\bf to dispersive term $\mathcal{H}_x\partial_x^2$} \\ \hline
 $0<\theta<\frac{1}{2}$ & No condition & No condition \\ 
 $\theta=\frac{1}{2}$ & Extra condition $\mathcal{H}_xf \in L^2(|x|\, dx dy)$  & No condition \\  
$\frac{1}{2}<\theta<\frac{3}{2}$ &  \eqref{hiphote1}  & No condition \\ 
$\theta=\frac{3}{2}$ &  \eqref{hiphote3} with $k=1$  & No condition \\ 
$\frac{3}{2}<\theta<\frac{5}{2}$ &  \eqref{hiphote2} and \eqref{hiphote4}  & No condition \\ 
$\theta=\frac{5}{2}$ &  \eqref{hiphote3} with $k=2$  & \eqref{hiphote2.1} with $k=2$ \\ 
$\frac{5}{2}<\theta<\frac{7}{2}$ &  \eqref{hiphote2} and \eqref{hiphote4} with $k=2$  & \eqref{hiphote2.1} with $k=2$ \\ 
$\theta=k+\frac{1}{2}$, $k\geq 2$ &  \eqref{hiphote2} and \eqref{hiphote3}  & \eqref{hiphote2.1}  \\ 
$k+\frac{1}{2}<\theta<k+1$, $k\geq 2$ &  \eqref{hiphote2} and \eqref{hiphote4}   & \eqref{hiphote2.1} \\ 
 \hline 
\end{tabular}
     \caption{Dispersive effects connection with hypothesis in Theorem \ref{LinearEst}.}
\label{table1}
\end{table}

A natural question now is whether conditions \eqref{hiphote1}, \eqref{hiphote2}, \eqref{hiphote2.1} and \eqref{hiphote4} are proper to the dispersive effects in \eqref{SHeq}. In the case where the weight $\theta\neq k+\frac{1}{2}$, for each $k\in \mathbb{Z}^{+}\cup\{0\}$, the following theorem establishes that the conditions in Theorem \ref{LinearEst} are optimal in order to have that at two different times $t_1\neq t_2$, $U(t_1)f, U(t_2)f\in H^{\theta,0}(\mathbb{R}\times \mathbb{T})\cap L^2(|x|^{2\theta}\, dx dy)$ (for simplicity in the presentation of the theorem below, we have set $t_1=0$).

\begin{theorem}\label{lineaerequniquecont1}
Let $k\geq 0$ be an integer, $\frac{1}{2}+k<\theta<k+\frac{3}{2}$, and $f\in H^{\theta,0}(\mathbb{R}\times \mathbb{Z})\cap L^2(|x|^{2\theta}\, dxdy)$. Assume that for some time $t\neq 0$, $U(t)f\in L^2(|x|^{2\theta}\, dxdy)$. 
\begin{itemize}
    \item[(i)]  If $k=0,1$, it follows that $f$ must satisfy
\begin{equation}\label{hiphoteunique1}
 \int_{\mathbb{T}}\int_{\mathbb{R}}e^{-i\eta y}x^{l} f(x,y)\, dxdy =0, 
\end{equation}
for all $l=0,1,\dots,k$, and all $\eta\in \mathbb{Z}$ whose square times $|t|$ is not a nonnull multiple of $2\pi$, i.e., $|t|\eta^2\neq 2\pi l'$ for each $l'\in \mathbb{Z}$ with $l'\geq 0$.

\item[(ii)] If $k\geq 2$, it follows that $f$ must satisfy
\begin{equation}\label{hiphoteunique2}
 \int_{\mathbb{T}}\int_{\mathbb{R}}e^{-i\eta y}x^{l} f(x,y)\, dxdy =0, 
\end{equation}
 for all $l=0,\dots,k-2$ and all $\eta\in \mathbb{Z}$, and   
\begin{equation}\label{hiphoteunique3}
 \int_{\mathbb{T}}\int_{\mathbb{R}}e^{-i\eta y}x^{l} f(x,y)\, dxdy =0, 
\end{equation}
for all $l=k-1,k$, and all $\eta\in \mathbb{Z}$ whose square times $|t|$ is not a nonnull multiple of $2\pi$, i.e., $|t|\eta^2\neq 2\pi l'$ for each $l'\in \mathbb{Z}$ with $l'\geq 0$.
\end{itemize}
\end{theorem}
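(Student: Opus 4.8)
The plan is to reduce the statement to a fibrewise regularity analysis in the Fourier variable $\xi$, indexed by the transverse frequency $\eta\in\mathbb{Z}$. Writing the group as the Fourier multiplier $\widehat{U(t)f}(\xi,\eta)=m_t(\xi,\eta)\widehat f(\xi,\eta)$ with symbol $m_t(\xi,\eta)=e^{it\sign(\xi)(\xi^2+\eta^2)}$, I first convert the two hypotheses $f\in L^2(|x|^{2\theta}\,dxdy)$ and $U(t)f\in L^2(|x|^{2\theta}\,dxdy)$ into fibrewise statements. Using Parseval in the discrete variable $y$ together with the identity $\||x|^{\theta}g\|_{L^2_x}=c\,\|D_\xi^{\theta}\widehat g\|_{L^2_\xi}$ for the homogeneous fractional derivative, and the fact that $U(t)$ preserves $H^{\theta,0}$ (its symbol has modulus one and commutes with $\langle\xi\rangle^\theta$), both hypotheses become: for every $\eta\in\mathbb{Z}$, the functions $\widehat f(\cdot,\eta)$ and $m_t(\cdot,\eta)\widehat f(\cdot,\eta)$ belong to $H^{\theta}(\mathbb{R}_\xi)$. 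Moreover, since $\theta>k+\tfrac12\ge l+\tfrac12$ for $0\le l\le k$, Cauchy--Schwarz shows that each moment $\int_{\mathbb T}\int_{\mathbb R}e^{-i\eta y}x^{l}f\,dxdy$ is absolutely convergent and equals, up to a nonzero constant, $\partial_\xi^{l}\widehat f(0,\eta)$.

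Next I exploit the singularity of $m_t$ at $\xi=0$. Because $\tfrac12+k<\theta<k+\tfrac32$, one has the Sobolev embedding $H^{\theta}(\mathbb{R})\hookrightarrow C^{k}(\mathbb{R})$, so both $\widehat f(\cdot,\eta)$ and $m_t(\cdot,\eta)\widehat f(\cdot,\eta)$ are genuinely $C^k$. The symbol $m_t(\cdot,\eta)$ is smooth on $\{\xi>0\}$ and on $\{\xi<0\}$ with one-sided limits of all derivatives at $0$; expanding $e^{\pm it\xi^2}$ in Taylor series gives the one-sided derivative jumps $c_i(\eta):=\partial_\xi^i m_t(0^+,\eta)-\partial_\xi^i m_t(0^-,\eta)$, namely $c_i=0$ for odd $i$ and
\[
c_{2m}(\eta)=(it)^{m}\big(e^{it\eta^2}-(-1)^{m}e^{-it\eta^2}\big)\frac{(2m)!}{m!},
\]
so that $c_{2m}\propto\sin(t\eta^2)$ for even $m$ (in particular $c_0=2i\sin(t\eta^2)$) and $c_{2m}\propto\cos(t\eta^2)$ for odd $m$ (in particular $c_2=4it\cos(t\eta^2)$). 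Since a piecewise-smooth function that is $C^k$ must have matching one-sided derivatives up to order $k$, the product $m_t(\cdot,\eta)\widehat f(\cdot,\eta)$ has vanishing one-sided jumps up to order $k$; by the Leibniz rule (the factor $\widehat f$ contributes no jump, being $C^k$) this is the triangular linear system
\[
\sum_{i=0}^{j}\binom{j}{i}c_i(\eta)\,\partial_\xi^{\,j-i}\widehat f(0,\eta)=0,\qquad j=0,1,\dots,k.
\]

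I then solve this system according to whether the leading jump $c_0(\eta)=2i\sin(t\eta^2)$ vanishes. If $c_0(\eta)\ne0$ (the non-resonant values of $\eta$ singled out in the statement), an induction on $j$ using $c_i=0$ for odd $i$ forces $\partial_\xi^{l}\widehat f(0,\eta)=0$ for every $l=0,\dots,k$, which after translation yields the top-moment conditions \eqref{hiphoteunique1} for $k=0,1$ and \eqref{hiphoteunique3} for $k\ge2$. If $c_0(\eta)=0$ then necessarily $c_2(\eta)\ne0$ (as $t\ne0$); now the equations $j=0,1$ are vacuous, while for $j=2,\dots,k$ an induction based on the nonvanishing coefficient $\binom{j}{2}c_2(\eta)$ and the already-vanishing lower-order data forces $\partial_\xi^{l}\widehat f(0,\eta)=0$ only for $l=0,\dots,k-2$. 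This produces the universal conditions \eqref{hiphoteunique2}, whose very appearance is governed by the Benjamin--Ono part $\sign(\xi)\xi^2$ of the phase through the nonvanishing of $c_2$; for $k=0,1$ there is no universal part, consistent with (i).

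The step I expect to be the main obstacle is the rigorous reduction in the first paragraph: identifying membership in $L^2(|x|^{2\theta})$ with fibrewise $H^\theta_\xi$ regularity for fractional $\theta$ and, crucially, giving pointwise meaning to the one-sided derivatives of $m_t\widehat f$ at $\xi=0$. This is exactly where the hypothesis $\theta\ne k+\tfrac12$ is used: the embedding $H^\theta\hookrightarrow C^k$ and the absolute convergence of the order-$k$ moment both degenerate at the endpoint, which is why that case is excluded here and is instead handled through the operator $\mathcal H_x$ in Theorem \ref{LinearEst}. A secondary technical point is to verify that $c_2(\eta)\ne0$ throughout the resonant set, so that the reduced system in the second branch is genuinely invertible down to order $k-2$.
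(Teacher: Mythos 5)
Your proposal is correct in substance, and it takes a genuinely different route from the paper's. The paper argues by induction over the weight ranges $\frac12<\theta<\frac32$, $\frac32<\theta<\frac52$, \dots, peeling off one derivative at a time through the distributional identity of Proposition \ref{generalderiv}, and detects the obstruction either via a fractional-derivative non-membership claim (Claim \ref{ClaimUnique1}: $\mathcal{D}^{\theta}_{\xi}\big(e^{it\sign(\xi)\eta^2}\phi\big)\notin L^2$) in the lowest range, or via one-sided limits at $\xi=0$ in the higher ranges. Your proof treats all orders at once: the fibrewise reduction to $H^{\theta}_{\xi}(\mathbb{R})$ (legitimate, since summability over $\eta$ of the squared norms gives finiteness for each fixed $\eta$, and \eqref{equi1} converts $L^2$ plus homogeneous derivative into $H^{\theta}$), the embedding $H^{\theta}\hookrightarrow C^{k}$ for $\theta>k+\frac12$, and the triangular system $\sum_{i=0}^{j}\binom{j}{i}c_i(\eta)\,\partial_{\xi}^{j-i}\widehat f(0,\eta)=0$, $j=0,\dots,k$, replace both the range-by-range induction and Claim \ref{ClaimUnique1}. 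Your jump coefficients are computed correctly ($c_0=2i\sin(t\eta^2)$, $c_2=4it\cos(t\eta^2)$, odd jumps zero), the absolute convergence of the moments of order $l\leq k$ is exactly where $\theta>k+\frac12$ enters, and the two inductions on the triangular system are sound; the dichotomy $c_0\neq 0$ versus $c_0=0,\ c_2\neq0$ cleanly separates the roles of $\mathcal{H}_x\partial_y^2$ and $\mathcal{H}_x\partial_x^2$, matching the paper's Table \ref{table1}. This organization is arguably cleaner than the paper's.

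The one point you must fix is the parenthetical identification of $\{c_0(\eta)\neq0\}$ with ``the non-resonant values of $\eta$ singled out in the statement.'' Your $c_0(\eta)=2i\sin(t\eta^2)$ vanishes if and only if $|t|\eta^2$ is an integer multiple of $\pi$, whereas the statement's condition is $|t|\eta^2\neq 2\pi l'$. These differ: if $|t|\eta^2$ is an \emph{odd} multiple of $\pi$, that $\eta$ lies in the statement's non-resonant set, yet $c_0(\eta)=0$ and your argument (correctly) yields only the vanishing up to order $k-2$ for it. So, as written, you prove \eqref{hiphoteunique1} and \eqref{hiphoteunique3} only on the smaller set $\{\,\eta:\ |t|\eta^2\neq \pi l',\ l'\in\mathbb{Z},\ l'\geq0\,\}$. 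This is not a defect of your method but of the statement itself: for such $\eta$ one has $e^{it\sign(\xi)\eta^2}\equiv-1$, the mode has no jump at $\xi=0$ and propagates the weight with no condition, so the claimed vanishing is genuinely false there. The paper's own proof has the same limitation, since the lower bound in Claim \ref{ClaimUnique1} is $|\sin(t\eta^2)|^2/x^{2\theta}$, which is vacuous when $\sin(t\eta^2)=0$; what both proofs actually establish is the corrected theorem with $\pi l'$ in place of $2\pi l'$ (note \eqref{hiphoteunique2} for all $\eta$ is unaffected, since $\cos(t\eta^2)\neq0$ whenever $\sin(t\eta^2)=0$). State your dichotomy in terms of $\sin(t\eta^2)\neq0$ and flag the discrepancy with the printed resonance condition rather than asserting they coincide.
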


When we study the operator $U(t)$ in the space $L^{2}(|x|^{2(k+\frac{1}{2})}\, dx dy)$ for some integer $k\in \mathbb{Z}^{+}\cup\{0\}$, it is not clear whether the condition \eqref{hiphote3} in Theorem \ref{LinearEst} is optimal. This is an inherent difficulty from the fact that $\mathcal{H}_x$ is not a bounded operator in $L^{2}(|x|^{2(k+\frac{1}{2})}\, dx dy)$. However, using the results of Theorem \ref{lineaerequniquecont1} and further estimates of the linear part (see Lemma \ref{uniquecontcritcase}), we deduce the following result.

\begin{corol}\label{lineaerequniquecont2}
Let $k\geq 0$ be an integer, and $f\in H^{k+\frac{1}{2},0}(\mathbb{R}\times \mathbb{Z})\cap L^2(|x|^{2(k+\frac{1}{2})}\, dx dy)$. Assume that for some fixed time $t\neq 0$, $U(t)f\in L^2(|x|^{2(k+\frac{1}{2})}\,dx dy)$. 
\begin{itemize}
    \item[(i)]  If $k=0,1$, it follows that
\begin{equation}\label{xkuniquecont0}
 U(t)\mathcal{H}_x(x^k f)\in L^2(|x|\, dx dy)\qquad \text{ if and only if } \qquad  \mathcal{H}_x(x^k f)\in L^2(|x|\, dx dy).
\end{equation}

\item[(ii)] If $k\geq 2$, it follows that
\begin{equation}\label{xkuniquecont}
\begin{aligned}
   U(t)(x^kf)\in L^2(|x|\, dx dy) \qquad \text{ if and only if } \qquad \int_{\mathbb{T}} \int_{\mathbb{R}} e^{-i\eta y}x^{k-2} f(x,y)\, dx dy=0,
\end{aligned}    
\end{equation}
for all $\eta\in \mathbb{Z}$. Moreover, assuming either of the conditions in \eqref{xkuniquecont}, it follows that 
\begin{equation}\label{xkuniquecont1}
 U(t)\mathcal{H}_x(x^k f)\in L^2(|x|\, dx dy)\qquad \text{ if and only if } \qquad  \mathcal{H}_x(x^{k} f)\in L^2(|x|\, dx dy).
\end{equation}
\end{itemize}
\end{corol}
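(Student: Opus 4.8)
The plan is to deduce Corollary~\ref{lineaerequniquecont2} from Theorem~\ref{lineaerequniquecont1} together with the auxiliary Lemma~\ref{uniquecontcritcase}, by carefully tracking how the group $U(t)$ interacts with multiplication by $x^k$ and with the Hilbert transform $\mathcal{H}_x$. The key algebraic identity is that on the Fourier side $U(t)$ is multiplication by the unimodular symbol $e^{it|\xi|\xi}e^{it\operatorname{sign}(\xi)\eta^2}$ (from the definition \eqref{Group}), so $\langle x\rangle^{1/2}$-weights correspond to half-order derivatives in $\xi$. The obstruction to directly squaring the weight $|x|^{2(k+\frac12)}$ is exactly that $\mathcal{H}_x$ fails to be bounded on $L^2(|x|\,dx\,dy)$, so I cannot simply apply Theorem~\ref{lineaerequniquecont1} with $\theta=k+\frac12$. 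Instead I would split $\langle x\rangle^{k+\frac12}U(t)f$ into the piece controlled by $\langle x\rangle^{k+\frac12}f$ (polynomial weight, which \emph{is} benign once the vanishing moments hold) and the piece involving $\mathcal{H}_x(x^k f)$, and prove each equivalence by a commutation computation.

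First I would establish part (ii), the case $k\geq 2$, since it is cleaner. The assumption $U(t)f\in L^2(|x|^{2(k+\frac12)})$ forces, via Theorem~\ref{lineaerequniquecont1} applied on any open interval $\frac12+k<\theta'<k+\frac32$ (or by direct inspection of the threshold), the lower-order moment conditions \eqref{hiphoteunique2} for $l=0,\dots,k-2$; in particular $\int e^{-i\eta y}x^{k-2}f\,dx\,dy$ enters as the critical quantity. The equivalence \eqref{xkuniquecont} then reduces to showing that $U(t)(x^kf)\in L^2(|x|\,dx\,dy)$ holds precisely when this $(k-2)$-moment vanishes for all $\eta\in\mathbb Z$. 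The mechanism is that applying $\langle x\rangle^{1/2}$ to $U(t)(x^kf)$ produces, after commuting the weight through $U(t)$, a term proportional to $t\,\eta^2$ times a lower-weight object plus a term that sees the $x^{k-2}$-moment as a non-integrable ($\delta$-like, or $|\xi|^{-1/2}$-type) singularity at $\xi=0$ in each Fourier mode $\eta$; that singularity is square-integrable against $|x|\,dx$ only when its coefficient—the $(k-2)$-moment—vanishes. This is exactly where I expect Lemma~\ref{uniquecontcritcase} to do the heavy lifting, quantifying the $L^2(|x|\,dx)$ membership of $U(t)g$ in terms of a Fourier-regularity/vanishing condition on $g$ at $\xi=0$.

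For the statements \eqref{xkuniquecont0} and \eqref{xkuniquecont1} concerning $U(t)\mathcal{H}_x(x^k f)$, the strategy is to show that $U(t)$ preserves the space $L^2(|x|\,dx\,dy)$ modulo the same finite-dimensional (in $\eta$) obstruction, so that the membership of $U(t)\mathcal{H}_x(x^k f)$ is \emph{equivalent} to the membership of $\mathcal{H}_x(x^k f)$ itself once the relevant moments are assumed to vanish. Concretely, write $\langle x\rangle U(t)\mathcal{H}_x(x^kf)$ and commute: the commutator of $\langle x\rangle$ (equivalently $D_\xi$) with the unimodular symbol of $U(t)$ generates only factors that are already controlled by the polynomial weight $\langle x\rangle^{k+\frac12}f$ (handled by the moment conditions from Theorem~\ref{lineaerequniquecont1}) plus a bounded multiplier, so no new unbounded behavior of $\mathcal{H}_x$ is introduced by the flow. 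Hence $U(t)\mathcal{H}_x(x^kf)\in L^2(|x|)$ iff $\mathcal{H}_x(x^kf)\in L^2(|x|)$; the ``if'' uses that $U(t)$ together with the moment hypotheses keeps the weighted norm finite, and the ``only if'' runs the same commutation backward using $U(-t)=U(t)^{*}$.

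The main obstacle will be the borderline case $\theta=k+\frac12$, where $\mathcal{H}_x$ is unbounded on $L^2(|x|^{2\theta})$: I cannot close the estimate by a weighted boundedness of $\mathcal{H}_x$ and must instead isolate the precise Fourier-side singularity that $\mathcal{H}_x$ creates at $\xi=0$ and show the flow $U(t)$ neither removes nor worsens it. Making this rigorous—showing that the $|x|\,dx$-integrability of $U(t)\mathcal{H}_x(x^kf)$ is genuinely equivalent to, rather than merely implied by, that of $\mathcal{H}_x(x^kf)$—is delicate because it requires that the oscillatory factor $e^{it\operatorname{sign}(\xi)\eta^2}$ does not conspire with the $\operatorname{sign}(\xi)$ jump of the Hilbert symbol to generate or cancel a $\xi^{-1}$-type singularity. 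I expect Lemma~\ref{uniquecontcritcase} to be exactly the tool that certifies this, reducing the whole corollary to a moment-by-moment ($\eta$-by-$\eta$) analysis already packaged there, with the two-time hypothesis $U(t)f\in L^2(|x|^{2(k+\frac12)})$ supplying the vanishing moments through Theorem~\ref{lineaerequniquecont1}.
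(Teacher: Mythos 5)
Your overall skeleton (moments from Theorem \ref{lineaerequniquecont1}, a Fourier-side singularity analysis at $\xi=0$, and Lemma \ref{uniquecontcritcase}) matches the paper's, but the two places where you locate the actual work would not close as described. The main gap is your mechanism for \eqref{xkuniquecont0} and \eqref{xkuniquecont1}: you propose to commute $\langle x\rangle^{1/2}$ (i.e.\ $\mathcal{D}_\xi^{1/2}$) through the symbol of $U(t)$ and assert that this generates ``only factors already controlled \dots plus a bounded multiplier, so no new unbounded behavior of $\mathcal{H}_x$ is introduced.'' That is precisely what fails at the critical weight. The relevant commutators are $[\mathcal{D}_\xi^{1/2},\sign(\xi)]$ and $[\mathcal{D}_\xi^{1/2},e^{it\sign(\xi)\eta^2}]$, which by Corollary \ref{signcoro} and Lemma \ref{derivexp} produce $|\xi|^{-1/2}$-type singularities at $\xi=0$; these can only be absorbed if the trace $\widehat{x^kf}(0,\eta)$ vanishes, and no hypothesis of the corollary forces that $k$-th moment to vanish (the conditions inherited from Theorem \ref{lineaerequniquecont1} reach $l\le k-1$ only for non-resonant $\eta$, and all $\eta$ only for $l\le k-3$). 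Running the same commutation ``backward with $U(-t)$'' inherits the identical obstruction. The paper's Lemma \ref{uniquecontcritcase} is not a commutation statement: it is proved via the exact conservation law from the proof of Proposition \ref{decaysimplyeq}, namely that $\|\langle x\rangle_N^{1/2}U_1(t)g\|_{L^2}^2+\|\langle x\rangle_N^{1/2}U_1(t)\mathcal{H}_xg\|_{L^2}^2$ is independent of $t$, which couples $g$ and $\mathcal{H}_xg$ as a system and never requires any weighted bound on $\mathcal{H}_x$ or on a commutator. You may quote that lemma as a black box, but then you must first verify its hypothesis $U(t)(x^kf)\in L^2(|x|\,dxdy)$ — which is exactly \eqref{xkuniquecont}, so the order of logic matters — and drop the commutation justification.

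The second issue is that Lemma \ref{uniquecontcritcase} cannot do ``the heavy lifting'' for \eqref{xkuniquecont}: it says nothing about when $U(t)g\in L^2(|x|\,dxdy)$ in terms of moments of $g$; it only transfers membership between $\mathcal{H}_xg$ and $U(t)\mathcal{H}_xg$. What the paper actually uses for \eqref{xkuniquecont} is the expansion of Proposition \ref{generalderiv} (reduced to \eqref{uniqueq8} by the inherited moments), the splitting \eqref{unique9}, Proposition \ref{decaywight}, and, crucially, Claim \ref{ClaimUnique1}(ii): $\sign(\xi)\phi(\xi)\notin H^{1/2}(\mathbb{R})$, so the term $e^{it\sign(\xi)\eta^2}\sign(\xi)\partial_\xi^{k-2}\widehat{f}(0,\eta)\phi(\xi)$ lies in $H^{1/2,0}$ if and only if its coefficient vanishes; note also that the continuity/Sobolev-embedding device used in Theorem \ref{lineaerequniquecont1}(ii) is unavailable here, since $H^{1/2}$ distributions need not be continuous. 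Your heuristic (``$|\xi|^{-1/2}$-type singularity whose coefficient is the $(k-2)$-moment'') is the right picture, but you also assert — inconsistently — that the hypothesis already forces \eqref{hiphoteunique2} for $l=0,\dots,k-2$ for all $\eta$. Since $f$ and $U(t)f$ lie only in $L^2(|x|^{2(k+\frac12)}\,dxdy)$, Theorem \ref{lineaerequniquecont1} applies only with index $k-1$ (i.e.\ $k-\tfrac12<\theta'<k+\tfrac12$), which gives the $(k-2)$- and $(k-1)$-moments solely for $\eta$ with $|t|\eta^2\notin 2\pi\mathbb{Z}_{\geq 0}$; upgrading the $(k-2)$-moment to all $\eta$ is the nontrivial content of \eqref{xkuniquecont}, not an input to it. With those two repairs — replace the commutation step by the conservation-law lemma, and replace the appeal to Lemma \ref{uniquecontcritcase} in step (ii) by the jump-discontinuity argument — your outline becomes the paper's proof.
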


\begin{rem}
We note that the conditions given in Theorem \ref{lineaerequniquecont1} are less restrictive than those in Theorem \ref{LinearEst} as they only need $\eta\in \mathbb{Z}$ to be such  $|t|\eta^2\neq 2\pi l'$ for each $l'\in \mathbb{Z}$, $l'\geq 0$. Such a conclusion holds since the time $t$ in this theorem is fixed. In contrast, in Theorem \ref{LinearEst}, we have stated the conditions in such a way that they do not depend on any particular time $t\in \mathbb{R}$. Nevertheless, for a fixed time $t\neq 0$,  the results of Theorem \ref{LinearEst} are valid assuming that \eqref{hiphote1}, \eqref{hiphote2} and \eqref{hiphote4} hold for all $\eta\in \mathbb{Z}$ such that $|t|\eta^2\neq 2\pi l'$ for each $l'\in \mathbb{Z}$, $l'\geq 0$. Note that \eqref{hiphote2.1} and \eqref{hiphote3} remain the same.
\end{rem}

Next, we present our local existence result for solutions of \eqref{SHeq} in weighted spaces.

\begin{theorem}\label{LWPresultinWS}

Let $K\geq 1$ be integer, and $\nu_k\in \mathbb{R}$, $k=1,\dots,K$ be constants not all null. Let $s_1>\frac{3}{2}$, $s_2>\frac{1}{2}$ be fixed.
\begin{itemize}
    \item[(i)] Let $0<\theta<\frac{1}{2}$. Then the Cauchy problem \eqref{SHeq} is locally well-posed in $H^{s_1,s_2}(\mathbb{R}\times \mathbb{T})\cap L^2(|x|^{2\theta}\, dx dy)$.

\item[(ii)] The Cauchy problem \eqref{SHeq} is locally well-posed in the space
$$L^2(|x|\, dx dy)\cap \{f\in H^{s_1,s_2}(\mathbb{R}\times \mathbb{T}): \mathcal{H}_x f \in L^2(|x|\, dx dy)\}.$$

\item[(iii)] Let $\frac{1}{2}<\theta<\frac{3}{2}$. Then the Cauchy problem \eqref{SHeq} is locally well-posed in 
$$L^2(|x|^{2\theta}\, dx dy)\cap \{f\in H^{s_1,s_2}(\mathbb{R}\times \mathbb{T}): \int_{\mathbb{T}}\int_{\mathbb{R}}e^{-iy \eta}f(x,y)\, dy=0, \, \, \text{for all } \eta \in \mathbb{Z}\}.$$
\end{itemize}
\end{theorem}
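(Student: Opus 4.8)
The plan is to combine local well-posedness in the unweighted anisotropic space $H^{s_1,s_2}(\mathbb{R}\times\mathbb{T})$ with a propagation-of-weights argument driven by Theorem~\ref{LinearEst}. First I would secure the unweighted theory by the standard energy method (parabolic regularization together with a Bona--Smith argument for continuous dependence), which at regularity $s_1>\tfrac32$, $s_2>\tfrac12$ controls $\|u\|_{L^\infty}$ and $\|\partial_x u\|_{L^\infty}$ and yields a solution $u\in C([0,T];H^{s_1,s_2})$ on an interval $[0,T]$ with $T=T(\|u_0\|_{H^{s_1,s_2}})$. Uniqueness and continuous dependence in the $H^{s_1,s_2}$-topology are then inherited, so the whole problem reduces to showing that the weight persists, i.e. that $\langle x\rangle^{\theta}u\in C([0,T];L^2)$ (together with $\mathcal{H}_x u\in C([0,T];L^2(|x|\,dxdy))$ in case (ii)), with an a priori bound. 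I would work in the complete space $X_T=\{u\in C([0,T];H^{s_1,s_2}):\langle x\rangle^{\theta}u\in C([0,T];L^2)\}$, intersected in case (iii) with the closed subspace cut out by the zero-mean constraint and in case (ii) with $\{\mathcal{H}_x u\in C([0,T];L^2(|x|\,dxdy))\}$, producing the solution there by regularizing the datum, deriving uniform-in-regularization weighted bounds, and passing to the limit.

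The heart of the matter is the a priori weighted estimate, and the decisive structural observation is that the nonlinearity is a perfect $x$-derivative: $\sum_k\nu_k u^k\partial_x u=\partial_x\big(\sum_k\tfrac{\nu_k}{k+1}u^{k+1}\big)$. Hence the forcing $F=\sum_k\nu_k u^k\partial_x u$ satisfies $\int_{\mathbb{R}}F(x,y)\,dx=0$ for every $y$, so $F$ automatically verifies the zero-mean hypothesis \eqref{hiphote1} needed to apply Theorem~\ref{LinearEst}(iii) to the Duhamel term, and no higher moment condition is required since $\theta<\tfrac32$. Read at frequency $\xi=0$, the same computation shows that the constraint $\widehat{u}(0,\eta,t)=0$ ($\eta\neq0$) imposed on the datum in case (iii) is preserved by the flow: the symbol of the linear part vanishes at $\xi=0$ and $\widehat{F}(0,\eta,t)=0$, so $\tfrac{d}{dt}\widehat{u}(0,\eta,t)=0$; thus $u(t)$ stays in the constraint set and Theorem~\ref{LinearEst} applies at every instant. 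Writing $u(t)=U(t)u_0-\int_0^t U(t-t')F(t')\,dt'$, the bound \eqref{weightineq} (resp. \eqref{weightineq2}) on $U(t)u_0$ is immediate because $\theta<s_1$. For $0<\theta<\tfrac12$ the inhomogeneous term also closes directly, as $\theta<s_1-1$ gives $\|F\|_{H^{\theta,0}}\lesssim\|u\|_{H^{s_1,s_2}}^{K}$.

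For $\tfrac12<\theta<\tfrac32$ the Duhamel estimate would cost one derivative too many on the weight, so I would instead control $\|\langle x\rangle^{\theta}u\|_{L^2}$ through a weighted energy identity with truncated weights $w_R\nearrow\langle x\rangle^{\theta}$. Integrating by parts, the derivative falls on $w_R^2$, which is pointwise dominated by $\langle x\rangle^{-1}w_R^2$, giving
\[
\Big|\int w_R^2\,u\,\partial_x\Big(\sum_{k}\tfrac{\nu_k}{k+1}u^{k+1}\Big)\,dx\,dy\Big|\lesssim C\big(\|u\|_{H^{s_1,s_2}}\big)\,\|\langle x\rangle^{\theta}u\|_{L^2}^2 ,
\]
so only $\|u\|_{L^\infty}$ (controlled by $s_1>\tfrac32$, $s_2>\tfrac12$) enters and Gronwall closes the estimate with no derivative loss on the weight.

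The main obstacle, and the reason the linear analysis is carried out first, is the treatment of the dispersive commutators with the weight in this energy identity, namely the contributions of $\mathcal{H}_x\partial_x^2$ and especially of $\mathcal{H}_x\partial_y^2$. These are exactly the quantities estimated in the proof of Theorem~\ref{LinearEst}, and it is there that the hypotheses become necessary: the zero-mean conditions are precisely what make $\mathcal{H}_x$ admissible against $\langle x\rangle^{2\theta}$ once $\theta\ge\tfrac12$ (the Hilbert transform being bounded on $L^2(|x|^{2\theta}\,dx)$ only for $0<\theta<\tfrac12$), which explains why case (i) needs no condition, case (iii) needs \eqref{hiphote1}, and case (ii) requires carrying $\mathcal{H}_x u$ as a separate quantity in $L^2(|x|\,dxdy)$, exactly as in \eqref{weightineq2}. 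I would therefore invoke Theorem~\ref{LinearEst} as a black box for $U(t)u_0$ and reproduce its weighted commutator bounds at the level of the energy identity for the full solution, the only new input being the nonlinear term handled above. Once the uniform bound $\sup_{[0,T]}\|\langle x\rangle^{\theta}u^{\varepsilon}\|_{L^2}\le C(T,\|u_0\|)$ is obtained for regularized data $u_0^{\varepsilon}$, weak-$*$ compactness yields $\langle x\rangle^{\theta}u\in L^\infty([0,T];L^2)$, time-continuity is recovered from the equation, and a Bona--Smith argument in the weighted norm gives continuous dependence, completing all three cases; the persistently delicate point is the failure of $\mathcal{H}_x$ to be bounded on $L^2(|x|\,dxdy)$ at $\theta=\tfrac12$, which forces the extra bookkeeping of $\mathcal{H}_x u$ in case (ii).
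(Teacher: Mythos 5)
Your overall skeleton (unweighted $H^{s_1,s_2}$ theory, regularization of the datum, uniform weighted bounds, passage to the limit) is the same as the paper's, and several of your observations are correct and used there: the nonlinearity is an exact $x$-derivative, so $\widehat{F}(0,\eta,t)=0$, the zero-mean constraint propagates, and Theorem \ref{LinearEst} applies to the Duhamel term; Duhamel does indeed break down for $\frac{1}{2}<\theta<\frac{3}{2}$ at regularity $s_1>\frac{3}{2}$ (it asks for $u^{k+1}\in H^{\theta+1,0}$ with $\theta+1$ as large as $\frac{5}{2}$); and your integration by parts disposes of the nonlinear term in the energy identity. The genuine gap sits exactly at the point you flag as ``the main obstacle'' and then defer: the dispersive contributions to the weighted energy identity. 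Your plan to ``invoke Theorem \ref{LinearEst} as a black box and reproduce its weighted commutator bounds at the level of the energy identity'' cannot be executed, because Theorem \ref{LinearEst} is proved entirely on the Fourier side (Stein derivatives $\mathcal{D}_{\xi}^{\theta}$ of $e^{it\omega}$, Proposition \ref{decaywight}) for the free group, and it contains no physical-space commutator estimates of the kind an energy identity needs. Concretely, with $w=\langle x\rangle^{\theta}$ (or its truncation), skew-adjointness of $\mathcal{H}_x$ and one integration by parts in $y$ give
\begin{equation*}
\int w^2\,u\,\mathcal{H}_x\partial_y^2u\,dxdy=\int\big([\mathcal{H}_x,w](w\,\partial_yu)\big)\,\partial_yu\,dxdy,
\end{equation*}
and the Calder\'on-commutator bound for the right-hand side involves $\|w\,\partial_yu\|_{L^2}\|\partial_yu\|_{L^2}$, quantities that are \emph{not} controlled by $\|u\|_{H^{s_1,s_2}}+\|w u\|_{L^2}$ when $s_2>\frac{1}{2}$ is the only transverse regularity available (interpolation would demand either more weight than $\theta$ or more $y$-derivatives than $s_2$). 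So the energy identity for $u$ alone does not close, in any of the three cases.

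The missing idea is the paper's symmetrization: apply $\mathcal{H}_x$ to the equation, write the weighted identity for $\mathcal{H}_xu$ as well, and add the two, i.e.\ work with the functional $\|\langle x\rangle_N^{\theta}u\|_{L^2}^2+\|\langle x\rangle_N^{\theta}\mathcal{H}_xu\|_{L^2}^2$. Because the weight is $y$-independent, the two $\mathcal{H}_x\partial_y^2$ contributions cancel identically after integrating by parts in $y$ (this is how \eqref{eqLWP1} and \eqref{integraleqN} are obtained, and the same cancellation underlies the linear Proposition \ref{decaysimplyeq}); only the $\mathcal{H}_x\partial_x^2$ terms survive, and they are handled with Propositions \ref{CalderonComGU} and \ref{propcomm2}. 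This also corrects your reading of the role of $\mathcal{H}_xu$: carrying it is not bookkeeping peculiar to case (ii) but the mechanism that kills the transverse dispersion in all three cases; in cases (i) and (iii) the quantity $\|\langle x\rangle_N^{\theta}\mathcal{H}_xu_0\|_{L^2}$ is finite thanks to Proposition \ref{a2condH}, respectively Remark \ref{remarkHilbertL2}, while in case (ii) it must be assumed. Once the symmetrized functional is adopted, a second nonlinear term $\int\mathcal{H}_x(u^k\partial_xu)\,\langle x\rangle_N^{2\theta}\,\mathcal{H}_xu\,dxdy$ appears which is not a perfect derivative; for $\frac{1}{2}<\theta<\frac{3}{2}$ the paper controls it through Remark \ref{remarkHilbertL2} (weighted boundedness of $\mathcal{H}_x$ on zero-mean functions), a step absent from your outline. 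Finally, note that even your Duhamel-plus-Gronwall closure for case (i) presupposes that $\|\langle x\rangle^{\theta}u(\tau)\|_{L^2}$ is finite on the time interval; the paper secures this qualitative persistence for the approximating smooth solutions first (Lemma \ref{smoothsolutdecay}, again via the truncated symmetrized energy functional, where finiteness is automatic), and only then runs the uniform-in-$N$ estimates.
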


The proof of Theorem \ref{LWPresultinWS} is based on energy estimates, where we use the symmetry of the equation and properties of the Hilbert transform to simplify the effects of the periodic variable $y$. A key idea in our arguments is to prove the persistence of polynomial decay for smooth solutions of the equation \eqref{SHeq}, see Lemma \ref{smoothsolutdecay}. Such a result depends on our linear estimates deduced on Theorem \ref{LinearEst}. 

\begin{rem}
  Motivated by the requirements between decay and regularity for solutions of the linear equation, see Theorem \ref{LinearEst}, one expects that all the results in Theorem \ref{LWPresultinWS} could be extended to regularities $s_1\geq \theta$, and $s_2\geq 0$. In this paper, we do not pursue the minimal regularity measured in $s_1$ and $s_2$ to propagate polynomial weights. What we seek is to develop a detailed study of the linear equation, which ultimately determines the maximum polynomial decay admitted by the solutions of \eqref{SHeq}. The restrictions $s_1>\frac{3}{2}$, and $s_2> \frac{1}{2}$ in Theorem \ref{LWPresultinWS} come from the standard well-posedness results in $H^{s_1,s_2}(\mathbb{R}\times \mathbb{T})$. However, our proof of Theorem \ref{LWPresultinWS} is valid for regularities $s_1\geq \max\{\theta,\frac{1}{2}\}$, and $s_2\geq 0$ provided that there exists a local theory of solutions of \eqref{SHeq} in  $H^{s_1,s_2}(\mathbb{R}\times \mathbb{T})$ for which the solutions also satisfy
  \begin{equation*}
      u, \partial_x u\in L^{k}([0,T];L^{\infty}(\mathbb{R}\times \mathbb{T})),
  \end{equation*}
  for all $k=1,\dots,K$. 
\end{rem}

Next, we present our asymptotic at infinity unique continuation principle for solutions of \eqref{SHeq}.  For more details on these types of questions, see \cite{LinaresPonce2021}.

\begin{theorem}\label{Uniquecontprinc}
Let $K\geq 1$ be integer, and $\nu_k\in \mathbb{R}$, $k=1,\dots,K$ be constants not all null.  
\begin{itemize}
    \item[(i)] Let $\frac{1}{4}<\theta<\frac{1}{2}$, $s_1\geq \frac{8\theta+2}{4\theta-1}$,  $s_2> \frac{6\theta}{4\theta-1}$. Let $u\in C([0,T];H^{s_1,s_2}(\mathbb{R}\times \mathbb{T}))\cap C([0,T];L^2(|x|^{2\theta}\, dxdy))$ be a solution of \eqref{SHeq} such that there exist two different times $t_2>t_1$ such that
    \begin{equation*}
        u(t_1), u(t_2)\in L^2(|x|^{1^{+}}\, dx dy). 
    \end{equation*}
Then 
\begin{equation}\label{Uniqueident1}
\int_{\mathbb{T}}\int_{\mathbb{R}} e^{-i\eta y}u(x,y,t)\, dx dy=0,    
\end{equation}
for all $t\in[t_1,T]$, $\eta\in \mathbb{Z}$ be such that $|t_2-t_1|\eta^{2}\neq 2\pi l'$ for each $l'\in \mathbb{Z}$ with $l'\geq 0$.   

  \item[(ii)] Let $1<\theta<\frac{3}{2}$, $s_1\geq \frac{8\theta+6}{4\theta-3}$,  $s_2> \frac{6\theta}{4\theta-3}$. Let $u\in C([0,T];H^{s_1,s_2}(\mathbb{R}\times \mathbb{T}))\cap C([0,T];L^2(|x|^{2\theta}\, dxdy))$ be a solution of \eqref{SHeq} such that
  \begin{equation*}
    \int_{\mathbb{T}}\int_{\mathbb{R}}e^{-iy \eta}u(x,y,t)\,dx dy=0, \, \, \text{for all } \eta \in \mathbb{Z}, \, t\in[0,T],  
  \end{equation*}
  and such that  there exist two different times $t_2>t_1$ for which
    \begin{equation*}
        u(t_1), u(t_2)\in L^2(|x|^{3^{+}}\, dx dy). 
    \end{equation*}
Then 
\begin{equation}\label{Uniqueident2}
\begin{aligned}
    \sin((t_2-t_1)\eta^2)\frac{\partial}{\partial \xi}\widehat{u}(0,\eta,t_1)=\int_0^{t_2-t_1}\sin((t_2-t_1-\tau)\eta^2)\Big(\sum_{k=1}^K 
    \frac{i \nu_k}{k+1}\widehat{u^{k+1}}(0,\eta,\tau)\Big)\, d\tau,
\end{aligned}    
\end{equation}
for all $\eta\in \mathbb{Z}$. In particular, if for some $0<T_1\leq T$, $u\in L^{\infty}([0,T_1];L^2(|x|^{3^{+}}\, dx dy))$ with $u(0)\in L^2(|x|^{3+}\,dxdy)$, then
\begin{equation}\label{contraconlcus}
\int_{\mathbb{T}}\int_{\mathbb{R}} e^{-i\eta y} x u(x,y,0)\, dx dy=0,    
\end{equation}
for all $\eta\in \mathbb{Z}\setminus\{0\}$.
\end{itemize}
\end{theorem}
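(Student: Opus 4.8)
\medskip
\noindent\textbf{Proof strategy.} The plan is to argue entirely on the Fourier side, exploiting the jump discontinuity at $\xi=0$ of the symbol of the linear propagator. Writing $\omega(\xi,\eta)=\sign(\xi)(\xi^2+\eta^2)$, we have $\widehat{U(t)f}(\xi,\eta)=e^{it\omega(\xi,\eta)}\widehat f(\xi,\eta)$, and for $\eta\neq0$ the one--sided limits $\omega(0^{\pm},\eta)=\pm\eta^2$ disagree. Since $\sum_{k}\nu_k u^k\partial_x u=\partial_x\big(\sum_k\frac{\nu_k}{k+1}u^{k+1}\big)=:N$, the spatial Fourier transform of the nonlinearity carries a factor $i\xi$; hence $\widehat N(0,\eta,\tau)=0$ and $\partial_\xi\widehat N(0,\eta,\tau)=\sum_k\frac{i\nu_k}{k+1}\widehat{u^{k+1}}(0,\eta,\tau)$. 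The basic identity I would use is Duhamel's formula $\widehat u(\xi,\eta,t_2)=e^{i(t_2-t_1)\omega}\widehat u(\xi,\eta,t_1)-\int_{t_1}^{t_2}e^{i(t_2-\tau)\omega}\widehat N(\xi,\eta,\tau)\,d\tau$, which I differentiate in $\xi$ and evaluate as $\xi\to0^{\pm}$.

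For part (i), the hypothesis $u(t_1),u(t_2)\in L^2(|x|^{1^+})$ puts $\widehat u(\cdot,\eta,t_j)\in H^{1/2^+}_\xi\hookrightarrow C^0$, so both are continuous at $\xi=0$. First I would check that the Duhamel term is harmless there: because $\widehat N\sim\xi$ and, thanks to $\theta>\tfrac14$ and the regularity $(s_1,s_2)$, the functions $\widehat{u^{k+1}}(\cdot,\eta,\tau)$ stay controlled near $\xi=0$, dominated convergence yields $\lim_{\xi\to0^{\pm}}\int_{t_1}^{t_2}e^{i(t_2-\tau)\omega}\widehat N\,d\tau=\int_{t_1}^{t_2}e^{\pm i(t_2-\tau)\eta^2}\widehat N(0,\eta,\tau)\,d\tau=0$. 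Letting $\xi\to0^{+}$ and $\xi\to0^{-}$ in Duhamel and subtracting then gives $\big(e^{i(t_2-t_1)\eta^2}-e^{-i(t_2-t_1)\eta^2}\big)\widehat u(0,\eta,t_1)=0$. To reach the sharp frequency set in \eqref{Uniqueident1}, rather than the coarser one coming from this bare jump computation, I would instead use $u(t_2)\in L^2(|x|^{1^+})$ together with an estimate of the Duhamel term in $L^2(|x|^{1^+})$ (here Theorem \ref{LinearEst}(iii) applies because $N$, being an $x$--derivative, has zero mean) to conclude $U(t_2-t_1)u(t_1)\in L^2(|x|^{1^+})$, and then invoke the linear unique continuation principle Theorem \ref{lineaerequniquecont1}(i) with $k=0$, which forces $\int_{\mathbb T}\int_{\mathbb R}e^{-i\eta y}u(x,y,t_1)\,dxdy=0$ for precisely those $\eta$ with $|t_2-t_1|\eta^2\neq2\pi l'$. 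Propagation to all $t\in[t_1,T]$ follows from the one--sided limits of the Duhamel formula based at $t_1$: for such an admissible $\eta$, $\lim_{\xi\to0^{\pm}}\widehat u(\xi,\eta,t)=e^{\pm i(t-t_1)\eta^2}\widehat u(0,\eta,t_1)=0$, which is \eqref{Uniqueident1}.

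For part (ii) and its corollary I would proceed by one more $\xi$--differentiation. Now $1<\theta<\tfrac32$ gives $u(\tau)\in L^1_x$, so $\widehat{u^{k+1}}(0,\eta,\tau)$ is well defined, while $u(t_j)\in L^2(|x|^{3^+})$ gives $\partial_\xi\widehat u(\cdot,\eta,t_j)\in C^0$, and the standing hypothesis $\widehat u(0,\eta,t)=0$ is available. Differentiating Duhamel in $\xi$ and letting $\xi\to0^{\pm}$, the contribution $\partial_\xi(\text{phase})\cdot\widehat u$ drops out (because $\partial_\xi e^{i(t_2-t_1)\omega}\to0$ and $\widehat u(0,\eta,t_1)=0$), leaving $\partial_\xi\widehat u(0,\eta,t_2)=e^{\pm i(t_2-t_1)\eta^2}\partial_\xi\widehat u(0,\eta,t_1)-\int_{t_1}^{t_2}e^{\pm i(t_2-\tau)\eta^2}\big(\sum_k\tfrac{i\nu_k}{k+1}\widehat{u^{k+1}}(0,\eta,\tau)\big)\,d\tau$; subtracting the two one--sided identities and dividing by $2i$ yields \eqref{Uniqueident2}. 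For the final assertion, under $u\in L^\infty([0,T_1];L^2(|x|^{3^+}))$ I would read \eqref{Uniqueident2} with $t_1=0$, $t_2=t$ for every small $t>0$: since $\widehat{u^{k+1}}(0,\eta,\cdot)$ is bounded the right--hand side is $O(t^2)$, whereas the left--hand side equals $\sin(t\eta^2)\,\partial_\xi\widehat u(0,\eta,0)=t\eta^2\,\partial_\xi\widehat u(0,\eta,0)+o(t)$; dividing by $t$ and letting $t\to0^{+}$ gives $\eta^2\,\partial_\xi\widehat u(0,\eta,0)=0$, that is $\int_{\mathbb T}\int_{\mathbb R}e^{-i\eta y}x\,u(x,y,0)\,dxdy=0$ for $\eta\neq0$, which is \eqref{contraconlcus}.

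The hard part will be the rigorous control of the nonlinear contribution. Two issues must be handled: the interchange of limit and $\tau$--integration (and, for part (i), actually placing the Duhamel remainder in $L^2(|x|^{1^+})$ so that Theorem \ref{lineaerequniquecont1} can be applied), together with the fact that at interior times $u$ only enjoys the weaker decay $L^2(|x|^{2\theta})$. This is exactly where the regularity thresholds $s_1\ge\frac{8\theta+2}{4\theta-1}$, $s_2>\frac{6\theta}{4\theta-1}$ (respectively $\frac{8\theta+6}{4\theta-3}$, $\frac{6\theta}{4\theta-3}$) enter: they are dictated by the weighted linear estimates of Theorem \ref{LinearEst} interpolated against the Sobolev regularity, needed to bound $\widehat{u^{k+1}}$ near $\xi=0$ and to close the Duhamel estimate. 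The second genuinely sharp point is upgrading the admissible set from the coarse jump condition to $|t_2-t_1|\eta^2\neq2\pi l'$, which rests on the refined linear results Theorem \ref{lineaerequniquecont1} and Corollary \ref{lineaerequniquecont2} rather than on continuity at $\xi=0$ alone.
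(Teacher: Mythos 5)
Your proposal is correct and follows essentially the same route as the paper's proof: for part (i), weighted control of the Duhamel term (the paper's Claim \ref{decayintegraleq}, resting on Theorem \ref{LinearEst}(iii) since the nonlinearity is an $x$-derivative) combined with the linear principle Theorem \ref{lineaerequniquecont1}(i) to get the sharp frequency set, then propagation by continuity of the Fourier transform at $\xi=0$; for part (ii), matching one-sided limits at $\xi=0$ of the $\xi$-differentiated Duhamel formula to obtain \eqref{Uniqueident2}, and deducing \eqref{contraconlcus} by dividing by $t$ and letting $t\to0$. The only difference is presentational: where you differentiate Duhamel directly and pass limits by dominated convergence (using weighted bounds on $u^{k}\partial_x u$ near $\xi=0$), the paper isolates the singular part with a cutoff $\phi$ (the decomposition $\mathcal{G}_1,\mathcal{G}_2,\mathcal{G}_3$ and then $\mathcal{G}_{1,j}$) and derives continuity of the regular pieces from Theorem \ref{LinearEst}(iii)--(iv) in Claims \ref{Zerosolunique}--\ref{Zerosolunique2}; the quantitative inputs you flag (weighted interpolation estimates for $u^{k}\partial_x u$, which dictate the thresholds on $s_1,s_2$) are exactly the ones the paper carries out.
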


The above theorem is a consequence of our studies for the linear equation in Theorems \ref{LinearEst} and \ref{lineaerequniquecont2}. In a way, it is observed that dispersion has a strong influence in establishing restrictions to propagate certain spatial decay for solutions of the nonlinear equation \eqref{SHeq}. Furthermore, we see that due to the influence of the Hilbert transform and its iteration with the periodic variable, the results in Theorem \ref{Uniquecontprinc} give us identities \eqref{Uniqueident1} and \eqref{Uniqueident2}, which are more restrictive than in the case of the Benjamin-Ono equation, cf. \cite{FonsecaPonce2011}. However, we can draw some conclusions from our unique continuation principles.

\begin{rem}
\begin{itemize}
    \item[(i)] Theorem \ref{Uniquecontprinc} (i) establishes that for arbitrary initial data in $u_0\in H^{s_1,s_2}(\mathbb{R}\times \mathbb{Z})\cap L^2(|x|^{1^{+}}\, dx dy)$ (with $s_1$, $s_2$ be given in Theorem \ref{Uniquecontprinc} (i)), the polynomial decay $|x|^{\frac{1}{2}^{-}}$ is the largest possible decay for solutions of \eqref{SHeq} in $L^2$-spaces. To see this, let $u_0\in H^{s_1,s_2}(\mathbb{R}\times \mathbb{Z})\cap L^2(|x|^{1^{+}}\, dx dy)$ be such that $\widehat{u_0}(0,\eta)\neq 0$ for some $\eta$. By Theorem \ref{LWPresultinWS} there exist a time $T>0$, and a unique solution of \eqref{SHeq} with initial condition $u_0$ such that $u\in C([0,T];L^2(|x|^{1^{-}}\, dx dy))$, but it can not happen that for some $0<T_1\leq T$, $u\in L^{\infty}([0,T_1];L^2(|x|^{1^{+}}\, dx dy))$. Otherwise, setting $t_1=0$ in \eqref{Uniqueident1} and using a continuity argument with $t_2\to 0$, it would follow that $\widehat{u_0}(0,\eta)=0$. Consequently, Theorem \ref{Uniquecontprinc} (i) shows that the conditions in Theorem \ref{LWPresultinWS} (iii) are required to propagate weights of magnitude $\theta>\frac{1}{2}$. However, we do not know if the condition in the space in Theorem \ref{LWPresultinWS} (ii) is optimal.
    \item[(ii)] Similar as before, Theorem \ref{Uniquecontprinc} (ii) shows that for arbitrary initial data in $u_0\in H^{s_1,s_2}(\mathbb{R}\times \mathbb{Z})\cap L^2(|x|^{3^{+}}\, dx dy)$ such that $\widehat{u_0}(0,\eta)=0$ for all $\eta \in \mathbb{Z}$, the polynomial decay $|x|^{\frac{3}{2}^{-}}$ is the largest possible decay for solutions of \eqref{SHeq} in $L^2$-spaces. For example, one can take a sufficiently regular initial condition $u_0\in L^2(|x|^{3^{+}}\, dx dy)$ be such that $\widehat{u_0}(0,\eta)= 0$ for all $\eta$, and such that  $\widehat{x u_0}(0,\eta)\neq 0$ for some $\eta \neq 0$. It follows from Theorem \ref{LWPresultinWS} that there exist a time $T>0$ and a unique solution of \eqref{SHeq} with initial condition $u_0$ such that $u\in C([0,T];L^2(|x|^{3^{-}}\, dx dy))$, but according to \eqref{contraconlcus}, it can not happen that for some $0<T_1\leq T$, $u\in L^{\infty}([0,T_1];L^2(|x|^{3^{+}}\, dx dy))$. 
    \item[(iii)] In the case of the Benjamin-Ono equation (\eqref{BO} with $K=k=1$), it is known from \cite{FonsecaPonce2011} that the only solution $u$ of such equation that satisfies $|x|^{\frac{7}{2}}u(t_j)\in L^{2}(\mathbb{R})$ for three different times $t_1,t_2,t_3$ is the null solution. However, in the cylinder case of the Cauchy problem \eqref{SHeq}, it is not known if there exists a maximum polynomial decay $\beta$ and a number of times $j=1,\dots M$ such that $|x|^{\beta}u(t_j)\in L^{2}(\mathbb{R}\times \mathbb{T})$ forces the solution $u$ of \eqref{SHeq} to be null. In this sense, our results in Theorem \ref{Uniquecontprinc} (ii) are not sufficient to conclude such a result.
    \end{itemize}
\end{rem}

This paper is organized as follows: Section \ref{Sectiprel} introduces the preliminaries and estimates necessary for the derivation of our main results. Section \ref{Lineareqstudysec} focuses on the study of the linear equation determined by \eqref{SHeq}. In this section, we derive the persistence results in weights spaces in Theorem \ref{LinearEst} and the unique continuation principles in Theorem \ref{lineaerequniquecont1} and Corollary \ref{lineaerequniquecont2}. We then conclude with the results for the nonlinear model \eqref{SHeq} in Section \ref{SectionNonlinear}, in which we present local well-posedness results in weighted spaces and asymptotic at infinity unique continuation principles discussed in Theorems \ref{LWPresultinWS} and \ref{Uniquecontprinc}.

%%%%%%%%%%%%%%%%%%%%%%%%%%%%%%%%%%%%%%%%%%%%%%%%%%%%%%%%%%%%%%%%%%%%%%%%%%%%%%%%%%%%%%%%%%%%%%%%%%%%%%%%%%%%%%%%%%%%%%%%%%%%%%%%%%%%%%%%%%%%%%%%%%%%%%%%%%%%%%%%%%%%%%%%%%%%%%%%%%%%%%%%%%%%%%%%%%%%%%%%%%%%%%%%%%%%%%%%%%%%%%%%%%%%%%%%%%%%%%%%%%%%%%%%%%%%%%%%%%%%%%%%%%%%%%%%%%%%%%%%%%%

\section{Notation and preliminaries}\label{Sectiprel}

Given two positive numbers $a$ and $b$, $a\lesssim b$ means that there exists a positive constant $c>0$ such that $a\leq c b$. We write $a\sim b$ whenever $a\lesssim b$ and $b\lesssim a$. $[A,B]$ stands for the commutator between two operators $A,B$, i.e.,
\begin{equation*}
[A,B]=AB-BA.
\end{equation*}  
Given $p\in [1,\infty]$ and $d\geq 1$ integer, the Lebesgue spaces $L^p(\mathbb{K})$ are defined as usual  by its norm as $\|f\|_{L^p(\mathbb{K})}=\left\|f\right\|_{L^p}=\left(\int_{\mathbb{K}} |f(x)|^p\,dx\right)^{\frac{1}{p}},$ with the usual modification when $p=\infty$. To emphasize the dependence on the spatial variables, we will denote by $\|f\|_{L^p(\mathbb{R}\times \mathbb{T})}=\|f\|_{L^p_{xy}(\mathbb{R}\times \mathbb{T})}$. $S(\mathbb{R}\times \mathbb{T})$ is the space of test functions that consists of smooth functions $f\in C^{\infty}(\mathbb{R}\times \mathbb{T})$, where for all $y\in \mathbb{T}$, the function $x\mapsto f(x,y)$ defines a Schwartz function or a rapidly decreasing function. The space $C^{\infty}_{c}(\mathbb{R})$ denotes the set of all functions $f:\mathbb{R}\rightarrow \mathbb{R}$ which are both smooth and compactly supported.

The  Fourier transform is defined by $$\widehat{f}(\xi,\eta)=\mathcal{F}f(\xi,\eta)=\int_{\mathbb{T}} \int_{\mathbb{R}} f(x,y) e^{-i x\cdot \xi-iy\cdot \eta}\, dx dy,$$
where $\xi\in \mathbb{R}$, $\eta \in \mathbb{Z}$. We denote the inverse Fourier transform by $f^{\vee}=\mathcal{F}^{-1}(f)$. $D^s_x$ denotes the homogeneous derivative of order $s\in \mathbb{R}$ in the $x$-direction, which is defined as $D_x^s f=(\mathcal{H}_x\partial_x)^s f=(|\xi|^s \widehat{f}(\xi,\eta))^{\vee}$. Similarly, we define $D_y^s$, $s\in \mathbb{R}$, as the homogeneous derivative of order $s$ acting on the $y$-variable. The anisotropic Sobolev spaces $H^{s_1,s_2}(\mathbb{R}\times \mathbb{T})$ consist of all tempered distributions on $\mathcal{S}(\mathbb{R}\times \mathbb{T})$ such that $\left\|f\right\|_{H^{s_1,s_2}}=\left\|J^{s_1}_x f \right\|_{L^2}+\left\|J_y^{s_2} f \right\|_{L^2}<\infty$, where  the Bessel operator $J^{s_1}_x$ is defined via the Fourier transform according to $\widehat{J^{s_1}_x f}(\xi,\eta)=\langle \xi \rangle^{s_1} \widehat{f}(\xi,\eta)$, and  $J^{s_2}_y$ is given by $\widehat{J^{s_2}_y f}(\xi,\eta)=\langle \eta \rangle^{s_2} \widehat{f}(\xi,\eta)$. When $s_1=s_2$, we write $H^s(\mathbb{R}\times \mathbb{T})=H^{s,s}(\mathbb{R}\times \mathbb{T})$, and we denote by $J^{s}_{x,y}$ the operator defined by the Fourier multiplier $\langle(\xi,\eta)\rangle^s$. We will also work with the space $H^{s,0}(\mathbb{R}\times \mathbb{Z})$, $s\geq 0$, which is defined by the norm $\|J_{x}^s f(x,y)\|_{L^2(\mathbb{R}\times \mathbb{Z})}$.

The unitary group $\{U(t)\}$ associated with the linear problem determined by \eqref{SHeq} is defined as 
\begin{equation}\label{Group}
    U(t)f(x,y)=\frac{1}{(2\pi)^{2}}\sum_{\eta=-\infty}^{\infty}\int_{\mathbb{R}} e^{it\omega(\xi, \eta)+ix\xi+iy\eta} \widehat{f}(\xi,\eta)\, d\xi,
\end{equation}
where
\begin{equation}\label{dispersivesymbol}
    \omega(\xi,\eta)=\sign(\xi)\xi^2+\sign(\xi)\eta^2.
\end{equation}

We also consider the unitary group $\{U_1(t)\}$ associated with the linear equation $\partial_tv-\mathcal{H}_x\partial_y^2v=0$, which satisfies
\begin{equation}\label{Grouponly}
    U_1(t)f(x,y)=\frac{1}{(2\pi)^{2}}\sum_{\eta=-\infty}^{\infty}\int_{\mathbb{R}} e^{it\sign(\xi)\eta^2+ix\xi+iy\eta} \widehat{f}(\xi,\eta)\, d\xi.
\end{equation}

We will use the following approximation of the function $\langle x \rangle$ as in \cite{FonsecaLinaresPonce2013} (see also \cite{FonsecaLinaresPonce2012,FonsecaPonce2011}). For any $N\in \mathbb{Z}^{+}$ and $0< \theta \leq 1$, we define the truncated weights $\langle \cdot \rangle_{N}^{\theta} : \mathbb{R} \rightarrow \mathbb{R}$ according to
 \begin{equation}\label{trunweightdef}
\langle x \rangle_{N}^{\theta} =\left\{\begin{aligned} 
 & (1+ |x|^2)^{\frac{\theta}{2}}, \text{ if } |x|\leq N, \\
 &(2N)^{\theta}, \hspace{1,1cm}\text{if } |x|\geq 3N
 \end{aligned}\right.
 \end{equation}
in such a way that $\langle x \rangle_{N}$ is smooth and non-decreasing in $|x|$ with $|\partial_x\langle x \rangle_{N}| \leq 1$,  and there exist constants $c_{l}$ independent of $N$ such that $|\partial_x^l\langle x \rangle_{N}| \leq c\partial_x^l\langle x \rangle$, for each $l=2,3$. 

We require some continuity properties for the Hilbert transform operator in the weighted spaces determined by $\langle x \rangle_{N}^{\theta}$ 

\begin{prop}\label{a2condH}
For any $-\frac{1}{2}<\theta<\frac{1}{2}$ and $N\in \mathbb{Z}^{+}$, the Hilbert transform $\mathcal{H}_x$ is bounded in $L^2(\mathbb{R},\langle x \rangle^{2\theta}_N dx)$ with a constant depending on $\theta$ but independent of $N\in \mathbb{Z}^{+}$.
\end{prop}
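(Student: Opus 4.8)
The plan is to reduce the claim to a uniform Muckenhoupt $A_2$ bound. By the Hunt--Muckenhoupt--Wheeden theorem \cite{HuntMuckenhouptWheeden1973} (with the quantitative refinement of \cite{Petermichl2007}), the Hilbert transform is bounded on $L^2(w\,dx)$ with operator norm controlled by a function of the $A_2$ characteristic
\[
[w]_{A_2}=\sup_{I}\langle w\rangle_I\,\langle w^{-1}\rangle_I,\qquad \langle g\rangle_I:=\frac{1}{|I|}\int_I g\,dx,
\]
the supremum being taken over all bounded intervals $I\subset\mathbb{R}$. Hence it suffices to prove $\sup_{N\in\mathbb{Z}^{+}}[\langle\cdot\rangle_N^{2\theta}]_{A_2}<\infty$, with the bound depending only on $\theta$, since then the resulting norm estimate is automatically independent of $N$.

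First I would record, directly from \eqref{trunweightdef} and the monotonicity of $\langle x\rangle_N$, the comparability $\langle x\rangle_N\sim\min(\langle x\rangle,2N)$ with implicit constants independent of $N\geq1$: the two functions coincide on $\{|x|\leq N\}$ and on $\{|x|\geq 3N\}$, while on the transition band $N\leq|x|\leq 3N$ both lie in $[N,2N]$. Since the $A_2$ characteristic is stable under comparability (if $c_0 w_1\leq w_2\leq c_1 w_1$ then $[w_2]_{A_2}\leq (c_1/c_0)^2[w_1]_{A_2}$), it is enough to bound $[w_N]_{A_2}$ for $w_N:=(\min(\langle x\rangle,2N))^{2\theta}$.

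The core of the argument is an elementary truncation estimate. Write $v:=\langle\cdot\rangle^{2\theta}$ and $M:=(2N)^{2\theta}$; it is classical that $v\in A_2$ precisely when $-\tfrac12<\theta<\tfrac12$, because $v$ behaves like the power weight $|x|^{2\theta}$ at infinity, which is $A_2$ exactly for $-1<2\theta<1$. For $\theta>0$ one has $w_N=\min(v,M)$, so $w_N\leq v$, $w_N\leq M$, and $w_N^{-1}=\max(v^{-1},M^{-1})\leq v^{-1}+M^{-1}$. Averaging these pointwise bounds over an arbitrary interval $I$ gives
\[
\langle w_N\rangle_I\,\langle w_N^{-1}\rangle_I\leq \langle w_N\rangle_I\,\langle v^{-1}\rangle_I+M^{-1}\langle w_N\rangle_I\leq \langle v\rangle_I\,\langle v^{-1}\rangle_I+1\leq [v]_{A_2}+1,
\]
where the first step uses $\langle w_N^{-1}\rangle_I\leq \langle v^{-1}\rangle_I+M^{-1}$, and the second uses $\langle w_N\rangle_I\leq\langle v\rangle_I$ in the first term and $\langle w_N\rangle_I\leq M$ in the second. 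Taking the supremum over $I$ yields $[w_N]_{A_2}\leq [v]_{A_2}+1$, uniformly in $N$. The case $\theta<0$ reduces to this one via $[w]_{A_2}=[w^{-1}]_{A_2}$, since $w_N^{-1}=(\min(\langle x\rangle,2N))^{2|\theta|}$ is exactly the $\theta>0$ weight with exponent $|\theta|$; the case $\theta=0$ is trivial.

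I expect the only place where uniformity in $N$ could fail is precisely this truncation estimate: a naive interval-by-interval comparison breaks on intervals straddling the level $|x|\sim N$, where the weight transitions from $\langle x\rangle^{2\theta}$ to the constant $(2N)^{2\theta}$. The pointwise bound $w_N^{-1}\leq v^{-1}+M^{-1}$ is the device that sidesteps this difficulty, absorbing the transition into the additive constant $1$ rather than requiring a delicate case analysis. Combining the three steps gives $\sup_{N}[\langle\cdot\rangle_N^{2\theta}]_{A_2}\leq C(\theta)<\infty$, and Hunt--Muckenhoupt--Wheeden then delivers the desired $N$-independent bound for $\mathcal{H}_x$ on $L^2(\mathbb{R},\langle x\rangle_N^{2\theta}\,dx)$.
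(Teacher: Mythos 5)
Your proposal is correct and takes essentially the same approach as the paper: the paper's proof is a citation of \cite[Proposition 1]{FonsecaPonce2011}, which likewise establishes an $N$-uniform $A_2$ bound for the truncated weights and then applies the Hunt--Muckenhoupt--Wheeden theorem \cite{HuntMuckenhouptWheeden1973,Petermichl2007}. The only difference is that you make the uniform $A_2$ verification self-contained via the comparability $\langle x\rangle_N\sim\min(\langle x\rangle,2N)$ and the truncation bound $w_N^{-1}\le v^{-1}+M^{-1}$, a detail the paper delegates to the reference.
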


The deduction of the above proposition can be consulted in \cite[Proposition 1]{FonsecaPonce2011}. We remark that such a result is based on the continuity of the Hilbert transform in weighted $L^p$-spaces (see \cite{HuntMuckenhouptWheeden1973,Petermichl2007}).

%%%%%%%%%%%%%%%%%%%%%%%%%%%%%%%%%%%%%%%%%%%%%%%%%%%%%%%%%%%%%%%%%%%%%%%%%%%%%%%%%%%%%%%%%%%%%%%%%%%%%%%%%%%%%%%%%%%%%%%%%%%%%%%%%%%%%%%%%%%%%%%%%%%%%%%%%%%%%%%%%%%%%%%%%%%%%%%%%%%%

\subsection{Commutator and fractional derivative estimates}

We recall the following commutator estimate for the Hilbert transform.
\begin{prop}\label{CalderonComGU}  
Let $1<p<\infty$. Assume $l,m \in \mathbb{Z}^{+}\cup \{0\}$, $l+m\geq 1$ then
\begin{equation}\label{Comwellprel1}
    \|\partial_x^l[\mathcal{H}_x,g]\partial_x^{m}f\|_{L^p(\mathbb{R})} \lesssim_{p,l,m} \|\partial_x^{l+m} g\|_{L^{\infty}(\mathbb{R})}\|f\|_{L^p(\mathbb{R})}.
\end{equation}
\end{prop}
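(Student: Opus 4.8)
The plan is to reduce the two-sided derivative commutator to a purely one-sided one, and then to isolate the single genuinely singular building block that cannot be removed by algebra. The only structural fact I need beyond the $L^p$-boundedness of $\mathcal{H}_x$ is the commutation relation $\partial_x\mathcal{H}_x=\mathcal{H}_x\partial_x$, which is immediate from the Fourier symbol $-i\sign(\xi)$.

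First I would record an operator Leibniz identity. Writing $T_g:=[\mathcal{H}_x,g]$, a direct computation using $\partial_x\mathcal{H}_x=\mathcal{H}_x\partial_x$ gives $\partial_x T_g=T_{g'}+T_g\partial_x$. Iterating this identity yields, for every $l\geq 0$,
\[
\partial_x^l T_g=\sum_{j=0}^{l}\binom{l}{j}\,T_{g^{(j)}}\,\partial_x^{\,l-j},
\qquad\text{hence}\qquad
\partial_x^l[\mathcal{H}_x,g]\partial_x^m f=\sum_{j=0}^{l}\binom{l}{j}\,[\mathcal{H}_x,g^{(j)}]\,\partial_x^{\,l+m-j}f .
\]
Since $(g^{(j)})^{(l+m-j)}=g^{(l+m)}$, by the triangle inequality it suffices to bound each summand in $L^p$ by $\|g^{(l+m)}\|_{L^\infty}\|f\|_{L^p}$. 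This splits into two building blocks. When $l+m-j=0$ (forcing $m=0$, $j=l$) the term is the plain commutator $[\mathcal{H}_x,g^{(l)}]f$, and the boundedness of $\mathcal{H}_x$ gives at once $\|[\mathcal{H}_x,h]f\|_{L^p}\leq\|\mathcal{H}_x(hf)\|_{L^p}+\|h\,\mathcal{H}_xf\|_{L^p}\lesssim\|h\|_{L^\infty}\|f\|_{L^p}$ with $h=g^{(l)}$. When $l+m-j\geq 1$ the remaining task is the one-sided higher commutator estimate $\|[\mathcal{H}_x,h]\partial_x^{m'}f\|_{L^p}\lesssim\|h^{(m')}\|_{L^\infty}\|f\|_{L^p}$, applied with $h=g^{(j)}$ and $m'=l+m-j$.

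The main obstacle is precisely this one-sided estimate for $m'\geq 2$. For $m'=1$ it is Calderón's first commutator theorem. For $m'\geq 2$ the purely algebraic reduction is \emph{circular}: applying $[\mathcal{H}_x,h]\partial_x=\partial_x[\mathcal{H}_x,h]-[\mathcal{H}_x,h']$ produces the term $\partial_x[\mathcal{H}_x,h]\partial_x^{m'-1}$, which re-expands through the Leibniz identity above to contain $[\mathcal{H}_x,h]\partial_x^{m'}$ again, so the recursion does not lower $m'$ and genuine analytic input is unavoidable. I would obtain it by viewing $[\mathcal{H}_x,h]\partial_x^{m'}$ as a singular integral operator: from
\[
[\mathcal{H}_x,h]\partial_x^{m'}f(x)=\frac1\pi\,\textit{p.v.}\!\int_{\mathbb{R}}\frac{h(y)-h(x)}{x-y}\,\partial_y^{m'}f(y)\,dy,
\]
one integrates by parts and expresses the resulting kernel through divided differences of $h$; its most singular part is controlled by $\|h^{(m')}\|_{L^\infty}/|x-y|$ and satisfies the Hörmander regularity conditions. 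The estimate then follows from the Calderón--Zygmund theorem together with the higher Calderón-commutator bounds (Calderón, Coifman--McIntosh--Meyer), whose operator norms are governed by the top-order derivative $\|h^{(m')}\|_{L^\infty}$. Inserting the two building blocks into the binomial expansion gives the stated inequality for all admissible $l,m$ with $l+m\geq 1$, for every $1<p<\infty$.
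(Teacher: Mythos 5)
The paper offers no proof of this proposition: its ``proof'' is the citation to \cite{DawsonMcGahaganPonce2008} (Lemma 3.1), with \cite{Riano2021} for a fractional version. So your proposal must stand on its own. Its skeleton is sound: the identity $\partial_x T_g=T_{g'}+T_g\partial_x$ follows from $\partial_x\mathcal{H}_x=\mathcal{H}_x\partial_x$, the binomial iteration is a correct induction, and it legitimately reduces the two-sided estimate to the one-sided bound $\|[\mathcal{H}_x,h]\partial_x^{m'}f\|_{L^p}\lesssim \|h^{(m')}\|_{L^{\infty}}\|f\|_{L^p}$ with $h=g^{(j)}$, $m'=l+m-j$, together with the trivial zero-order commutator bound. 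Your identification of $m'=1$ with Calder\'on's first commutator theorem is right, and so is your observation that pure algebra cannot lower $m'$ (the recursion is circular), i.e., that genuine analytic input is needed exactly there.

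The gap is in how you supply that input for $m'\geq 2$. Two concrete problems. First, Calder\'on--Zygmund kernel estimates (size plus H\"ormander) never by themselves yield $L^p$ boundedness; the CZ theorem upgrades an a priori $L^2$ bound to $L^p$, so an $L^2$ (or $T1$) input is indispensable. Second, the theorems you name for that input do not cover the operator your computation produces. Integrating by parts $m'$ times and Taylor-expanding $h$ at $y$ to order $m'$, the kernel of $[\mathcal{H}_x,h]\partial_x^{m'}$ is, up to constants,
\begin{equation*}
-\frac{h^{(m')}(y)}{x-y}\;+\;m'!\,\frac{R_{m'}(x,y)}{(x-y)^{m'+1}},
\qquad R_{m'}(x,y)=h(x)-\sum_{k=0}^{m'-1}\frac{h^{(k)}(y)}{k!}(x-y)^k ,
\end{equation*}
where the first piece is just $\mathcal{H}_x(h^{(m')}f)$ and is harmless. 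The results of Calder\'on and Coifman--McIntosh--Meyer concern the commutators with kernel $(A(x)-A(y))^k/(x-y)^{k+1}$ under $A'\in L^{\infty}$, i.e., $k$-fold products of \emph{first-order} divided differences; the Taylor-remainder kernel above, under the hypothesis that only the \emph{top} derivative $h^{(m')}$ is bounded, is a genuinely different object, and no simple algebra converts one into the other. What your argument actually needs is the $L^p$ boundedness of singular integrals with Taylor-remainder kernels, with operator norm controlled by $\|h^{(m')}\|_{L^{\infty}}$ --- a theorem of J. Cohen and Cohen--Gosselin on multilinear singular integrals, alternatively provable by checking the $T1$ theorem for $R_{m'}(x,y)/(x-y)^{m'+1}$ (your size bound $\lesssim\|h^{(m')}\|_{L^{\infty}}|x-y|^{-1}$ and the corresponding H\"ormander bounds are correct and are what make the kernel admissible). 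With that replacement your proof closes; as written, the final invocation points at theorems that do not apply to your kernel.
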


\begin{proof}
We refer to \cite[Lemma 3.1]{DawsonMcGahaganPonce2008}. For a fractional version, see also \cite[Proposition 1.1]{Riano2021}.
\end{proof}
We also require the following commutator estimate for fractional derivatives, whose deduction can be consulted in \cite[Proposition 3.10]{Li2019} and \cite{DawsonMcGahaganPonce2008}.
\begin{prop}\label{propcomm2} 
For any $0\leq \beta<1$, $0<\gamma \leq 1-\beta$, $1<p<\infty$, we have
\begin{equation}
    \|D_x^{\beta}[D_x^{\gamma},f]D^{1-(\beta+\gamma)}g\|_{L^p(\mathbb{R})}\lesssim \|\partial_x f\|_{L^{\infty}(\mathbb{R})}\|g\|_{L^p(\mathbb{R})}.
\end{equation}
\end{prop}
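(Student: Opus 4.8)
The plan is to pass to the Fourier side and realize the operator as a bilinear multiplier acting on the pair $(\partial_x f, g)$, so that the desired bound becomes an $L^\infty \times L^p \to L^p$ estimate amenable to the Coifman--Meyer theorem. Writing $T(f,g) = D_x^\beta[D_x^\gamma, f]D_x^{1-(\beta+\gamma)}g$ and using $\widehat{[D_x^\gamma,f]h}(\xi) = \int_{\mathbb{R}}(|\xi|^\gamma - |\eta|^\gamma)\,\widehat f(\xi-\eta)\,\widehat h(\eta)\,d\eta$, then setting $h = D_x^{1-(\beta+\gamma)}g$ and applying $D_x^\beta$, one obtains
\[
\widehat{T(f,g)}(\xi) = \int_{\mathbb{R}} |\xi|^\beta\big(|\xi|^\gamma - |\eta|^\gamma\big)\,|\eta|^{1-(\beta+\gamma)}\,\widehat f(\xi-\eta)\,\widehat g(\eta)\,d\eta.
\]
To produce the factor $\partial_x f$ demanded by the right-hand side, I would substitute $\widehat f(\xi-\eta) = \big(i(\xi-\eta)\big)^{-1}\widehat{\partial_x f}(\xi-\eta)$, recasting $T$ as the bilinear multiplier operator with symbol
\[
\sigma(\xi,\eta) = \frac{|\xi|^\beta\big(|\xi|^\gamma - |\eta|^\gamma\big)\,|\eta|^{1-(\beta+\gamma)}}{i(\xi-\eta)}
\]
acting on $(\partial_x f, g)$.

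The key structural observation is that $\sigma$ is homogeneous of degree $0$: the numerator has homogeneity $\beta + \gamma + (1-(\beta+\gamma)) = 1$, matching the denominator. The crucial cancellation is carried by the divided difference $\dfrac{|\xi|^\gamma - |\eta|^\gamma}{\xi-\eta}$, which away from the origin behaves like a smooth symbol of order $\gamma - 1$ (for $\xi,\eta$ of the same sign it is comparable to $\gamma|\xi|^{\gamma-1}$ near the diagonal and carries no genuine singularity there). Hence the total order of $T$ is $\beta + (\gamma-1) + (1-(\beta+\gamma)) = 0$, exactly as required for an $L^p$ estimate. I would then verify the bounds $|\partial_\xi^a \partial_\eta^b \sigma(\xi,\eta)| \lesssim (|\xi|+|\eta|)^{-(a+b)}$ for finitely many $a,b$ and conclude by the bilinear multiplier theorem that $\|T(f,g)\|_{L^p} \lesssim \|\partial_x f\|_{L^\infty}\|g\|_{L^p}$. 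Since $\sigma$ is only piecewise smooth, I would carry out this verification after a B\'ony (paraproduct) decomposition into the regions $|\xi| \ll |\eta|$, $|\xi| \sim |\eta|$, and $|\xi| \gg |\eta|$, where on each dyadic block the factors $|\xi|^\beta$ and $|\eta|^{1-(\beta+\gamma)}$ become smooth and the symbol estimates are routine.

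The main obstacle is precisely the regularity of $\sigma$. The homogeneous factors $|\xi|^\beta$ and $|\eta|^{1-(\beta+\gamma)}$ fail to be smooth at the origin when the exponents are non-integer; here the hypotheses $\beta \ge 0$ and $\gamma \le 1-\beta$ guarantee both exponents are nonnegative, so these factors are at least bounded and continuous, and the dyadic decomposition separates them from the origin on the off-diagonal blocks. One must also check that the diagonal $\xi = \eta$ — where the divided difference is defined only as a limit — produces no singularity; this is handled using $\frac{|\xi|^\gamma-|\eta|^\gamma}{\xi-\eta}\sim \gamma|\xi|^{\gamma-1}$ together with the frequency-support separation of the paraproduct pieces. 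A secondary technical point is the endpoint $p_1=\infty$ in the bilinear estimate, which I would treat either via the $L^\infty$-endpoint form of the Coifman--Meyer theorem or, alternatively, through the more elementary route of the cited references: start from the pointwise representation $[D_x^\gamma,f]h(x) = c_\gamma\int_{\mathbb{R}} \frac{\big(f(x)-f(x-z)\big)h(x-z)}{|z|^{1+\gamma}}\,dz$, bound $|f(x)-f(x-z)| \le \|\partial_x f\|_{L^\infty}|z|$ by the mean value theorem to expose the order-$(\gamma-1)$ smoothing, and control the resulting singular integral by Calder\'on--Zygmund theory, recovering the full range of $(\beta,\gamma)$ by interpolation.
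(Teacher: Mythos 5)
The paper offers no proof of this proposition to compare against: it is quoted from the literature (Li \cite{Li2019}, Proposition 3.10, and \cite{DawsonMcGahaganPonce2008}), so the only question is whether your outline would actually close. As written, it would not. Your Fourier-side setup, the symbol $\sigma$, its degree-$0$ homogeneity, and the divided-difference cancellation are all correct, and they do dispose of the low-high block (where $\partial_x f$ carries the low frequency). The gap is in the claim that after the Bony decomposition ``the symbol estimates are routine'' on every block. A paraproduct decomposition separates the two \emph{input} frequencies from each other, not from the origin. Write $\xi_1=\xi-\eta$ for the frequency of $\partial_x f$. On the high-low block $|\eta|\ll|\xi_1|$ the variable $\eta$ sweeps a full neighborhood of $0$, and the factors $|\eta|^{\gamma}$ and $|\eta|^{1-(\beta+\gamma)}$ are singular along the line $\eta=0$ lying \emph{inside} that block: differentiating them produces $|\eta|^{\gamma-1}$, resp.\ $|\eta|^{-(\beta+\gamma)}$, so $|\partial_\eta\sigma|$ is of size $|\xi_1|^{\beta-1}|\eta|^{-\beta}+|\xi_1|^{\beta+\gamma-1}|\eta|^{-(\beta+\gamma)}$ near $\eta=0$, which is not $O\big((|\xi_1|+|\eta|)^{-1}\big)$; at the endpoint $\gamma=1-\beta$, $\beta=0$ the symbol is merely Lipschitz across $\eta=0$. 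The Coifman--Meyer theorem tolerates a singularity only at the single point $(\xi_1,\eta)=(0,0)$, not along lines, so it does not apply there; the same problem occurs on the resonant block, where the output frequency $\xi$ vanishes inside the block and $|\xi|^{\beta}$, $|\xi|^{\gamma}$ are singular. A decisive sanity check: the admissible endpoint $(\beta,\gamma)=(0,1)$ of the proposition is, modulo the trivial term $\mathcal{H}_x(\partial_xf\,g)$, exactly the Calder\'on commutator estimate of Proposition \ref{CalderonComGU} ($l=0$, $m=1$), whose bilinear symbol $\eta\,(\sign\xi-\sign\eta)/(\xi-\eta)$ is not even continuous across $\xi=0$ and $\eta=0$; no multiplier theorem of Coifman--Meyer type can yield it. The correct handling of these blocks (as in Li's proof) keeps $D_x^{\beta}$, $D_x^{\gamma}$, $D_x^{1-(\beta+\gamma)}$ as operators acting on dyadic pieces, uses Bernstein on the low-frequency factor, and sums the dyadic pieces with Littlewood--Paley square functions and the Fefferman--Stein vector-valued maximal inequality --- the triangle inequality only gives a bound uniform in the dyadic parameter, so the sum diverges without these tools. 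That summation argument, absent from your outline, is where the proof actually lives.

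Your fallback ``elementary route'' also fails as stated. Inserting $|f(x)-f(x-z)|\le\|\partial_xf\|_{L^\infty}|z|$ into the kernel gives $\big|[D_x^{\gamma},f]h(x)\big|\lesssim\|\partial_xf\|_{L^\infty}\int_{\mathbb{R}}|h(x-z)|\,|z|^{-\gamma}\,dz$, a fractional integral that is not finite for general $h\in L^p$ (it fails at $|z|\to\infty$) and is in any case not $L^p$-bounded; worse, taking absolute values destroys precisely the cancellation you must retain to absorb the flanking operators $D_x^{\beta}$ and $D_x^{1-(\beta+\gamma)}$, since pointwise bounds do not commute with $D_x^{\beta}$. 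Finally, ``recovering the full range of $(\beta,\gamma)$ by interpolation'' is unsupported: no analytic family of operators is exhibited, and the endpoint one would interpolate from is the Calder\'on commutator theorem itself, which your elementary argument does not prove. In short, the proposal captures the standard strategy in broad strokes, but every step labeled routine is exactly the nontrivial content of the cited proofs.
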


In the context of studies of persistence of fractional weights, the following characterization of fractional Sobolev space  $W^{b,p}(\mathbb{R})$ is frequently used, see \cite{FonsecaPonce2011,FonsecaLinaresPonce2012,Iorio1986,Iorio1991,CunhaPastor2021,CunhaPastor2014,Riano2020,Riano2021}.  Given $b\in(0,1)$, $\frac{2}{(1+2b)}<p<\infty$, we have that $f\in W^{b,p}(\mathbb{R})$ if and only if $f\in L^p(\mathbb{R})$, and 
\begin{equation}\label{fracderiv1}
\mathcal{D}^bf(x)=\left(\int\limits_{\mathbb{R}}\frac{|f(x)-f(z)|^2}{|x-z|^{1+2b}}\, dz\right)^{\frac{1}{2}}\in L^{p}(\mathbb{R}),    
\end{equation}
with 
\begin{equation}\label{equi1}
\left\|J^b f\right\|_{L^p}=\left\|(1-\partial_x^2)^{\frac{b}{2}} f\right\|_{L^p} \sim \big(\left\|f\right\|_{L^p}+\left\|\mathcal{D}^b f\right\|_{L^p}\big) \sim \big( \left\|f\right\|_{L^p}+\left\|D^b f\right\|_{L^p}\big).
\end{equation} 
Moreover, if $f\in H^b(\mathbb{R})$,
\begin{equation}\label{equi2}
 \|\mathcal{D}^b(f)\|_{L^2}\sim \|D^b f\|_{L^2}.    
\end{equation}
For proof of the previous characterization, we refer to \cite{Stein1961}. As a consequence of the definition of the fractional derivative in \eqref{fracderiv1}, we have    
\begin{equation}\label{prelimneq0}
\mathcal{D}^b(fg)(x) \lesssim \mathcal{D}^b(f)(x)|g(x)|+\|f\|_{L^{\infty}}\mathcal{D}^b(g)(x).
\end{equation}
For all $b\in (0,1)$, it follows
\begin{equation} \label{prelimneq}  
\left\|\mathcal{D}^b(fg)\right\|_{L^2} \lesssim \left\|f\mathcal{D}^b g\right\|_{L^2}+\left\|g\mathcal{D}^bf \right\|_{L^2},
\end{equation}
and it holds
\begin{equation} \label{prelimneq1} 
\left\|\mathcal{D}^{b} h\right\|_{L^{\infty}} \lesssim \big(\left\|h\right\|_{L^{\infty}}+\left\|\partial_x h\right\|_{L^{\infty}} \big).
\end{equation} 

\begin{rem}
We will use the fractional derivative introduced above to exchange fractional polynomial decay in space into fractional differentiation in the frequency domain. We will use the notation $\mathcal{D}_{\xi}^b$ to emphasize that we only differentiate in the $\xi$-direction. 
\end{rem}

\begin{lemma}\label{derivexp}   
Let $b\in (0,1)$, $t\in \mathbb{R}$, then
\begin{equation}\label{steinderiv1} 
\begin{aligned}
\mathcal{D}^{b}_{\xi}\big(e^{i t \xi|\xi| t}\big)(x) \lesssim \langle t \rangle^{b}\langle x \rangle^{b}.
\end{aligned}
\end{equation}
Moreover, let $\eta\in \mathbb{Z},$ $t\in \mathbb{R}$, then 
\begin{equation}\label{steinderiv2}
  \mathcal{D}^b_{\xi} (e^{i t \sign(\xi)\eta^2})(x)\lesssim \min\{1, |t|\eta^2\}|x|^{-b}, \hspace{0.5cm} x\neq 0. 
\end{equation}
\end{lemma}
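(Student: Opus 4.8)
The plan is to treat the two estimates separately, beginning with \eqref{steinderiv2}, which reduces to an exact computation, and then handling \eqref{steinderiv1} by a scaling reduction followed by a splitting of the defining integral \eqref{fracderiv1}. For \eqref{steinderiv2}, I would exploit that the map $\xi\mapsto e^{it\sign(\xi)\eta^2}$ takes only the two values $e^{\pm it\eta^2}$ and is constant on each half-line $\{\xi>0\}$ and $\{\xi<0\}$. Writing $c=t\eta^2$, the increment $|e^{it\sign(x)\eta^2}-e^{it\sign(z)\eta^2}|$ vanishes when $x,z$ have the same sign and equals $|e^{ic}-e^{-ic}|=2|\sin(c)|$ when they have opposite signs. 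Taking $x>0$ (the case $x<0$ is symmetric, and $x=0$ is excluded), only $z<0$ contributes to the integral, so
\begin{equation*}
\big(\mathcal{D}^b_\xi(e^{it\sign(\xi)\eta^2})(x)\big)^2=4\sin^2(c)\int_{-\infty}^0\frac{dz}{(x-z)^{1+2b}}=\frac{2}{b}\,\sin^2(t\eta^2)\,|x|^{-2b},
\end{equation*}
and the estimate follows from $|\sin(t\eta^2)|\leq\min\{1,|t|\eta^2\}$.

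For \eqref{steinderiv1}, I would first record the scaling identity $\mathcal{D}^b(f(\lambda\,\cdot))(x)=\lambda^b(\mathcal{D}^bf)(\lambda x)$ for $\lambda>0$, which is obtained from the change of variables $z\mapsto z/\lambda$ in \eqref{fracderiv1}. Writing the phase as $\psi(\xi)=t\sign(\xi)\xi^2$ and factoring $e^{i\psi(\xi)}=g(|t|^{1/2}\xi)$ with $g(\zeta)=e^{i\sign(t)\zeta|\zeta|}$ (which no longer carries $t$), the identity gives
\begin{equation*}
\mathcal{D}^b_\xi(e^{i\psi(\xi)})(x)=|t|^{b/2}\,(\mathcal{D}^bg)(|t|^{1/2}x).
\end{equation*}
Since $\langle|t|^{1/2}x\rangle^b\lesssim\langle t\rangle^{b/2}\langle x\rangle^b$ (from $1+|t|x^2\leq(1+|t|)(1+x^2)$ and $1+|t|\lesssim\langle t\rangle$) and $|t|^{b/2}\leq\langle t\rangle^{b/2}$, the claim reduces to the $t$-free bound $\mathcal{D}^bg(y)\lesssim\langle y\rangle^b$; the case $t=0$ is trivial since $\psi\equiv0$.

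To establish this last bound, I would estimate the increment of $g$ by $|g(y)-g(z)|\leq\min\{2,\,|y|y|-z|z||\}$ and then use $|y|y|-z|z||\leq 2\max(|y|,|z|)|y-z|$, which comes from the mean value theorem applied to $u\mapsto u|u|$ (whose derivative is $2|u|$). After the substitution $w=z-y$ this yields
\begin{equation*}
(\mathcal{D}^bg(y))^2\lesssim\int_{\mathbb{R}}\frac{\min\{1,\,(|y|+|w|)^2w^2\}}{|w|^{1+2b}}\,dw,
\end{equation*}
which I would split into the regions $|w|\leq|y|$ and $|w|>|y|$, and within each region split further at the value of $|w|$ where the two terms in the minimum balance. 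Each of the resulting elementary integrals converges because $0<b<1$ (the singularity at $w=0$ is integrable since $1-2b>-1$, and the tail is controlled by $-1-2b<-1$), and the dominant contribution is of size $|y|^{2b}$, giving $(\mathcal{D}^bg(y))^2\lesssim\langle y\rangle^{2b}$.

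The main obstacle is the bookkeeping in this last splitting: the factor $\max(|y|,|z|)$ couples both variables, so one must track how the crossover point between the trivial bound $2$ and the mean-value bound depends on $|y|$ in each region, and check that both the near-diagonal and the far contributions produce the same sharp power $\langle y\rangle^{b}$. The scaling reduction is precisely what makes this tractable, since it removes the parameter $t$ and leaves a single homogeneous computation, while \eqref{steinderiv2} is elementary because the relevant phase is piecewise constant in $\xi$.
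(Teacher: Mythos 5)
Your proposal is correct. For \eqref{steinderiv2} your argument coincides with the paper's: the paper also observes that only opposite-sign pairs contribute, changes variables $w=x-z$, and bounds the tail integral $\int_{|w|>|x|}|w|^{-1-2b}\,dw\lesssim |x|^{-2b}$; the only cosmetic difference is that the paper bounds the increment by $\min\{1,|t|\eta^2\}$ via the mean value theorem, while you compute it exactly as $2|\sin(t\eta^2)|$ before applying $|\sin c|\leq\min\{1,|c|\}$, which is marginally sharper but equivalent for the purpose of the lemma.

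For \eqref{steinderiv1} the routes genuinely diverge: the paper gives no proof at all, simply citing \cite[Proposition 2]{FonsecaPonce2011} (see also \cite{NahasPonce2009}), whereas you supply a complete, self-contained argument. Your scaling reduction is valid --- the identity $\mathcal{D}^b(f(\lambda\,\cdot))(x)=\lambda^b(\mathcal{D}^bf)(\lambda x)$ follows from the change of variables in \eqref{fracderiv1}, the factorization $e^{it\xi|\xi|}=g(|t|^{1/2}\xi)$ with $g(\zeta)=e^{i\sign(t)\zeta|\zeta|}$ is exact, and the elementary inequalities $1+|t|x^2\leq(1+|t|)(1+x^2)$ and $|t|^{b/2}\leq\langle t\rangle^{b/2}$ correctly recombine the powers of $t$ into $\langle t\rangle^b\langle x\rangle^b$. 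The remaining $t$-free bound $\mathcal{D}^bg(y)\lesssim\langle y\rangle^b$ also checks out: with $|g(y)-g(y+w)|\lesssim\min\{1,(|y|+|w|)|w|\}$, the region $|w|\leq|y|$ contributes $\lesssim y^2\int_{|w|\leq(2|y|)^{-1}}|w|^{1-2b}\,dw+\int_{(2|y|)^{-1}<|w|\leq|y|}|w|^{-1-2b}\,dw\lesssim|y|^{2b}$ for $|y|$ large (and $O(1)$ for $|y|$ small), while the region $|w|>|y|$ contributes $O(1)$ since $\min\{1,w^4\}|w|^{-1-2b}$ is integrable for $0<b<1$; both fit under $\langle y\rangle^{2b}$. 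What your approach buys is independence from the cited reference, making the lemma fully verifiable within the paper; what the paper's citation buys is brevity, and the external result of Fonseca--Ponce is stated in a form ($\mathcal{D}^b(e^{it\xi|\xi|})(x)\lesssim|t|^{b/2}+|t|^b|x|^b$) that keeps the two powers of $t$ separate, which can be slightly more precise for small $|t|$, though the paper only ever uses the coarser $\langle t\rangle^b\langle x\rangle^b$ form that you prove.
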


\begin{proof}
The inequality \eqref{steinderiv1} was deduced in \cite[Proposition 2]{FonsecaPonce2011} (see also \cite{NahasPonce2009}). Next, we combine the definition of the fractional derivative in \eqref{fracderiv1}, the mean value theorem, and the change of variable $w=x-z$ to deduce
\begin{equation*}
 \begin{aligned}
\Big( \mathcal{D}^b_{\xi} (e^{i t \sign(\xi)\eta^2})(x)\Big)^2 =&\int_{\sign(x)\neq \sign(y)} \frac{|e^{i t \sign(x) \eta^2}-e^{it\sign(z) \eta^2}|^2}{|x-z|^{1+2b}}\, dz  \\
 \lesssim & \min\{1, (t\eta^2)^2\}\int_{|w|>|x|}\frac{1}{|w|^{1+2b}}\, dw\\
 \lesssim & \min\{1, (t\eta^2)^2\}\frac{1}{|x|^{2b}}.
 \end{aligned}   
\end{equation*}
Taking the square root of the above inequality yields \eqref{steinderiv2}.
\end{proof}

The same arguments in the deduction of \eqref{steinderiv2} yield:

\begin{corol}\label{signcoro}
Let $b\in (0,1)$, then
\begin{equation*}
\begin{aligned}
\mathcal{D}^{b}_{\xi}\big(\sign(\xi)\big)(x) \lesssim |x|^{-b}, \hspace{0.5cm} x\neq 0.
\end{aligned}
\end{equation*}
\end{corol}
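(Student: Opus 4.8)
The plan is to substitute $f(\xi)=\sign(\xi)$ directly into the definition \eqref{fracderiv1} of the fractional derivative, now taken in the $\xi$-variable, and then replay the computation already carried out for \eqref{steinderiv2}. Writing
\[
\big(\mathcal{D}^{b}_{\xi}(\sign(\xi))(x)\big)^2=\int_{\mathbb{R}}\frac{|\sign(x)-\sign(z)|^2}{|x-z|^{1+2b}}\, dz,
\]
the first step is to observe that the numerator $|\sign(x)-\sign(z)|^2$ equals $0$ whenever $x$ and $z$ share the same sign and equals the constant $4$ whenever they have opposite signs. Hence the integral localizes to the half-line $\{z:\sign(z)\neq\sign(x)\}$, which is exactly the region that appeared in the derivation of \eqref{steinderiv2}. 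The only difference is that the oscillatory factor $|e^{it\sign(x)\eta^2}-e^{it\sign(z)\eta^2}|^2$ occurring there, controlled via the mean value theorem by $\min\{1,(t\eta^2)^2\}$, is here replaced by the absolute constant $4$, so that no $t$- or $\eta$-dependent prefactor survives.

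Next I would perform the same change of variables $w=x-z$ used before. On the region where the signs differ one has $|x-z|=|x|+|z|>|x|$, so the substitution bounds the integral by $\int_{|w|>|x|}|w|^{-1-2b}\, dw$, which evaluates to a constant multiple of $|x|^{-2b}$, the implicit constant depending only on $b$. Taking square roots then gives $\mathcal{D}^{b}_{\xi}(\sign(\xi))(x)\lesssim |x|^{-b}$ for $x\neq 0$, as claimed.

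Since this is a verbatim specialization of the argument already used for \eqref{steinderiv2}, I do not anticipate a genuine obstacle. The one point meriting a line of care is verifying that the jump of $\sign$ yields a bounded (indeed constant) numerator, rather than a quantity that must first be estimated; this is what guarantees both the finiteness of the integral over $\{z:\sign(z)\neq\sign(x)\}$ and its correct homogeneity $|x|^{-2b}$, and it is immediate from the definition of $\sign$.
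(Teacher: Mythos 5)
Your proposal is correct and coincides with the paper's own argument: the paper proves Corollary \ref{signcoro} precisely by noting that "the same arguments in the deduction of \eqref{steinderiv2}" apply, i.e., the integral localizes to $\{z:\sign(z)\neq\sign(x)\}$, the oscillatory numerator is replaced by the constant $4$, and the change of variables $w=x-z$ gives the bound $\int_{|w|>|x|}|w|^{-1-2b}\,dw\lesssim |x|^{-2b}$. Your observation that the jump of $\sign$ contributes a constant numerator (removing the $t,\eta$-dependent prefactor) is exactly the intended specialization.
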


Next, to control the negative weight $|x|^{-b}$ in \eqref{steinderiv2}, we require the following proposition.
\begin{prop}\label{decaywight}
 Let $\frac{1}{2}<\ell\leq 1$ and $f\in H^{\ell}(\mathbb{R})$ such that $f(0)=0$, then
 \begin{equation*}
 \begin{aligned}
 \||\cdot|^{-\ell}f\|_{L^2(\mathbb{R})}\lesssim \|f\|_{H^{\ell}(\mathbb{R})}.    
 \end{aligned}    
 \end{equation*}
\end{prop}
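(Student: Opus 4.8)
The plan is to prove the inequality as a fractional Hardy inequality, exploiting the vanishing condition $f(0)=0$, which is meaningful because $\ell>\tfrac12$ forces $f\in H^\ell(\mathbb{R})\hookrightarrow C(\mathbb{R})$. The starting point is the Gagliardo characterization recorded in \eqref{fracderiv1}--\eqref{equi2}, which gives $\|\mathcal{D}^\ell f\|_{L^2}^2=c\iint_{\mathbb{R}^2}\frac{|f(x)-f(z)|^2}{|x-z|^{1+2\ell}}\,dx\,dz\lesssim\|f\|_{H^\ell}^2$. I split $\mathbb{R}=\{|x|\geq1\}\cup\{0<|x|<1\}$. On $\{|x|\geq1\}$ one has $|x|^{-2\ell}\leq1$, so that region contributes at most $\|f\|_{L^2}^2\leq\|f\|_{H^\ell}^2$ and requires no further work.

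The core is the region $\{0<|x|<1\}$, where I replace $f(x)$ by $f(x)-f(0)$ and control it by telescoping down to the origin. Fixing $x>0$ (the case $x<0$ is symmetric), set $\delta_j=2^{-j}x$ and let $\bar f_j$ denote the average of $f$ over the interval $I_j=(\delta_{j+1},\delta_j)$ of length $\sim\delta_j$. Continuity and $f(0)=0$ give $\bar f_j\to0$, whence
$$
f(x)-f(0)=(f(x)-\bar f_0)+\sum_{j\geq0}(\bar f_j-\bar f_{j+1}).
$$
Each increment is estimated by a localized Gagliardo integral: since all points involved lie at mutual distance $\lesssim\delta_j$, Cauchy--Schwarz yields $|\bar f_j-\bar f_{j+1}|^2\lesssim\delta_j^{2\ell-1}G_j$ with $G_j=\iint_{I_j\times I_{j+1}}\frac{|f(u)-f(v)|^2}{|u-v|^{1+2\ell}}\,du\,dv$, and the analogous bound holds for $|f(x)-\bar f_0|^2$.

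Next I would apply Cauchy--Schwarz to the telescoping series, splitting $\delta_j^{\ell-1/2}=\delta_j^{(\ell-1/2)/2}\cdot\delta_j^{(\ell-1/2)/2}$, to obtain
$$
\frac{|f(x)-f(0)|^2}{|x|^{2\ell}}\lesssim\frac{1}{x}\sum_{j\geq0}2^{-j(\ell-1/2)}G_j(x)+\frac{|f(x)-\bar f_0|^2}{|x|^{2\ell}}.
$$
The geometric factor $\sum_j 2^{-j(\ell-1/2)}$ converges \emph{precisely because} $\ell>\tfrac12$; this is exactly where the hypothesis on $\ell$ enters. Integrating in $x\in(0,1)$ and interchanging the order of integration, each fixed pair $(u,v)$ contributes only for a controlled range of $x$ with $x\sim 2^j u$, so the $x^{-1}$ weight integrates to $O(1)$ and, summed against $2^{-j(\ell-1/2)}$, reproduces the full double integral $\iint\frac{|f(u)-f(v)|^2}{|u-v|^{1+2\ell}}\lesssim\|\mathcal{D}^\ell f\|_{L^2}^2$. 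The term with $f(x)-\bar f_0$ is handled directly by the same localized bound without summation.

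I expect the main obstacle to be the bookkeeping in this last step: organizing the double sum after integration in $x$, verifying via Fubini that each Gagliardo pair $(u,v)$ is counted boundedly in $j$ and that the $x^{-1}$ singularity is absorbed, all while keeping the geometric series summable. No cancellation is available, so the whole argument rests on the quantitative gain $\delta_j^{2\ell-1}$ coming from the fractional seminorm together with $\ell>\tfrac12$; the endpoint $\ell=\tfrac12$ is genuinely excluded, consistent with the failure of the inequality there. Combining the two regions yields $\||\cdot|^{-\ell}f\|_{L^2}\lesssim\|f\|_{H^\ell}$.
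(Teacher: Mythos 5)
Your route is genuinely different from the paper's: the paper offers no argument for this proposition at all, simply citing \cite[Proposition 3.1]{Yafaev1999}, whereas you give a self-contained proof of the fractional Hardy inequality by dyadic telescoping toward the origin. For $\frac{1}{2}<\ell<1$ your argument is sound: the splitting into $\{|x|\geq 1\}$ and $\{0<|x|<1\}$, the bound $|\bar f_j-\bar f_{j+1}|^2\lesssim \delta_j^{2\ell-1}G_j$ from Jensen/Cauchy--Schwarz on the averages, the Cauchy--Schwarz on the telescoping series producing the factor $x^{2\ell-1}$ (hence the $x^{-1}$ weight), and the Fubini step in which each pair $(u,v)$ is seen only for $x$ in a dyadic window $(2^ju,2^{j+1}u)$, so that $\int x^{-1}\,dx\leq \log 2$ uniformly in $j$, all go through; the geometric series $\sum_j 2^{-j(\ell-1/2)}$ converges exactly because $\ell>\frac{1}{2}$, and continuity of $f$ (Sobolev embedding, $\ell>\frac12$) justifies both $f(0)$ being defined and $\bar f_j\to 0$. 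Compared with the citation, your proof has the merit of making transparent where $f(0)=0$ and $\ell>\frac12$ enter.

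There is, however, a concrete gap: as written, your proof does not cover the endpoint $\ell=1$, which the statement includes. Your starting point is the characterization \eqref{fracderiv1}--\eqref{equi2}, which the paper (and the underlying result of Stein) states only for $b\in(0,1)$; at $b=1$ it is false, since for any non-constant $f\in H^1(\mathbb{R})$ the double integral $\iint |f(x)-f(z)|^2|x-z|^{-3}\,dx\,dz$ diverges (near the diagonal the integrand behaves like $|f'(x)|^2|x-z|^{-1}$), so the inequality $\|\mathcal{D}^{1}f\|_{L^2}\lesssim \|f\|_{H^1}$ that your argument rests on has no content there. The repair is short but must be stated: treat $\ell=1$ separately via the classical Hardy inequality, writing $f(x)=\int_0^x f'(t)\,dt$ (valid since $f\in H^1$ and $f(0)=0$) and using $\int_0^{\infty} x^{-2}|f(x)|^2\,dx\leq 4\int_0^{\infty}|f'|^2\,dx$ together with the symmetric estimate on the negative half-line, the region $|x|\geq 1$ being handled as before. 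With that additional case your proof is complete; without it, the proposition is only established for $\frac12<\ell<1$.
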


\begin{proof}
We refer to \cite[Proposition 3.1]{Yafaev1999}. 
\end{proof}

We now present some interpolation results.
\begin{prop}\label{interpo} For any $a,b>0$, $\gamma\in (0,1)$,
\begin{equation}\label{interpineq1}
\|\langle x \rangle^{\gamma a}\big(J^{(1-\gamma)b}f\big)\|_{L^2(\mathbb{R})}\lesssim \|J^{b}f\|^{1-\gamma}_{L^2(\mathbb{R})}\|\langle x \rangle^{a} f\|_{L^2(\mathbb{R})}^{\gamma},
\end{equation}
\begin{equation}\label{interpineq2}
\|J^{\gamma a}\big(\langle x \rangle^{(1-\gamma)b}f\big)\|_{L^2(\mathbb{R})}\lesssim \|\langle x \rangle^{b}f\|^{1-\gamma}_{L^2(\mathbb{R})}\|J^{a} f\|_{L^2(\mathbb{R})}^{\gamma}.
\end{equation}
\end{prop}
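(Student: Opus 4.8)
The plan is to recognize that the two estimates \eqref{interpineq1} and \eqref{interpineq2} are equivalent under the Fourier transform, so that it suffices to establish one of them, and to obtain that one by Stein's analytic interpolation.

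First I would record the Fourier duality. Since $\mathcal{F}$ is an $L^2$-isometry (up to a constant) and intertwines multiplication by $\langle x\rangle^s$ with the Bessel operator $J_\xi^s$ acting in the frequency variable, and intertwines $J_x^s$ with multiplication by $\langle\xi\rangle^s$, applying $\mathcal{F}$ to \eqref{interpineq2} written for $f$ converts it, term by term, into \eqref{interpineq1} written for $\widehat{f}$. As $f\mapsto\widehat{f}$ is a bijection on the relevant spaces, it is enough to prove \eqref{interpineq2}.

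To prove \eqref{interpineq2} I would use the three-lines theorem. For $f$ in a dense class (say Schwartz functions), I consider on the strip $0\le \mathrm{Re}\,z\le 1$ the analytic family $T_z f = J^{za}\big(\langle x\rangle^{(1-z)b} f\big)$, regularized by a factor $e^{\delta z^2}$ to guarantee decay as $|\mathrm{Im}\,z|\to\infty$ (to be removed at the end by letting $\delta\to 0$). At $z=\gamma$ this is exactly the left-hand side of \eqref{interpineq2}. On the line $\mathrm{Re}\,z=0$, writing $z=is$, the operators $J^{isa}$ (a Fourier multiplier of modulus one) and $\langle x\rangle^{-isb}$ (multiplication by a unimodular function) are isometries on $L^2$, whence $\|T_{is}f\|_{L^2}=\|\langle x\rangle^b f\|_{L^2}$ with no growth in $s$. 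On the line $\mathrm{Re}\,z=1$, writing $z=1+is$, one has $\|T_{1+is}f\|_{L^2}=\|J^a(\langle x\rangle^{-isb}f)\|_{L^2}$, and I would split
\[
J^a(\langle x\rangle^{-isb}f)=\langle x\rangle^{-isb}J^a f+[J^a,\langle x\rangle^{-isb}]f,
\]
so that the first term contributes exactly $\|J^a f\|_{L^2}$ and only the commutator remains to be controlled.

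The main obstacle is precisely this commutator on the boundary line $\mathrm{Re}\,z=1$: I must show that $\|[J^a,\langle x\rangle^{-isb}]f\|_{L^2}$ is bounded by $\|J^a f\|_{L^2}$ (or $\|\langle x\rangle^b f\|_{L^2}$) times a factor of \emph{admissible growth} in $s$. Since $\langle x\rangle^{-isb}=e^{-isb\log\langle x\rangle}$ satisfies $\|\partial_x\langle x\rangle^{-isb}\|_{L^\infty}\lesssim |s|\,b$, with analogous polynomial bounds for higher derivatives, commutator estimates of Calderón type such as Propositions \ref{CalderonComGU} and \ref{propcomm2} yield a bound growing only polynomially in $|s|$ (linearly when $0<a\le 1$, with an iteration for $a>1$). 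Polynomial growth is admissible for Stein's theorem, so interpolating the boundary bounds $M_0=\|\langle x\rangle^b f\|_{L^2}$ and $M_1=\|J^a f\|_{L^2}$ at $z=\gamma$ gives $\|T_\gamma f\|_{L^2}\lesssim M_0^{1-\gamma}M_1^{\gamma}$, which is \eqref{interpineq2}. Finally I would remove the regularization, extend from the dense class by density, and deduce \eqref{interpineq1} from the Fourier duality recorded above.
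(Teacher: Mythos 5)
Your proof is correct and is essentially the approach behind the paper's proposition: the paper defers to \cite[Lemma 4]{NahasPonce2009}, whose proof is precisely this three-lines/analytic-family argument, and the paper itself carries out the identical scheme for the cylinder analogue in Lemma \ref{InterpLemma} (unimodular boundary operators on $\mathrm{Re}\,z=0$, a Gaussian factor absorbing polynomial growth in $\mathrm{Im}\,z$, and pairing against a test function to apply Hadamard's theorem). One caveat: on the line $\mathrm{Re}\,z=1$ the bound $\|J^{a}(\langle x\rangle^{-isb}f)\|_{L^2}\lesssim P(|s|)\,\|J^{a}f\|_{L^2}$ should be obtained, as in the proof of Lemma \ref{InterpLemma}, from \eqref{equi1} together with the pointwise Leibniz properties \eqref{prelimneq0}--\eqref{prelimneq1}, rather than from Propositions \ref{CalderonComGU} or \ref{propcomm2}; the former concerns commutators with $\mathcal{H}_x$, and the latter (taking $\beta=0$, $\gamma=a$) controls $\|[D_x^{a},\langle x\rangle^{-isb}]w\|_{L^2}$ only by $\|D_x^{a-1}w\|_{L^2}$, a negative-order quantity at low frequencies that is not dominated by $\|J^{a}w\|_{L^2}$, so those two propositions do not directly deliver the commutator bound you assert.
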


\begin{proof}
We refer to \cite[Lemma 4]{NahasPonce2009}, see also \cite[Lemma 2.7]{LinaresPastorSilva2020}.
\end{proof}

Next, we introduce an interpolation inequality for the operator $\langle x \rangle_N$ in \eqref{trunweightdef}, whose deduction can be consulted in \cite[Lemma 1]{FonsecaPonce2011}. 

\begin{prop}\label{interpo2} Let $N\in \mathbb{Z}^{+}$. For any $a,b>0$, $\gamma\in (0,1)$,
\begin{equation}\label{interpineq3}
\|J^{\gamma a}\big(\langle x \rangle_N^{(1-\gamma)b}f\big)\|_{L^2(\mathbb{R})}\lesssim \|\langle x \rangle_N^{b}f\|^{1-\gamma}_{L^2(\mathbb{R})}\|J^{a} f\|_{L^2(\mathbb{R})}^{\gamma},
\end{equation}
where the implicit constant is independent of $N$. 
\end{prop}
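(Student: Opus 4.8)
The plan is to prove \eqref{interpineq3} by complex (Stein) interpolation, following the scheme behind the non-truncated inequality \eqref{interpineq2} in Proposition \ref{interpo}, but tracking every constant to guarantee independence of $N$. First I would reduce to a log-convexity statement: writing $g=\langle x\rangle_N^b f$ (so $f=\langle x\rangle_N^{-b}g$ and $\langle x\rangle_N^{(1-\gamma)b}f=\langle x\rangle_N^{-\gamma b}g$), the estimate \eqref{interpineq3} is equivalent to
\begin{equation*}
\|J^{\gamma a}\langle x\rangle_N^{-\gamma b}g\|_{L^2}\lesssim \|g\|_{L^2}^{1-\gamma}\,\|J^{a}\langle x\rangle_N^{-b}g\|_{L^2}^{\gamma}.
\end{equation*}
I then introduce the analytic family $T_z=J^{za}\langle x\rangle_N^{-zb}$ on the strip $0\le\operatorname{Re}z\le1$, with endpoints $T_0=\mathrm{Id}$ and $T_1=J^a\langle x\rangle_N^{-b}$, so that $T_\gamma g$ is exactly the quantity to be bounded. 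For Schwartz $g$ (a dense class) and any $w\in L^2$ with $\|w\|_{L^2}\le1$, the scalar function $F(z)=\langle T_z g,w\rangle$ is analytic and bounded on the strip, since $|\langle\xi\rangle^{za}|=\langle\xi\rangle^{a\operatorname{Re}z}\le\langle\xi\rangle^a$ and $|\langle x\rangle_N^{-zb}|\le1$ there; taking the supremum over $w$ recovers the left-hand norm at $z=\gamma$.

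Next I would estimate $F$ on the two boundary lines. On $\operatorname{Re}z=0$ the operator $T_{is}=J^{isa}\langle x\rangle_N^{-isb}$ is a product of $L^2$-isometries (a unimodular Fourier multiplier and multiplication by a unimodular function), so $|F(is)|\le\|g\|_{L^2}=:M_0$. On $\operatorname{Re}z=1$ I factor $T_{1+is}=J^{isa}\big(J^a\langle x\rangle_N^{-b}\big)\langle x\rangle_N^{-isb}$ and, after peeling off the isometries $J^{isa}$ and $\langle x\rangle_N^{-isb}$, I must compare $\|J^a\langle x\rangle_N^{-(1+is)b}g\|_{L^2}$ with $M_1:=\|J^a\langle x\rangle_N^{-b}g\|_{L^2}$. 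Commuting $J^a$ past the unimodular multiplier $\langle x\rangle_N^{-isb}$ produces $[J^a,\langle x\rangle_N^{-isb}]$, and here lies the decisive point for $N$-uniformity: the definition \eqref{trunweightdef} gives $|\partial_x\langle x\rangle_N|\le1$ and $|\partial_x^l\langle x\rangle_N|\lesssim\partial_x^l\langle x\rangle$ for $l\ge2$ with constants independent of $N$, whence $\langle x\rangle_N^{-isb}=e^{-isb\log\langle x\rangle_N}$ has symbol seminorms bounded by $C(1+|s|)^{M}$ \emph{uniformly in $N$}. Standard pseudodifferential commutator estimates then yield $\|[J^a,\langle x\rangle_N^{-isb}]h\|_{L^2}\lesssim(1+|s|)^{M}\|h\|_{H^{a-1}}$, and applying this with $h=\langle x\rangle_N^{-b}g$ and interpolating $\|h\|_{H^{a-1}}$ between $M_0$ and $M_1$ gives $|F(1+is)|\lesssim(1+|s|)^{M}(M_0+M_1)$.

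Finally, since the boundary bound grows only polynomially in $s$, this is admissible growth for Stein's interpolation theorem (equivalently, one multiplies $F$ by a bounded analytic factor, obtained from the harmonic extension of $-M\log(1+s^2)^{1/2}$, that absorbs $(1+|s|)^{M}$ on the right line while contributing only an $s$-independent constant at $z=\gamma$); the three-lines argument then gives $|F(\gamma)|\lesssim M_0^{1-\gamma}(M_0+M_1)^\gamma$, from which the clean bound $M_0^{1-\gamma}M_1^\gamma$ follows after absorbing the lower-order commutator contribution, which lives in $H^{a-1}$ and is therefore dominated by the interpolated quantity. Taking the supremum over $w$ and removing the density restriction completes the proof. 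The main obstacle is exactly this right-endpoint estimate: one must push $J^a$ through the complex power $\langle x\rangle_N^{-isb}$ while keeping \emph{both} the $s$-dependence at most polynomial (so Stein interpolation applies) \emph{and} all constants independent of $N$; it is precisely the built-in derivative bounds of the truncated weight \eqref{trunweightdef} that secure this uniformity, which is the whole reason for working with $\langle x\rangle_N$ rather than $\langle x\rangle$.
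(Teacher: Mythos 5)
Your proposal is correct in substance and follows essentially the same route as the paper: Proposition \ref{interpo2} is quoted there from \cite[Lemma 1]{FonsecaPonce2011}, and the paper's own proof of the analogous Lemma \ref{InterpLemma} is exactly this three-lines argument with a unimodular weight factor on the right boundary line -- there the polynomial growth in the imaginary direction is killed by the factor $e^{z^2-1}$ rather than by a harmonic-extension multiplier, and the commutation of $J^{a}$ past $\langle x\rangle^{-i\kappa b}$ is handled with the Stein-derivative Leibniz rules \eqref{prelimneq0}--\eqref{prelimneq1} instead of pseudodifferential calculus; both devices work equally well for $\langle x\rangle_N$ precisely because of the $N$-uniform derivative bounds of \eqref{trunweightdef} that you identify as the crux.

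One bookkeeping correction at the right boundary: you should bound the commutator contribution by $\|h\|_{H^{a-1}}\le\|h\|_{H^{a}}=M_1$, which gives $|F(1+is)|\lesssim (1+|s|)^{M}M_1$ and hence the clean conclusion $|F(\gamma)|\lesssim M_0^{1-\gamma}M_1^{\gamma}$ directly from the three-lines theorem. Interpolating $\|h\|_{H^{a-1}}$ between $M_0$ and $M_1$, as you do, only yields $|F(\gamma)|\lesssim M_0^{1-\gamma}(M_0+M_1)^{\gamma}$, and this weaker estimate does \emph{not} imply the stated inequality when $M_1\ll M_0$, so the final ``absorption'' step is not a valid inference as written; replacing it by the trivial embedding $H^{a}\subset H^{a-1}$ closes the argument exactly as you intended.
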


We conclude this section by deducing the following interpolation result in $L^2(\mathbb{R}\times \mathbb{T})$.
\begin{lemma}\label{InterpLemma}
Let $a, b>0$. Assume that $f \in H^a(\mathbb{R}\times \mathbb{T})\cap L^2(|x|^b\, dx dy)$. Then for any $\gamma \in (0,1)$
\begin{equation}\label{eqsuppsep3}
\|J^{\gamma a}_{x,y}\big(\langle x\rangle^{(1-\gamma) b}f)\|_{L^2_{x,y}} \lesssim \|\langle x \rangle^{b}f\|_{L^{2}_{x,y}}^{1-\gamma}\|J^a_{x,y} f\|_{L^2_{x,y}}^{\gamma}.
\end{equation}
\end{lemma}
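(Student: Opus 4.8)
The plan is to reduce this two-dimensional weighted estimate to the one-variable inequality \eqref{interpineq2} of Proposition \ref{interpo}, exploiting that the weight $\langle x\rangle$ acts only in the $x$-variable while the transversal derivatives commute with it. The starting point is the elementary frequency equivalence $\langle(\xi,\eta)\rangle^{s}\sim\langle\xi\rangle^{s}+\langle\eta\rangle^{s}$, valid for every $s>0$ and all $(\xi,\eta)\in\mathbb{R}\times\mathbb{Z}$ (which follows from $\langle(\xi,\eta)\rangle\sim(\langle\xi\rangle^2+\langle\eta\rangle^2)^{1/2}\sim\max\{\langle\xi\rangle,\langle\eta\rangle\}$). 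By Plancherel this yields, for any $h$,
$$\|J^{\gamma a}_{x,y}h\|_{L^2_{x,y}}\sim \|J^{\gamma a}_x h\|_{L^2_{x,y}}+\|J^{\gamma a}_y h\|_{L^2_{x,y}}.$$
Applying this with $h=\langle x\rangle^{(1-\gamma)b}f$, it suffices to bound each of the two pieces on the right by $\|\langle x\rangle^b f\|_{L^2_{x,y}}^{1-\gamma}\|J^a_{x,y}f\|_{L^2_{x,y}}^{\gamma}$.

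For the tangential piece I would freeze $y\in\mathbb{T}$ and treat $x\mapsto f(x,y)$ as a one-variable function; by Fubini--Tonelli the hypotheses guarantee $f(\cdot,y)\in H^a(\mathbb{R})$ and $\langle\cdot\rangle^b f(\cdot,y)\in L^2(\mathbb{R})$ for a.e.\ $y$. Inequality \eqref{interpineq2}, whose constant is independent of $y$, then gives $\|J^{\gamma a}_x(\langle x\rangle^{(1-\gamma)b}f(\cdot,y))\|_{L^2_x}\lesssim\|\langle x\rangle^b f(\cdot,y)\|_{L^2_x}^{1-\gamma}\|J^a_x f(\cdot,y)\|_{L^2_x}^{\gamma}$. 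Squaring, integrating over $\mathbb{T}$, and applying Hölder's inequality in $y$ with exponents $\tfrac{1}{1-\gamma}$ and $\tfrac1\gamma$ produce $\|J^{\gamma a}_x(\langle x\rangle^{(1-\gamma)b}f)\|_{L^2_{x,y}}\lesssim\|\langle x\rangle^b f\|_{L^2_{x,y}}^{1-\gamma}\|J^a_x f\|_{L^2_{x,y}}^{\gamma}$, and $\|J^a_x f\|_{L^2_{x,y}}\le\|J^a_{x,y}f\|_{L^2_{x,y}}$ since $\langle\xi\rangle\le\langle(\xi,\eta)\rangle$.

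For the transversal piece the crucial simplification is that $J^{\gamma a}_y$ acts only in $y$, hence commutes with multiplication by the $x$-weight: $J^{\gamma a}_y(\langle x\rangle^{(1-\gamma)b}f)=\langle x\rangle^{(1-\gamma)b}J^{\gamma a}_y f$. Writing $f_\eta(x)$ for the $\eta$-th Fourier coefficient of $y\mapsto f(x,y)$ and using Plancherel in $y$, the square of the $L^2_{x,y}$-norm of this quantity equals, up to a constant, $\sum_{\eta\in\mathbb{Z}}\int_{\mathbb{R}}\langle x\rangle^{2(1-\gamma)b}\langle\eta\rangle^{2\gamma a}|f_\eta(x)|^2\,dx$. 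The integrand factors as $(\langle x\rangle^{2b}|f_\eta(x)|^2)^{1-\gamma}(\langle\eta\rangle^{2a}|f_\eta(x)|^2)^{\gamma}$, so Hölder's inequality in the joint $(x,\eta)$ variable with exponents $\tfrac{1}{1-\gamma},\tfrac1\gamma$ bounds the sum by $\|\langle x\rangle^b f\|_{L^2_{x,y}}^{2(1-\gamma)}\|J^a_y f\|_{L^2_{x,y}}^{2\gamma}$, and once more $\|J^a_y f\|_{L^2_{x,y}}\le\|J^a_{x,y}f\|_{L^2_{x,y}}$. Combining the tangential and transversal contributions gives \eqref{eqsuppsep3}.

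I expect no serious obstacle: the argument is a structural reduction rather than a hard estimate. The only points requiring care are (a) the measurability and Fubini justification allowing the one-dimensional inequality to be applied slicewise in $y$ with a constant uniform in $y$, and (b) the observation that the weight--derivative interaction is confined entirely to the tangential piece, so that the transversal piece trivializes by commutation and reduces to a plain Hölder inequality. Once the frequency splitting $\langle(\xi,\eta)\rangle^{\gamma a}\sim\langle\xi\rangle^{\gamma a}+\langle\eta\rangle^{\gamma a}$ is in place, both pieces follow directly from Proposition \ref{interpo} and Hölder's inequality.
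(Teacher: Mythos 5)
Your proof is correct, but it follows a genuinely different route from the paper's. The paper proves Lemma \ref{InterpLemma} by complex interpolation directly in the two-dimensional setting: it introduces the analytic family $F(z)=e^{z^2-1}\int_{\mathbb{T}}\int_{\mathbb{R}} J_{x,y}^{za}\big(\langle x\rangle^{(1-z)b}f\big)\overline{g}\,dxdy$, checks continuity on the strip $0\leq \operatorname{Re} z\leq 1$ and analyticity inside, bounds the boundary lines $\operatorname{Re} z=0$ and $\operatorname{Re} z=1$ (the latter requiring the fractional Leibniz-type estimates \eqref{prelimneq0}--\eqref{prelimneq1} to control $J^a_x(\langle x\rangle^{-i\kappa b}f)$, with constants polynomial in $\kappa$ absorbed by the factor $e^{-\kappa^2}$), and concludes via Hadamard's three-lines theorem. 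You instead exploit the multiplier equivalence $\langle(\xi,\eta)\rangle^{\gamma a}\sim\langle\xi\rangle^{\gamma a}+\langle\eta\rangle^{\gamma a}$ to split $J^{\gamma a}_{x,y}$ into a tangential and a transversal piece, handle the tangential piece by applying the one-dimensional inequality \eqref{interpineq2} slicewise in $y$ followed by H\"older in $y$, and reduce the transversal piece to a plain H\"older inequality in $(x,\eta)$ after commuting $J^{\gamma a}_y$ past the $x$-weight; each step checks out, including the Fubini justification for the slicewise application and the final comparisons $\|J^a_xf\|_{L^2},\|J^a_yf\|_{L^2}\leq\|J^a_{x,y}f\|_{L^2}$. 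What your argument buys is elementarity at the 2D level: no analytic family, no boundary estimates with $\kappa$-dependent constants, no fractional Leibniz machinery — the key observation that the weight--derivative interaction is confined to the $x$-variable does all the work. What it costs is self-containedness: the hard analysis is outsourced to Proposition \ref{interpo} (itself proved by three lines in the cited references), whereas the paper's argument is a template that would survive in situations where the weight and the derivatives do not decouple so cleanly.
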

\begin{proof}
Let $g\in \mathcal{S}(\mathbb{R}\times \mathbb{T})$ with $\|g\|_{L^{2}_{x,y}}=1$. We define
\begin{equation*}
F(z)=e^{z^2-1}\int_{\mathbb{T}}\int_{\mathbb{R}} J_{x,y}^{z a}(\langle x \rangle^{(1-z)b}f(x,y))\overline{g(x,y)} \, dxdy.
\end{equation*}
Using that $g\in \mathcal{S}(\mathbb{R}\times \mathbb{T})$, and Plancherel's identity, we have that $F$ is continuous in $\{z=\beta+i\kappa\, :\, 0\leq \beta \leq 1\}$ and analytic in its interior. Moreover,
\begin{equation*}
|F(0+i\kappa)|\lesssim e^{-(\kappa^2+1)}\|\langle x\rangle^{b} f\|_{L^2_{x,y}}.
\end{equation*}
On the other hand, we have
\begin{equation}\label{interpcyl1}
\begin{aligned}
|F(1+i\kappa)|\lesssim & e^{-\kappa^2}\|J^a_{x,y}(\langle  x \rangle^{-i\kappa b} f)\|_{L^2_{x,y}} \\
\lesssim & e^{-\kappa^2}\big(\|J^a_{x}(\langle  x \rangle^{-i\kappa b} f)\|_{L^2_{x,y}}+\|J^a_{y} f\|_{L^2_{x,y}}\big),
\end{aligned}
\end{equation}
where we have used that $\langle  x \rangle^{-iyb}$ is independent of the $y$-variable. Now, writing $a=m+m_1$ with $m\in \mathbb{Z}^{+}\cup\{0\}$, and $m_1\in [0,1)$, we distribute the local and fractional derivatives, and using \eqref{equi1}, we get
\begin{equation*}
\begin{aligned}
    \|J^a_{x}(\langle  x \rangle^{-i\kappa b} f)\|_{L^2_{x,y}} 
%    \lesssim & \|f\|_{L^2_{x,y}}+\|D^{m_1}_x(\partial_x^m(\langle  x \rangle^{-i\kappa b} f))\|_{L^2_{x,y}}\\
\lesssim & \|f\|_{L^2_{x,y}}+\sum_{k=0}^{m}\|\|D^{m_1}_x(\partial_x^{k}(\langle  x \rangle^{-i\kappa b})\partial^{m-k} f)\|_{L^2_{x}}\|_{L^2_{y}}\\
        \lesssim & \|f\|_{L^2_{x,y}}+\sum_{k=0}^{m}\big(\|\mathcal{D}_x^{m_1}(\partial_x^{k}(\langle  x \rangle^{-i\kappa b}))\|_{L^{\infty}_x}+\|\partial_x^{k}(\langle  x \rangle^{-i\kappa b})\|_{L^{\infty}_x}\big)\|J^{a}_x f\|_{L^2_{x,y}}\\
        \lesssim & \|J^{a}_x f\|_{L^2_{x,y}},
\end{aligned}    
\end{equation*}
where we also used the properties \eqref{prelimneq0}-\eqref{prelimneq1}. Gathering the previous estimate, we deduce
\begin{equation}\label{interpcyl2}
\begin{aligned}
|F(1+i\kappa)|\lesssim & e^{-\kappa^2}\|J^a_{x,y} f\|_{L^2_{x,y}}.
\end{aligned}
\end{equation}
We remark that the implicit constants in \eqref{interpcyl1} and \eqref{interpcyl2} depend polynomially on $\kappa$ (i.e., sums of powers of $\kappa$ with fixed degree), but thanks to the exponential factor $e^{-\kappa^2}$, such constants can be controlled uniformly for all $\kappa$. At this point, \eqref{eqsuppsep3} is a consequence of Hadamard's three lines Theorem.

\end{proof}

%%%%%%%%%%%%%%%%%%%%%%%%%%%%%%%%%%%%%%%%%%%%%%%%%%%%%%%%%%%%%%%%%%%%%%%%%%%%%%%%%%%%%%%%%%%%%%%%%%%%%%%%%%%%%%%%%%%%%%%%%%%%%%%%%%%%%%%%%%%%%%%%%%%%%%%%%%%%%%%%%%%%%%%%%%%%%%%%%%%%%%%%%%%%%%%%%%%%%%%%%%%%
\section{Spatial decay properties linear equation}\label{Lineareqstudysec}

To obtain estimates of the group $U(t)$ in weighted spaces, transferring polynomial decay in the spatial domain into differentiation in the frequency domain, we need estimates of the derivatives of the function $e^{i t\omega(\xi,\eta)}$, which is the multiplier associated with the family of operators $U(t)$. We perform such an estimate in the following proposition. But first, we introduce some notation, we will denote by $\delta_{\xi}$ the Dirac delta distribution at the origin acting only on the $\xi$-variable, i.e., $\delta_{\xi}(\phi)=\phi(0,\eta)$, $\eta\in \mathbb{Z}$, for any $\phi=\widehat{\psi}$ with $\psi\in S(\mathbb{R}\times \mathbb{T})$. We denote its $k$th distributional derivative with respect to $\xi$ as $\delta_{\xi}^{(k)}=\partial_{\xi}^{k}\big(\delta_{\xi}\big)$.

\begin{prop}\label{generalderiv}
Let $k\geq 1$ be a fixed integer and $f(\xi,\eta)\in H^{k,0}(\mathbb{R}\times\mathbb{Z})$. Consider $\omega=\omega(\xi,\eta)$ be given as in \eqref{dispersivesymbol}. Then, the following identity holds true
\begin{equation}\label{derivative}
\begin{aligned}
 \frac{\partial^k}{\partial \xi^k}\big(e^{it\omega}f\Big)=&\sum_{l=0}^{k-1}\sum_{m=0}^{k-1-l}\nu_{l,m}(t)\sin(\eta^2 t)\frac{\partial^{l}}{\partial_{\xi}^{l}}f(0,\eta)\delta_{\xi}^{(m)} +\sum_{l=0}^{k-3}\sum_{m=0}^{k-3-l} d_{l,m}(t)\cos(\eta^2 t)\frac{\partial^{l}}{\partial_{\xi}^{l}}f(0,\eta)\delta_{\xi}^{(m)} \\
 &+e^{it \omega }\sum_{m=1}^k \sum_{\substack{l_1+\dots+l_{m}+l_{m+1}=k-m\\ 0\leq l_j\leq 1\\ j=1,\dots,m}}c_{l_1,\dots,l_{m+1}}\partial_{\xi}^{l_1}(2it|\xi|)\dots \partial_{\xi}^{l_m}(2it|\xi|)\frac{\partial^{l_{m+1}}}{\partial_{\xi}^{l_{m+1}}}f\\
&+ e^{it \omega} \frac{\partial^{k}}{\partial_{\xi}^{k}}f,
\end{aligned}    
\end{equation}
where the differentiation is taken in the distributional sense, and the above holds for polynomials $\nu_{l,m}(t)$, $d_{l,m}(t)$ of degree less than or equal to $\lfloor\frac{k-1}{2}\rfloor$\footnote{ Here $\lfloor \cdot \rfloor$ denotes the floor function, that is the function with input $x\in \mathbb{R}$ and output the greatest integer less than or equal to $x$.} with $\nu_{k-1,0}(t)=\text{constant}\neq 0$, $d_{k-3,0}(t)=4it$, and some constants $c_{l_1,\dots,l_{m+1}}$, where $l_1=1$, $l_2=k-2$, and $c_{l_1,l_2}=1$, when $m=1$.  Above, we also assume the zero convention for the empty summation such as $\sum_{m=0}^{k-3}(\cdots)=0$, whenever $k=1,2$.
\end{prop}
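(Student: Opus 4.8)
The plan is to isolate the non-smooth behaviour of the multiplier at $\xi=0$ through the factorization
\begin{equation*}
e^{it\omega(\xi,\eta)}=e^{it\xi|\xi|}\big(\cos(t\eta^2)+i\sin(t\eta^2)\sign(\xi)\big),
\end{equation*}
which follows from $\sign(\xi)\xi^2=\xi|\xi|$ and $e^{it\sign(\xi)\eta^2}=\cos(t\eta^2)+i\sin(t\eta^2)\sign(\xi)$. Writing $E=e^{it\xi|\xi|}$, which is globally $C^1$ with $\partial_\xi E=2it|\xi|E$, the two genuinely distributional building blocks are $\partial_\xi^k(Ef)$ and $\partial_\xi^k(\sign(\xi)Ef)$; the first carries the factor $\cos(t\eta^2)$ and the second $i\sin(t\eta^2)$. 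Delta contributions arise only from jumps at the origin: $\sign(\xi)$ jumps immediately, giving the $\sin$-deltas from order $k=1$, whereas $\partial_\xi^2 E=2it\sign(\xi)E-4t^2\xi^2 E$ is the first derivative of $E$ carrying a jump, so $E$ produces deltas only from order $k=3$, giving the $\cos$-deltas. I would carry out the proof by induction on $k$, using the distributional product rule together with the standard reduction $g(\xi)\delta^{(m)}_\xi=\sum_{j=0}^m(-1)^j\binom{m}{j}g^{(j)}(0)\,\delta^{(m-j)}_\xi$, valid for $g$ locally smooth near $0$, to convert every product of a coefficient with a derivative of $\delta_\xi$ into the stated terms $\frac{\partial^l}{\partial\xi^l}f(0,\eta)\,\delta^{(m)}_\xi$.

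For the base cases one differentiates directly. The single identity driving the induction is
\begin{equation*}
\partial_\xi e^{it\omega}=2it|\xi|\,e^{it\omega}+2i\sin(t\eta^2)\,\delta_\xi,
\end{equation*}
obtained from the factorization together with $\partial_\xi\sign(\xi)=2\delta_\xi$ and $E(0)=1$; it simultaneously explains the smooth factor $2it|\xi|$ feeding the $e^{it\omega}$-terms and the emergence of a $\sin$-weighted delta. Checking $k=1$ recovers the formula with delta coefficient $\nu_{0,0}=2i$ and middle sum reducing to $e^{it\omega}\,2it|\xi|f$.

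In the inductive step I apply $\partial_\xi$ to each of the three families in \eqref{derivative}. The delta terms are constant in $\xi$ apart from $\delta^{(m)}_\xi$, so differentiation merely shifts $m\mapsto m+1$, leaving the polynomials $\nu_{l,m}(t),d_{l,m}(t)$ and their degrees untouched and matching the enlarged index ranges. Each function term has the form $e^{it\omega}\Phi$ with $\Phi$ a genuine function, since every factor $\partial_\xi^{l_j}(2it|\xi|)$ is $2it|\xi|$ ($l_j=0$) or $2it\sign(\xi)$ ($l_j=1$); its distributional derivative splits into (a) the a.e.\ derivative, which reproduces function terms of the same type---$\partial_\xi$ may fall on $e^{it\omega}$ (producing a new factor $2it|\xi|$, i.e.\ $l=0$), on some $2it|\xi|$ (raising it to $2it\sign(\xi)$, $l=1$), or on $f^{(l_{m+1})}$---so the constraints $\sum l_j=k-m$, $0\le l_j\le1$ and the normalization $c_{1,k-2}=1$ propagate; and (b) the jump of $e^{it\omega}\Phi$ at $\xi=0$, which multiplies $\delta_\xi$. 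Only monomials with every $l_j=1$ survive this evaluation, those with an $l_j=0$ carrying a vanishing $|\xi|$, contributing $(2it)^m\sign(\xi)^m f^{(l_{m+1})}(0,\eta)$; combining the one-sided limits $e^{\pm it\eta^2}$ yields the factor $2i\sin(t\eta^2)$ when $m$ is even and $2\cos(t\eta^2)$ when $m$ is odd, which is exactly the split into the $\sin$- and $\cos$-delta families. An all-ones configuration forces $l_{m+1}=(k-1)-2m\ge0$, so $m\le\lfloor(k-1)/2\rfloor$, giving the claimed degree bound for $\nu_{l,m}(t),d_{l,m}(t)$; the case $m=1$, $l_{m+1}=k-3$ produces the jump $2it\,(e^{it\eta^2}+e^{-it\eta^2})f^{(k-3)}(0,\eta)=4it\cos(t\eta^2)f^{(k-3)}(0,\eta)$, establishing $d_{k-3,0}(t)=4it$, while the $\sin$-delta from the jump of the last term $e^{it\omega}\partial_\xi^{k-1}f$, namely $2i\sin(t\eta^2)f^{(k-1)}(0,\eta)\delta_\xi$, gives the nonzero constant $\nu_{k-1,0}=2i$.

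The main obstacle is the bookkeeping in step (b): one must justify that $e^{it\omega}\Phi$ is a bona fide locally integrable function with well-defined one-sided limits at $\xi=0$, so that its distributional derivative is genuinely the a.e.\ derivative plus a single jump-delta, with no spurious distributional products such as $\sign(\xi)\delta_\xi$, and then verify that after the reduction formula for $g\delta^{(m)}_\xi$ all newly created deltas land in the stated families with coefficients that are polynomials in $t$ of degree at most $\lfloor(k-1)/2\rfloor$ with the precise leading terms. Controlling these coefficients---rather than merely their existence---is the delicate part, handled by tracking the exponent $m$ of $(2it)^m$ against the surviving index $l_{m+1}\ge0$ throughout the induction.
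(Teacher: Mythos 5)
Your proposal follows essentially the same route as the paper's own proof: your ``single identity driving the induction,'' $\partial_\xi e^{it\omega}=2it|\xi|\,e^{it\omega}+2i\sin(t\eta^2)\,\delta_\xi$, is exactly the paper's \eqref{distribderiv1} (tested against $f$); your observation that any factor $2it|\xi|$ kills the jump and therefore produces no delta is \eqref{distribderiv3}; and your jump computation for the all-ones configurations, giving $2i\sin(t\eta^2)$ for $m$ even and $4it\cos(t\eta^2)$ for $m=1$, is \eqref{distribderiv2}. Both arguments then conclude by induction on $k$. Your write-up is actually more explicit than the paper's about the points the paper leaves implicit: the parity mechanism behind the $\sin$/$\cos$ split, the origin of the degree bound $m\le\lfloor(k-1)/2\rfloor$, and the identification $\nu_{k-1,0}=2i$, $d_{k-3,0}=4it$.

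One concrete step does fail, however: the assertion that ``the normalization $c_{1,k-2}=1$ propagates.'' It does not. The order-$k$ coefficient of $e^{it\omega}(2it\sign(\xi))\partial_\xi^{k-2}f$ receives contributions from \emph{two} distinct order-$(k-1)$ terms: from $e^{it\omega}(2it|\xi|)\partial_\xi^{k-2}f$ (the derivative falling on $|\xi|$) and from $e^{it\omega}(2it\sign(\xi))\partial_\xi^{k-3}f$ (the derivative falling on $f$), so that $c^{(k)}_{1,k-2}=c^{(k-1)}_{0,k-2}+c^{(k-1)}_{1,k-3}$. Since $c^{(k-1)}_{0,k-2}=k-1$, Pascal's rule gives $c^{(k)}_{1,k-2}=\binom{k}{2}$, which equals $1$ only when $k=2$. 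For instance, at $k=3$ a direct computation from \eqref{distribderiv1}--\eqref{distribderiv3} shows that $e^{it\omega}(2it\sign(\xi))\partial_\xi f$ appears with coefficient $3$: one copy from differentiating $e^{it\omega}(2it\sign(\xi))f$ via \eqref{distribderiv2}, and two copies from differentiating $2\,e^{it\omega}(2it|\xi|)\partial_\xi f$ via \eqref{distribderiv3}. To be fair, this is a defect of the statement itself, which the paper's own one-line induction also does not (and cannot) justify; it is harmless downstream, because in \eqref{uniqueq9} and \eqref{uniqueq8} only the fact that this constant is \emph{nonzero} is used. The honest fix, both for you and for the statement, is to replace ``$c_{1,k-2}=1$'' by ``$c_{1,k-2}$ is a nonzero constant (in fact $\binom{k}{2}$),'' which your jump-plus-a.e.\ bookkeeping does establish.
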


\begin{proof}
Using the definition of the distributional derivative, it is not hard to see
\begin{equation}\label{distribderiv1}
 \frac{\partial}{\partial_{\xi}}(e^{it \omega }f)=2i\sin(\eta^2 t)f(0,\eta)\delta_{\xi}+e^{it\omega}(2it|\xi|)f+e^{it\omega }\frac{\partial}{\partial_{\xi}}f, 
\end{equation}
it holds
\begin{equation}\label{distribderiv2}
 \frac{\partial}{\partial_{\xi}}( e^{it \omega }(2it \sign(\xi))f)=4it\cos(\eta^2 t)f(0,\eta)\delta_{\xi}+e^{it\omega}(2it\sign(\xi))(2it|\xi|)f+e^{it\omega } (2it\sign(\xi)) \frac{\partial}{\partial \xi}f  
\end{equation}
and
\begin{equation}\label{distribderiv3}
 \frac{\partial}{\partial_{\xi}}(e^{it \omega }(2it|\xi|)f)=e^{it\omega}(2i|\xi|t)^2f+e^{it\omega }(2it\sign(\xi))f+e^{it\omega }(2it|\xi|)\frac{\partial}{\partial_{\xi}}f.   
\end{equation}

Thus, combining the fact that $\partial_{\xi}(|\xi|)=\sign(\xi)$, and the identities \eqref{distribderiv1}, \eqref{distribderiv2} and \eqref{distribderiv3}, we can use an inductive argument on the order of the derivative $k\geq 1$ in $\frac{\partial^{k}}{\partial \xi^k}(e^{i\omega t}f)$ to get \eqref{derivative}.
\end{proof}

In the following proposition, we deduce some estimates in weighted spaces for the group $\{U_1(t)\}$ defined in \eqref{Grouponly}.
\begin{prop}\label{decaysimplyeq}
Let $0<\theta<\frac{1}{2}$. If $f\in L^2(\mathbb{R}\times\mathbb{T})\cap L^2(|x|^{2\theta}\, dx)$, then
\begin{equation}\label{group1deca}
  \|\langle x\rangle^{\theta}U_1(t)f\|_{L^2}\lesssim \|\langle x \rangle^{\theta}f\|_{L^2}.
\end{equation}
If $\theta=\frac{1}{2}$, assume $f\in L^2(\mathbb{R}\times\mathbb{T})\cap L^2(|x|\, dx)$ and $\mathcal{H}_xf\in L^2(|x|\, dx)$, then
\begin{equation}\label{group1deca1}
  \|\langle x\rangle^{\frac{1}{2}}U_1(t)f\|_{L^2}+\|\langle x\rangle^{\frac{1}{2}}U_1(t)\mathcal{H}_xf\|_{L^2}\lesssim \|\langle x \rangle^{\theta}f\|_{L^2}+\|\langle x \rangle^{\theta}\mathcal{H}_xf\|_{L^2}.
\end{equation}
\end{prop}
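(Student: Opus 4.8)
The plan is to exploit that the symbol $e^{it\sign(\xi)\eta^2}$ of $U_1(t)$ depends on $\xi$ only through $\sign(\xi)$, so that $U_1(t)$ splits into bounded Fourier multipliers in the periodic variable acting on $f$ and on $\mathcal{H}_x f$. First I would write $e^{it\sign(\xi)\eta^2}=\cos(\eta^2 t)+i\sin(\eta^2 t)\sign(\xi)$ and use the identity $\widehat{\mathcal{H}_x f}=-i\sign(\xi)\widehat{f}$ (equivalently $\sign(\xi)\widehat{f}=i\widehat{\mathcal{H}_x f}$) to obtain, on the Fourier side,
$$\widehat{U_1(t)f}(\xi,\eta)=\cos(\eta^2 t)\widehat{f}(\xi,\eta)-\sin(\eta^2 t)\widehat{\mathcal{H}_x f}(\xi,\eta).$$
The structural point is that the coefficients $\cos(\eta^2 t)$ and $\sin(\eta^2 t)$ are Fourier multipliers in $y$ of modulus at most one, and as such they commute with multiplication by the $x$-weight $\langle x\rangle^{\theta}$.

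Next I would expand $f$ in a Fourier series in $y$, $f(x,y)=\sum_{\eta}f_\eta(x)e^{iy\eta}$, and apply Parseval's identity in the periodic variable. Because $|\cos(\eta^2 t)|\le 1$ and $|\sin(\eta^2 t)|\le 1$, passing the weight through the multipliers gives at once
$$\|\langle x\rangle^{\theta} U_1(t)f\|_{L^2}\lesssim \|\langle x\rangle^{\theta} f\|_{L^2}+\|\langle x\rangle^{\theta}\mathcal{H}_x f\|_{L^2},$$
valid for every $\theta>0$ and requiring no dispersive estimate on the propagator. Note that since $\langle x\rangle^{\theta}\ge 1$ this already absorbs the plain $L^2$ contribution, so all that remains is to dispose of the term $\|\langle x\rangle^{\theta}\mathcal{H}_x f\|_{L^2}$.

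For $0<\theta<\frac{1}{2}$ I would invoke Proposition \ref{a2condH}: the Hilbert transform is bounded on $L^2(\langle x\rangle^{2\theta}\,dx)$ for $-\frac{1}{2}<\theta<\frac{1}{2}$ (letting $N\to\infty$, the constant being uniform in $N$). Applying this fibrewise in $\eta$ and summing yields $\|\langle x\rangle^{\theta}\mathcal{H}_x f\|_{L^2}\lesssim \|\langle x\rangle^{\theta} f\|_{L^2}$, which establishes \eqref{group1deca}. For the endpoint $\theta=\frac{1}{2}$ this boundedness fails, so I would instead keep the Hilbert-transform term untouched: the bound on $\|\langle x\rangle^{1/2} U_1(t)f\|_{L^2}$ is then immediate from the displayed inequality with $\theta=\frac{1}{2}$, which is precisely why the hypothesis $\mathcal{H}_x f\in L^2(|x|\,dxdy)$ must be imposed and appears on the right-hand side of \eqref{group1deca1}. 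To handle the second summand $\|\langle x\rangle^{1/2}U_1(t)\mathcal{H}_x f\|_{L^2}$ I would apply the same displayed inequality with $\mathcal{H}_x f$ in place of $f$, and then use $\mathcal{H}_x^2=-I$ on $L^2$ to replace $\mathcal{H}_x\mathcal{H}_x f$ by $-f$; this produces the bound $\|\langle x\rangle^{1/2}\mathcal{H}_x f\|_{L^2}+\|\langle x\rangle^{1/2}f\|_{L^2}$, completing \eqref{group1deca1}.

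The genuine obstacle is confined to the endpoint $\theta=\frac{1}{2}$: there is no way to reabsorb the Hilbert-transform contribution into $\|\langle x\rangle^{1/2}f\|_{L^2}$, since $\mathcal{H}_x$ is not bounded on $L^2(|x|\,dx)$. This is the structural reason the statement at $\theta=\frac{1}{2}$ must carry the extra weighted assumption on $\mathcal{H}_x f$ and must estimate $U_1(t)f$ and $U_1(t)\mathcal{H}_x f$ jointly; away from this endpoint everything reduces to Parseval in $y$ together with the modulus bounds on $\cos$ and $\sin$.
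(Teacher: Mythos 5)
Your proof is correct, but it takes a genuinely different route from the paper. The paper argues by an energy method: it differentiates the quantity $\|\langle x\rangle_N^{\theta}u\|_{L^2}^2+\|\langle x\rangle_N^{\theta}\mathcal{H}_xu\|_{L^2}^2$ in time for $u=U_1(t)f$, uses the equation $\partial_t u-\mathcal{H}_x\partial_y^2u=0$ together with its Hilbert transform $\partial_t\mathcal{H}_xu+\partial_y^2u=0$, integrates by parts in $y$, and discovers that this sum is exactly conserved (identity \eqref{Afterintegra}); it then invokes Proposition \ref{a2condH} for $0<\theta<\frac12$ and lets $N\to\infty$. That computation needs Schwartz-class approximation and the truncated weights $\langle x\rangle_N^{\theta}$ to be justified. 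Your argument instead exploits the algebraic identity $e^{it\sign(\xi)\eta^2}=\cos(\eta^2t)+i\sign(\xi)\sin(\eta^2t)$, so that $U_1(t)f$ is written exactly as $C_tf-S_t\mathcal{H}_xf$, where $C_t,S_t$ are $y$-Fourier multipliers of modulus at most one commuting with any $x$-weight; the estimate then follows from Parseval in $y$ with no time differentiation, no truncation, and no density argument, and your closing steps (weighted boundedness of $\mathcal{H}_x$ for $\theta<\frac12$, and $\mathcal{H}_x^2=-I$ plus joint estimation of $f$ and $\mathcal{H}_xf$ at $\theta=\frac12$) match the paper's. Your route is more elementary and in fact explains the paper's conservation law: since $U_1(t)\mathcal{H}_xf=S_tf+C_t\mathcal{H}_xf$, the pair $(U_1(t)f,U_1(t)\mathcal{H}_xf)$ is obtained from $(f,\mathcal{H}_xf)$ by a rotation fibrewise in $\eta$, which preserves the sum of squares and yields \eqref{Afterintegra} exactly. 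What the paper's heavier method buys is reusability: the same energy computation, which never uses an explicit solution formula, is what carries over to the nonlinear equation in Lemma \ref{smoothsolutdecay} and Theorem \ref{LWPresultinWS}, where no such closed-form splitting exists.
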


\begin{proof}
We will assume that $f\in S(\mathbb{R}\times \mathbb{T})$ as the general case follows from an approximation argument applied to our estimates. Consequently, setting $u=U_1(t)f$, it follows that $u\in C^k(\mathbb{R},H^{\infty}(\mathbb{R}\times \mathbb{T}))$ for all integer $k\geq 0$, where $H^{\infty}(\mathbb{R}\times \mathbb{T}))=\bigcap_{s\geq 0} H^s(\mathbb{R}\times \mathbb{T})$, and it also holds that $u$ solves the equation
\begin{equation}\label{lineareq1}
    \partial_t u-\mathcal{H}_x \partial_y^2 u=0.
\end{equation}
Since $u(t)\in H^{\infty}(\mathbb{R}\times \mathbb{T})$, we can apply $\mathcal{H}_x$ to the previous equation, using that $\mathcal{H}_x^2=-1$ to get
\begin{equation}\label{lineareq2}
    \partial_t \mathcal{H}_xu+ \partial_y^2 u=0.
\end{equation}
Multiplying \eqref{lineareq1} by $\langle x\rangle_N^{2\theta} u$ and integrating over $\mathbb{R}\times \mathbb{T}$ yields
\begin{equation*}
  \frac{1}{2}\frac{d}{dt}\int \big(\langle x\rangle_N^{\theta} u\big)^2\, dx dy =\int  \mathcal{H}_x \partial_y^2 u \langle x\rangle_N^{2\theta} u\, dx dy.
\end{equation*}
Note that the above identity is justified by the conditions on regularity and integrability of $u$, and the fact that the weight $\langle x \rangle_N^{\theta}\in L^{\infty}(\mathbb{R})$. Next, we multiply \eqref{lineareq2} by $\langle x\rangle_N^{2\theta} \mathcal{H}_xu$, integrating the resulting expression over $\mathbb{R}\times \mathbb{T}$, we deduce
\begin{equation*}
\begin{aligned}
     \frac{1}{2}\frac{d}{dt}\int \big(\langle x\rangle_N^{\theta}\mathcal{H}_x u\big)^2\, dx dy=&-\int   \partial_y^2 u \langle x\rangle_N^{2\theta}\mathcal{H}_x u\, dx dy\\
  =&-\int   \langle x\rangle_N^{2\theta} u\mathcal{H}_x \partial_y^2u\, dx dy, 
\end{aligned}
\end{equation*}
where we have also integrated by parts in the $y$-variable, and we used that $\langle x\rangle_N^{\theta}\in L^{\infty}(\mathbb{R})$ does not depend on $y$. Consequently, we combine the previous equations to get
\begin{equation}\label{diffident}
    \begin{aligned}
     \frac{1}{2}\frac{d}{dt}\Big(\int \big(\langle x\rangle_N^{\theta} u\big)^2\, dx dy+\int \big(\langle x\rangle_N^{\theta}\mathcal{H}_x u\big)^2\, dx dy \Big)=0.
    \end{aligned}
\end{equation}
Integrating over $[0,t]$ if $t>0$, or over $[t,0]$ when $t<0$, the identity above yields
\begin{equation}\label{Afterintegra}
\begin{aligned}
   \|\langle x\rangle_N^{\theta}u\|_{L^2}^2+\|\langle x\rangle_N^{\theta}\mathcal{H}_x u\|_{L^2}^2=&\|\langle x\rangle_N^{\theta}f\|_{L^2}^2+\|\langle x\rangle_N^{\theta}\mathcal{H}_x f\|_{L^2}^2.
\end{aligned}
\end{equation}
We notice that when $0<\theta<\frac{1}{2}$, Proposition \ref{a2condH} implies
\begin{equation*}
\begin{aligned}
\|\langle x\rangle_N^{\theta}\mathcal{H}_x u\|_{L^2}+\|\langle x\rangle_N^{\theta}\mathcal{H}_x f\|_{L^2}=&\|\, \|\langle x\rangle_N^{\theta}\mathcal{H}_x u\|_{L_x^2}\|_{L^2_y}+\|\, \|\langle x\rangle_N^{\theta}\mathcal{H}_x f\|_{L^2_x}\|_{L^2_y}\\
\lesssim & \|\langle x\rangle_N^{\theta} u\|_{L^2}+ \|\langle x\rangle_N^{\theta} f\|_{L^2},    
\end{aligned}
\end{equation*}
with implicit constant independent of $N$. Then, using that $\langle x \rangle^{\theta}_N\leq \langle x\rangle^{\theta}$, we infer 
\begin{equation*}
\begin{aligned}
   \|\langle x\rangle_N^{\theta}u\|_{L^2}^2+\|\langle x\rangle_N^{\theta}\mathcal{H}_x u\|_{L^2}^2\lesssim \|\langle x\rangle^{\theta}f\|_{L^2}^2.  
\end{aligned}
\end{equation*}
Taking $N\to \infty$ in the previous inequality gives \eqref{group1deca}. On the other hand, when $\theta=\frac{1}{2}$, we obtain the exact same estimate \eqref{Afterintegra}, which is justified by the assumption $\mathcal{H}_xf \in L^2(|x|\, dx dy)$. Thus, taking $N\to \infty$, we get \eqref{group1deca1}. 
\end{proof}

\subsection{Proof of Theorem \ref{LinearEst}}

We are in the condition to obtain spatial decay properties for the group $\{U(t)\}$ of solutions of the linear equation associated with \eqref{SHeq}.

\begin{proof}[Proof of Theorem \ref{LinearEst}]
We will assume that $f$ is sufficiently regular and has enough decay to justify the argument developed below. The general case follows from approximation to our estimates. Depending on the size of the weight $\theta>0$, we consider the following cases:
\begin{itemize}
    \item[(a)] $0<\theta\leq \frac{1}{2}$.
    \item[(b)] $\frac{1}{2}<\theta< 1$.
    \item[(c)] $1 \leq \theta<\frac{3}{2} $.
    \item[(d)] $k+\frac{1}{2}\leq \theta< k+1$, $k\in \mathbb{Z}^{+}$.
     \item[(e)] $k+1\leq \theta< k+\frac{3}{2}$, $k\in \mathbb{Z}^{+}$.
\end{itemize}

Cases (a), (b), and (c) show parts $(i)$, $(ii)$, and $(iii)$ in the statement of Theorem \ref{LinearEst}. To make our arguments more understandable in the more general cases (d) and (e), we have decided to show (a), (b), and (c) first. We remark that cases (d) and (e) concern part $(iv)$. Also, notice that $k=0$ in (d) and (e) gives us (b) and (c), respectively. 
\\ \\
\underline{\bf (a) Assume $0<\theta\leq \frac{1}{2}$}. We first recall the notation $\omega(\xi,\eta)=\xi|\xi|+\sign(\xi)\eta^2$, $\xi\in \mathbb{R}$, $\eta\in \mathbb{Z}$ introduced in \eqref{dispersivesymbol}. Writing $e^{it\omega}=e^{it\xi|\xi|}e^{it\sign(\xi)\eta^2}$, by using Plancherel's identity, the equivalence \eqref{equi1}, and the product rule type \eqref{prelimneq}, we get 
\begin{equation}\label{linearestimeq0}
\begin{aligned}
 \||x|^{\theta}U(t)f\|_{L^2}\sim & \|D_{\xi}^{\theta}(e^{it\omega}\widehat{f}\,\big)\|_{L^2}\\
 \lesssim & \|\widehat{f}\,\|_{L^2}+\|\|\mathcal{D}_{\xi}^{\theta}\big(e^{it\omega}\widehat{f}\,\big)\|_{L^2_{\xi}}\|_{L^2_{\eta}}\\
 \lesssim & \|\widehat{f}\,\|_{L^2}+\|\|\mathcal{D}_{\xi}^{\theta}\big(e^{it\xi|\xi|}\big)\widehat{f}\,\|_{L^2_{\xi}}\|_{L^2}+\|\mathcal{D}_{\xi}^{\theta}\big(e^{it\sign(\xi)\eta^2}\widehat{f}\,\big)\|_{L^2}.
\end{aligned}
\end{equation}
Next, we estimate the right-hand side of the above inequality. By Lemma \ref{derivexp}, inequality \eqref{steinderiv1}, and Plancherel's identity, we find
\begin{equation}\label{linearestimeq0.1}
\begin{aligned}
\|\|\mathcal{D}_{\xi}^{\theta}\big(e^{it\xi|\xi|}\big)\widehat{f}\,\|_{L^2_{\xi}}\|_{L^2}\lesssim \langle t \rangle^{\theta}\|\|\langle \xi\rangle^{\theta}\widehat{f}\,\|_{L^2_{\xi}}\|_{L^2_{\eta}}\sim\langle t \rangle^{\theta}\|f\|_{H^{\theta,0}}.    
\end{aligned}   
\end{equation}
On the other hand, if $0<\theta<\frac{1}{2}$, \eqref{equi1}, Plancherel's identity and Proposition \ref{decaysimplyeq} yield
\begin{equation}\label{linearestimeq0.2}
 \begin{aligned}
\|\mathcal{D}_{\xi}^{\theta}\big(e^{it\sign(\xi)\eta^2}\widehat{f}\,\big)\|_{L^2}\lesssim \|\langle x \rangle^{\theta}U_1(t)f\|_{L^2} \lesssim \|\langle x \rangle^{\theta}f\|_{L^2}.    
 \end{aligned}   
\end{equation}
If $\theta=\frac{1}{2}$, we further assume $\mathcal{H}_xf\in L^2(|x|\, dx)$. Then it follows from Proposition \ref{decaysimplyeq} that
\begin{equation}\label{linearestimeq0.3}
 \begin{aligned}
\|\mathcal{D}_{\xi}^{\theta}\big(e^{it\sign(\xi)\eta^2}\widehat{f}\,\big)\|_{L^2}\lesssim \|\langle x \rangle^{\theta}U_1(t)f\|_{L^2} \lesssim \|\langle x \rangle^{\theta}f\|_{L^2}+\|\langle x \rangle^{\theta}\mathcal{H}_xf\|_{L^2}.    
 \end{aligned}   
\end{equation}
Plugging \eqref{linearestimeq0.1}, \eqref{linearestimeq0.2} and \eqref{linearestimeq0.3} into \eqref{linearestimeq0}, we complete the deduction of \eqref{weightineq} and \eqref{weightineq2}.
\\ \\
\underline{\bf (b) Assume $\frac{1}{2}<\theta <1$}. By Plancherel's identity, \eqref{equi1}, and \eqref{prelimneq} applied to the product $e^{it\xi|\xi|}e^{it\sign(\xi)\eta^2}\widehat{f}$, we get 
\begin{equation}\label{linearestimeq0.4}
\begin{aligned}
 \||x|^{\theta}U(t)f\|_{L^2}\lesssim & \|e^{it\omega}\widehat{f}\,\|_{L^2}+\|\|\mathcal{D}_{\xi}^{\theta}\big(e^{it\omega}\widehat{f}\,\big)\|_{L^{2}_{\xi}}\|_{L^2_{\eta}}\\
\lesssim & \|f\|_{L^2}+\|\mathcal{D}_{\xi}^{\theta}\big(\widehat{f}\,\big)\|_{L^2}+\|\|\mathcal{D}_{\xi}^{\theta}\big(e^{it\xi|\xi|}\big)\widehat{f}\,\|_{L^{2}_{\xi}}\|_{L^2_{\eta}}+\|\|\mathcal{D}_{\xi}^{\theta}\big(e^{it\sign(\xi)\eta^2}\big)\widehat{f}\,\|_{L^{2}_{\xi}}\|_{L^2_{\eta}}  \\
\lesssim & \|\langle x \rangle^{\theta}f\|_{L^2}+\langle t\rangle^{\theta}\|\langle \xi \rangle^{\theta}\widehat{f}\,\|_{L^2}+\|\min\{1,|t|\eta^2\}|\xi|^{-\theta}\widehat{f}\,\|_{L^2}.  
\end{aligned}    
\end{equation}
Clearly, when $\eta=0$, we have $\min\{1,|t|\eta^2\}|\xi|^{-\theta}\widehat{f}(\xi,\eta)=0$ for all $\xi\neq 0$. Moreover, by hypothesis \eqref{hiphote1}, it follows $\widehat{f}(0,\eta)=0$, for all $\eta\in \mathbb{Z}\setminus\{0\}$. Thus, Proposition \ref{decaywight} shows
\begin{equation*}
\begin{aligned}
\|\min\{1,|t|\eta^2\}|\xi|^{-\theta}\widehat{f}\,\|_{L^2}=&\|\|\min\{1,|t|\eta^2\}|\xi|^{-\theta}\widehat{f}(\xi,\eta)\|_{L^2_{\xi}}\|_{L^2_{\eta}(\mathbb{Z}\setminus\{0\})}\\
\leq & \|\,\||\xi|^{-\theta}\widehat{f}(\xi,\eta)\|_{L^2_{\xi}}\|_{L^2_{\eta}(\mathbb{Z}\setminus\{0\})}\\
\lesssim & \|\|\widehat{f}(\xi,\eta)\|_{H^{\theta}_{\xi}}\|_{L^2_{\eta}(\mathbb{Z}\setminus \{0\})}\\
\lesssim & \|\widehat{f}\,\|_{H^{\theta,0}}\sim \|\langle x \rangle^{\theta}f\|_{L^2}.
\end{aligned}    
\end{equation*}
Gathering the previous estimates, we deduce \eqref{weightineq} for $\frac{1}{2}<\theta<1$.
\begin{rem}\label{remarkHilbertL2}
The arguments in the deduction of the case (b) above and Corollary \ref{signcoro} show that for a function $f\in L^2(\mathbb{R}\times \mathbb{T})\cap L^2(|x|^{2\theta}\, dxdy)$, $\frac{1}{2}<\theta<1$, such that $\widehat{f}(0,\eta)=0$ for all $\eta\in \mathbb{Z}$, it follows
\begin{equation*}
\|\langle x \rangle^{\theta}\mathcal{H}_xf\|_{L^2}\lesssim \|\langle x \rangle^{\theta}f\|_{L^2}.   
\end{equation*}
To verify this result, we apply Plancherel's identity together with \eqref{equi1} and \eqref{prelimneq} to get
\begin{equation*}
\begin{aligned}
\|\langle x \rangle^{\theta}\mathcal{H}_xf\|_{L^2}\lesssim & \|\sign(\xi)\widehat{f}(\xi,\eta)\|_{L^2}+\|\|\mathcal{D}_{\xi}^{\theta}(\sign(\xi)\widehat{f}(\xi,\eta))\|_{L^2_{\xi}}\|_{L^2_{\eta}} \\
    \lesssim & \|f\|_{L^2}+\|\|\sign(\xi)\mathcal{D}_{\xi}^{\theta}(\widehat{f}(\xi,\eta))\|_{L^2_{\xi}}\|_{L^2_{\eta}}+\|\|\widehat{f}(\xi,\eta)\mathcal{D}_{\xi}^{\theta}(\sign(\xi))\|_{L^2_{\xi}}\|_{L^2_{\eta}}\\
\lesssim & \|\langle x \rangle^{\theta} f\|_{L^2}+\|\|\, |\xi|^{-\theta}\widehat{f}(\xi,\eta)\|_{L^2_{\xi}}\|_{L^2_{\eta}},
\end{aligned}    
\end{equation*}
where we have also used  Corollary \ref{signcoro} to estimate $\mathcal{D}_{\xi}^{\theta}(\sign(\xi))$. Now, since $\widehat{f}(0,\eta)=0$ for all $\eta\in \mathbb{Z}$, we can apply Proposition \ref{decaywight} to deduce
\begin{equation*}
\begin{aligned}
\|\|\, |\xi|^{-\theta} \widehat{f}(\xi,\eta)\|_{L^2_{\xi}}\|_{L^2_{\eta}}\lesssim \|\|\widehat{f}(\xi,\eta)\|_{H^{\theta}_{\xi}}\|_{L^2_{\eta}} \lesssim \|\langle x\rangle^{\theta} f\|_{L^2}.   
\end{aligned}    
\end{equation*}
Collecting the above estimates, we obtain the desired inequality.
\end{rem}
The previous remark will be useful to deduce part (d) below. Next, we deal with (c).
\\ \\
\underline{\bf (c) Assume $1\leq \theta <\frac{3}{2}$}. We recall identity  \eqref{distribderiv1}, i.e.,
\begin{equation*}
 \frac{\partial}{\partial_{\xi}}(e^{it \omega }\widehat{f}\,)=2i\sin(\eta^2 t)\widehat{f}(0,\eta)\delta_{\xi}+e^{it\omega}(2i t|\xi|)\widehat{f}(\xi,\eta)+e^{it\omega }\frac{\partial}{\partial \xi}\widehat{f}(\xi,\eta). 
\end{equation*}
By hypothesis \eqref{hiphote1}, we have that the term with $\delta_{\xi}$ in the identity above equals zero whenever $\eta\in \mathbb{Z}\setminus\{0\}$. When $\eta=0$, this follows from the cancellation of the function $\sin(\eta^2 t)$. Consequently, writing $\theta=1+\theta_1$ with $0\leq \theta_1<\frac{1}{2}$, Plancherel's identity and the previous discussion show
\begin{equation}\label{linearestimeq1}
 \begin{aligned}
 \||x|^{\theta}U(t)f\|_{L^2}\leq \|D_{\xi}^{\theta_1}\partial_{\xi}\big(e^{it\omega}\widehat{f}\,\big)\|_{L^2}\lesssim \|D^{\theta_1}_{\xi}\big(e^{it\omega}(2i|\xi|t)\widehat{f}(\xi,\eta)\big)\|_{L^2}+\|D_{\xi}^{\theta_1}\big(e^{it\omega }\partial_\xi\widehat{f}(\xi,\eta)\big)\|_{L^2}.
 \end{aligned}   
\end{equation}
In the case $\theta_1=0$, we assume $D^{\theta_1}_{\xi}$ and $\mathcal{D}^{\theta_1}_{\xi}$ are the identity operators. Let us estimate the right-hand side of the previous inequality. We use the equivalence \eqref{equi1}, the type of product rule \eqref{prelimneq}, and Lemma \ref{derivexp} to deduce
\begin{equation}\label{linearestimeq1.1}
\begin{aligned}
\|D^{\theta_1}_{\xi}\big(e^{it\omega}(2i|\xi|t)\widehat{f}\,\big)\|_{L^2}\lesssim & \|(2it|\xi|)\widehat{f}\,\big\|_{L^2}+\|\,\|\mathcal{D}_{\xi}^{\theta_1}\big(e^{it\omega}(2it|\xi|)\widehat{f}\,\big)\|_{L^2_{\xi}}\|_{L^2_{\eta}}\\
\lesssim & \|(2it|\xi|)\widehat{f}\,\|_{L^2}+\|\mathcal{D}_{\xi}^{\theta_1}\big(e^{it\sign(\xi)\xi^2}\big)(2it|\xi|)\widehat{f}\,\|_{L^2}\\
&+\|\mathcal{D}_{\xi}^{\theta_1}\big(e^{it\sign(\xi)\eta^2}\big)(2it|\xi|)\widehat{f}\,\|_{L^2}+\|\mathcal{D}_{\xi}^{\theta_1}\big((2it|\xi|)\widehat{f}\,\big)\|_{L^2}\\
\lesssim & \langle t\rangle\|f\|_{H^{1,0}}+\langle t\rangle^{1+\theta_1}\|\langle\xi \rangle^{\theta_1}|\xi|\widehat{f}\,\|_{L^2}+\langle t \rangle\|\min\{1,\eta^2 |t|\}\frac{1}{|\xi|^{\theta_1}}|\xi|\widehat{f}\,\|_{L^2}\\
&+\langle t \rangle\|\mathcal{D}_{\xi}^{\theta_1}\big(|\xi|\widehat{f}\,\big)\|_{L^2},
\end{aligned}    
\end{equation}
where we have also used Plancherel's identity and the fact $|t|\leq \langle t\rangle$. Since $\theta=1+\theta_1$, we get
\begin{equation*}
\begin{aligned}
\langle t\rangle^{1+\theta_1}\|\langle\xi \rangle^{\theta_1}|\xi|\widehat{f}\,\|_{L^2}+\langle t \rangle\|\min\{1,\eta^2 |t|\}\frac{1}{|\xi|^{\theta_1}}|\xi|\widehat{f}\,\|_{L^2}\lesssim  \langle t\rangle^{\theta}\|f\|_{H^{\theta,0}}.  
\end{aligned}    
\end{equation*}
Next, we use \eqref{prelimneq}, \eqref{prelimneq1}, and interpolation Proposition \ref{interpo} to get
\begin{equation}\label{linearestimeq3.1}
\begin{aligned}
\|\mathcal{D}_{\xi}^{\theta_1}\big(|\xi|\widehat{f}\,\big)\|_{L^2}=\|\mathcal{D}_{\xi}^{\theta_1}\Big(\frac{|\xi|}{\langle\xi\rangle}\langle\xi\rangle\widehat{f}\,\Big)\|_{L^2} \lesssim & \|\mathcal{D}_{\xi}^{\theta_1}\Big(\frac{|\xi|}{\langle\xi\rangle}\Big)\|_{L^{\infty}}\|\langle\xi\rangle\widehat{f}\,\|_{L^2}+\|J^{\theta_1}\big(\langle\xi\rangle\widehat{f}\,\big)\|_{L^2} \\
 \lesssim & \|f\|_{H^{1,0}}+\|\|\langle\xi\rangle^{1+\theta_1}\widehat{f}\,\|_{L^2_{\xi}}^{\frac{1}{1+\theta_1}}\|J_{\xi}^{1+\theta_1}\widehat{f}\,\|_{L^2_{\xi}}^{\frac{\theta_1}{1+\theta_1}}\|_{L^2_{\eta}}\\  
\lesssim & \|f\|_{H^{\theta,0}}+\|\langle x\rangle^{\theta}f\|_{L^2},
\end{aligned}    
\end{equation}
where we have used Young's inequality $ab\lesssim a^{p_1}+b^{p_1}$, $\frac{1}{p_1}+\frac{1}{p_2}=1$, $a,b\geq 0$ and Minkowski inequality in $L^2_{\xi}(\mathbb{R})$ together with Plancherel's identity. Summarizing the previous estimates, we obtain
\begin{equation}\label{linearestimeq2}
\|D^{\theta_1}_{\xi}\big(e^{it\omega}(2i|\xi|t)\widehat{f}\,\big)\|_{L^2}\lesssim \langle t \rangle^{\theta}\big(\|f\|_{H^{\theta,0}}+\|\langle x\rangle^{\theta}f\|_{L^2}\big).   
\end{equation}
It remains to control the second factor on the right-hand side of \eqref{linearestimeq1}. If $\theta_1=0$, at once we deduce
\begin{equation}\label{linearestimeq3}
 \begin{aligned}
 \|e^{it\omega }\frac{d}{d\xi}\widehat{f}\,\|_{L^2}\leq \|\langle x\rangle f\|_{L^2}\leq \|\langle x\rangle^{\theta} f\|_{L^2}.   
 \end{aligned}   
\end{equation}
If $0<\theta_1<\frac{1}{2}$, by Plancherel's identity and \eqref{weightineq} (which we deduced in (a) above), it is seen that
\begin{equation}\label{linearestimeq4}
 \begin{aligned}
 \|D_{\xi}^{\theta_1}\big(e^{it\omega }\frac{d}{d\xi}\widehat{f}\,\big)\|_{L^2}\sim\||x|^{\theta_1}U(t)(xf)\|_{L^2}  \lesssim & \langle t \rangle^{\theta_1}\big(\|\langle x\rangle^{1+\theta_1} f\|_{L^2}+\|\|J^{\theta_1}_x\big(\langle x\rangle f\big)\|_{L^2_x}\|_{L^2_y}\big)\\
\lesssim & \langle t \rangle^{\theta_1}\big(\|\langle x\rangle^{1+\theta_1} f\|_{L^2}+\|\|\langle x \rangle^{1+\theta_1}f\|_{L^2_x}^{\frac{1}{1+\theta_1}}\|J^{1+\theta_1}_x f\|_{L^2_x}^{\frac{\theta_1}{1+\theta_1}}\|_{L^2_y}\big)\\
\lesssim & \langle t \rangle^{\theta}\big(\|\langle x\rangle^{\theta} f\|_{L^2}+\| f\|_{H^{\theta,0}}\big),
 \end{aligned}   
\end{equation}
where we used interpolation Proposition \ref{interpo} and Young's inequality as before.  Finally, inserting \eqref{linearestimeq2}, \eqref{linearestimeq3} and \eqref{linearestimeq4} in \eqref{linearestimeq1}, we obtain \eqref{weightineq} for the present restrictions on $\theta$ and $f$.
\\ \\
\underline{\bf (d) $k+\frac{1}{2}\leq \theta<k+1$, $k\in \mathbb{Z}^{+}$}. By Plancherel's identity, the estimate for $|x|^{\theta}U(t)f\in L^2(\mathbb{R}\times\mathbb{T})$ is equivalent to the estimate of $D^{\theta_1}_{\xi}\partial_{\xi}^k(e^{it\omega}\widehat{f}\,)\in L^2(\mathbb{R}\times\mathbb{Z})$, where $\frac{1}{2}\leq \theta_1:=\theta-k<1$, or equivalently, we must estimate $\partial_{\xi}^k(e^{it\omega}\widehat{f}\,)\in H^{\theta_1,0}_{\xi}(\mathbb{R}\times \mathbb{Z})$. We have from hypothesis \eqref{hiphote2} that $\partial_{\xi}^l\widehat{f}(0,\eta)=0$, for all $\eta\in \mathbb{Z}\setminus\{0\}$, and each $l=1,\dots,k-1$, and if $k\geq 2$, it follows from \eqref{hiphote2.1} that $\partial_{\xi}^{l}\widehat f(0,0)=0$, for each $l=0,1,\dots,k-2$. Thus, identity in Proposition \ref{generalderiv} reduces to
\begin{equation}\label{derivativesimpl}
\begin{aligned}
 \frac{\partial^k}{\partial \xi^k}\big(e^{it\omega}\widehat{f}\,\big)=&e^{it \omega }\sum_{m=1}^k \sum_{\substack{l_1+\dots+l_{m}+l_{m+1}=k-m\\ 0\leq l_j\leq 1\\ j=1,\dots,m}}c_{l_1,\dots,l_{m+1}}\partial_{\xi}^{l_1}(2it|\xi|)\dots \partial_{\xi}^{l_m}(2it|\xi|)\partial_{\xi}^{l_{m+1}}\widehat{f}\\
&+ e^{it \omega} \partial_{\xi}^{k}\widehat{f},
\end{aligned}    
\end{equation}
where we have also used that $\sin(\eta^2 t)=0$, when $\eta=0$. Note that up to this point, condition $\partial_{\xi}^{l}\widehat f(0,0)=0$ is only necessary for $l=0,1,\dots,k-3$. To deduce \eqref{weightineq} for the present case, we will estimate the $H^{\theta_1,0}_{\xi}(\mathbb{R}\times \mathbb{Z})$-norm of \eqref{derivativesimpl}. We first study each factor composing the sum of the first term on the right-hand side of \eqref{derivativesimpl}. We consider two extra cases.
\\ \\
\underline{\bf Assume that $l_1=l_2=\dots=l_m=1$}. It follows, $l_{m+1}=k-2m$ and
\begin{equation*}
\begin{aligned}
\|D^{\theta_1}_{\xi}\big(e^{it\omega}\partial_{\xi}^{l_1}(2it|\xi|)\dots \partial_{\xi}^{l_m}(2it|\xi|)\partial_{\xi}^{l_{m+1}}\widehat{f}\,\big)\|_{L^2}=\|D^{\theta_1}_{\xi}\big(e^{it\omega}(2it\sign(\xi))^m\partial^{k-2m}_{\xi}\widehat{f}\,\big)\|_{L^2}.   
\end{aligned}    
\end{equation*}
Hence, when $m$ is even, we get
\begin{equation}\label{firstdecomp1}
\begin{aligned}
\|D^{\theta_1}_{\xi}\big(e^{it\omega}(2it\sign(\xi))^m\partial_{\xi}^{k-2m}\widehat{f}\,\big)\|_{L^2} \lesssim &\langle t\rangle^m \|D^{\theta_1}_{\xi}\big(e^{it\omega}\partial^{k-2m}_{\xi}\widehat{f}\,\big)\|_{L^2}\\
\lesssim &\langle t\rangle^m \|\langle x \rangle^{\theta_1}U(t)(x^{k-2m}f)\|_{L^2},
\end{aligned}    
\end{equation}
and when $m$ is odd, 
\begin{equation}\label{firstdecomp2}
\begin{aligned}
\|D^{\theta_1}_{\xi}\big(e^{it\omega}(2it\sign(\xi))^m\partial_{\xi}^{k-2m}\widehat{f}\,\big)\|_{L^2} \lesssim &\langle t\rangle^m \|D^{\theta_1}_{\xi}\big(e^{it\omega}\sign(\xi)\partial_{\xi}^{k-2m}\widehat{f}\,\big)\|_{L^2}\\
\lesssim &\langle t\rangle^m \|\langle x \rangle^{\theta_1}U(t)\mathcal{H}_x(x^{k-2m}f)\|_{L^2}.
\end{aligned}    
\end{equation}
We proceed with the estimates of \eqref{firstdecomp1} and \eqref{firstdecomp2} in the cases: $\frac{1}{2}<\theta_1<1$, and $\theta_1=\frac{1}{2}$. When $\frac{1}{2}<\theta_1<1$, since $m\geq 1$, \eqref{hiphote2} shows that $x^{k-2m}f$ satisfies the hypothesis of $(iii)$ in Theorem \ref{LinearEst} (i.e., it satisfies the results proven in (b) and (c) above). Thus, we deduce
\begin{equation}\label{firstdecomp3}
 \begin{aligned}
 \|\langle x \rangle^{\theta_1}U(t)(x^{k-2m}f)\|_{L^2} \lesssim & \langle t \rangle^{\theta_1}\big(\|\langle x \rangle^{\theta_1}x^{k-2m}f\|_{L^2}+\|\|J^{\theta_1}_{x}(x^{k-2m}f)\|_{L^2_x}\|_{L^2_y}\big) \\
 \lesssim & \langle t \rangle^{\theta_1}\big(\|\langle x \rangle^{\theta_1}x^{k-2m}f\|_{L^2}+\|\|J^{\theta_1}_{x}(\langle x\rangle^{k-2m}f)\|_{L^2_x}\|_{L^2_y}\big) \\
  \lesssim & \langle t \rangle^{\theta_1}\big(\|\langle x \rangle^{\theta_1+k-2m}f\|_{L^2}+\|f\|_{H^{\theta,0}}\big)  , 
 \end{aligned}   
\end{equation}
where to control $J^{\theta_1}_{x}(x^{k-2m}f)\in L^{2}(\mathbb{R}\times \mathbb{T})$, we have used interpolation and Young's inequality (this argument is rather similar to \eqref{linearestimeq3.1}). Next, we recall  that for a function $g$ sufficiently regular with enough decay such that $\widehat{g}(0,\eta)=0$ for all $\eta\in \mathbb{Z}$, it follows 
\begin{equation}\label{conmm}
[\mathcal{H}_x,x]g=0.    
\end{equation}
The assumptions \eqref{hiphote2} and \eqref{hiphote2.1} imply that $\widehat{(x^{l}f)}(0,\eta)=0$ for all $\eta\in \mathbb{Z}$, and each $l=1,\dots,k-2$, thus \eqref{conmm} yields
\begin{equation}\label{conmuthilber}
  \mathcal{H}_x(x^l f)=x\mathcal{H}_x(x^{l-1}f)=\dots=x^l\mathcal{H}_x f\in L^{2}(\mathbb{R}\times \mathbb{T}),  
\end{equation}
for all $l=1,\dots,k-2$. Note that since $k-2m\leq k-2$, in particular $\widehat{(x^{k-2m}f)}(0,\eta)=0$ for all $\eta\in \mathbb{Z}$. Hence, this fact and $\frac{1}{2}<\theta_1<1$ allow us to apply Theorem \ref{LinearEst} (iii) (which is part (b) above), the result in Remark \ref{remarkHilbertL2} and complex interpolation Proposition \ref{interpo} to infer
\begin{equation}\label{firstdecomp4}
 \begin{aligned}
 \|\langle x \rangle^{\theta_1}U(t)\mathcal{H}_x(x^{k-2m}f)\|_{L^2} \lesssim & \langle t \rangle^{\theta_1}\big(\|\langle x \rangle^{\theta_1}\mathcal{H}_x(x^{k-2m}f)\|_{L^2}+\|\|J^{\theta_1}_{x}\mathcal{H}_x(x^{k-2m}f)\|_{L^2_x}\|_{L^2_y}\big) \\
 \lesssim & \langle t \rangle^{\theta_1}\big(\|\langle x \rangle^{\theta_1}(x^{k-2m}f)\|_{L^2}+\|\|J^{\theta_1}_{x}(\langle x\rangle^{k-2m}f)\|_{L^2_x}\|_{L^2_y}\big) \\
  \lesssim & \langle t \rangle^{\theta_1}\big(\|\langle x \rangle^{\theta_1+k-2m}f\|_{L^2}+\|f\|_{H^{\theta,0}}\big).
 \end{aligned}   
\end{equation}
Next, we consider $\theta_1=\frac{1}{2}$ in \eqref{firstdecomp1} and \eqref{firstdecomp2}. In this case, we will apply (ii) in Theorem \ref{LinearEst}. For this, let us first show that $|x|^{\frac{1}{2}+k-2m}\mathcal{H}_xf\in L^2(\mathbb{R}\times \mathbb{T})$, i.e., by the equivalence in \eqref{equi1} (see also \eqref{equi2}), and \eqref{conmuthilber}, it is enough to show $\mathcal{D}_{\xi}^{\frac{1}{2}}(\sign(\xi)\partial_{\xi}^{k-2m}\widehat{f}\,)\in L^2(\mathbb{R}\times \mathbb{Z})$. We use \eqref{prelimneq}, Corollary \ref{signcoro} and Plancherel's identity to get 
\begin{equation*}
 \begin{aligned}
 \|\mathcal{D}_{\xi}^{\theta_1}(\sign(\xi)\partial_{\xi}^{k-2m}\widehat{f}\,)\|_{L^2}\lesssim & \|\|\mathcal{D}_{\xi}^{\theta_1}(\sign(\xi))\partial_{\xi}^{k-2m}\widehat{f}\,\|_{L^2_{\xi}}\|_{L^2_{\eta}}+\|\|J_{\xi}^{\theta_1}\partial_{\xi}^{k-2m}\widehat{f}\,\|_{L^2_{\xi}}\|_{L^2_{\eta}}\\
 \lesssim & \|\||\xi|^{-\frac{1}{2}}\partial_{\xi}^{k-2m}\widehat{f}\,\|_{L^2_{\xi}}\|_{L^2_{\eta}}+\|\langle x \rangle^{\theta}f\|_{L^2}.
 \end{aligned}   
\end{equation*}
To estimate the first term on the right-hand side of the inequality above, we use again that $\partial_{\xi}^{k-2m}\widehat{f}(0,\eta)=0$ for all $\eta \in \mathbb{Z}$ (which is a consequence of \eqref{hiphote2} and \eqref{hiphote2.1}), and we split the integration on $L^2_{\xi}(\mathbb{R})$ to use the mean value theorem to find
\begin{equation*}
\begin{aligned}
\|\||\xi|^{-\frac{1}{2}}\partial_{\xi}^{k-2m}\widehat{f}\,\|_{L^2_{\xi}}\|_{L^2_{\eta}}\leq & \|\||\xi|^{-\frac{1}{2}}\big(\partial_{\xi}^{k-2m}\widehat{f}(\xi,\eta)-\partial_{\xi}^{k-2m}\widehat{f}(0,\eta)\big)\|_{L^2_{\xi}(|\xi|\leq 1)}\|_{L^2_{\eta}}+\|\|\partial_{\xi}^{k-2m}\widehat{f}\,\|_{L^2_{\xi}(|\xi|> 1)}\|_{L^2_{\eta}} \\
\lesssim & \|\||\xi|^{\frac{1}{2}}\|_{L^2_{\xi}(|\xi|\leq 1)}\|\partial_{\xi}^{k+1-2m}\widehat{f}(\cdot,\eta)\|_{L^{\infty}_{\xi}}\|_{L^2_{\eta}}+\|\partial_{\xi}^{k-2m}\widehat{f}\,\|_{L^2}\\
\lesssim & \|\|\partial_{\xi}^{k+1-2m}\widehat{f}(\cdot,\eta)\|_{H^{1}_{\xi}}\|_{L^2_{\eta}}+\|\partial_{\xi}^{k-2m}\widehat{f}\,\|_{L^2}\\
\lesssim & \|\langle x \rangle^{\theta}f\|_{L^2}.
\end{aligned}
\end{equation*}
Above we used Sobolev embedding $H^{1}_{\xi}(\mathbb{R})\hookrightarrow L^{\infty}_{\xi}(\mathbb{R})$ and the fact that $k+2-2m\leq \theta$ provided that $m\geq 1$. Gathering the previous result, we conclude
\begin{equation}\label{Hdecay}
 \|  \langle x\rangle^{\frac{1}{2}+k-2m}\mathcal{H}_x f\|_{L^2}\lesssim \|\langle x \rangle^{\theta}f\|_{L^2}.
\end{equation}
Consequently, \eqref{Hdecay} and \eqref{conmuthilber} show that $x^{k-2m}f$ satisfies the hypothesis in Theorem \ref{LinearEst} $(ii)$, which we deduce in step (a) above. Hence, we can use similar ideas as in \eqref{firstdecomp3} and \eqref{firstdecomp4} to conclude
\begin{equation*}
 \begin{aligned}
\|\langle x\rangle^{\frac{1}{2}}U(t)(x^{k-2m}f)\|_{L^2}+\|\langle x\rangle^{\frac{1}{2}}U(t)\mathcal{H}_x(x^{k-2m}f)\|_{L^2} \lesssim \langle t \rangle^{\frac{1}{2}}\big(\|\langle x \rangle^{\theta}f\|_{L^2}+\|f\|_{H^{\theta,0}}\big).
 \end{aligned}   
\end{equation*}
This completes the case $\theta_1=\frac{1}{2}$. Going back to \eqref{firstdecomp1} and \eqref{firstdecomp2}, all of the above estimates can be summarized as follows 
\begin{equation*}
 \|D^{\theta_1}_{\xi}\big(e^{it\omega}\partial_{\xi}^{l_1}(2it|\xi|)\dots \partial_{\xi}^{l_m}(2it|\xi|)\partial_{\xi}^{l_{m+1}}\widehat{f}\,\big)\|_{L^2}\lesssim \langle t \rangle^{\theta}\big(\|\langle x \rangle^{\theta}f\|_{L^2}+\|f\|_{H^{\theta,0}}\big),   
\end{equation*}
whenever $k+\frac{1}{2}\leq \theta< k+1$, $\theta_1=\theta-k$, $m=1,\dots,k$, $l_1+\dots+l_{m+1}=k-m$, and $l_1=\dots=l_m=1$. 
\\ \\
\underline{\bf Assume that $l_j=0$ for some $j=1,\dots,m$}. To simplify notation, let us denote $$g_m(t,\xi):=\partial_{\xi}^{l_1}(2it|\xi|)\dots \partial_{\xi}^{l_m}(2it|\xi|).$$
Since $0\leq l_k\leq 1$, and there exists some $l_j=0$, it follows $m-\sum_{r=1}^m l_r\geq 1$, and we have that
\begin{equation}\label{decaycond}
 |g_m(t,\xi)|\lesssim \langle t \rangle^{m}|\xi|^{m-\sum_{r=1}^m l_r} .
\end{equation}
Now, by the arguments in \eqref{linearestimeq0.4} and \eqref{decaycond}, we have 
\begin{equation}\label{linearestimeq4.1}
\begin{aligned}
\|D^{\theta_1}_{\xi}\big(e^{it\omega}g_m(t,\xi)\partial_{\xi}^{l_{m+1}}\widehat{f}\,\big)\|_{L^2}\lesssim & \langle t\rangle^{m+\theta_1}\big( \|J^{\theta_1}_{\xi}\big(g_m(1,\xi)\partial_{\xi}^{l_{m+1}}\widehat{f}\big)\,\|_{L^2}+\|\langle \xi\rangle^{\theta_1} g_m(1,\xi)\partial_{\xi}^{l_{m+1}}\widehat{f}\,\|_{L^2}\\
&+\|\min\{1,|t|\eta^2\}|\xi|^{-\theta_1} g_m(1,\xi)\partial_{\xi}^{l_{m+1}}\widehat{f}\,\|\big)\\
\lesssim& \langle t\rangle^{m+\theta_1}\big( \|J^{\theta_1}_{\xi}\big(g_m(1,\xi)\partial_{\xi}^{l_{m+1}}\widehat{f}\big)\,\|_{L^2}+\|\langle \xi\rangle^{\theta_1+m-\sum_{r=1}^m l_r} \partial_{\xi}^{l_{m+1}}\widehat{f}\,\|_{L^2}\\
&+\|\langle \xi\rangle^{m-\sum_{r=1}^m l_r-\theta_1} \partial_{\xi}^{l_{m+1}}\widehat{f}\,\|_{L^2}\big).
\end{aligned}    
\end{equation}
To complete the study of the previous estimate, it is not hard to see that several applications of the identity $\langle \xi\rangle^{\nu}\partial_{\xi}h=\partial_{\xi}(\langle \xi\rangle^{\nu} h)-\partial_{\xi}(\langle \xi\rangle^{\nu})h$, the fact that $|\partial_{\xi}(\langle \xi\rangle^{\nu})|\lesssim \langle \xi\rangle^{\nu-1}$, and interpolation Proposition \ref{interpo} establish 
\begin{equation}\label{linearestimeq4.2}
\begin{aligned}
    \|\langle \xi\rangle^{\theta_1+m-\sum_{r=1}^m l_r} \partial_{\xi}^{l_{m+1}}\widehat{f}\,\|_{L^2}+\|\langle \xi\rangle^{m-\sum_{r=1}^m l_r-\theta_1} \partial_{\xi}^{l_{m+1}}\widehat{f}\,\|_{L^2}\lesssim &\|\|J^{l_{m+1}}_{\xi}\big(\langle \xi\rangle^{\theta_1+m-\sum_{r=1}^m l_r} \widehat{f}\,\big)\|_{L^2_{\xi}}\|_{L^2_{\eta}} \\
\lesssim &\|\|\langle \xi\rangle^{\theta} \widehat{f}\,\|_{L^2_{\xi}}^{\frac{\theta-l_{m+1}}{\theta}}\|J^{\theta}_{\xi}\widehat{f}\,\|_{L^2_{\xi}}^{\frac{l_{m+1}}{\theta}}\|_{L^2_{\eta}}\\
\lesssim& \|\langle x \rangle^{\theta}f\|_{L^2}+\|f\|_{H^{\theta,0}}.
\end{aligned}    
\end{equation}
On the other hand, following a similar argument as in \eqref{linearestimeq3.1}, using \eqref{prelimneq}, \eqref{prelimneq1}, and the fact that the function $g_m(1,\xi)\langle \xi \rangle^{-m+\sum_{r=1}^m l_r}$ is bounded with bounded derivative, we get 
\begin{equation*}
\begin{aligned}
\|J^{\theta_1}_{\xi}\big(g_m(1,\xi)\partial_{\xi}^{l_{m+1}}\widehat{f}\big)\,\|_{L^2}=& \|J^{\theta_1}_{\xi}\Big(\frac{g_m(1,\xi)}{\langle \xi \rangle^{m-\sum_{r=1}^m l_r}} \langle \xi \rangle^{m-\sum_{r=1}^m l_r} \partial_{\xi}^{l_{m+1}} \widehat{f}\Big)\|_{L^2}\\
\lesssim & \|J^{\theta_1+l_{m+1}}_{\xi}\big(\langle \xi \rangle^{m-\sum_{r=1}^m l_r}  \widehat{f}\big)\|_{L^2}\\
\lesssim& \|\langle x \rangle^{\theta}f\|_{L^2}+\|f\|_{H^{\theta,0}},
\end{aligned}    
\end{equation*} 
where we also argued as in \eqref{linearestimeq4.2}, and once again we applied interpolation in the $\xi$-variable. This resolves the estimate of the case $l_j=0$ for some $j=1,\dots,m$.

Consequently, we have completed the estimate of the first term on the right-hand side of \eqref{derivativesimpl}. Next, we study the second term on the right-hand side of \eqref{derivativesimpl}, i.e., we study $e^{it\omega}\partial_{\xi}^k f\in H_{\xi}^{\theta_1,0}(\mathbb{R}\times \mathbb{Z})$.

Since $\frac{1}{2}\leq \theta_1<1$, the hypothesis \eqref{hiphote3} and \eqref{hiphote4} imply that $x^kf$ satisfies the conditions in $(ii)$ and $(iii)$ in Theorem \ref{LinearEst} with $\theta_1=\theta-k$. Thus, when $\frac{1}{2}< \theta_1<1$, we use the results of parts (a), (b), and (c) above to get
\begin{equation*}
\begin{aligned}
 \|D_{\xi}^{\theta_1}\big( e^{it \omega} \partial_{\xi}^{k}\widehat{f}\,\big)\|_{L^2}\lesssim \|\langle x\rangle^{\theta_1}U(t)(x^kf)\|_{L^2}\lesssim& \langle t \rangle^{\theta_1}\big(\|\langle x\rangle^{\theta_1}(x^kf)\|_{L^2}+ \|\|J^{\theta_1}(x^k f)\|_{L^2_{x}}\|_{L^2_y}\big)\\
 \lesssim & \langle t \rangle^{\theta_1}\big(\|\langle x\rangle^{\theta_1+k}f\|_{L^2}+ \|\|J^{\theta_1}(\langle x\rangle^k f)\|_{L^2_{x}}\|_{L^2_y}\big)\\
 \lesssim & \langle t \rangle^{\theta}\big(\|\langle x\rangle^{\theta}f\|_{L^2}+\|f\|_{H^{\theta,0}}\big),
\end{aligned}    
\end{equation*}
where we also used interpolation Proposition \ref{interpo} and Young's inequality (this argument is similar to the interpolation estimate in \eqref{linearestimeq3.1}). Now, when $\theta_1=\frac{1}{2}$, it follows from (b) (which is \eqref{weightineq2}),  \eqref{conmuthilber} and interpolation that
\begin{equation*}
\begin{aligned}
 \|D_{\xi}^{\frac{1}{2}}\big( e^{it \omega} \partial_{\xi}^{k}\widehat{f}\,\big)\|_{L^2}\lesssim \|\langle x\rangle^{\frac{1}{2}}U(t)(x^k f)\|_{L^2}\lesssim& \langle t \rangle^{\frac{1}{2}}\big(\|\langle x\rangle^{\frac{1}{2}}(x^kf)\|_{L^2}+\|\langle x\rangle^{\frac{1}{2}}\mathcal{H}_x(x^kf)\|_{L^2}+ \|\|J^{\frac{1}{2}}(x^k f)\|_{L^2_{x}}\|_{L^2_y}\big)\\
 \lesssim & \langle t \rangle^{\frac{1}{2}}\big(\|\langle x\rangle^{\frac{1}{2}+k}f\|_{L^2}+\|\langle x\rangle^{\frac{1}{2}}\mathcal{H}_x(x^k f)\|_{L^2}+ \|\|J^{\frac{1}{2}}(\langle x\rangle^k f)\|_{L^2_{x}}\|_{L^2_y}\big)\\
 \lesssim & \langle t \rangle^{\frac{1}{2}}\big(\|\langle x\rangle^{\theta}f\|_{L^2}+\|\langle x\rangle^{\frac{1}{2}}\mathcal{H}_x(x^k f)\|_{L^2}+\|f\|_{H^{\theta,0}}\big).
\end{aligned}    
\end{equation*}
This completes the deduction of case (d) and in turn restriction $k+\frac{1}{2}\leq \theta<k+1$ in part $(iv)$ of Theorem \ref{LinearEst}. 
\\ \\
\underline{\bf (e) $k+1\leq \theta<k+\frac{3}{2}$, $k\in \mathbb{Z}^{+}$}. Writing $\theta_1=\theta-k-1$ and following the comments at the beginning of Case (d), to get the current case we must estimate $\partial_{\xi}^{k+1}(e^{it\omega}\widehat{f}\,)\in H^{\theta_1,0}_{\xi}(\mathbb{R}\times \mathbb{Z})$. Proposition \ref{generalderiv} shows
\begin{equation}\label{distriidentity2}
\begin{aligned}
 \partial^{k+1}_{\xi}\big(e^{it\omega}\widehat{f}\,\big)=&\sum_{l=0}^{k}\sum_{m=0}^{k-l}\nu_{l,m}(t)\sin(\eta^2 t)\partial_{\xi}^{l}\widehat{f}(0,\eta)\delta_{\xi}^{(m)} +\sum_{l=0}^{k-2}\sum_{m=0}^{k-2-l} d_{l,m}(t)\cos(\eta^2 t)\partial_{\xi}^l\widehat{f}(0,\eta)\delta_{\xi}^{(m)} \\ 
 &+e^{it \omega }\sum_{m=1}^{k+1} \sum_{\substack{l_1+\dots+l_{m}+l_{m+1}=k+1-m\\ 0\leq l_j\leq 1\\ j=1,\dots,m}}c_{l_1,\dots,l_{m+1}}\partial_{\xi}^{l_1}(2it|\xi|)\dots \partial_{\xi}^{l_m}(2it|\xi|)\partial_{\xi}^{l_{m+1}}\widehat{f}\\
&+ e^{it \omega} \partial_{\xi}^{k+1}\widehat{f}.
\end{aligned}    
\end{equation}
We recall that if $k<2$, the convention above is that $\sum_{l=0}^{k-2}(\cdots)=0$. Now, since \eqref{hiphote2} and \eqref{hiphote4} imply that $\partial_{\xi}^l\widehat{f}(0,\eta)$ for all $\eta\in \mathbb{Z}\setminus\{0\}$, and each $l=1,\dots,k$, and $\sin(\eta^2 t)=0$, when $\eta=0$, we have that the terms with factor $\sin(\eta^2 t)$ in \eqref{distriidentity2} are null. Moreover, when $k\geq 2$, \eqref{hiphote2}, and \eqref{hiphote2.1} show that the terms with $\cos(\eta^2 t)$ in \eqref{distriidentity2} are null. Such cancellation is possible since \eqref{hiphote2.1} states that $\partial^{l}_{\xi}\widehat{f}(0,0)=0$ for all $l=0,1,\dots,k-2$. Summarizing, we have that \eqref{distriidentity2} reduces to 
\begin{equation*}
\begin{aligned}
 \partial^{k+1}_{\xi}\big(e^{it\omega}\widehat{f}\,\big)= &e^{it \omega }\sum_{m=1}^{k+1} \sum_{\substack{l_1+\dots+l_{m}+l_{m+1}=k+1-m\\ 0\leq l_j\leq 1\\ j=1,\dots,m}}c_{l_1,\dots,l_{m+1}}\partial_{\xi}^{l_1}(2it|\xi|)\dots \partial_{\xi}^{l_m}(2it|\xi|)\partial_{\xi}^{l_{m+1}}\widehat{f}\\
&+ e^{it \omega} \partial_{\xi}^{k+1}\widehat{f}.
\end{aligned}    
\end{equation*}
As a consequence, the above identity allows us to follow similar arguments to those used in the previous case (d) to conclude (e). To avoid repetition, we omit further details.
\end{proof}

%%%%%%%%%%%%%%%%%%%%%%%%%%%%%%%%%%%%%%%%%%%%%%%%%%%%%%%%%%%%%%%%%%%%%%%%%%%%%%%%%%%%%%%%%%%%%%%%%%%%%%%%%%%%%%%%%%%%%%%%%%%%%%%%%%%%%%%%%%%%%%%%%%%%%

\subsection{Unique continuation principles linear equation}

In this section, we present the deduction of the unique continuation principles stated in Theorem  \ref{lineaerequniquecont1} and Corollary \ref{lineaerequniquecont2}. More precisely, Theorem  \ref{lineaerequniquecont1} deals with the study of the cases where given $f\in H^{\theta,0}(\mathbb{R}\times\mathbb{T})\cap L^2(|x|^{\theta}\, dx dy)$, there exists a time $t\neq 0$ such that $U(t)f\in H^{\theta,0}(\mathbb{R}\times\mathbb{T})\cap L^2(|x|^{\theta}\, dx dy)$, where $\theta\neq \frac{1}{2}+k$ for all integer $k\geq 0$. In Corollary \ref{lineaerequniquecont2}, we study the cases $\theta=\frac{1}{2}+k$ for some integer $k\geq 0$.

\subsubsection{Proof of Theorem \ref{lineaerequniquecont1}}

\begin{proof}[Proof of Theorem \ref{lineaerequniquecont1}]
We divide the proof into the following parts:
\begin{itemize}
    \item[(a)] Case $\frac{1}{2}<\theta<\frac{3}{2}$.
    \item[(b)] Case $\frac{3}{2}<\theta<\frac{5}{2}$.
    \item[(c)] Case $\frac{5}{2}<\theta<\frac{7}{2}$.
    \item[(d)] Case  $k+\frac{1}{2}<\theta<k+\frac{3}{2}$ with $k\geq 3$.
\end{itemize}
Let us first discuss the ideas and motivations of the previous cases. (a) and (b) correspond to part $(i)$ in Theorem \ref{lineaerequniquecont1}. In these parts, we show that somehow the dispersive effects of $\mathcal{H}_x\partial_y^2$ are more dominant and responsible for condition \eqref{hiphoteunique1}. In part (c), we deduce $k=2$ in $(ii)$ in Theorem \ref{lineaerequniquecont1}. In this part, we show that \eqref{hiphoteunique2} is imposed by the dispersion $\mathcal{H}_x\partial_x^2$, while  \eqref{hiphoteunique3} is  due to $\mathcal{H}_x\partial_y^2$. Finally, we conclude the general case in part (d), which follows from similar considerations as in (c), and using an inductive argument. Additionally, we remark that for technical reasons in the deductions of (c) and (d) below, we get first \eqref{hiphoteunique3} for all $l=0,\dots,k$, and then we show \eqref{hiphoteunique2}.
\\ \\
\underline{\bf (a) Assume $\frac{1}{2}<\theta<\frac{3}{2}$}. 
We consider a function $\phi\in C^{\infty}_{c}(\mathbb{R})$ with $\phi=1$ in a neighborhood of the origin. Let us first assume that $\frac{1}{2}<\theta<1$. We remark that $U(t)f\in L^2(|x|^{2\theta}\, dx dy)$ if and only if  $e^{it\omega}\widehat{f}\in H^{\theta,0}(\mathbb{R}\times \mathbb{Z})$.  Thus, taking the Fourier transform of $U(t)f$, we write
\begin{equation}\label{uniqueq4}
 \begin{aligned}
  e^{it\omega}\widehat{f}(\xi,\eta)=&\big(e^{i t\xi|\xi|}-1\big)e^{i t \sign(\xi)\eta^2}\widehat{f}(\xi,\eta)+e^{i t \sign(\xi)\eta^2}\big(\widehat{f}(\xi,\eta)-\widehat{f}(0,\eta)\phi(\xi)\big)\\
  &+e^{i t \sign(\xi)\eta^2}\widehat{f}(0,\eta)\phi(\xi).   
 \end{aligned}   
\end{equation}
Using the fractional derivative $\mathcal{D}_{\xi}^{\theta}$, its properties, and Lemma \ref{derivexp}, we have that $\big(e^{i t\xi|\xi|}-1\big)e^{i t \sign(\xi)\eta^2}\widehat{f}(\xi,\eta)\in H^{\theta,0}_{\xi}(\mathbb{R}\times \mathbb{Z})$. Setting $g(\xi,\eta)=\big(\widehat{f}(\xi,\eta)-\widehat{f}(0,\eta)\phi(\xi)\big)$, it follows $g(0,\eta)=0$ for all $\eta\in \mathbb{Z}$. Then, we use \eqref{prelimneq}, \eqref{steinderiv2}, and Proposition \ref{decaywight} to deduce
\begin{equation*}
 \begin{aligned}
 \|\mathcal{D}_{\xi}^{\theta}\big(e^{it\sign(\xi)\eta^2}g\big)\|_{L^2}\lesssim & \|g\|_{L^2}+\|\|\mathcal{D}_{\xi}^{\theta}\big(e^{it\sign(\xi)\eta^2}\big)g\|_{L^2_{\xi}}\|_{L^2_{\eta}}\\
 \lesssim & \|g\|_{L^2}+\|\|\min\{1,(|t|\eta^2)\}\frac{1}{|\xi|^{\theta} }g\|_{L^2_{\xi}}\|_{L^2_{\eta}}\\
  \lesssim & \|g\|_{L^2}+\|\|g\|_{H^{\theta}_{\xi}}\|_{L^2_{\eta}}\lesssim \|g\|_{H^{\theta,0}}\lesssim \|\langle x\rangle^{\theta}f\|,
 \end{aligned}   
\end{equation*}
where we used Sobolev embedding to get $|\widehat{f}(0,\eta)|\lesssim \|J^{\theta}_{\xi}\widehat{f}(\cdot,\eta)\|_{L^2_{\xi}}$, for all $\eta\in \mathbb{Z}$. Going back to \eqref{uniqueq4}, when $\frac{1}{2}<\theta<1$, we have
\begin{equation*}
  e^{it\omega}\widehat{f}(\xi,\eta)\in H^{\theta,0}(\mathbb{R}\times \mathbb{Z})\, \, \text{ if and only if }\, \,  e^{i t \sign(\xi)\eta^2}\widehat{f}(0,\eta)\phi(\xi)\in  H^{\theta,0}(\mathbb{R}\times \mathbb{Z}).
\end{equation*}
Since the hypothesis in Lemma \ref{lineaerequniquecont1} imply that the above is true for some $t\neq 0$, it follows that $ e^{i t \sign(\xi)\eta^2}\widehat{f}(0,\eta)\phi(\xi)\in H^{\theta}_{\xi}(\mathbb{R})$ for all $\eta\in \mathbb{Z}$, which is always the case when $|t|\eta^2= 2l'\pi$ for some $l'\in \mathbb{Z}^{+}\cup\{0\}$. Otherwise, we have the following claim

\begin{claim}\label{ClaimUnique1} 
\begin{itemize}
    \item[(i)] Let $\frac{1}{2}<\theta<1$. If $|t|\eta^2\neq 2l'\pi$ for all $l'\in\mathbb{Z}$ with $l'\geq 0$, then 
$\mathcal{D}_{\xi}^{\theta}\big(e^{i t \sign(\xi)\eta^2}\phi(\xi)\big)\notin L^2(\mathbb{R})$.
\item[(ii)]  $\mathcal{D}_{\xi}^{\frac{1}{2}}\big(\sign(\xi)\phi(\xi)\big)\notin L^2(\mathbb{R})$.
\end{itemize}

\end{claim}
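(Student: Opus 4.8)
\emph{Plan.} Both parts are instances of a single principle: a compactly supported function that is smooth away from the origin but carries a genuine jump across $\xi=0$ cannot belong to $H^\theta_\xi$ for $\theta\geq\frac12$, and by the equivalences \eqref{equi1}--\eqref{equi2} this is the same as saying that its fractional derivative $\mathcal D^\theta_\xi$ fails to be square integrable. The plan is therefore to isolate the jump, reduce everything to the single model function $\sign(\xi)\phi(\xi)$, and then extract a divergent lower bound for $\mathcal D^\theta_\xi$ directly from the definition \eqref{fracderiv1}.

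For part (i), writing $h(\xi)=e^{it\sign(\xi)\eta^2}\phi(\xi)$, I would first split off the singular part by
\begin{equation*}
h(\xi)=\cos(t\eta^2)\,\phi(\xi)+i\sin(t\eta^2)\,\sign(\xi)\phi(\xi).
\end{equation*}
The first summand is smooth and compactly supported, hence lies in $H^\theta_\xi$, so $\mathcal D^\theta_\xi\big(\cos(t\eta^2)\phi\big)\in L^2$. Since $\mathcal D^\theta_\xi$ is subadditive and positively homogeneous (it is an $L^2_z$ norm of the difference quotient), it suffices to prove $\mathcal D^\theta_\xi\big(\sign(\xi)\phi(\xi)\big)\notin L^2$: were $\mathcal D^\theta_\xi h$ square integrable, then so would be $\mathcal D^\theta_\xi\big(h-\cos(t\eta^2)\phi\big)=|\sin(t\eta^2)|\,\mathcal D^\theta_\xi\big(\sign(\xi)\phi\big)$, forcing $\mathcal D^\theta_\xi\big(\sign(\xi)\phi\big)\in L^2$ because the coefficient $\sin(t\eta^2)$ is nonzero under the hypothesis on $\eta$. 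Part (ii) is exactly the endpoint $\theta=\frac12$ of this same statement, so both parts reduce to showing $\mathcal D^\theta_\xi\big(\sign(\xi)\phi(\xi)\big)\notin L^2$ for every $\theta\in[\frac12,1)$.

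To establish this, set $g(\xi)=\sign(\xi)\phi(\xi)$ and fix $\delta>0$ with $\phi\equiv 1$ on $(-\delta,\delta)$. For $0<\xi<\delta$ I would bound $\mathcal D^\theta_\xi g(\xi)$ from below by restricting the integral in \eqref{fracderiv1} to $z\in(-\delta,0)$, where $g(\xi)-g(z)=1-(-1)=2$; the change of variables $w=-z$ gives
\begin{equation*}
\big(\mathcal D^\theta_\xi g(\xi)\big)^2\geq \int_{-\delta}^{0}\frac{4}{|\xi-z|^{1+2\theta}}\,dz=4\int_0^\delta\frac{dw}{(\xi+w)^{1+2\theta}}=\frac{2}{\theta}\Big(\xi^{-2\theta}-(\xi+\delta)^{-2\theta}\Big)\gtrsim \xi^{-2\theta}
\end{equation*}
for $\xi$ small. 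Integrating in $\xi$ then yields $\int_0^\delta\big(\mathcal D^\theta_\xi g\big)^2\,d\xi\gtrsim\int_0^\delta \xi^{-2\theta}\,d\xi=+\infty$, since $2\theta\geq 1$ (a power divergence when $\theta>\frac12$ and a logarithmic one at the endpoint $\theta=\frac12$). Hence $\mathcal D^\theta_\xi g\notin L^2$, which closes both (i) and (ii).

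The only delicate point is making the lower bound rigorous without the smooth cutoff interfering, and this is precisely what working on the symmetric interval $(-\delta,\delta)$ where $\phi\equiv 1$ achieves: there the singular contribution comes purely from the sign jump, and the nonnegativity of the integrand legitimizes discarding the remainder of the $z$-integration. I expect this localization, together with verifying that the threshold $2\theta=1$ is exactly where the $\xi$-integral diverges, to be the main thing to get right; everything else is the jump/$H^\theta$ heuristic made quantitative. This threshold also explains the dichotomy in Theorem \ref{LinearEst}: for $\theta<\frac12$ the same integral converges, consistent with no cancellation condition being needed in part (i) of that theorem.
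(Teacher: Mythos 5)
Your proof is correct and its engine is the same as the paper's: a pointwise lower bound for the Stein derivative $\mathcal{D}^{\theta}_{\xi}$ obtained by restricting the integral in \eqref{fracderiv1} to the opposite side of the jump at the origin, a change of variables, and the divergence of $\int_0^{\delta}\xi^{-2\theta}\,d\xi$ when $2\theta\geq 1$. The packaging, however, differs in two ways worth recording. The paper estimates $\mathcal{D}^{\theta}_{\xi}\big(e^{it\sign(\xi)\eta^2}\phi\big)$ directly, reading off the jump $|e^{it\eta^2}-e^{-it\eta^2}|=2|\sin(t\eta^2)|$ across $\xi=0$, and then settles part (ii) separately by citing \cite{FonsecaPonce2011}. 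You instead use the parity identity $e^{it\sign(\xi)\eta^2}=\cos(t\eta^2)+i\sin(t\eta^2)\sign(\xi)$ (valid a.e.) together with the pointwise subadditivity and homogeneity of $\mathcal{D}^{\theta}_{\xi}$ to peel off the smooth summand $\cos(t\eta^2)\phi\in H^{\theta}(\mathbb{R})$, reducing (i) to the single model statement $\mathcal{D}^{\theta}_{\xi}\big(\sign(\xi)\phi\big)\notin L^2(\mathbb{R})$; and you run the model computation for all $\theta\in[\tfrac12,1)$, so the endpoint $\theta=\tfrac12$ (logarithmic divergence) yields (ii) as well. The net effect is a unified, self-contained proof of both items, where the paper needs an external reference for (ii); that is a modest but genuine gain in economy.

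One caveat, which you share with the paper rather than introduce: both arguments actually prove the claim under the hypothesis $\sin(t\eta^2)\neq 0$, i.e.\ $|t|\eta^2\neq l'\pi$ for every integer $l'\geq 0$, which is strictly stronger than the stated condition $|t|\eta^2\neq 2\pi l'$. If $t\eta^2$ is an odd multiple of $\pi$, then $e^{it\sign(\xi)\eta^2}\phi(\xi)=-\phi(\xi)$ a.e., a smooth compactly supported function, so the conclusion of (i) genuinely fails; correspondingly, the paper's lower bound $\gtrsim |\sin(t\eta^2)|^2 x^{-2\theta}$ degenerates exactly when your coefficient $\sin(t\eta^2)$ vanishes. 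Your assertion that this coefficient ``is nonzero under the hypothesis on $\eta$'' therefore inherits an imprecision already present in the claim's statement and in the paper's own proof; it is not a defect of your reduction.
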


\begin{proof}[Proof of Claim \ref{ClaimUnique1}] $(ii)$ is a consequence of \cite[Proposition 3]{FonsecaPonce2011}. Let us deal with $(i)$. Let $\delta>0$ be such that $\phi(\xi)=1 $, whenever $|x|<\delta$. We consider the set $\mathcal{A}_{\delta}=\{x>0:x<\delta\}$. Let $x\in \mathcal{A}_{\delta}$, using that $\phi(x)=1$, and the change of variables $w=x-\xi$, we get
\begin{equation*}
\begin{aligned}
\mathcal{D}_{\xi}^{\theta}\big(e^{i t \sign(\xi)\eta^2}\phi(\xi)\big)^2(x)\gtrsim & \int_{|\xi|\leq \delta}\frac{|e^{it \sign(x)\eta^2}\phi(x)-e^{it \sign(\xi)\eta^2}\phi(\xi)|^2}{|x-\xi|^{1+2\theta}}\, d \xi \\
\sim & \int_{-\delta \leq \xi\leq 0 }\frac{|e^{it \eta^2}-e^{-it\eta^2}|^2}{|x-\xi|^{1+2\theta}}\, d\xi\\
\sim & |\sin(t \eta^2)|^2\int_{x \leq w\leq \delta+x }\frac{1}{w^{1+2\theta}}\, d w\\
\gtrsim & \frac{|\sin(t \eta^2)|^2}{x^{2\theta}},
\end{aligned}
\end{equation*}
where we have used that $\frac{1}{x^{2\theta}}-\frac{1}{(\delta+x)^{2\theta}}\geq \frac{1}{3 x^{2\theta}}$, $x\in \mathcal{A}_{\delta}$. Since $\theta>\frac{1}{2}$, it follows that $\frac{|\sin(t \eta^2)|}{x^{\theta}}\notin L^2_{loc}(\mathbb{R})$, and thus  $\mathcal{D}_{\xi}^{\theta}\big(e^{i t \sign(\xi)\eta^2}\phi(\xi)\big)\notin L^2(\mathbb{R})$. 
\end{proof}
Hence, \eqref{equi1} (see also \eqref{equi2}) and Claim \ref{ClaimUnique1} show
\begin{equation*}
\begin{aligned}
e^{i t \sign(\xi)\eta^2}\widehat{f}(0,\eta)\phi(\xi)\in H^{\theta}_{\xi}(\mathbb{R}) \,\, \text{ if and only if }\,\,  \widehat{f}(0,\eta)=0,
\end{aligned}    
\end{equation*}
for all $\eta\in \mathbb{Z}$ such that $|t|\eta^2\neq 2\pi l'$, for all $l'\in \mathbb{Z}$ with $l'\geq 0$, that is, condition \eqref{hiphoteunique1} must be true. This comment completes the deduction of Theorem \ref{lineaerequniquecont1} $(i)$ whenever $\frac{1}{2}<\theta<1$. Now, when $1\leq \theta<\frac{3}{2}$, it follows that $U(t)f\in L^2(|x|^{2\theta}\, dx dy)\subset L^2(|x|^{2\widetilde{\theta}}\, dx dy)$, for all $\frac{1}{2}<\widetilde{\theta}<1$. Then, the argument above works with $\widetilde{\theta}$; thus, the same conclusion \eqref{lineaerequniquecont1} holds.
\\ \\
\underline{\bf (b) Assume $\frac{3}{2}<\theta<\frac{5}{2}$}. Since $U(t)f\in L^2(|x|^{2\widetilde{\theta}}\,dxdy)$ for all $\frac{1}{2}<\widetilde{\theta}<1$, we have from the previous case that $\widehat{f}(0,\eta)=0$, for all $\eta$ such that $|t|\eta^2\neq 2\pi l'$, for each $l'\in \mathbb{Z}$ with $l'\geq 0$. Thus, since $xU(t)f\in L^2(\mathbb{R}\times \mathbb{T})$, the identity \eqref{distribderiv1} reduces to
\begin{equation}\label{uniqueq5}
   \partial_{\xi}(e^{it\omega}\widehat{f}\,)=e^{it\omega}(2it|\xi|)\widehat{f}+e^{it\omega}\partial_{\xi}\widehat{f}. 
\end{equation}
Consequently, we have
\begin{equation*}
U(t)f\in L^2(|x|^{2\theta}\,dxdy)\,\, \text{ if and only if}\, \, \partial_{\xi}(e^{it}\widehat{f}\,)\in H^{\theta-1,0}(\mathbb{R}\times \mathbb{Z}). 
\end{equation*}
Using similar ideas as those in the proof of Theorem \ref{LinearEst} inequality \eqref{linearestimeq1.1}, we have $e^{it\omega}(2it|\xi|)\widehat{f}\in H^{\theta-1,0}(\mathbb{R}\times \mathbb{Z})$. Hence, from identity \eqref{uniqueq5}, we see that
\begin{equation*}
\begin{aligned}
U(t)f\in L^2(|x|^{2\theta}\,dxdy)\,\, &\text{ if and only if}\, \; e^{it\omega }\partial_{\xi} \widehat{f}\in H^{\theta-1,0}(\mathbb{R}\times \mathbb{Z})\\
&\text{ if and only if}\, \; U(t)(xf)\in L^2(|x|^{2(\theta-1)}\, dx dy).  
\end{aligned}
\end{equation*}

Since $\frac{1}{2}<\theta-1<\frac{3}{2}$,  we can apply the previous case (a) to $xf$ to get $\partial_{\xi}\widehat{f}(0,\eta)=0$ for all $\eta\in \mathbb{Z}$ such that $|t|\eta^2\neq 2\pi l'$ for each $l'\in \mathbb{Z}$ with $l'\geq 0$.  This completes the deduction of the present case.
\\ \\
\underline{\bf (c) Assume $\frac{5}{2}<\theta<\frac{7}{2}$}. Since $U(t)f\in L^2(|x|^{\widetilde{\theta}}\, dx dy)$, for all $0<\widetilde{\theta}<\frac{5}{2}$, previous cases show that $\partial_{\xi}^k\widehat{f}(0,\eta)=0$ for all $\eta\in \mathbb{Z}$ such that $|t|\eta^2\neq 2\pi l'$ for each $l'\in \mathbb{Z}$, $l'\geq 0$, with $k=0,1$ (thus, \eqref{hiphoteunique3} holds when $l=k-1=1$ in this case). Let us consider first the case $\frac{5}{2}<\theta<3$. Using identity \eqref{derivative}, we have that there exist some constants $c_1,c_2$ such that
\begin{equation}\label{uniqueq5.1}
 \begin{aligned} \partial_{\xi}^2(e^{it\omega}\widehat{f}\,)=e^{it\omega}(2it\sign(\xi))\widehat{f}+c_1e^{it\omega}(2it|\xi|)\partial_{\xi}\widehat{f}+c_2e^{it\omega}(2it|\xi|)(2it|\xi|)\widehat{f}+e^{it\omega}\partial_{\xi}^2\widehat{f}.    
 \end{aligned}   
\end{equation}
Similar ideas in \eqref{linearestimeq1.1} show $e^{it\omega}(2it|\xi|)\partial_{\xi}\widehat{f}\in H^{\theta-2,0}(\mathbb{R}\times \mathbb{Z})$. In addition,  arguing as in \eqref{linearestimeq4.1}, we have $e^{it\omega}(2it|\xi|)(2it|\xi|)\widehat{f}\in H^{\theta-2,0}(\mathbb{R}\times \mathbb{Z})$.  On the other hand, since $\frac{1}{2}<\theta-2<1$, Sobolev embedding implies that for all $\eta\in \mathbb{Z}$, the function $\xi\rightarrow \partial_{\xi}^2(e^{it\omega}\widehat{f}\,)(\xi,\eta)$ is continuous. Thus, for all $\eta\in \mathbb{Z}$ such that $|t|\eta^2\neq 2\pi l'$, for each $l'\in \mathbb{Z}$ with $l'\geq 0$, it follows from \eqref{uniqueq5.1} that
\begin{equation}\label{uniqueq6}
 \begin{aligned}
  \lim_{\xi\to 0^{+}}\partial_{\xi}^2(e^{it\omega}\widehat{f}\,)(\xi,\eta)=e^{it\eta^2}\partial_{\xi}^2\widehat{f}(0,\eta)=\lim_{\xi\to 0^{-}}\partial_{\xi}^2(e^{it\omega}\widehat{f}\,)(\xi,\eta)=e^{-it\eta^2}\partial_{\xi}^2\widehat{f}(0,\eta),
 \end{aligned}   
\end{equation}
where we have also used the fact that $f\in L^2(|x|^{\theta}\, dx dy)$ and Sobolev embedding imply $\xi \rightarrow \partial_{\xi}^2\widehat{f}(\xi,\eta)$ is continuous for each $\eta\in \mathbb{Z}$, together with $\sign(\xi)\widehat{f}(0,\eta)=0$ for all $\eta\in \mathbb{Z}$ such that $|t|\eta^2\neq 2\pi l'$ for each $l'\in \mathbb{Z}$ with $l'\geq 0$. Consequently, \eqref{uniqueq6} implies $\partial_{\xi}^2\widehat{f}(0,\eta)$ for all $\eta\in \mathbb{Z}$ such that $|t|\eta^2\neq 2\pi l'$ for each $l'\in \mathbb{Z}$, $l'\geq 0$ (this is \eqref{hiphoteunique3} when $k=2$). Combining this previous fact, and using that $x^2f\in L^2(|x|^{2(\theta-2)}\,dx dy)$, $x^2f\in H^{\theta-2,0}(\mathbb{R}\times \mathbb{T})$ (this is a consequence of interpolation Proposition \ref{interpo}), we can use the argument in the proof of Theorem \ref{LinearEst} $(iii)$ to conclude $U(t)(x^2f)\in L^2(|x|^{2(\theta-2)}\, dxdy)$, that is to say, $e^{it\omega}(\partial_{\xi}^2\widehat{f}\,)\in H^{\theta-2,0}(\mathbb{R}\times\mathbb{Z})$.  Going back to \eqref{uniqueq5.1}, we have
\begin{equation*}
 \begin{aligned} \partial_{\xi}^2(e^{it\omega}\widehat{f}\,)\in H^{\theta-2,0}(\mathbb{R}\times \mathbb{Z}) \, \, &\text{ if and only if }\,\, e^{it\omega}(2it\sign(\xi))\widehat{f}\in H^{\theta-2,0}(\mathbb{R}\times \mathbb{Z}).
 \end{aligned}   
\end{equation*}
Since $\frac{1}{2}<\theta-2<1$, the statement above and continuous Sobolev embedding yields that for each $\eta\in \mathbb{Z}$, the function $\xi \rightarrow e^{it\omega}(2it\sign(\xi))\widehat{f}(\xi,\eta)$ is continuous and it must follow
\begin{equation*}
 \begin{aligned}
  \lim_{\xi\to 0^{+}}  e^{it\omega}(2it\sign(\xi))\widehat{f}(\xi,\eta)=2it e^{it\eta^2}\widehat{f}(0,\eta)=\lim_{\xi\to 0^{-}} e^{it\omega}(2it\sign(\xi))\widehat{f}(\xi,\eta)=-2it e^{-it\eta^2}\widehat{f}(0,\eta),
 \end{aligned}   
\end{equation*}
which implies 
\begin{equation*}
\begin{aligned}
4it \cos(t\eta^2)\widehat{f}(0,\eta)=0.    
\end{aligned}    
\end{equation*}
If $|t|\eta^2=2l'\pi$ for some $l'\in \mathbb{Z}^{+}\cup\{0\}$, the identity above forces us to have $\widehat{f}(0,\eta)=0$, and since we have such cancellation when $\eta$ satisfies $|t|\eta^2\neq 2l'\pi$, for all $l'\in \mathbb{Z}$, $l'\geq 0$, we conclude that $\widehat{f}(0,\eta)=0$ for all $\eta\in \mathbb{Z}$ (this is \eqref{hiphoteunique2} for $k-2=0$). This completes the considerations for the case $\frac{5}{2}\leq \theta <3$. Finally, when $3\leq \theta <\frac{7}{2}$, using that $L^2(|x|^{2\theta}\, dx dy)\subset L^2(|x|^{2\widetilde{\theta}}\, dx dy)$ for all $\frac{5}{2}<\widetilde{\theta}<3$, we have that the previous argument applied to $\widetilde{\theta}$ yields the same restrictions \eqref{hiphoteunique2} and \eqref{hiphoteunique3} on $f$ with $k=2$.  
\\ \\
\underline{\bf (d) Assume $k+\frac{1}{2}<\theta<k+\frac{3}{2}$ with $k\geq 3$}. We will argue by induction on the integer $k\geq 2$. We remark that the deduction of the present case resembles the case $k=2$ above, but for completeness, we will carry on its deduction. We will shows that for each integer $k\geq 2$, if $k+\frac{1}{2}<\theta<k+\frac{3}{2}$, and $f\in H^{\theta,0}(\mathbb{R}\times \mathbb{T})\cap L^2(|x|^{2\theta}\, dx dy)$ is such that $U(t)f\in L^2(|x|^{2\theta}\, dx dy)$ for some $t\neq 0$, then $f$ satisfies \eqref{hiphoteunique2} and \eqref{hiphoteunique3}. Firstly, as a consequence of the previous case, we have that the above holds true when $k=2$. Thus, we proceed to deduce the case $k\geq 3$, and we assume by the inductive hypothesis that the desired result holds for $k-1$. Since $f\in H^{\theta,0}(\mathbb{R}\times \mathbb{T})\cap L^2(|x|^{2\theta}\, dx dy)\subset H^{\widetilde{\theta},0}(\mathbb{R}\times \mathbb{T})\cap L^2(|x|^{2\widetilde{\theta}}\, dx dy)$, and $U(t)f\in L^2(|x|^{2\theta}\, dx dy)\subset L^2(|x|^{2\widetilde{\theta}}\, dx dy)$ for all $(k-1)+\frac{1}{2}<\widetilde{\theta}<(k-1)+\frac{3}{2}$, we have by inductive hypothesis that $f$ satisfies \eqref{hiphoteunique2} and \eqref{hiphoteunique3} with $k-1$, i.e.,  in the frequency domain these conditions transfer to
\begin{equation}\label{uniqueq7}
 \begin{aligned}
  \partial_{\xi}^{l}\widehat{f}(0,\eta)=0,    
 \end{aligned}   
\end{equation}
for all $l=0,\dots, k-1$, all $\eta\in \mathbb{Z}$ such that $|t|\eta^2\neq 2\pi l'$ for each $l'\in \mathbb{Z}$, $l'\geq 0$, and in addition \eqref{uniqueq7} holds for all $\eta \in \mathbb{Z}$, if $l=0,\dots, k-3$. We will show that $U(t)f\in L^2(|x|^{2\theta}\, dx dy)$ for some $t\neq 0$ forces us to have that $f$ also satisfies \eqref{uniqueq7} with $l=k$ and all $\eta\in \mathbb{Z}$ such that $|t|\eta^2\neq 2\pi l'$ for each $l'\in \mathbb{Z}$ with $l'\geq 0$, and \eqref{uniqueq7} is true for all $\eta \in \mathbb{Z}$ when $l=k-2$. Since $L^2(|x|^{2\theta}\, dx dy)\subset L^2(|x|^{2\widetilde{\theta}}\, dx dy)$ for some $k+\frac{1}{2}<\widetilde{\theta}<k+1$, it is enough to only consider the case $\frac{1}{2}<\theta-k<1$.

We first notice that \eqref{uniqueq7} and the identity \eqref{derivative} show that 
\begin{equation}\label{uniqueq9}
\begin{aligned}
\partial^k_{\xi}\big(e^{it\omega}\widehat{f}\,\big)=&e^{it \omega }\sum_{m=1}^k \sum_{\substack{l_1+\dots+l_{m}+l_{m+1}=k-m\\ 0\leq l_j\leq 1\\ j=1,\dots,m}}c_{l_1,\dots,l_{m+1}}\partial_{\xi}^{l_1}(2it|\xi|)\dots \partial_{\xi}^{l_m}(2it|\xi|)\partial^{l_{m+1}}_{\xi}\widehat{f}\\
&+ e^{it \omega} \partial^{k}_{\xi}\widehat{f},
\end{aligned}    
\end{equation}
where, when $m=1$, $l_1=1$, and $l_2=k-2$, $c_{l_1,l_2}=1$. Now, to simplify notation, we write
\begin{equation*}
 \begin{aligned}
\mathcal{A}_m=\{(l_1,\dots, l_m,l_{m+1})\in \big(\mathbb{Z}^{+}\cup\{0\}\big)^{m+1}\,:\, &0\leq l_j\leq 1, \,\text{ for all }\, j=1,\dots,m, \\
&l_1+\dots+l_m+l_{m+1}=k-m\\
& \text{there exists some $0\leq j' \leq m$ such that $l_{j'}=0$}\}.     
 \end{aligned}   
\end{equation*}
Then, splitting the summation in \eqref{uniqueq9} according to the cases where $l_1=\dots=l_m=1$, we have
\begin{equation}\label{uniqueq8}
\begin{aligned}
\partial^k_{\xi}\big(e^{it\omega}\widehat{f}\,\big)=&e^{it \omega }(2it\sign(\xi))\partial_{\xi}^{k-2}\widehat{f}+e^{it \omega }\sum_{m=2}^{\lfloor \frac{k}{2}\rfloor}c_m (2it\sign(\xi))^m\partial_{\xi}^{k-2m}\widehat{f}\\
&+e^{it \omega }\sum_{m=1}^k \sum_{(l_1,\dots,l_m)\in \mathcal{A}_m} c_{l_1,\dots,l_{m+1}}\partial_{\xi}^{l_1}(2it|\xi|)\dots \partial_{\xi}^{l_m}(2it|\xi|)\partial^{l_{m+1}}_{\xi}\widehat{f}\\
&+ e^{it \omega} \partial^{k}_{\xi}\widehat{f}.
\end{aligned}    
\end{equation}
Notice that the assumption $|x|^{\theta}U(t)f\in L^2(\mathbb{R}\times \mathbb{T})$ is equivalent to $\partial^k_{\xi}\big(e^{it\omega}\widehat{f}\,\big)\in H^{\theta-k,0}(\mathbb{R}\times \mathbb{Z})$. Next, we study the identity \eqref{uniqueq8} in the space $H^{\theta-k,0}(\mathbb{R}\times \mathbb{Z})$. The arguments in \eqref{decaycond} and \eqref{linearestimeq4.1} show that
\begin{equation*}
\begin{aligned}
e^{it \omega }\sum_{m=1}^k \sum_{(l_1,\dots,l_m)\in \mathcal{A}_m}c_{l_1,\dots,l_{m+1}}\partial_{\xi}^{l_1}(2it|\xi|)\dots \partial_{\xi}^{l_m}(2it|\xi|)\partial^{l_{m+1}}_{\xi}\widehat{f}\in H^{\theta-k,0}(\mathbb{R}\times \mathbb{Z}).    
\end{aligned}    
\end{equation*}
Now, we show that the second term on the right-hand side of \eqref{uniqueq8} is in $H^{\theta-k,0}(\mathbb{R}\times \mathbb{Z})$. For all $m=2,\dots,\lfloor\frac{k}{2}\rfloor$, we have $x^{k-2m}f\in L^2(|x|^{2(\theta-k)}\, dx dy)$, and by \eqref{uniqueq7},  $\widehat{x^{k-2m}f}(0,\eta)=0$ for all $\eta\in \mathbb{Z}$. Here we used that $k-2m\leq k-4$, and so by the inductive hypothesis, \eqref{uniqueq7} holds for $l=k-2m$ and all $\eta \in \mathbb{Z}$. Thus, we have checked that $x^{k-2m}f$ satisfies the hypothesis of Remark \ref{remarkHilbertL2}, and we get from it
\begin{equation*}
\begin{aligned}
\|D^{\theta-k}_{\xi}\Big(\sum_{m=2}^{\lfloor \frac{k}{2}\rfloor}c_m(2it\sign(\xi))^m\partial_{\xi}^{k-2m}\widehat{f}\,\Big)\|_{L^2}\lesssim &\sum_{m=2}^{\lfloor \frac{k}{2}\rfloor}  \langle t \rangle^{m}\Big(\|\langle x \rangle^{\theta-k}\mathcal{H}_x(x^{k-2m}f)\|_{L^2}+\|\langle x \rangle^{\theta-k}(x^{k-2m}f)\|_{L^2}\Big)\\
\lesssim & \langle t \rangle^{\theta}\|\langle x \rangle^{\theta }f\|_{L^2}.
\end{aligned}    
\end{equation*}
In addition, from the previous estimate and the fact that $\mathcal{F}(\mathcal{H}_x x^{k-2m}f))(0,\eta)=0$ for all $\eta\in \mathbb{Z}$ (this is also the assumption \eqref{uniqueq7} with $l=k-2m\leq k-4$), it also follows  that $\mathcal{H}(x^{k-2m}f)$ satisfies the hypothesis of Theorem \ref{LinearEst}, from which it follows
\begin{equation*}
\begin{aligned}
\|D^{\theta-k}_{\xi}&\Big(e^{it\omega} \sum_{m=2}^{\lfloor \frac{k}{2}\rfloor}c_m(2it\sign(\xi))^m\partial_{\xi}^{k-2m}\widehat{f}\,\Big)\|_{L^2}\\
\lesssim & \sum_{m=2}^{\lfloor \frac{k}{2}\rfloor}\langle t \rangle^{m}\big(\|\langle x \rangle^{\theta-k}U(t)\mathcal{H}_x(x^{k-2m}f)\|_{L^2}+\|\langle x \rangle^{\theta-k}U(t)(x^{k-2m}f) \|_{L^2}\big)\\
\lesssim & \sum_{m=2}^{\lfloor \frac{k}{2}\rfloor}  \langle t\rangle^{\theta-k+m}\Big( \|J^{\theta-k}_{x}(x^{k-2m}f)\|_{L^2}+\|\langle x \rangle^{\theta-k}\mathcal{H}_x(x^{k-2m}f)\|_{L^2}+\|\langle x \rangle^{\theta-k}x^{k-2m}f\|_{L^2}\Big)\\
\lesssim& \langle t \rangle^{\theta}\big(\|J^{\theta}_{x}f\|_{L^2}+ \|\langle x \rangle^{\theta }f\|_{L^2}\big),
\end{aligned}    
\end{equation*}
where we have also used that $\|J^{\theta-k}_x (x^{k-2m}f)\|_{L^2}\lesssim \|J^{\theta-k}_x(\langle x\rangle^{k-2m}f)\|_{L^2}$ and interpolation inequality \eqref{interpineq1} (for a similar argument, see \eqref{linearestimeq4}). Consequently, we have deduced
\begin{equation*}
 \begin{aligned}
e^{it \omega }\sum_{m=2}^{\lfloor \frac{k}{2}\rfloor} c_m(2it\sign(\xi))^m \partial_{\xi}^{k-2m}\widehat{f}\in H^{\theta-k,0}(\mathbb{R}\times \mathbb{Z}).    
 \end{aligned}   
\end{equation*}
Since $\frac{1}{2}<\theta-k<1$, we have from the continuity provided by Sobolev embedding $H^{\theta-2}(\mathbb{R})\hookrightarrow L^{\infty}(\mathbb{R})$, hypothesis \eqref{uniqueq7}, and identity \eqref{uniqueq8} that
\begin{equation}
 \begin{aligned}
  \lim_{\xi\to 0^{+}}\partial_{\xi}^k(e^{it\omega}\widehat{f}\,)(\xi,\eta)=e^{it\eta^2}\partial_{\xi}^k\widehat{f}(0,\eta)=\lim_{\xi\to 0^{-}}\partial_{\xi}^k(e^{it\omega}\widehat{f}\,)(\xi,\eta)=e^{-it\eta^2}\partial_{\xi}^k\widehat{f}(0,\eta),
 \end{aligned}   
\end{equation}
for all $\eta\in \mathbb{Z}$ such that $|t|\eta^2\neq 2\pi l'$ for each $l'\in \mathbb{Z}$ with $l'\geq 0$. It follows,
\begin{equation*}
  2i\sin(t\eta^2)\partial_{\xi}^{k}\widehat{f}(0,\eta)=0,
\end{equation*}
for all $\eta\in \mathbb{Z}$ such that $|t|\eta^2\neq 2\pi l'$ for each $l'\in \mathbb{Z}$ with $l'\geq 0$. Then, $\partial_{\xi}^k\widehat{f}(0,\eta)=0$ for such $\eta$ as above. This fact shows that we can argue as in the proof of Theorem \ref{LinearEst} part $(iii)$ to get $e^{it \omega}\partial_{\xi}^k\widehat{f}\in H^{\theta-k,0}(\mathbb{R}\times \mathbb{T})$, that is to say, $U(t)(x^k f)\in L^{2}(|x|^{2\theta}\, dx dy)$. Moreover, we have that \eqref{uniqueq7} holds true for $l=k$ and all $\eta\in \mathbb{Z}$ such that $|t|\eta^2\neq 2\pi l'$ for each $l'\in \mathbb{Z}$, $l'\geq 0$, which is the first result we require to prove the inductive step. Collecting the results above, we get
\begin{equation*}
 \begin{aligned} \partial_{\xi}^k(e^{it\omega}\widehat{f}\,)\in H^{\theta-k,0}(\mathbb{R}\times \mathbb{Z}) \, \, &\text{ if and only if }\,\, e^{it\omega}(2it\sign(\xi))\partial_{\xi}^{k-2}\widehat{f}\in H^{\theta-2,0}(\mathbb{R}\times \mathbb{Z}).
 \end{aligned}   
\end{equation*}
Since the above is true for some $t\neq 0$, we have $e^{it\omega}(2it\sign(\xi))\partial_{\xi}^{k-2}\widehat{f}\in H^{\theta-2,0}(\mathbb{R}\times \mathbb{Z})$, but this fact and continuity show
\begin{equation*}
 \begin{aligned}
  \lim_{\xi\to 0^{+}}  e^{it\omega}(2it\sign(\xi))\partial_{\xi}^{k-2}\widehat{f}(\xi,\eta)&=2it e^{it\eta^2}\partial_{\xi}^{k-2}\widehat{f}(0,\eta)=\lim_{\xi\to 0^{-}} e^{it\omega}(2it\sign(\xi))\partial_{\xi}^{k-2}\widehat{f}(\xi,\eta)\\
  &=-2it e^{-it\eta^2}\partial_{\xi}^{k-2}\widehat{f}(0,\eta),
 \end{aligned}   
\end{equation*}
which is equivalent to
\begin{equation*}
\begin{aligned}
4it \cos(t\eta^2)\partial_{\xi}^{k-2}\widehat{f}(0,\eta)=0.    
\end{aligned}    
\end{equation*}
Hence, $\partial_{\xi}^{k-2}\widehat{f}(0,\eta)=0$ for all $\eta \in \mathbb{Z}$ such that $t|\eta|^2=2\pi l'$ for some $l'\in \mathbb{Z}$. Since we deduced above that the same cancellation holds for all $\eta \in \mathbb{Z}$ such that $t|\eta|^2\neq 2\pi l'$ for all $l'\in \mathbb{Z}$, it follows $\partial_{\xi}^{k-2}\widehat{f}(0,\eta)=0$ for all $\eta\in \mathbb{Z}$. This completes the deduction of \eqref{uniqueq7}, when $l=k-2$ and $\eta\in \mathbb{Z}$, and in consequence, the inductive step $k\geq 3$. 
\end{proof}

%%%%%%%%%%%%%%%%%%%%%%%%%%%%%%%%%%%%%%%%%%%%%%%%%%%%%%%%%%%%%%%%%%%%%%%%%%%%%%%%%%%%%%%%%%%%%%%%%%%%%%%%%%%%%%%%%%%%%%%%%%%%%%%%%%%%%%%%%%%%%%%%%%%%%%%%%%%%%%%%%%%%%%%%%%%%%%%%%%%%%%%%%%%%%%%%

\subsubsection{Proof of Corollary \ref{lineaerequniquecont2}}

We begin with the deduction of the following lemma, which relates $U(t)\mathcal{H}_x f\in  L^2(|x|\, dxdy)$ and $\mathcal{H}_x f\in  L^2(|x|\, dxdy)$. 

\begin{lemma}\label{uniquecontcritcase}
Let $f\in H^{\frac{1}{2},0}(\mathbb{R}\times \mathbb{T})\cap L^2(|x|\, dxdy)$. Assume that there exists a time $t\neq 0$ such that $U(t)f\in L^2(|x|\, dx dy)$. Then $U(t)\mathcal{H}_x f\in L^2(|x|\, dxdy)$ if and only if $\mathcal{H}_x f\in L^2(|x|\, dxdy)$.
\end{lemma}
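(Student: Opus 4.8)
The plan is to avoid any direct estimate at the critical weight $\theta=\tfrac{1}{2}$ and instead reduce \emph{both} implications to the already-established linear bound in Theorem \ref{LinearEst}(ii), exploiting three structural facts: the Fourier multipliers $U(t)$ and $\mathcal{H}_x$ commute, $\mathcal{H}_x^2=-\mathrm{Id}$, and both $U(t)$ and $\mathcal{H}_x$ act as isometries on $H^{\frac{1}{2},0}(\mathbb{R}\times\mathbb{T})$ (each has a unimodular symbol commuting with $\langle\xi\rangle^{1/2}$, so $\|J_x^{1/2}\,\cdot\,\|_{L^2}$ is preserved). The only genuinely analytic ingredient needed is Theorem \ref{LinearEst}(ii), which is tailored to the weight $\theta=\frac{1}{2}$ and already absorbs an extra hypothesis of the form $\mathcal{H}_x(\cdot)\in L^2(|x|\,dxdy)$.

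For the implication ``$\mathcal{H}_xf\in L^2(|x|\,dxdy)\Rightarrow U(t)\mathcal{H}_xf\in L^2(|x|\,dxdy)$'', I would set $g:=\mathcal{H}_xf$ and verify that $g$ meets every hypothesis of Theorem \ref{LinearEst}(ii): $g\in L^2(|x|\,dxdy)$ is precisely the assumption, $g\in H^{\frac{1}{2},0}$ because $\mathcal{H}_x$ preserves that space, and the required extra decay $\mathcal{H}_xg=\mathcal{H}_x^2f=-f\in L^2(|x|\,dxdy)$ is a standing hypothesis of the lemma. Theorem \ref{LinearEst}(ii) then yields $U(t)g=U(t)\mathcal{H}_xf\in L^2(|x|\,dxdy)$. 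Note this direction does not even require the assumption $U(t)f\in L^2(|x|\,dxdy)$.

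For the converse, I would use the commutation $U(t)\mathcal{H}_x=\mathcal{H}_xU(t)$ to write $w:=U(t)\mathcal{H}_xf=\mathcal{H}_x\big(U(t)f\big)$. The hypotheses then give $w\in L^2(|x|\,dxdy)$, while $U(t)f\in L^2(|x|\,dxdy)$ yields $\mathcal{H}_xw=\mathcal{H}_x^2U(t)f=-U(t)f\in L^2(|x|\,dxdy)$; moreover $w\in H^{\frac{1}{2},0}$ since $U(t)$ and $\mathcal{H}_x$ both preserve this space. Thus $w$ again satisfies all hypotheses of Theorem \ref{LinearEst}(ii), now applied at time $-t$, giving $U(-t)w\in L^2(|x|\,dxdy)$. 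Finally the group law $U(-t)=U(t)^{-1}$ together with the commutation collapses this to $U(-t)w=U(-t)\mathcal{H}_xU(t)f=\mathcal{H}_xf$, which is the desired conclusion.

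The conceptual heart — rather than a genuine computational obstacle — is that $\mathcal{H}_x$ is \emph{not} bounded on $L^2(|x|\,dxdy)$ at $\theta=\frac{1}{2}$ (Proposition \ref{a2condH} excludes this endpoint), so one cannot simply push $\mathcal{H}_x$ through the weighted norm. The argument sidesteps this precisely by never applying $\mathcal{H}_x$ to a function whose Hilbert transform is uncontrolled: in each direction $\mathcal{H}_x$ is applied to a function ($g$, respectively $w$) for which both it and its Hilbert transform lie in $L^2(|x|\,dxdy)$, which is exactly the input format of Theorem \ref{LinearEst}(ii). The remaining checks — that $\mathcal{H}_x$ and $U(t)$ are isometries of $H^{\frac{1}{2},0}$, that they commute, and that $U(-t)=U(t)^{-1}$ — are immediate from the unimodularity of the symbols and the group property, and require no computation.
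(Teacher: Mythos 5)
Your proof is correct, and it takes a genuinely different route from the paper. The paper never invokes Theorem \ref{LinearEst}(ii) here; instead it (1) splits the multiplier as $e^{it\omega}=\big(e^{it\xi|\xi|}-1\big)e^{it\sign(\xi)\eta^2}+e^{it\sign(\xi)\eta^2}$ and shows the first piece always lands in $H^{\frac{1}{2},0}(\mathbb{R}\times\mathbb{Z})$, which reduces both membership questions for $U(t)$ to the reduced group $U_1(t)$ of \eqref{Grouponly} (the equivalences \eqref{uniqeq2} and \eqref{uniqeq2.1}), and then (2) exploits the exact truncated-weight conservation law \eqref{uniqeq3}, i.e.\ the identity behind Proposition \ref{decaysimplyeq}, letting $N\to\infty$ to transfer finiteness between $\|\langle x\rangle^{1/2}U_1(t)\mathcal{H}_xf\|_{L^2}$ and $\|\langle x\rangle^{1/2}\mathcal{H}_xf\|_{L^2}$. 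You instead treat Theorem \ref{LinearEst}(ii) as a black box and apply it twice: forward to $g=\mathcal{H}_xf$ (whose extra hypothesis $\mathcal{H}_xg=-f\in L^2(|x|\,dxdy)$ is exactly the lemma's standing assumption) and backward, at time $-t$, to $w=U(t)\mathcal{H}_xf=\mathcal{H}_xU(t)f$ (whose extra hypothesis $\mathcal{H}_xw=-U(t)f\in L^2(|x|\,dxdy)$ is exactly the lemma's other standing assumption), then collapse $U(-t)w=\mathcal{H}_xf$ by the group law. All the structural facts you use — commutation of the multipliers, $\mathcal{H}_x^2=-\mathrm{Id}$, unimodularity on $H^{\frac{1}{2},0}$, and validity of Theorem \ref{LinearEst} for negative times (the paper's proof of Proposition \ref{decaysimplyeq} explicitly integrates over $[t,0]$ when $t<0$) — hold, and there is no circularity since Theorem \ref{LinearEst} is established before and independently of this lemma. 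What each approach buys: yours is shorter and softer, makes transparent that the implication $\mathcal{H}_xf\in L^2(|x|\,dxdy)\Rightarrow U(t)\mathcal{H}_xf\in L^2(|x|\,dxdy)$ needs no hypothesis on $U(t)f$ at all, and avoids re-deriving energy identities; the paper's argument is more quantitative — it produces an exact equality among the four weighted norms rather than a two-sided implication — and its intermediate $U\leftrightarrow U_1$ equivalences \eqref{uniqeq2}--\eqref{uniqeq2.1} are the pieces reused in the proof of Corollary \ref{lineaerequniquecont2}. Of course, since Theorem \ref{LinearEst}(ii) is itself proved via that same conservation identity, your proof repackages the same analytic core, just deployed through the already-proven linear estimate rather than re-run from scratch.
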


\begin{proof}
Recalling $\omega$ as in \eqref{dispersivesymbol}, we take the Fourier transform of $U(t)f$ to write
\begin{equation}\label{uniqeq1}
 \begin{aligned}
  e^{it\omega}\widehat{f}=\big(e^{i t\xi|\xi|}-1\big)e^{i t \sign(\xi)\eta^2}\widehat{f}+e^{i t \sign(\xi)\eta^2}\widehat{f}.   
 \end{aligned}   
\end{equation}
Once again, $U(t)f\in L^2(|x|\, dx dy)$ is equivalent to $e^{it \omega}\widehat{f}\in H^{\frac{1}{2},0}(\mathbb{R}\times \mathbb{Z})$. Now, since $f\in H^{\frac{1}{2},0}(\mathbb{R}\times \mathbb{T})\cap L^2(|x|\, dxdy)$, using the equivalence in \eqref{equi1}, together with \eqref{prelimneq}, \eqref{steinderiv1} and \eqref{steinderiv2}, we can use previous arguments to deduce $\big(e^{i t\xi|\xi|}-1\big)e^{i t \sign(\xi)\eta^2}\widehat{f}\in H^{\frac{1}{2},0}(\mathbb{R}\times \mathbb{Z})$ for all $t$. Consequently, it must follow that
\begin{equation*}
 e^{it \omega}\widehat{f}\in H^{\frac{1}{2},0}(\mathbb{R}\times\mathbb{Z})\, \, \text{ if and only if}\,\, e^{i t \sign(\xi)\eta^2}\widehat{f}\in H^{\frac{1}{2},0}(\mathbb{R}\times\mathbb{Z}), 
\end{equation*}
which by Fourier transform and using the group notation $\{U_1(t)\}$ (see \eqref{Grouponly}) transfers to
\begin{equation}\label{uniqeq2}
 U(t)f\in  L^2(|x|\, dx dy)\, \, \text{ if and only if}\, \, U_1(t)f\in  L^2(|x|\, dx dy).
\end{equation}
Similarly, we have
\begin{equation}\label{uniqeq2.1}
 U(t)\mathcal{H}_x f\in  L^2(|x|\, dx dy)\, \, \text{ if and only if}\, \, U_1(t)\mathcal{H}_x f\in  L^2(|x|\, dx dy).
\end{equation}
Next, using an approximation argument and the fact that $
\langle x \rangle_N\in L^{\infty}(\mathbb{R})$ (see \eqref{trunweightdef}), we can argue as in the deduction of identity \eqref{Afterintegra} in the proof of Proposition \ref{decaysimplyeq} to get
\begin{equation}\label{uniqeq3}
\begin{aligned}
   \|\langle x\rangle_N^{\frac{1}{2}}U(t)f\|_{L^2}^2+\|\langle x\rangle_N^{\frac{1}{2}}U(t)\mathcal{H}_x f\|_{L^2}^2=&\|\langle x\rangle_N^{\frac{1}{2}}f\|_{L^2}^2+\|\langle x\rangle_N^{\frac{1}{2}}\mathcal{H}_x f\|_{L^2}^2.
\end{aligned}
\end{equation}
Since $\langle x\rangle_N^{\frac{1}{2}}\lesssim \langle x\rangle^{\frac{1}{2}}$ (with implicit constant independent of $N$), \eqref{uniqeq2}, the fact that $f\in H^{\frac{1}{2},0}(\mathbb{R}\times \mathbb{T})\cap L^2(|x|\, dxdy)$, and taking $N\to \infty$ in  \eqref{uniqeq3} show
\begin{equation*}
\begin{aligned}
 U_1(t)\mathcal{H}_{x} f\in  L^2(|x|\, dx dy)\, \, \text{ if and only if}\, \, \mathcal{H}_{x}f\in  L^2(|x|\, dx dy),
\end{aligned}
\end{equation*}
$t\neq 0$. The above conclusion and \eqref{uniqeq2.1} yield the desired result.
\end{proof}

\begin{proof}[Proof of Corollary \ref{lineaerequniquecont2}]
Let $f\in H^{k+\frac{1}{2},0}(\mathbb{R}\times \mathbb{T})\cap L^2(|x|^{k+\frac{1}{2}}\, dx dy)$. We notice that $k=0$ is a consequence of Lemma \ref{uniquecontcritcase}. Now, if $k\geq 1$, given that  $f\in L^2(|x|^{2\widetilde{\theta}}\, dx dy)$ for all $(k-1)+\frac{1}{2}<\widetilde{\theta}<(k-1)+\frac{3}{2}$, by Theorem \ref{lineaerequniquecont1}, it follows that $f$ satisfies \eqref{uniqueq7} for all $l=0,\dots, k-1$, and all $\eta\in \mathbb{Z}$ such that $|t|\eta^2\neq 2\pi l'$ for each $l'\in \mathbb{Z}$. Thus, if $k=1$, this previous fact and identity \eqref{derivative} with $k=1$, allow us to use  Lemma \ref{uniquecontcritcase} to get the desired result \eqref{xkuniquecont0}. Consequently, we will consider the case $k\geq2$.

Note that if $k\geq 3$ and $(k-1)+\frac{1}{2}<\widetilde{\theta}<(k-1)+\frac{3}{2}$, by Theorem \ref{lineaerequniquecont1}, we have that \eqref{uniqueq7} holds for all $l=0,\dots,k-3$  and all $\eta\in \mathbb{Z}$. Such results imply that $\partial_{\xi}^k(e^{it\omega}\widehat{f}\,)$ satisfies \eqref{uniqueq9} (or \eqref{uniqueq8}) for all $k\geq 2$. We remark that in the deduction of $(ii)$ in Theorem \ref{lineaerequniquecont1},  we used the fact that Sobolev embedding assures that any distribution in $H^{\frac{1}{2}^{+}}(\mathbb{R})$ can be represented as a continuous function (e.g., see the ideas in \eqref{uniqueq6}). However, in the present case we cannot apply the same ideas as $\partial_{\xi}^k(e^{it\omega}\widehat{f}\,)\in H^{\frac{1}{2},0}(\mathbb{R}\times \mathbb{T})$ and distributions in $H^{\frac{1}{2}}(\mathbb{R})$ are not necessarily represented by a continuous function.

Now, given the same decomposition in \eqref{uniqueq8} , similar arguments in the proof of Theorem \ref{lineaerequniquecont1} $(ii)$ case (d) show 
\begin{equation}
\begin{aligned}
\partial^k_{\xi}\big(e^{it\omega}\widehat{f}\,\big)\in H^{\frac{1}{2},0}(\mathbb{R}\times \mathbb{Z})\qquad \text{ if and only if} \qquad e^{it \omega }(2it\sign(\xi))\partial_{\xi}^{k-2}\widehat{f}\,+ e^{it \omega} \partial^{k}_{\xi}\widehat{f}\,\in H^{\frac{1}{2},0}(\mathbb{R}\times \mathbb{Z}).
\end{aligned}    
\end{equation}
Now, the above statement implies
\begin{equation}
\begin{aligned}
e^{it \omega }(2it\sign(\xi)\partial_{\xi}^{k-2}\widehat{f}\,)\in H^{\frac{1}{2},0}(\mathbb{R}\times \mathbb{Z})\qquad \text{ if and only if} \qquad  e^{it \omega} \partial^{k}_{\xi}\widehat{f}\in H^{\frac{1}{2},0}(\mathbb{R}\times \mathbb{Z}).
\end{aligned}    
\end{equation}
Next, we write
\begin{equation}\label{unique9}
 \begin{aligned}
  e^{it\omega}(\sign(\xi)\partial_{\xi}^{k-2}\widehat{f}(\xi,\eta))=&\big(e^{i t\xi|\xi|}-1\big)e^{i t \sign(\xi)\eta^2}\sign(\xi)\partial_{\xi}^{k-2}\widehat{f}(\xi,\eta)\\
  &+e^{i t \sign(\xi)\eta^2}\sign(\xi)\big(\partial_{\xi}^{k-2}\widehat{f}(\xi,\eta)-\partial_{\xi}^{k-2}\widehat{f}(0,\eta)\phi(\xi)\big)\\
  &+e^{i t \sign(\xi)\eta^2}\sign(\xi)\partial_{\xi}^{k-2}\widehat{f}(0,\eta)\phi(\xi),      
\end{aligned}    
\end{equation}
where $\phi\in C^{\infty}_c(\mathbb{R})$ is equal to $1$ in a neighborhood of the origin. Using similar ideas in the justifications for the estimate in \eqref{uniqueq4}, together with \eqref{decaywight}, it follows that the first two terms on the right-hand side of \eqref{unique9} are in $H^{\frac{1}{2},0}(\mathbb{R}\times \mathbb{Z})$. Thus, we get
\begin{equation*}
\begin{aligned}
e^{it \omega }(2it\sign(\xi)\partial_{\xi}^{k-2}\widehat{f}\,)\in H^{\frac{1}{2},0}(\mathbb{R}\times \mathbb{Z}) \qquad & \text{ if and only if} \\
&  e^{i t \sign(\xi)\eta^2}\sign(\xi)\partial_{\xi}^{k-2}\widehat{f}(0,\eta)\phi(\xi)\in H^{\frac{1}{2},0}(\mathbb{R}\times \mathbb{Z}).
\end{aligned}    
\end{equation*}
Since we know that $\partial_{\xi}^{k-2}\widehat{f}(\xi,\eta)=0$ for all $\eta\in \mathbb{Z}$ such that $|t|\eta^2\neq 2\pi l'$ for each $l'\in \mathbb{Z}$, and by Claim \ref{ClaimUnique1} (ii) we know that $\sign(\xi)\phi(\xi)\notin H^{\frac{1}{2}}(\mathbb{R})$, it must follow
\begin{equation*}
\begin{aligned}
e^{it \omega }(2it\sign(\xi)\partial_{\xi}^{k-2}\widehat{f}\,)\in H^{\frac{1}{2},0}(\mathbb{R}\times \mathbb{Z}) \qquad & \text{ if and only if} \qquad \partial_{\xi}^{k-2}\widehat{f}(0,\eta)=0,  \text{ for each } \eta \in \mathbb{Z}.
\end{aligned}    
\end{equation*}
Gathering the previous results, we have
\begin{equation}
\begin{aligned}
e^{it \omega} \partial^{k}_{\xi}\widehat{f}\in H^{\frac{1}{2},0}(\mathbb{R}\times \mathbb{Z}) \qquad \text{ if and only if} \qquad \partial_{\xi}^{k-2}\widehat{f}(0,\eta)=0,  \text{ for each } \eta \in \mathbb{Z}.
\end{aligned}    
\end{equation}
The previous conclusion is \eqref{xkuniquecont}. On the other hand, assuming either of the conditions above,  we can apply Lemma \ref{uniquecontcritcase} to obtain that $U(t)\mathcal{H}_x(x^k f)\in L^2(|x|\, dx dy)$ if and only if $\mathcal{H}_x(x^k f)\in L^2(|x|\, dx dy)$, that is, \eqref{xkuniquecont1} holds true.
\end{proof}

%%%%%%%%%%%%%%%%%%%%%%%%%%%%%%%%%%%%%%%%%%%%%%%%%%%%%%%%%%%%%%%%%%%%%%%%%%%%%%%%%%%%%%%%%%%%%%%%%%%%%%%%%%%%%%%%%%%%%%%%%%%%%%%%%%%%%%%%%%%%%%%%%%%%%%

\section{Persistence results nonlinear equation}\label{SectionNonlinear}

In this section, we deduce the local existence result stated in Theorem \ref{LWPresultinWS}. We will use standard theory in $H^{s_1,s_2}(\mathbb{R}\times \mathbb{T})$ of the Cauchy problem \eqref{SHeq} to get existence and uniqueness of solutions in the class $C([0,T];H^{s_1,s_2}(\mathbb{R}\times \mathbb{T}))$. Then, we will show that such solutions also persist in the space $L^2(|x|^{2\theta}\, dx dy)$. The strategy is to deduce first a persistence result in weighted spaces for smooth solutions and use it to extend to the general case by energy estimates and approximation.

Standard arguments that do not take into account the dispersion in \eqref{SHeq}, which rely on the embedding $H^{\frac{1}{2}^{+},\frac{1}{2}^{+}}(\mathbb{R}\times \mathbb{T})\hookrightarrow L^{\infty}(\mathbb{R}\times \mathbb{T})$ establish the following local well-posedness result. For similar ideas, we refer to \cite{Iorio1986}. 

\begin{lemma}\label{StandardLWP}
Let $K\geq 1$, $\nu_k \in \mathbb{R}$, $k=1,\dots,K$, not all null, and $s_1>\frac{3}{2}$, $s_2>\frac{1}{2}$. Let $u_0\in H^{s_1,s_2}(\mathbb{R}\times\mathbb{T})$. Then there exist $T>0$ and a unique $u\in C([0,T];H^{s_1,s_2}(\mathbb{R}\times \mathbb{T}))$ solution of the Cauchy problem \eqref{SHeq}. Moreover, the existence time $T=T(\|u_0\|_{H^{s_1,s_2}})$ is a nonincreasing function of $\|u_0\|_{H^{s_1,s
_2}}$ and the flow-map is continuous. 
\end{lemma}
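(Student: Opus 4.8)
The plan is to follow the classical energy method combined with parabolic regularization, as in \cite{Iorio1986}, since no smoothing coming from the dispersive terms is exploited here. For $\varepsilon>0$ I would first consider the regularized problem
\begin{equation*}
\partial_t u^{\varepsilon}-\mathcal{H}_x\partial_x^2 u^{\varepsilon}-\mathcal{H}_x\partial_y^2 u^{\varepsilon}+\sum_{k=1}^K\nu_k (u^{\varepsilon})^k\partial_x u^{\varepsilon}=\varepsilon\big(\partial_x^2+\partial_y^2\big)u^{\varepsilon},\qquad u^{\varepsilon}(0)=u_0,
\end{equation*}
whose solution I would build by a contraction argument on the Duhamel formulation, using that the semigroup generated by $\mathcal{H}_x\partial_x^2+\mathcal{H}_x\partial_y^2+\varepsilon(\partial_x^2+\partial_y^2)$ is smoothing in $x$ and $y$ and that the unitary group $\{U(t)\}$ preserves every $H^{s_1,s_2}(\mathbb{R}\times\mathbb{T})$. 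This yields, for each $\varepsilon$, a smooth local solution $u^{\varepsilon}\in C([0,T_{\varepsilon}];H^{s_1,s_2})$.

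The heart of the matter is an a priori energy estimate uniform in $\varepsilon$. Applying $J_x^{s_1}$ to the equation, pairing with $J_x^{s_1}u^{\varepsilon}$ in $L^2(\mathbb{R}\times\mathbb{T})$, and doing the same with $J_y^{s_2}$, I would use that $\mathcal{H}_x\partial_x^2$ and $\mathcal{H}_x\partial_y^2$ are skew-adjoint, so the dispersive terms contribute nothing, while the viscous term contributes the favorable sign $-\varepsilon\|\nabla J^{s_i}u^{\varepsilon}\|_{L^2}^2\le 0$. It remains to control the nonlinear contributions $\int J_x^{s_1}\big((u^{\varepsilon})^k\partial_x u^{\varepsilon}\big)J_x^{s_1}u^{\varepsilon}$ and their $y$-analogues. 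For the leading piece I would write $J_x^{s_1}((u^{\varepsilon})^k\partial_x u^{\varepsilon})=(u^{\varepsilon})^k\partial_x J_x^{s_1}u^{\varepsilon}+[J_x^{s_1},(u^{\varepsilon})^k]\partial_x u^{\varepsilon}$; the first term is integrated by parts to produce $\tfrac12\int\partial_x((u^{\varepsilon})^k)(J_x^{s_1}u^{\varepsilon})^2$, and the commutator is bounded by a Kato--Ponce type inequality (the fractional analogues of Propositions \ref{CalderonComGU} and \ref{propcomm2}). All resulting terms are majorized by
\begin{equation*}
\big(\|u^{\varepsilon}\|_{L^{\infty}}+\|\partial_x u^{\varepsilon}\|_{L^{\infty}}\big)\|u^{\varepsilon}\|_{H^{s_1,s_2}}^2,
\end{equation*}
and here the hypotheses enter: the embedding $H^{s_1,s_2}(\mathbb{R}\times\mathbb{T})\hookrightarrow L^{\infty}$ needs $s_1,s_2>\tfrac12$, while $\|\partial_x u^{\varepsilon}\|_{L^{\infty}}\lesssim\|u^{\varepsilon}\|_{H^{s_1,s_2}}$ needs $s_1>\tfrac32$, $s_2>\tfrac12$. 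One thus obtains $\tfrac{d}{dt}\|u^{\varepsilon}\|_{H^{s_1,s_2}}^2\lesssim\|u^{\varepsilon}\|_{H^{s_1,s_2}}^3$ uniformly in $\varepsilon$, and by comparison with the associated ODE a time $T=T(\|u_0\|_{H^{s_1,s_2}})>0$, nonincreasing in the norm, on which $\{u^{\varepsilon}\}$ is bounded in $C([0,T];H^{s_1,s_2})$.

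With these uniform bounds I would pass to the limit $\varepsilon\to 0$: the equation provides a uniform bound on $\partial_t u^{\varepsilon}$ in a space of lower regularity, so an Aubin--Lions/Arzel\`a--Ascoli compactness argument yields a subsequence converging in $C([0,T];H^{s_1',s_2'})$ for $s_i'<s_i$ and weak-$*$ in $L^{\infty}([0,T];H^{s_1,s_2})$ to a solution $u$ of \eqref{SHeq}; the strong continuity $u\in C([0,T];H^{s_1,s_2})$ at the endpoint regularity I would upgrade by the Bona--Smith argument. Uniqueness follows from an $L^2$ estimate on the difference $w=u-v$ of two solutions: the dispersive terms drop by skew-adjointness, the nonlinear difference is handled after integrating by parts to avoid a derivative loss on $w$, and Gronwall forces $w\equiv 0$; continuous dependence of the data-to-solution map is then obtained, again, via Bona--Smith. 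The main obstacle is the top-order energy estimate for the nonlinearity: because $u^k\partial_x u$ carries a full $x$-derivative, closing the estimate without losing derivatives hinges on the commutator/integration-by-parts structure together with the sharp $L^{\infty}$-control of $\partial_x u$, which is exactly what forces $s_1>\tfrac32$.
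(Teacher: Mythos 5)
The paper never writes out a proof of this lemma: it is presented as a standard result, justified only by the remark that energy arguments ignoring dispersion plus the embedding $H^{\frac12^{+},\frac12^{+}}(\mathbb{R}\times\mathbb{T})\hookrightarrow L^{\infty}$ suffice, with a pointer to Iorio's treatment of Benjamin--Ono. Your architecture --- parabolic regularization, uniform-in-$\varepsilon$ energy estimates using skew-adjointness of $\mathcal{H}_x\partial_x^2$ and $\mathcal{H}_x\partial_y^2$, commutator plus integration by parts for $u^k\partial_x u$, compactness and Bona--Smith --- is exactly that classical route (whether one regularizes, as you do, or invokes Kato's quasilinear theory, as in Iorio's paper, is a difference of implementation, not substance). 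A small imprecision: with the full nonlinearity the energy inequality is $\tfrac{d}{dt}\|u^\varepsilon\|_{H^{s_1,s_2}}^2\lesssim\sum_{k=1}^K\|u^\varepsilon\|_{H^{s_1,s_2}}^{k+2}$ rather than cubic, which is harmless.

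There is, however, one step that fails as written, and you should know the paper's own remark glosses over the same point. With the paper's stated definition $\|f\|_{H^{s_1,s_2}}=\|J_x^{s_1}f\|_{L^2}+\|J_y^{s_2}f\|_{L^2}$, i.e.\ Fourier weight $\langle\xi\rangle^{s_1}+\langle\eta\rangle^{s_2}$, the embeddings you rely on do not hold on the whole range $s_1>\tfrac32$, $s_2>\tfrac12$: the bound $\|u\|_{L^{\infty}}\lesssim\|u\|_{H^{s_1,s_2}}$ amounts to $(\langle\xi\rangle^{s_1}+\langle\eta\rangle^{s_2})^{-1}\in L^2(\mathbb{R}\times\mathbb{Z})$, which requires $s_2>\tfrac{s_1}{2s_1-1}$, and $\|\partial_x u\|_{L^{\infty}}\lesssim\|u\|_{H^{s_1,s_2}}$ requires $s_2>\tfrac{s_1}{2s_1-3}$, which blows up as $s_1\downarrow\tfrac32$; for instance $(s_1,s_2)=(1.6,0.6)$ is admissible for the lemma but violates both. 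Likewise, in the $J_y^{s_2}$ energy estimate the Kato--Ponce commutator bound for $[J_y^{s_2},(u^{\varepsilon})^k]\partial_x u^{\varepsilon}$ produces the terms $\|\partial_y((u^{\varepsilon})^k)\|_{L^{\infty}}\|J_y^{s_2-1}\partial_x u^{\varepsilon}\|_{L^2}$: the first factor is not controlled when $s_2\le\tfrac32$, and the second is a mixed derivative with symbol $\langle\eta\rangle^{s_2-1}|\xi|$, which is not dominated by $\langle\xi\rangle^{s_1}+\langle\eta\rangle^{s_2}$ once $s_2>s_1$ (test $|\xi|=\langle\eta\rangle^{2}$ with $(s_1,s_2)=(2,10)$). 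So the claim that all commutator terms are majorized by $\big(\|u^{\varepsilon}\|_{L^{\infty}}+\|\partial_x u^{\varepsilon}\|_{L^{\infty}}\big)\|u^{\varepsilon}\|_{H^{s_1,s_2}}^2$ is not justified in this setting. All of these difficulties disappear if $H^{s_1,s_2}$ is read with the product weight $\langle\xi\rangle^{s_1}\langle\eta\rangle^{s_2}$, i.e.\ the norm $\|J_x^{s_1}J_y^{s_2}f\|_{L^2}$, under which your thresholds $s_1>\tfrac32$, $s_2>\tfrac12$ are exactly right --- this is evidently what the paper's remark implicitly assumes as well, though the commutator estimates then need bi-parameter versions of the Leibniz rule rather than the one-dimensional Kato--Ponce inequality. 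To repair your proof, either state explicitly that you work with the product-type norm (and supply the bi-parameter commutator estimates), or impose a relation such as $s_2\le s_1$ and redo the $y$-commutator without placing a full $\partial_y$ on $(u^{\varepsilon})^k$.
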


In particular, the result of Lemma \ref{StandardLWP} also establishes the existence of smooth solutions. Indeed, setting $s_1>\frac{3}{2}$, $s_2>\frac{1}{2}$ fixed, when $u_0\in H^{\infty}(\mathbb{R}\times \mathbb{T})$, there exists a time $T=T(\|u_0\|_{H^{s_1,s_2}})>0$ and a unique solution $u\in C([0,T];H^{\infty}(\mathbb{R}\times \mathbb{T}))$ of the Cauchy problem \eqref{SHeq}. Next, we show that smooth solutions of \eqref{SHeq} also persist in weighted spaces.

\begin{lemma}\label{smoothsolutdecay}
Let $K\geq 1$, $\nu_k\in \mathbb{R}$, $k=1,\dots,K$, not all null, $u_0\in H^{\infty}(\mathbb{R}\times \mathbb{T})\cap L^2(|x|^{2\theta}\, dx dy)$. Let $u\in C([0,T];H^{\infty}(\mathbb{R}\times \mathbb{T}))$ be the solution of the Cauchy problem \eqref{SHeq} with initial condition $u_0$.
\begin{itemize}
    \item[(i)] Let $0<\theta\leq \frac{1}{2}$. If $\theta=\frac{1}{2}$, in addition, assume that $\mathcal{H}_xu_{0}\in L^2(|x|\, dx)$. Then \begin{equation}\label{contweights}
 u\in C([0,T];L^2(|x|^{2\theta}\, dx dy)).   
\end{equation} 
\item[(ii)] If, in addition, $\widehat{u}_0(0,\eta)=0$ for all $\eta\in \mathbb{Z}$, then \eqref{contweights} also holds for all $0<\theta<\frac{3}{2}$.

\end{itemize}

\end{lemma}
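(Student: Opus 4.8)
The plan is to control the smooth solution through its Duhamel representation
\begin{equation*}
u(t)=U(t)u_0-\int_0^t U(t-\tau)F(u)(\tau)\,d\tau,\qquad F(u)=\sum_{k=1}^K\nu_k u^k\partial_x u=\sum_{k=1}^K\frac{\nu_k}{k+1}\partial_x\big(u^{k+1}\big),
\end{equation*}
and to feed each piece into the weighted linear estimates of Theorem \ref{LinearEst}. Taking the $\|\langle x\rangle^{\theta}\cdot\|_{L^2}$ norm and using Minkowski's inequality reduces the task to bounding $\|\langle x\rangle^{\theta}U(t)u_0\|_{L^2}$ and, for each $\tau\in[0,t]$, $\|\langle x\rangle^{\theta}U(t-\tau)F(u)(\tau)\|_{L^2}$.

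The observation that makes Theorem \ref{LinearEst} applicable to the forcing is that $F(u)$ is an exact $x$-derivative: since $\widehat{F(u)(\tau)}(\xi,\eta)=i\xi\sum_k\frac{\nu_k}{k+1}\widehat{u^{k+1}}(\xi,\eta)$ vanishes at $\xi=0$, the forcing automatically satisfies the mean-zero condition \eqref{hiphote1} for every $\eta\in\mathbb{Z}$. Thus in case (ii), where $\frac12<\theta<\frac32$, Theorem \ref{LinearEst} (iii) applies to $F(u)(\tau)$ with no extra hypothesis, while for the linear term $U(t)u_0$ the assumption $\widehat{u_0}(0,\eta)=0$ is exactly \eqref{hiphote1}; in case (i) with $0<\theta<\frac12$ no conditions are needed, and at the endpoint $\theta=\frac12$ the required control $\mathcal{H}_xu_0\in L^2(|x|\,dxdy)$ is assumed on $u_0$, while for the forcing $\mathcal{H}_xF(u)=\sum_k\frac{\nu_k}{k+1}D_x(u^{k+1})$ again has vanishing $\xi=0$ mode, so $\|\langle x\rangle^{1/2}\mathcal{H}_xF(u)(\tau)\|_{L^2}$ can be bounded exactly as in the computation leading to \eqref{Hdecay}. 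In every case Theorem \ref{LinearEst} produces, schematically,
\begin{equation*}
\|\langle x\rangle^{\theta}U(t-\tau)F(u)(\tau)\|_{L^2}\lesssim \langle t-\tau\rangle^{\theta}\big(\|\langle x\rangle^{\theta}F(u)(\tau)\|_{L^2}+\|F(u)(\tau)\|_{H^{\theta,0}}+R_\tau\big),
\end{equation*}
where $R_\tau$ collects the weighted $\mathcal{H}_x$-term present only at $\theta=\frac12$. Writing $u^k\partial_x u=u^{k-1}(\partial_x u)\,u$ and placing the whole weight on a single factor gives the pointwise bound $\|\langle x\rangle^{\theta}u^k\partial_x u\|_{L^2}\le\|u\|_{L^\infty}^{k-1}\|\partial_x u\|_{L^\infty}\|\langle x\rangle^{\theta}u\|_{L^2}$, and the $H^{\theta,0}$-norms of $F(u)$ and $u_0$—which carry no weight—together with $\|u\|_{L^\infty}$ and $\|\partial_x u\|_{L^\infty}$ are bounded uniformly on $[0,T]$ by Sobolev embedding and $u\in C([0,T];H^\infty)$. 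Setting $a(t)=\|\langle x\rangle^{\theta}u(t)\|_{L^2}$, these estimates close into a linear integral inequality $a(t)\lesssim_T a(0)+1+\int_0^t a(\tau)\,d\tau$, and Gronwall yields $a\in L^\infty([0,T])$.

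The main obstacle is that a priori $a(t)$ need not be finite, so that both the product bound on $\|\langle x\rangle^{\theta}F(u)(\tau)\|_{L^2}$ and the Gronwall step are formal. I would resolve this by running the whole argument with the truncated weights $\langle x\rangle_N^{\theta}$ of \eqref{trunweightdef}, which are bounded and hence make $\|\langle x\rangle_N^{\theta}u(t)\|_{L^2}$ finite for every $t$ from the start; the ingredients behind the low-weight linear estimates—the truncated energy identity of Proposition \ref{decaysimplyeq}, the boundedness of $\mathcal{H}_x$ in Proposition \ref{a2condH}, and the interpolation of Proposition \ref{interpo2}—all carry constants independent of $N$, so the resulting integral inequality and its Gronwall bound are uniform in $N$, and $N\to\infty$ by monotone convergence gives $u\in L^\infty([0,T];L^2(|x|^{2\theta}\,dxdy))$. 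The endpoint $\theta=\frac12$ is the delicate point, since $\mathcal{H}_x$ is not bounded on $L^2(|x|\,dxdy)$; there one keeps the coupled pair $\|\langle x\rangle_N^{1/2}u\|_{L^2}$ and $\|\langle x\rangle_N^{1/2}\mathcal{H}_xu\|_{L^2}$ and propagates them together, exactly as in Proposition \ref{decaysimplyeq} and Theorem \ref{LinearEst} (ii). Finally, the continuity $u\in C([0,T];L^2(|x|^{2\theta}\,dxdy))$ follows once the $L^\infty_t$ bound is in hand: the strong continuity of $t\mapsto U(t)u_0$ in $L^2(|x|^{2\theta})$ comes from applying Theorem \ref{LinearEst} to $(U(t-t')-I)U(t')u_0$, and the continuity of the Duhamel integral from dominated convergence using the uniform weighted bound on $F(u)$; part (ii) for the full range $0<\theta<\frac32$ is then assembled by combining part (i) on $0<\theta\le\frac12$ with the argument above on $\frac12<\theta<\frac32$.
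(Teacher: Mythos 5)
Your formal scheme (Duhamel plus Theorem \ref{LinearEst} plus Gronwall, with the whole weight placed on one factor of the nonlinearity) is plausible on paper, but the step you yourself identify as the main obstacle --- the a priori finiteness of $\|\langle x\rangle^{\theta}u(t)\|_{L^2}$ --- is not repaired by your truncation argument, and this is a genuine gap. What your Gronwall loop needs is a truncated-weight version of the linear estimate, $\|\langle x\rangle_N^{\theta}U(t)f\|_{L^2}\lesssim \langle t\rangle^{\theta}\big(\|\langle x\rangle_N^{\theta}f\|_{L^2}+\|f\|_{H^{\theta,0}}\big)$ with a constant independent of $N$, so that both sides are finite from the start. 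No such estimate exists in the paper, and it does not follow from the propositions you cite: Proposition \ref{decaysimplyeq} treats only the group $U_1(t)$ of $\partial_t v=\mathcal{H}_x\partial_y^2v$, not the full group $U(t)$; Propositions \ref{a2condH} and \ref{interpo2} say nothing about the Benjamin--Ono factor $e^{it\xi|\xi|}$ of the propagator. The proof of Theorem \ref{LinearEst} is carried out entirely in frequency variables through the Plancherel identification $\||x|^{\theta}U(t)f\|_{L^2}\sim\|\mathcal{D}_{\xi}^{\theta}(e^{it\omega}\widehat{f}\,)\|_{L^2}$, Lemma \ref{derivexp}, Corollary \ref{signcoro}, and (for $\tfrac12<\theta<\tfrac32$) the Dirac-delta cancellation under \eqref{hiphote1} together with Proposition \ref{decaywight}; all of this is tied to the exact power $|x|^{\theta}$ and has no analogue for the weights \eqref{trunweightdef}. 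The failure is most serious precisely in part (ii), $\tfrac12<\theta<\tfrac32$, where the only available weighted control of $U(t)$ is the frequency-based Theorem \ref{LinearEst} (iii), so your integral inequality is set up in a norm you cannot justify is finite. (For $0<\theta<\tfrac12$ one \emph{could} manufacture a truncated linear estimate, but only by an energy identity on the linear equation --- at which point you are effectively doing the paper's proof of part (i), not quoting existing propositions.)

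The paper closes this loop by two different mechanisms, both of which avoid your circularity. For part (i) it never invokes the group at all: it pairs the nonlinear equation with $\langle x\rangle_N^{2\theta}u$ and its $\mathcal{H}_x$-image with $\langle x\rangle_N^{2\theta}\mathcal{H}_xu$, so that the $\mathcal{H}_x\partial_y^2$ contributions cancel exactly, handles $\mathcal{H}_x\partial_x^2$ with the commutator estimates of Propositions \ref{CalderonComGU} and \ref{propcomm2} (harmless here because $\partial_x\langle x\rangle_N^{2\theta}$ and $\partial_x^2\langle x\rangle_N^{2\theta}$ are bounded uniformly in $N$ when $\theta\le\tfrac12$), and only then applies Gronwall to the manifestly finite truncated quantities. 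For part (ii) it does use Duhamel, and your observation that $u^k\partial_xu$ is an exact $x$-derivative and hence automatically satisfies \eqref{hiphote1} is exactly the paper's key point; but instead of Gronwall at the final weight $\theta$, the paper bootstraps the weight exponent: starting from $\theta_1<\tfrac12$ (finite by part (i)), it passes to $\theta_2=\tfrac{k+2}{2}\theta_1$ by distributing the weight over the $k+1$ factors of $u^{k+1}$ via Lemma \ref{InterpLemma} and Sobolev embedding, obtaining $\|\langle x\rangle^{\theta_2}u^k\partial_xu\|_{L^2}\lesssim \|\langle x\rangle^{\theta_1}u\|_{L^2}^{k+1}+\|J_{x,y}^{10}u\|_{L^2}^{k+1}$, i.e.\ a bound in terms of the \emph{previously established} smaller weight only. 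Iterating finitely many times reaches any $\theta<\tfrac32$ with no self-referential norm and no Gronwall. To make your write-up rigorous you would have to either adopt this weight bootstrap, or replace the a priori estimate by a genuine fixed-point argument for the Duhamel map in $C([0,T'];H^{s_1,s_2}\cap L^2(|x|^{2\theta}\,dxdy))$ on short time intervals (noting that the mean-zero condition is preserved by both $U(t)$ and the Duhamel term) and then iterate in time; the truncation route as written does not close.
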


\begin{proof}
We first deduce $(i)$. Setting $0<\theta\leq \frac{1}{2}$, we multiply the equation in \eqref{SHeq} by $\langle x \rangle_N^{2\theta} u$, and then integrating over $\mathbb{R}\times \mathbb{T}$, we get
\begin{equation*}
\begin{aligned}
\frac{1}{2}\frac{d}{dt}\int (\langle x \rangle_N^{\theta} u)^2\, dx dy -\int \mathcal{H}_x\partial_x^2 u(\langle x \rangle_N^{2\theta} u)\, dx dy&-\int \mathcal{H}_x\partial_y^2u (\langle x \rangle_N^{2\theta} u)\, dx dy\\
&=-\sum_{k=1}^K \nu_k\int  u^k\partial_xu(\langle x \rangle_N^{2\theta} u)\, dx dy.
\end{aligned}    
\end{equation*}
Now, applying $\mathcal{H}_x$ to the equation in \eqref{SHeq}, then multiplying by $\langle x \rangle_N^{2\theta} \mathcal{H}_xu$ and integrating over $\mathbb{R}\times \mathbb{T}$, it follows
\begin{equation*}
\begin{aligned}
\frac{1}{2}\frac{d}{dt}\int (\langle x \rangle_N^{\theta}\mathcal{H}_x u)^2\, dx dy +\int \partial_x^2 u(\langle x \rangle_N^{2\theta} \mathcal{H}_x u)\, dx dy&+\int \partial_y^2u (\langle x \rangle_N^{2\theta} \mathcal{H}_x u)\, dx dy\\
&=-\sum_{k=1}^K\nu_k\int \mathcal{H}_x(u^k\partial_x u )(\langle x \rangle_N^{2\theta} \mathcal{H}_x u)\, dx dy.
\end{aligned}    
\end{equation*}
Combining the previous equation, using integration by parts on the $y$-variable together with the fact that $\langle x \rangle_N^{2\theta}$ is independent of $y$, we deduce
\begin{equation}\label{eqLWP1}
\begin{aligned}
\frac{1}{2}\frac{d}{dt}\Big(\int (\langle x \rangle_N^{\theta} u)^2\, dx dy&+\int (\langle x \rangle_N^{\theta}\mathcal{H}_x u)^2\, dx dy\Big)\\
=&\int \mathcal{H}_x\partial_x^2 u(\langle x \rangle_N^{2\theta} u)\, dx dy-\int \partial_x^2 u(\langle x \rangle_N^{2\theta} \mathcal{H}_x u)\, dx dy\\
&-\sum_{k=1}^K\nu_k\int u^k\partial_x u(\langle x \rangle_N^{2\theta} u)\, dx dy\\
&-\sum_{k=1}^K\nu_k\int \mathcal{H}_x (u^k\partial_x u)(\langle x \rangle_N^{2\theta} \mathcal{H}_x u)\, dx dy.
\end{aligned}    
\end{equation}
Let us estimate the right-hand side of the above identity. We study first the parts involving the dispersion. Integration by parts shows
\begin{equation*}
\begin{aligned}
\int \mathcal{H}_x\partial_x^2 u(\langle x \rangle_N^{2\theta} u)\, dx dy-\int \partial_x^2 u(\langle x \rangle_N^{2\theta} \mathcal{H}_x u)\, dx dy=&\int  \mathcal{H}_x u \partial_x^2(\langle x\rangle_N^{2\theta}) u\, dx dy+2\int \mathcal{H}_x u \partial_x(\langle x\rangle_N^{2\theta}) \partial_x u\, dx dy\\
=:&\,\mathcal{I}+\mathcal{II}.    
\end{aligned}    
\end{equation*}
We proceed with the estimates for $\mathcal{I}$ and $\mathcal{II}$. Given that $0<\theta\leq \frac{1}{2}$, the definition of the weight $\langle x \rangle_N$ yields $|\partial_x^2(\langle x\rangle_N^{2\theta})|\lesssim 1$, where the implicit constant is independent of $\theta$ and $N$. Additionally, since $\mathcal{H}_x$ extends to a bounded operator in $L^2(\mathbb{R}\times \mathbb{T})$, we use H\"older's inequality to deduce \begin{equation*}
\begin{aligned}
 |\mathcal{I}|\leq  \|\partial_x^2(\langle x\rangle_N^{2\theta})\|_{L^{\infty}}\|\mathcal{H}_x u\|_{L^2}\|u\|_{L^2}\lesssim  \| u\|_{L^2}^2.
\end{aligned}
\end{equation*}
Now, since $\mathcal{H}_x$ is a skew-symmetric operator, and $D_x=\mathcal{H}_x\partial_x$, we write
\begin{equation}
 \begin{aligned}
 \frac{1}{2}\mathcal{II}=&-\int u \big([\mathcal{H}_x,\partial_x(\langle x\rangle_N^{2\theta})]\partial_x u\big)\, dx dy-\int u \partial_x(\langle x\rangle_N^{2\theta}) \mathcal{H}_x\partial_x u\, dx dy \\
=&-\int u [\mathcal{H}_x,\partial_x(\langle x\rangle_N^{2\theta})]\partial_x u\, dx dy-\int \big([D_x^{\frac{1}{2}},\partial_x(\langle x\rangle_N^{2\theta})]u\big) D_x^{\frac{1}{2}} u\, dx dy \\
&-\int \partial_x(\langle x\rangle_N^{2\theta}) D_x^{\frac{1}{2}}u D_x^{\frac{1}{2}} u\, dxdy.
 \end{aligned}   
\end{equation}
We use Cauchy-Schwarz inequality and Proposition \ref{CalderonComGU}  to get
\begin{equation*}
\begin{aligned}
\Big|\int u [\mathcal{H}_x,\partial_x(\langle x\rangle_N^{2\theta})]\partial_x u\, dx dy\Big| \leq & \|u\|_{L^2}\|\,\|[\mathcal{H}_x,\partial_x(\langle x\rangle_N^{2\theta})]\partial_x u\|_{L^2_x}\|_{L^2_y}\\
\lesssim & \|\partial_x^2(\langle x\rangle_N^{2\theta})\|_{L^{\infty}}\|u\|_{L^2}^2\\
\lesssim & \|u\|_{L^2}^{2}.
\end{aligned}    
\end{equation*}
Using that $D_x^{\frac{1}{2}}$ is a self-adjoint operator, an application of Proposition \ref{propcomm2} shows 
\begin{equation}
\begin{aligned}
 \Big|\int \big([D_x^{\frac{1}{2}},\partial_x(\langle x\rangle_N^{2\theta})]u\big) D_x^{\frac{1}{2}} u\, dx dy\Big|&=\Big|\int D_x^{\frac{1}{2}}\big([D_x^{\frac{1}{2}},\partial_x(\langle x\rangle_N^{2\theta})]u\big)  u\, dx dy \Big|\\
 &\lesssim \|\,\|D_x^{\frac{1}{2}}\big([D_x^{\frac{1}{2}},\partial_x(\langle x\rangle_N^{2\theta})]u\big)\|_{L^2_x}\|_{L^2_y}\|u\|_{L^2}\\
 &\lesssim \|\partial_x^2(\langle x\rangle_N^{2\theta})\|_{L^{\infty}}\|u\|_{L^2}^2\\
 &\lesssim \| u\|_{L^2}^{2}.
\end{aligned}    
\end{equation}
Using again that $0<\theta\leq \frac{1}{2}$ implies $|\partial_x(\langle x\rangle_N^{2\theta})|\lesssim 1$, where the implicit constant is independent of $\theta$ and $N$, we deduce
\begin{equation*}
\begin{aligned}
\Big|\int \partial_x(\langle x\rangle_N^{2\theta}) D_x^{\frac{1}{2}}u D_x^{\frac{1}{2}} u\, dx dy\Big|\lesssim & \|\partial_x(\langle x\rangle_N^{2\theta})\|_{L^{\infty}}\|D_x^{\frac{1}{2}}u\|_{L^{2}}^2\lesssim \|u\|_{H^{\frac{1}{2},0}}^2.
\end{aligned}   
\end{equation*}
Gathering the previous estimates, we deduce
\begin{equation*}
  |\mathcal{I}|+ |\mathcal{II}|\lesssim  \| u\|_{H^{\frac{1}{2},0}}^{2}.
\end{equation*}
Next, we estimate the nonlinear terms in \eqref{eqLWP1}. To control $\mathcal{H}_x\Big(\sum_{k=1}^K\nu_k  u^k\partial_x u\Big)$, given $k=1,\dots,K$, we write
\begin{equation*}
\begin{aligned}
\langle x \rangle_N^{\theta}\mathcal{H}_x(u^k\partial_x u)= \frac{1}{k+1}[\langle x \rangle_N^{\theta},\mathcal{H}_x]\partial_x(u^{k+1})+\mathcal{H}_x(\langle x \rangle_N^{\theta} u^k\partial_x u),   
\end{aligned}    
\end{equation*}
thus, by H\"older's inequality and Young's inequality, we deduce
\begin{equation*}
\begin{aligned}
\|\langle x \rangle_N^{\theta}\mathcal{H}_x(u^k\partial_x u)\|_{L^2}\lesssim & \|\partial_x(\langle x \rangle_N^{\theta})\|_{L^{\infty}}\|u^{k+1}\|_{L^2}+\|\langle x \rangle_N^{\theta} u^k\partial_x u\|_{L^2}\\
\lesssim & \|u\|_{L^{\infty}}^k\|u\|_{L^{2}}+\|\partial_x u\|_{L^{\infty}}\|u\|_{L^{\infty}}^{k-1}\|\langle x \rangle_N^{\theta} u\|_{L^{2}}\\
\lesssim &\big(\|u\|_{L^{\infty}}^k+\|\partial_x u\|_{L^{\infty}}^k\big)\big(\|u\|_{L^{2}}+\|\langle x \rangle_N^{\theta} u\|_{L^{2}}\big).
\end{aligned}
\end{equation*}
Thus, the previous inequality shows
\begin{equation*}
 \begin{aligned}
\|\langle x \rangle_N^{\theta}\mathcal{H}_x\Big(\sum\limits_{k=1}^K \nu_k u^k\partial_x u\Big)\|_{L^2} \lesssim & \sum_{k=1}^K|\nu_k|\|\langle x \rangle_N^{\theta}\mathcal{H}_x(u^{k}\partial_x u)\|_{L^2} \\
\lesssim & \Big(\sum_{k=1}^K (\| u\|_{L^{\infty}}^k+\|\partial_x u\|_{L^{\infty}}^k)\Big)\big(\|u\|_{L^{2}}+\|\langle x \rangle_N^{\theta} u\|_{L^{2}}\big).
 \end{aligned}   
\end{equation*}
Similarly, we obtain
\begin{equation*}
 \begin{aligned}
\|\langle x \rangle_N^{\theta}\Big(\sum\limits_{k=1}^K \nu_k u^k\partial_x u\Big)\|_{L^2} \lesssim & \sum_{k=1}^K|\nu_k|\|\langle x \rangle_N^{\theta}(u^{k}\partial_x u)\|_{L^2} \\
\lesssim & \Big(\sum_{k=1}^K (\| u\|_{L^{\infty}}^k+\|\partial_x u\|_{L^{\infty}}^k)\Big)\big(\|u\|_{L^{2}}+\|\langle x \rangle_N^{\theta} u\|_{L^{2}}\big).
 \end{aligned}   
\end{equation*}
Consequently, Cauchy-Schwarz inequality yields
\begin{equation*}
\begin{aligned}
\Big|\int \Big(\sum\limits_{k=1}^K \nu_k u^k\partial_x u\Big)&(\langle x \rangle_N^{2\theta} u)\, dx dy +\int \mathcal{H}_x \Big(\sum\limits_{k=1}^K \nu_k u^k\partial_x u\Big)(\langle x \rangle_N^{2\theta} \mathcal{H}_x u)\, dx dy\Big|\\
&\lesssim \|\langle x \rangle_N^{\theta}\Big(\sum\limits_{k=1}^K \nu_k u^k\partial_x u\Big)\|_{L^2}\|\langle x \rangle_N^{\theta} u\|_{L^2}+\|\langle x \rangle_N^{\theta}\mathcal{H}_x\Big(\sum\limits_{k=1}^K \nu_k u^k\partial_x u\Big)\|_{L^2}\|\langle x \rangle_N^{\theta} \mathcal{H}_x u\|_{L^2}\\
&\lesssim  \Big(\sum_{k=1}^K (\| u\|_{L^{\infty}}^k+\|\partial_x u\|_{L^{\infty}}^k)\Big)\big(\|u\|_{L^{2}}+\|\langle x \rangle_N^{\theta} u\|_{L^{2}}\big)\\
&\hspace{3cm}\times \Big(\|\langle x \rangle_N^{\theta} u\|_{L^2}+\|\langle x \rangle_N^{\theta} \mathcal{H}_x u\|_{L^2}\Big).
\end{aligned}    
\end{equation*}
Plugging the previous estimates back to \eqref{eqLWP1}, we infer that there exist positive constants $c_0$, $c_1$ independent of $N$ such that 
\begin{equation*}
\begin{aligned}
\frac{d}{d t}\Big(\|\langle x\rangle_N^{\theta} u(t)\|_{L^2}^2&+\|\langle x\rangle_N^{\theta}\mathcal{H}_x u(t)\|_{L^2}^2\Big)\\
\leq & c_0\Big(1+\sum_{k=1}^K (\| u(t)\|_{L^{\infty}}^k+\|\partial_x u(t)\|_{L^{\infty}}^k)\Big)\|u(t)\|_{H^{\frac{1}{2},0}}^2\\
&+c_1\Big(\sum_{k=1}^K (\| u(t)\|_{L^{\infty}}^k+\|\partial_x u(t)\|_{L^{\infty}}^k)\Big)\Big(\|\langle x \rangle_N^{\theta} u(t)\|_{L^2}^2+\|\langle x \rangle_N^{\theta} \mathcal{H}_x u(t)\|_{L^2}^2\Big).   
\end{aligned}    
\end{equation*}
Setting $s_1>\frac{3}{2}$, $s_2>\frac{1}{2}$ fixed, we can use the fact that $u\in C([0,T];H^{\infty}(\mathbb{R}\times \mathbb{T}))$, and the embedding 
$H^{\frac{1}{2}^{+},\frac{1}{2}^{+}}(\mathbb{R}\times \mathbb{T})\hookrightarrow L^{\infty}(\mathbb{R}\times \mathbb{T})$ to get that there exists a universal constant $c>0$ such that 
\begin{equation*}
 \begin{aligned}
 \sup_{t\in [0,T]}\Big(\|u(t)\|_{L^{\infty}}+\|\partial_x u(t)\|_{L^{\infty}}+\|u(t)\|_{H^{\frac{1}{2}},0} \Big)\leq c \sup_{t\in [0,T]}\|u(t)\|_{H^{s_1,s_2}}=:M.   
 \end{aligned}   
\end{equation*}
Thus, we arrive at 
\begin{equation*}
\begin{aligned}
\frac{d}{dt}\Big(\|\langle x\rangle_N^{\theta} u(t)\|_{L^2}^2&+\|\langle x\rangle_N^{\theta}\mathcal{H}_x u(t)\|_{L^2}^2\Big)\\
\leq & c_0\Big(1+2\sum_{k=1}^K M^k\Big)M^2+2c_1\Big(\sum_{k=1}^K M^k \Big)\Big(\|\langle x \rangle_N^{\theta} u(t)\|_{L^2}^2+\|\langle x \rangle_N^{\theta} \mathcal{H}_x u(t)\|_{L^2}^2\Big),  
\end{aligned}    
\end{equation*}
which after Gronwall's inequality in the case $0<\theta<\frac{1}{2}$ allows us to deduce
\begin{equation}\label{afterGronwa}
\begin{aligned}
 \|\langle x\rangle_N^{\theta} u(t)\|_{L^2}^2+&\|\langle x\rangle_N^{\theta}\mathcal{H}_x u(t)\|_{L^2}^2\\
 \leq & e^{2c_1\big(\sum_{k=1}^K M^k \big)t}\Big(\|\langle x\rangle_N^{\theta} u_0\|_{L^2}^2+\|\langle x\rangle_N^{\theta}\mathcal{H}_x u_0\|_{L^2}^2+c_0\Big(1+2\sum_{k=1}^K M^k\Big)M^2|t|\Big)\\
 \leq & e^{2c_1\big(\sum_{k=1}^K M^k \big)t}\Big(\|\langle x\rangle^{\theta} u_0\|_{L^2}^2+c_0\Big(1+2\sum_{k=1}^K M^k\Big)M^2|t|\Big),
\end{aligned}    
\end{equation}
where we have used $\|\langle x\rangle_N^{\theta}\mathcal{H}_x u_0\|_{L^2}^2\leq \|\langle x\rangle^{\theta} u_0\|_{L^2}^2<\infty$, which follows from Proposition \ref{a2condH} when $0<\theta<\frac{1}{2}$. Now, when $\theta=\frac{1}{2}$, we get inequality \eqref{afterGronwa} but with the addition of the term $\|\langle x\rangle^{\theta} \mathcal{H}_x u_0\|_{L^2}^2$, which is finite by hypothesis. Thus, by the above inequality and taking $N\to \infty$, we see that $u\in L^{\infty}([0,T];L^2(|x|^{2\theta}\, dx dy))$. Continuity follows by similar arguments in the proof of \cite[Theorem 1.3]{Riano2021} and references therein.  This completes the deduction of $(i)$.
\\ \\
Let us now deal with part $(ii)$. We assume that $u_0\in L^{2}(|x|^{2\theta}\, dx dy)$, for some $\frac{1}{2}<\theta<\frac{3}{2}$, and $\widehat{u_0}(0,\eta)=0$ for all $\eta\in \mathbb{Z}$. Now, let $0<\theta_1<\frac{1}{2}$ be fixed. By $(i)$, we know that $u\in C([0,T];L^2(|x|^{2\theta_1}\, dx))$. Thus, we are going to use this fact and the integral formulation of \eqref{SHeq} to deduce that $u\in C([0,T];L^2(|x|^{2\theta_2}\, dx))$ for some $0<\theta_1<\theta_2\leq \theta$. We first recall that since $u$ solves \eqref{SHeq}, it solves the integral equation
\begin{equation}\label{inteque}
\begin{aligned}
u(t)=U(t)u_0-\int_0^t U(t-\tau)\Big(\sum_{k=1}^K \nu_k u^{k}\partial_x u(\tau)\Big)\, d\tau.   
\end{aligned}    
\end{equation}
Thus, since $\widehat{u_0}(0,\eta)=0$ for all $\eta\in \mathbb{Z}$, we can apply Theorem \ref{LinearEst} part $(iii)$ to get $U(t)u_0\in C([0,T];L^{2}(|x|^{2\theta}\, dxdy))$. Consequently, we are reduced to control the integral factor in \eqref{inteque}.

We consider a fixed number $\theta_1<\theta_2\leq \theta$, where
\begin{equation}\label{defotheta}
   \theta_2=\left\{\begin{aligned}
&\frac{(k+2)}{2}\theta_1, \, \, \, \text{ if } \, \, \frac{(k+2)\theta_1}{2}\leq \theta, \\
&\theta, \hspace{1.5cm} \text{ if } \, \, \frac{(k+2)\theta_1}{2}> \theta.
   \end{aligned}\right. 
\end{equation}
We notice that for all $k=1,\dots,K$, $\widehat{u^k \partial_x u}(\xi,\eta,\tau)=\frac{1}{k+1}(i\xi) \widehat{u^{k+1}}(\xi,\eta,\tau)$, it follows $\widehat{u^k \partial_x u}(0,\eta,\tau)=0$, for all $\eta\in \mathbb{Z}$, and $\tau\in (0,T)$. Thus, we apply Theorem \ref{LinearEst} $(iii)$ to get
\begin{equation}\label{integraleqinweights}
 \begin{aligned}
    \|\langle x\rangle^{\theta_2}\int_0^t U(t-\tau)\Big(\sum_{k=1}^K \nu_k u^{k}\partial_x u(\tau)\Big)\, d\tau\|_{L^2}
    \lesssim & \sum_{k=1}^K\int_0^t \Big(\|u^k\partial_x u\|_{H^{\theta_2,0}} +\|\langle x\rangle^{\theta_2}u^k\partial_x u\|_{L^2}\Big)\, d\tau.
 \end{aligned}   
\end{equation}
Next, we will use the fractional Leibniz rule
\begin{equation}\label{fLR}
\begin{aligned}
\|J^s(f g)\|_{L^2}\lesssim \|J^s f\|_{L^2}\|g\|_{L^{\infty}}+\|f\|_{L^{\infty}}\|J^s g\|_{L^2}, 
\end{aligned}
\end{equation}
which for arbitrary $s>0$ was deduced in \cite[Theorem 1]{GrafakosOh2014} (see also \cite{KatoPonce1988}). Then, by multiple applications of the previous inequality in the $x$-variable, it follows
\begin{equation*}
\begin{aligned}
    \|u^k\partial_x u\|_{H^{\theta_2,0}}\lesssim \|\|J^{\theta_2+1}_x(u^{k+1})\|_{L^2_x}\|_{L^2_y}\lesssim & \|\|u\|_{L^{\infty}_x}^k\|J^{\theta_2+1}_x u\|_{L^2_x}\|_{L^2_y}\\
    \lesssim & \|u\|_{L^{\infty}}^k\|u\|_{H^{\theta_2+1,0}}. 
\end{aligned}    
\end{equation*}
On the other hand, writing $u^k\partial_x u=\frac{1}{k+1}\partial_x(u^{k+1})$ and distributing the derivative $\partial_x$, we apply complex interpolation \eqref{interpineq2} to deduce
\begin{equation}\label{finaleq1}
\begin{aligned}
 \|\langle x\rangle^{\theta_2}u^k\partial_x u\|_{L^2}  \lesssim & \|\|J_x(\langle x\rangle^{\theta_2} u^{k+1})\|_{L^2_x}\|_{L^2_y}\\
 \lesssim & \|\|J_x^{\frac{(3k+4)\theta_1}{(3k+4)\theta_1-4\theta_2}}( u^{k+1})\|_{L^2_x}\|_{L^2_y}+\|\|\langle x\rangle^{\frac{(3k+4)}{4}\theta_1} u^{k+1}\|_{L^2_x}\|_{L^2_y}.
\end{aligned}    
\end{equation}
Using \eqref{fLR}, and the fact that $u(t)$ describes a continuous curve in $H^{\infty}(\mathbb{R}\times \mathbb{T})$, we have that the first term on the right-hand side of \eqref{finaleq1} is bounded. Next, we deal with the second term on the right-hand side of \eqref{finaleq1}.

Applying Sobolev embedding $H^{\frac{k}{2(k+1)}}(\mathbb{R})\hookrightarrow L^{2(k+1)}(\mathbb{R})$, $H^{\frac{1}{2}^{+}}(\mathbb{T})\hookrightarrow L^{\infty}(\mathbb{T})$, and the fact that $\mathbb{T}$ defines a finite measure space, we deduce
\begin{equation}\label{finaleq2}
\begin{aligned}
\|\|\langle x\rangle^{\frac{(3k+4)}{4}\theta_1} u^{k+1}\|_{L^2_x}\|_{L^2_y}=\|\|\langle x\rangle^{\frac{(3k+4)}{4(k+1)}\theta_1} u\|_{L^{2(k+1)}_x}^{k+1}\|_{L^{2}_{y}}\lesssim &\|\|J_{x}^{\frac{k}{2(k+1)}}(\langle x\rangle^{\frac{(3k+4)}{4(k+1)}\theta_1} u)\|_{L^2_{x}}^{k+1}\|_{L^2_y}\\
\lesssim &\Big(\sup_{y\in \mathbb{T}}\|J_{x}^{\frac{k}{2(k+1)}}(\langle x\rangle^{\frac{(3k+4)}{4(k+1)}\theta_1} u)\|_{L^2_{x}}\Big)^{k+1}\\
\lesssim &  \|J_{x}^{\frac{k}{2(k+1)}}J_{y}^{1}(\langle x\rangle^{\frac{(3k+4)}{4(k+1)}\theta_1} u)\|_{L^{2}_{x,y}}^{k+1}.
\end{aligned}    
\end{equation}
To complete the estimate of the right-hand side of the above inequality, we apply interpolation Lemma \ref{InterpLemma} to obtain 
\begin{equation}\label{finaleq3}
\begin{aligned}
   \|J_{x}^{\frac{k}{2(k+1)}}J_{y}^{1}(\langle x\rangle^{\frac{(3k+4)}{4(k+1)}\theta_1} u)\|_{L^{2}_{x,y}}^{k+1}\lesssim & \|J_{x,y}^{\frac{k}{2(k+1)}+1}(\langle x\rangle^{\frac{(3k+4)}{4(k+1)}\theta_1} u)\|_{L^{2}_{x,y}}^{k+1}\\ 
    \lesssim & \|\langle x\rangle^{\theta_1} u\|_{L^{2}_{x,y}}^{k+1}+\|J_{x,y}^{10}u\|_{L^2}^{k+1}.
\end{aligned}
\end{equation}
Consequently, the previous estimates, the fact that $u\in  C([0,T];H^{\infty}(\mathbb{R}\times \mathbb{T})\cap L^2(|x|^{2\theta_1}))$ allow us to conclude  
\begin{equation*}
 \int_0^t U(t-\tau)\Big(\sum_{k=1}^K \nu_k u^{k}\partial_x u(\tau)\Big)\, d\tau\in C([0,T];L^2(|x|^{2\theta_2}\, dx dy)).   
\end{equation*}
and in turn $u\in C([0,T];L^2(|x|^{2\theta_2}\,dxdy))$. Consequently, if $\theta_2=\theta$, we are done. In the case $\theta_2<\theta$, replacing $\theta_1$  by $\theta_2$, we can define $\theta_3$ as we did in \eqref{defotheta}, i.e., $\theta_3=\frac{(k+2)}{2}\theta_2$, if $\frac{(k+2)}{2}\theta_2\leq \theta$, and $\theta_3=\theta$, if $\frac{(k+2)}{2}\theta_2>\theta$.  Thus, replacing $\theta_2$ and $\theta_1$ by $\theta_3$ and $\theta_2$, respectively in the arguments in \eqref{finaleq1}-\eqref{finaleq3}, we get $u\in C([0,T];L^2(|x|^{2\theta_3}\,dxdy))$. Finally, since for each $k=1,\dots,K$, $\frac{k+2}{2}>1$, the rest of the proof runs as before, iterating the previous argument a finite number of times to conclude  $u\in C([0,T];L^2(|x|^{2\theta}\,dxdy))$.
\end{proof}

%%%%%%%%%%%%%%%%%%%%%%%%%%%%%%%%%%%%%%%%%%%%%%%%%%%%%%%%%%%%%%%%%%%%%%%%%%%%%%%%%%%%%%%%%%%%%%%%%%%%%%%%%%%%%%%%%%%5%%%%%%%%%%%%%%%%%%%%%%%%%%%%%%%%%%%%%%%%%%%%%%%%%%%%%%%%%%%%%%%%%%%%%%%%%%%%%%%%%%%%%%%%%%%%%%%%%%%%%%%%%%%%%%%%%%%%%%%%%%%%

\subsection{Proof of Theorem \ref{LWPresultinWS}}

The results of Lemma \ref{smoothsolutdecay} establish persistence in weights spaces for smooth solutions of \eqref{SHeq}, where we remark that the proof requires solutions with a higher order of regularity. To optimize such regularity in the general case of Theorem \ref{LWPresultinWS}, we will use energy estimates and approximation with smooth solutions that satisfy the conclusions of Lemma \ref{smoothsolutdecay}.

\begin{proof}[Proof of Theorem \ref{LWPresultinWS}]
Let $0<\theta<\frac{3}{2}$, $s_1>\frac{3}{2}$, $s_2>\frac{1}{2}$ and $u_0\in H^{s_1,s_2}(\mathbb{R}\times \mathbb{T})\cap L^2(|x|^{2\theta}\, dx dy)$ where, if $\theta=\frac{1}{2}$, we further assume $\mathcal{H}_xu_0\in L^2(|x|\, dx dy)$, and if $\frac{1}{2}<\theta<\frac{3}{2}$, we also assume $\widehat{u}_0(0,\eta)=0$ for all $\eta\in \mathbb{Z}$. By Lemma \ref{StandardLWP}, there exist a time $T>0$ and a unique solution $u\in C([0,T];H^{s_1,s_2}(\mathbb{R}\times \mathbb{T}))$ of the Cauchy problem \eqref{SHeq}  with initial condition $u_0$. Moreover, given $N\geq 1$ be integer, we define the sequence $P_N u_0\in H^{\infty}(\mathbb{R}\times \mathbb{T})$, where $(P_N u_0)^{\wedge}(\xi,\eta)=\phi(\xi/N,\eta/N)\widehat{u}_0(\xi,\eta)$, and $\phi\in C^{\infty}_c(\mathbb{R}^2)$ with $\phi(\xi,\eta)=1$, whenever $|(\xi,\eta)|\leq 1$. Thus, by Lemma \ref{StandardLWP} (continuous dependence in it), we can take $N_0$ sufficiently large such that for all $N\geq N_0$, there exists a unique smooth solution $u_N\in C([0,T];H^{\infty}(\mathbb{R}\times \mathbb{T}))$ of \eqref{SHeq}, with initial condition $P_Nu_0$, such that $u_N\to u$ as $N\to \infty$ in the topological sense of $C([0,T];H^{s_1,s_2}(\mathbb{R}\times \mathbb{T}))$. Also, observe that $P_Nu_0\to u_0$ in $L^2(|x|^{2\theta}\, dx dy)$.

In what follows, we will use approximation with the sequence of smooth solutions $u_N$ and Lemma \ref{smoothsolutdecay} to show that $u\in C([0,T];L^2(|x|^{2\theta}\, dx dy))$. Once this has been established, the continuous dependence in $L^2(|x|^{2\theta}\, dx dy)$ follows by our arguments below and approximation. This, in turn, leads to the well-posedness statement in Theorem \ref{LWPresultinWS}. 

Let us show that $u\in C([0,T];L^2(|x|^{2\theta}\, dx dy))$. We will consider two cases: $0<\theta\leq \frac{1}{2}$ and $\frac{1}{2}<\theta<\frac{3}{2}$. But, we first notice that $u_0\in L^2(|x|^{2\theta}\, dx dy)$ implies that $P_Nu_0\in L^2(|x|^{2\theta}\, dx dy)$, which can be deduced by the properties of the fractional derivative \eqref{prelimneq0}-\eqref{prelimneq1}. For example, one can use some of the ideas in the proof of Theorem \ref{LinearEst}. Additionally, since the operator $P_N$ commutes with the Hilbert transform,  we have that $\mathcal{H}_x u_0\in L^2(|x|^{2\theta}\, dx dy)$ implies $\mathcal{H}_x P_N u_0\in L^2(|x|^{2\theta}\, dx dy)$. Besides that, we have that if $\widehat{u}_0(0,\eta)=0$ for all $\eta\in \mathbb{Z}$, it follows $\widehat{P_Nu_0}(0,\eta)=0$. Summarizing, we have that the conditions over $u_0$ in the statements of Theorem \ref{LWPresultinWS} transfer to $P_Nu_0$.
\\ \\
\underline{Proof of Theorem \ref{LWPresultinWS} $(i)$ and $(ii)$; weights $0<\theta\leq \frac{1}{2}$}. By the previous discussion and Lemma \ref{smoothsolutdecay}, we have that $u_N\in C([0,T];L^2(|x|^{2\theta}\, dx dy))$. Thus, the same energy estimate in the proof of $(i)$ in  Lemma \ref{smoothsolutdecay} (see \eqref{afterGronwa}) applied to the sequence $u_N$, allows us to take $N\to \infty$ to deduce  $u\in C([0,T];L^2(|x|^{2\theta}\, dx dy))$. We remark that such an argument is valid provided that $u_N\to u$ as $N\to \infty$ in $C([0,T];H^{s_1,s_2}(\mathbb{R}\times \mathbb{T}))$.
\\ \\
\underline{Proof of Theorem \ref{LWPresultinWS} $(iii)$; weights $\frac{1}{2}<\theta< \frac{3}{2}$}. Once again, the fact that $\widehat{P_Nu_0}(0,\eta)=0$ and Lemma \ref{smoothsolutdecay} show that $u_N\in C([0,T];L^2(|x|^{2\theta}\, dx dy))$. We write $\theta=\theta_1+\theta_2$, where $\theta_1\in \{0,1\}$, $\theta_2\in [0,1)$. Arguing as in the deduction of \eqref{eqLWP1}, it is not hard to get
\begin{equation}\label{integraleqN}
\begin{aligned}
\frac{1}{2}\frac{d}{dt}\Big(\int (x^{\theta_1}\langle x \rangle^{\theta_2} u_N)^2\, dx dy&+\int (x^{2\theta_1}\langle x \rangle^{2\theta_2}\mathcal{H}_x u_N)^2\, dx dy\Big)\\
=&\underbrace{\int \mathcal{H}_x\partial_x^2 u_N(x^{2\theta_1}\langle x \rangle^{2\theta_2} u_N)\, dx dy-\int \partial_x^2 u_N(x^{2\theta_1}\langle x \rangle^{2\theta_2} \mathcal{H}_x u_N)\, dx dy}_{DT}\\
&-\underbrace{\sum_{k=1}^K\nu_k\int u_N^k\partial_x u_N(x^{2\theta_1}\langle x \rangle^{2\theta_2} u_N)\, dx dy}_{NL1}\\
&-\underbrace{\sum_{k=1}^K\nu_k\int \mathcal{H}_x (u^k_N\partial_x u_N)(x^{2\theta_1}\langle x \rangle^{2\theta_2} \mathcal{H}_x u_N)\, dx dy}_{NL2}.
\end{aligned}    
\end{equation}
We remark that above we used directly the weight $x^{\theta_1}\langle x \rangle^{\theta_2}$ not an approximation of it, which is justified by the fact that $u_N\in C([0,T];H^{\infty}(\mathbb{R}\times \mathbb{T}))\cap C([0,T];L^2(|x|^{2\theta}\, dx dy))$. We first estimate $NL1$ and $NL2$ in \eqref{integraleqN}. Integration by parts and Young's inequality yield
\begin{equation*}
\begin{aligned}
| NL1|=&\Big|\sum_{k=1}^K \frac{\nu_k}{2}\int \partial_x(x^{2\theta_2}\langle x\rangle^{2\theta_1} u_{N}^k)u_{N}^2\, dx dy  \Big|\\
\lesssim & \sum_{k=1}^{K}\big(\|u_{N}\|_{L^{\infty}}^k+\|u_{N}\|_{L^{\infty}}^{k-1}\|\partial_x u_{N}\|_{L^{\infty}}\big)\|x^{\theta_1}\langle x \rangle^{\theta_2}u_{N}\|_{L^2}^2\\
\lesssim & \sum_{k=1}^K (\|u_N\|_{L^{\infty}}^{k}+\|\partial_x u_{N}\|_{L^{\infty}}^{k})\|x^{\theta_1}\langle x \rangle^{\theta_2}u_{N}\|_{L^2}^2.    
\end{aligned}    
\end{equation*}
To estimate $NL2$, we take $k=1,\dots,K$ fixed. Given that $\partial_x(u_N^k)$ is such that $\widehat{\partial_x(u_N^k)}(0,\eta)=0$ for all $\eta \in \mathbb{Z}$, we use \eqref{conmm} and the fact that $\theta_1\in \{0,1\}$ to get
\begin{equation*}
\begin{aligned}
x^{\theta_1}\langle x\rangle^{\theta_2} \mathcal{H}_x (u^k_N\partial_x u_N)=\frac{1}{k+1}\langle x\rangle^{\theta_2} \mathcal{H}_x (x^{\theta_1} \partial_x (u^{k+1}_N)).  
\end{aligned}    
\end{equation*}
Now, since $\theta=\theta_1+\theta_2\in (\frac{1}{2},\frac{3}{2})$, when $\theta_1=0$, it follows that $\theta_2\in (\frac{1}{2},1)$, then we can apply Remark \ref{remarkHilbertL2} to the previous identity to deduce
\begin{equation*}
 \begin{aligned}
\|x^{\theta_1}\langle x\rangle^{\theta_2} \mathcal{H}_x (u^k_N\partial_x u_N)\|\lesssim\|x^{\theta_1}\langle x\rangle^{\theta_2}  \partial_x (u^{k+1}_N)\|_{L^2} \lesssim \|u_N\|^{k-1}_{L^{\infty}}\|\partial_x u_N\|_{L^{\infty}}\|x^{\theta_1}\langle x\rangle^{\theta_2}u_N\|_{L^2}.
 \end{aligned}   
\end{equation*}
The exact same estimate above holds when $\theta_1=1$, since this implies that $\theta_1\in (0,\frac{1}{2})$, and thus the Hilbert transform $\mathcal{H}_x$ defines a bounded operators from $L^2(|x|^{2\theta_2}\, dx dy)$ into itself. Thus, we deduce
\begin{equation*}
\begin{aligned}
 |NL2|\leq & \sum_{k=1}^K\|x^{\theta_1}\langle x\rangle^{\theta_2} \mathcal{H}_x (u^k_N\partial_x u_N)\|\|x^{\theta_1}\langle x\rangle^{\theta_2}u_N\|_{L^2}\\
 \lesssim& \sum_{k=1}^K\|u_N\|^{k-1}_{L^{\infty}}\|\partial_x u_N\|_{L^{\infty}}\|x^{\theta_1}\langle x\rangle^{\theta_2}u_N\|_{L^2}^2\\
 \lesssim & \sum_{k=1}^K (\|u_N\|_{L^{\infty}}^{k}+\|\partial_x u_{N}\|_{L^{\infty}}^{k})\|x^{\theta_1}\langle x \rangle^{\theta_2}u_{N}\|_{L^2}^2.  
\end{aligned}    
\end{equation*}
Next, we deal with the dispersive term $DT$ in \eqref{integraleqN}. Using integration by parts twice, we have
\begin{equation*}
\begin{aligned}
DT=&\int u_n\big(\partial_x^2(x^{2\theta_1}\langle x \rangle^{2\theta_2})\mathcal{H}_x u_N\big)\, dx dy+   2\int u_n\big(\partial_x(x^{2\theta_1}\langle x \rangle^{2\theta_2})\partial_x\mathcal{H}_x u_N\big)\, dx dy \\
=&: DT1+DT2.
\end{aligned}    
\end{equation*}
Using continuity of the Hilbert transform, the fact that $|\partial_x^2 (x^{2\theta_1}\langle x\rangle^{2\theta_2})|\lesssim \langle x\rangle^{\theta_1+\theta_2}$, it is seen that
\begin{equation*}
\begin{aligned}
|DT1|\lesssim & \|\partial_x^2(x^{\theta_1}\langle x \rangle^{\theta_2})u_N\|_{L^{2}}\|u_N\|_{L^2}\leq \|\langle x \rangle^{\theta_1+\theta_2}u_N\|_{L^2}\|u_N\|_{L^2}\\
\lesssim &\|u_N\|_{L^2}^2+\|x^{\theta_1}\langle x\rangle^{\theta_2}u_N\|_{L^2}^2.
\end{aligned}    
\end{equation*}
On the other hand, when $\theta_1+\theta_2>1$, using that $|\partial_x (x^{2\theta_1}\langle x\rangle^{2\theta_2})|\lesssim \langle x\rangle^{2\theta_1+2\theta_2-1}$ and interpolation, we infer
\begin{equation*}
\begin{aligned}
|DT2|\lesssim & \|\langle x \rangle^{\theta_1+\theta_2} u_N\|_{L^2}\|\langle x \rangle^{\theta_1+\theta_2-1}\partial_x \mathcal{H}_xu_N\|_{L^2}\\
\lesssim &  \|\langle x \rangle^{\theta_1+\theta_2} u_N\|_{L^2}\big(\|\|J_x(\langle x \rangle^{\theta_1+\theta_2-1}\mathcal{H}_x u_N)\|_{L^2_x}\|_{L^2_y}\big) \\
\lesssim &  \|\langle x \rangle^{\theta_1+\theta_2} u_N\|_{L^2}\big(\|J_x^{\theta_1+\theta_2}u_N\|_{L^2}+\|x^{\theta_1}\langle x\rangle^{\theta_2}\mathcal{H}_xu_N\|_{L^2}\big).
\end{aligned}    
\end{equation*}
We remark that the same conclusion of the inequality above holds when $\theta_1+\theta_2\leq 1$ as in such a case, we do not need interpolation and the estimate follows from $\langle x\rangle^{\theta_1+\theta_2-1}\lesssim 1$. Hence, Young's inequality yields
\begin{equation*}
\begin{aligned}
|DT2|\lesssim  \|J_x^{\theta_1+\theta_2}u_N\|_{L^2}^2+\|x^{\theta_1}\langle x\rangle^{\theta_2}u_N\|_{L^2}^2+\|x^{\theta_1}\langle x\rangle^{\theta_2}\mathcal{H}_xu_N\|_{L^2}^2.
\end{aligned}    
\end{equation*}
Since $u_N \to u$ in $C([0,T];H^{s_1,s_2}(\mathbb{R}\times \mathbb{T}))$, with $s_1\geq \max\{\theta_1+\theta_2,\frac{3}{2}^{+}\}$, there exists $N_1$ large enough such that for all $N\geq N_1$,
\begin{equation*}
 \sup_{t\in[0,T]}\big(\|u_N(t)\|_{L^{\infty}}+\|\partial_x u_N(t)\|_{L^{\infty}}+\|u_N(t)\|_{H^{s_1,s_2}}\big)\leq 2\Big(\sup_{t\in [0,T]}\|u(t)\|_{H^{s_1,s_2}}\Big)=:M_1,   
\end{equation*}
where we controlled the $L^{\infty}$-norms above by Sobolev embedding $H^{\frac{1}{2}^{+},\frac{1}{2}^{+}}(\mathbb{R}\times\mathbb{T})\subset L^{\infty}(\mathbb{R}\times \mathbb{R})$. Thus, collecting the estimates for $DT$, $NL1$ and $NL2$, developed above, we deduce that there exist some positive constants $c_1,c_2$ independent of $N$ and $M_1$ such that 
\begin{equation*}
\begin{aligned}
\frac{d}{dt}\Big(&\|x^{\theta_1}\langle x\rangle^{\theta_2}u_N\|_{L^2}^2+\|x^{\theta_1}\langle x\rangle^{\theta_2}\mathcal{H}_xu_N\|_{L^2}^2\Big)\\
&\leq c_1M_1+c_2\langle M_1 \rangle^K \Big(\|x^{\theta_1}\langle x\rangle^{\theta_2}u_N\|_{L^2}^2+\|x^{\theta_1}\langle x\rangle^{\theta_2}\mathcal{H}_xu_N\|_{L^2}^2\Big).   
\end{aligned}    
\end{equation*}
Hence, Gronwall's inequality establishes that for all $t\in (0,T]$
\begin{equation*}
\begin{aligned}
\|x^{\theta_1}\langle x\rangle^{\theta_2}u_N(t)\|_{L^2}^2+\|x^{\theta_1}\langle x\rangle^{\theta_2}\mathcal{H}_xu_N(t)\|_{L^2}^2\leq & e^{c_2 \langle M_1 \rangle^K  t}\big(c_1 M_1 t+\|x^{\theta_1}\langle x\rangle^{\theta_2}P_Nu_0\|_{L^2}^2+\|x^{\theta_1}\langle x\rangle^{\theta_2}\mathcal{H}_xP_Nu_{0}\|_{L^2}^2\big)\\
\leq & e^{c_2 \langle M_1 \rangle^K  T}\big(c_1 M_1 T+\|x^{\theta_1}\langle x\rangle^{\theta_2}P_Nu_0\|_{L^2}^2+\|x^{\theta_1}\langle x\rangle^{\theta_2}\mathcal{H}_xP_Nu_{0}\|_{L^2}^2\big) .  
\end{aligned}
\end{equation*}
Since $P_Nu_0\to u_0$ in $L^2(|x|^{2\theta}\, dx dy)$ and $u_N(x,t)\to u(x,t)$ uniformly in space for each time $t\in [0,T]$ as $N\to \infty$, we can apply Fatou's lemma and the previous inequality to conclude $u\in L^{\infty}([0,T];L^2(|x|^{2\theta}\, dx dy))$. Continuity follows by the energy estimate above and similar arguments in \cite[Theorem 1.3]{Riano2021}.
\end{proof}

%%%%%%%%%%%%%%%%%%%%%%%%%%%%%%%%%%%%%%%%%%%%%%%%%%%%%%%%%%%%%%%%%%%%%%%%%%%%%%%%%%%%%%%%%%%%%%%%%%%%%%%%%%%%%%%%%%%%%%%%%%%%%%%%%%%%%%%%%%%%%%%%%%%%%%%%%%%%%%%%%%%%%%%%%%%%%%%%%%%%%%%%%%%%%%%%%%%%%%%%%%%%%%%%%%%%

\subsection{Proof of Theorem \ref{Uniquecontprinc} part $(i)$}

Let $K\geq 1$ be integer, $\frac{1}{4}<\theta<\frac{1}{2}$, $s_1\geq \frac{8\theta+2}{4\theta-1}$, $s_2> \frac{6\theta}{4\theta-1}$ and $u\in C([0,T];H^{s_1,s_2}(\mathbb{R}\times \mathbb{T}))\cap L^{\infty}([0,T];L^2(|x|^{2\theta}dxdy))$ be a solution of \eqref{SHeq} such that there exist two different times $t_2>t_1$, for which $u(t_1)\in H^{s_1,s_2}(\mathbb{R}\times\mathbb{T})\cap L^2(|x|^{1^{+}}\, dx dy)$ and $u(t_2)\in H^{s_1,s_2}(\mathbb{R}\times\mathbb{T})\cap L^2(|x|^{1^{+}}\, dx dy)$. We need the following result.
\begin{claim}\label{decayintegraleq}
Let $\nu_k\in \mathbb{R}$, $k=1,\dots,K$. Consider $\frac{1}{4}<\theta<\frac{1}{2}$, $s_1\geq \frac{8\theta+2}{4\theta-1}$, $s_2>\frac{6\theta}{4\theta-1}$ and $u\in C([0,T];H^{s_1,s_2}(\mathbb{R}\times \mathbb{T}))\cap C([0,T];L^2(|x|^{2\theta} dxdy))$ be a solution of \eqref{SHeq}. Then it follows
\begin{equation*}
 \int_0^t U(t-\tau)\Big(\sum_{k=1}^K \nu_k u^{k}\partial_x u(\tau)\Big)\, d\tau  \in C([0,T];L^2(|x|^{1^{+}} dxdy)).
\end{equation*}
\end{claim}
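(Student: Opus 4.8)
The plan is to exploit the fact that the integrand $u^k\partial_x u$ is a perfect $x$-derivative, so it automatically has vanishing $x$-mean, which is precisely the hypothesis needed to apply the weighted linear estimate Theorem \ref{LinearEst} $(iii)$ and thereby gain decay. Note first that the target space $L^2(|x|^{1^+}\,dxdy)$ corresponds, in the notation $L^2(|x|^{2\theta'}\,dxdy)$, to an exponent $\theta'$ slightly above $\tfrac12$, so that $\tfrac12<\theta'<\tfrac32$ lies in the range covered by Theorem \ref{LinearEst} $(iii)$; the required gain of decay is therefore only from $\theta<\tfrac12$ up to $\theta'=\tfrac12^+$.

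By linearity it suffices to estimate, for each fixed $k=1,\dots,K$, the term $\int_0^t U(t-\tau)\big(u^k\partial_x u(\tau)\big)\,d\tau$. Writing $u^k\partial_x u=\tfrac{1}{k+1}\partial_x(u^{k+1})$, we get $\widehat{u^k\partial_x u}(0,\eta,\tau)=0$ for every $\eta\in\mathbb{Z}$ and every $\tau$, so hypothesis \eqref{hiphote1} holds for the integrand at each time. Applying Theorem \ref{LinearEst} $(iii)$ with weight $\theta'$ together with Minkowski's inequality in $\tau$ then yields
\begin{equation*}
\Big\|\langle x\rangle^{\theta'}\int_0^t U(t-\tau)\big(u^k\partial_x u(\tau)\big)\,d\tau\Big\|_{L^2}\lesssim \int_0^t\Big(\|u^k\partial_x u\|_{H^{\theta',0}}+\|\langle x\rangle^{\theta'}u^k\partial_x u\|_{L^2}\Big)\,d\tau.
\end{equation*}
The first integrand is controlled by the fractional Leibniz rule \eqref{fLR}, giving $\|u^k\partial_x u\|_{H^{\theta',0}}\lesssim \|u\|_{L^\infty}^k\|u\|_{H^{\theta'+1,0}}$, which is finite and continuous in $\tau$ as soon as $s_1\ge\theta'+1$.

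The heart of the argument is the weighted bound on the second integrand. Here I would write $\|\langle x\rangle^{\theta'}u^k\partial_x u\|_{L^2}=\tfrac{1}{k+1}\|\langle x\rangle^{\theta'}\partial_x(u^{k+1})\|_{L^2}$, move the weight and the derivative together to reduce to $\|J_x(\langle x\rangle^{\theta'}u^{k+1})\|_{L^2}$ plus a lower-order term (as in \eqref{finaleq1}), and then distribute the weight $\theta'$ equally among the $k+1$ factors, each carrying weight $\tfrac{\theta'}{k+1}$. Using the Sobolev embeddings $H^{\frac{k}{2(k+1)}}(\mathbb{R})\hookrightarrow L^{2(k+1)}(\mathbb{R})$ in $x$ and $H^{\frac12^+}(\mathbb{T})\hookrightarrow L^\infty(\mathbb{T})$ in $y$, exactly as in \eqref{finaleq2}, and finally interpolating each factor between the available decay $\langle x\rangle^{\theta}u\in L^2$ and the regularity provided by $u\in H^{s_1,s_2}$ via Lemma \ref{InterpLemma} and Proposition \ref{interpo}, one converts the per-factor weight $\tfrac{\theta'}{k+1}$ together with the fractional derivative into the two controlled norms. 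This interpolation closes precisely when $\tfrac{\theta'}{k+1}\le\theta$; since $\theta'$ may be taken arbitrarily close to $\tfrac12$ and the worst case is $k=1$, the binding condition is $\tfrac{1}{2}\cdot\tfrac{1}{2}\le\theta$, i.e. $\theta>\tfrac14$, which is exactly the standing hypothesis. The bookkeeping of the derivative cost in the interpolation — matching the output Sobolev order against $s_1$ and $s_2$ in the worst case $k=1$ as $\theta'\to\tfrac12^+$ — is what produces the explicit thresholds $s_1\ge\frac{8\theta+2}{4\theta-1}$ and $s_2>\frac{6\theta}{4\theta-1}$.

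Finally, since $u\in C([0,T];H^{s_1,s_2}(\mathbb{R}\times\mathbb{T}))\cap C([0,T];L^2(|x|^{2\theta}\,dxdy))$, all the norms above are finite and continuous in $\tau$, so the time integral is finite; this places the Duhamel term in $L^\infty([0,T];L^2(|x|^{1^+}\,dxdy))$, and continuity in $t$ then follows from the strong continuity of $\{U(t)\}$ together with dominated convergence, exactly as in the continuity arguments used for Lemma \ref{smoothsolutdecay} and Theorem \ref{LWPresultinWS}. I expect the main obstacle to be the weighted nonlinear estimate of the previous paragraph: the gain of decay from $\theta<\tfrac12$ up to $\tfrac12^+$ is impossible for a single factor and is recovered only through the multiplicative structure of $u^{k+1}$, which is what forces the threshold $\theta>\tfrac14$ and dictates the amount of regularity that must be spent to afford the $x$-derivative.
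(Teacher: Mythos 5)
Your proposal is correct and follows essentially the same route as the paper: exploit that $u^k\partial_x u=\tfrac{1}{k+1}\partial_x(u^{k+1})$ has zero $x$-mean so that Theorem \ref{LinearEst} $(iii)$ applies to the Duhamel term with weight $\theta'=\tfrac12^{+}$, then control $\|u^k\partial_x u\|_{H^{\theta',0}}$ by the fractional Leibniz rule and $\|\langle x\rangle^{\theta'}u^k\partial_x u\|_{L^2}$ by moving the weight and derivative together, interpolating, and using the Sobolev embeddings with Lemma \ref{InterpLemma}, which closes exactly when $\theta>\tfrac14$. The only detail you deferred is carried out in the paper by fixing the intermediate exponents $\theta_1=\tfrac{4\theta+3}{8}$ and $\theta_2=\tfrac{4\theta+1}{4}$ (note the weight handed to the purely weighted term after the $x$-interpolation is $\theta_2>\theta'$, not $\theta'$ itself), and this explicit bookkeeping is what verifies that the stated thresholds $s_1\geq\tfrac{8\theta+2}{4\theta-1}$ and $s_2>\tfrac{6\theta}{4\theta-1}$ suffice.
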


\begin{proof}[Proof of Claim \ref{decayintegraleq}]
Given $k=1,\dots, K$ and $\frac{1}{4}<\theta<\frac{1}{2}$, let $\theta_1=\frac{4\theta+3}{8}$. We first observe
\begin{equation}\label{eqinclaim1}
u^k\partial_x u \in  C([0,T];H^{\theta_1,0}(\mathbb{R}\times \mathbb{T}))\cap C([0,T];L^2(|x|^{2\theta_1}\, dx)).  
\end{equation}
Since $\theta_1+1\leq s_1$, the fact that $u^k\partial_x u $ is in the space $C([0,T];H^{\theta_1,0}(\mathbb{R}\times \mathbb{T}))$ follows from \eqref{fLR} and the fact that $u(t)$ describes a continuous curve in $H^{s_1,s_2}(\mathbb{R}\times \mathbb{T})$. To show persistence in weighted spaces, setting $\theta_2=\frac{4\theta+1}{4}$, interpolation yields  
\begin{equation*}
\begin{aligned}
\|\langle x\rangle^{\theta_1}u^{k}\partial_x u\|_{L^2}\lesssim & \|\|J_x(\langle x\rangle^{\theta_1}u^{k+1})\|_{L^2_x}\|_{L^2_y}\\
\lesssim& \|J^{\frac{2(4\theta+1)}{4\theta-1}}_x u\|_{L^2}+\|\|\langle x \rangle^{\frac{\theta_2}{(k+1)}}u\|_{L^{2(k+1)}_x}^{k+1}\|_{L^2_y}.
\end{aligned}    
\end{equation*}
Note that the first term on the inequality above is controlled in the space $H^{s_1,0}(\mathbb{R}\times \mathbb{T})$ with $s_1\geq \frac{8\theta+2}{4\theta-1}$. Arguing as in \eqref{finaleq2} and \eqref{finaleq3}, we apply Lemma \ref{InterpLemma} to deduce
\begin{equation*}
\begin{aligned}
   \|\|\langle x \rangle^{\frac{\theta_2}{(k+1)}}u\|_{L^{2(k+1)}_x}^{k+1}\|_{L^2_y}\lesssim& \|J_{x,y}^{\frac{k}{2(k+1)}+\frac{1}{2}^{+}}\big(\langle x \rangle^{\frac{\theta_2}{(k+1)}}u\big)\|_{L^{2}}^{k+1}\\
    \lesssim& \|J^{\frac{4(k+1)\theta}{4k\theta-1}\big(\frac{k}{2(k+1)}+\frac{1}{2}^{+}\big)}_{x,y} u\|_{L^2}^{k+1}+\|\langle x \rangle^{\theta}u\|_{L^{2}}^{k+1}.
\end{aligned}    
\end{equation*}
We have that the definition of $\frac{1}{2}^{+}$, and the fact that $k\geq 1$ imply $\frac{4(k+1)\theta}{4k\theta-1}\big(\frac{k}{2(k+1)}+\frac{1}{2}^{+}\big)\leq \frac{6\theta}{4\theta-1}+\epsilon$, for some $0<\epsilon\ll 1$. This is the regularity condition $s_2>\frac{6\theta}{4\theta-1}$.  Using the estimates above, we have $u^k\partial_x u$ is in the space $C([0,T];L^2(|x|^{\theta_1}\, dx))$. This completes the deduction of \eqref{eqinclaim1}. Now, the argument in \eqref{integraleqinweights}, which depends on Theorem \ref{LinearEst} $(iii)$, allows us to use \eqref{eqinclaim1} to get the desired result.
\end{proof}

We are in a position to show the first part of Theorem \ref{Uniquecontprinc}.

\begin{proof}[Proof of Theorem \ref{Uniquecontprinc} (i)]
    By assumptions on $u(t_1)$, $u(t_2)$, and Claim \ref{decayintegraleq}, the integral formulation of \eqref{inteque}  implies $U(t_1)u_0\in H^{s_1,s_2}(\mathbb{R}\times\mathbb{T})\cap L^2(|x|^{1^{+}}dx dy)$, and $U(t_2)u_0\in H^{s_1,s_2}(\mathbb{R}\times\mathbb{T})\cap L^2(|x|^{1^{+}} dx dy)$. Thus, setting $f=U(t_1)u_0$, we can apply Theorem \ref{lineaerequniquecont1} $(i)$ to get
\begin{equation*}
    \widehat{U(t_1)u_0}(0,\eta)=0, \,\, \text{ for all $\eta\in \mathbb{Z}$ be such that $|t_2-t_1|\eta^2\neq 2\pi l'$, for all $l'\in \mathbb{Z}$ with $l'\geq 0$}.
\end{equation*}
Since $U(t_1)u_0\in L^2(|x|^{1^{+}} dx dy)$, it follows from Sobolev embedding that the function $\xi\rightarrow\widehat{U(t_1)u_0}(\xi,\eta)$ is continuous for all $\eta\in \mathbb{Z}$ and the same is true for the Fourier transform of $ \int_0^t U(t-\tau)\Big(\sum_{k=1}^K \nu_k u^{k}\partial_x u(\tau)\Big)\, d\tau $ provided Claim \ref{decayintegraleq}. Hence, gathering the previous results, we deduce that for each $t\geq t_1$,
\begin{equation}
\begin{aligned}
\lim_{\xi\to 0} \widehat{u}(\xi,\eta,t)=&\lim_{\xi\to 0} \bigg(e^{i\omega(\xi,\eta)(t-t_1)}\widehat{U(t_1)u_0}(\xi,\eta)-\mathcal{F}\Big( \int_0^t U(t-\tau)\big(\sum_{k=1}^K \nu_k u^{k}\partial_x u(\tau)\big)\, d\tau  \Big)(\xi,\eta)\bigg)  \\
=&0,
\end{aligned}    
\end{equation}
for all $\eta\in \mathbb{Z}$ be such that $|t_2-t_1|\eta^2\neq 2\pi l'$, for all $l'\in \mathbb{Z}$, $l'\geq 0$. This completes the proof of Theorem \ref{Uniquecontprinc}. 
\end{proof}

%%%%%%%%%%%%%%%%%%%%%%%%%%%%%%%%%%%%%%%%%%%%%%%%%%%%%%%%%%%%%%%%%%%%%%%%%%%%%%%%%%%%%%%%%%%%%%%%%%%%%%%%%%%%%%%%%%%%%%%%%%%%%%%%%%%%%%%%%%%%%%%%%%%%%

\subsection{Proof of Theorem \ref{Uniquecontprinc} part $(ii)$}

Transferring decay to regularity in the frequency domain, the hypothesis in Theorem \ref{Uniquecontprinc} $(ii)$ implies that $J^{\frac{3}{2}^{+}}_{\xi}\widehat{u}(t_j)\in L^2(\mathbb{R}\times \mathbb{Z})$, for $j=1,2$. Thus, taking the Fourier transform to the integral formulation of \eqref{SHeq} at times $t_2>t_1$, we write
\begin{equation}\label{decomps1}
\begin{aligned}
\widehat{u}(t_2)=&e^{i(t_2-t_1)\omega(\xi,\eta)}\widehat{u}(\xi,\eta,t_1)-\int_{0}^{t_2-t_1}e^{i(t_2-t_1-\tau)\omega(\xi,\eta)}\Big(\sum_{k=1}^K \frac{\nu_k}{k+1} i\xi \widehat{u^{k+1}}(\xi,\eta,\tau)\Big)\, d \tau\\
=:&\,\mathcal{G}_1(u,t_1,t_2)+\mathcal{G}_2(u,t_1,t_2)+\mathcal{G}_3(u,t_1,t_2),
\end{aligned}    
\end{equation}
where  
\begin{equation*}
\begin{aligned}
\mathcal{G}_1(u,t_1,t_2)=e^{i(t_2-t_1)\omega(\xi,\eta)}\frac{\partial}{\partial \xi}\widehat{u}(0,\eta,t_1)\xi\phi(\xi)-\int_{0}^{t_2-t_1}e^{i(t_2-t_1-\tau)\omega(\xi,\eta)}\Big(\sum_{k=1}^K \frac{i\nu_k}{k+1} \widehat{u^{k+1}}(0,\eta,\tau)\xi\phi(\xi)\Big)\, d\tau,
\end{aligned}    
\end{equation*}
with $\phi(\xi)\in C^{\infty}_c(\mathbb{R})$ be such that $\phi(\xi)=1$ for all $|\xi|\leq 1$, we also define
\begin{equation*}
\begin{aligned}
\mathcal{G}_2(u,t_1,t_2)=&e^{i(t_2-t_1)\omega(\xi,\eta)}\Big(\widehat{u}(\xi,\eta,t_1)-\frac{\partial}{\partial \xi}\widehat{u}(0,\eta,t_1)\xi \phi(\xi)\Big)
\end{aligned}
\end{equation*}
and
\begin{equation*}
\begin{aligned}
\mathcal{G}_{3}(u,t_1,t_2)=&-\int_{0}^{t_2-t_1}e^{i(t_2-t_1-\tau)\omega(\xi,\eta)}\sum_{k=1}^K\frac{\nu_k}{k+1}\Big(i\xi \widehat{u^{k+1}}(\xi,\eta,\tau)-i\widehat{u^{k+1}}(0,\eta,\tau)\xi\phi(\xi)\Big)\, d\tau.
\end{aligned}
\end{equation*}

We first present some regularity conditions of $\mathcal{G}_2$ and $\mathcal{G}_3$.
\begin{claim}\label{Zerosolunique}
Let $\nu_k\in \mathbb{R}$, $k=1,\dots,K$. Consider $1<\theta<\frac{3}{2}$, $s_1\geq \frac{8\theta+6}{4\theta-3}$, $s_2> \frac{6\theta}{4\theta-3}$, and $u\in C([0,T];H^{s_1,s_2}(\mathbb{R}\times \mathbb{T}))\cap C([0,T];L^2(|x|^{2\theta}\,dx dy))$ be a solution of \eqref{SHeq} such that $\widehat{u}(0,\eta,t)=0$ for all $\eta\in \mathbb{Z}$, $t\in [0,T]$ and $u(t_1),u(t_2)\in L^2(|x|^{3^{+}}dx dy)$. Then it follows
\begin{equation*}
J_{\xi}^{\frac{3}{2}^{+}}\big(\mathcal{G}_j(u,t_1,t_2)\big)\in L^2(\mathbb{R}\times \mathbb{Z}),
\end{equation*}
for each $j=2,3$.
\end{claim}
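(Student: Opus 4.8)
The plan is to recognize both $\mathcal{G}_2$ and $\mathcal{G}_3$ as (time-integrated, in the case of $\mathcal{G}_3$) linear evolutions $\widehat{U(\cdot)(\cdot)}$ of profiles that have been subtracted precisely so that they vanish to second order at $\xi=0$, and then to invoke the linear persistence estimate of Theorem \ref{LinearEst} part $(iv)$ with $k=1$ and $\theta=\tfrac32^{+}$. Recall that $J^{\frac32^{+}}_{\xi}(\cdot)\in L^2(\mathbb{R}\times\mathbb{Z})$ is equivalent, via Plancherel and \eqref{equi1}, to the corresponding physical-space object lying in $L^2(|x|^{3^{+}}\,dxdy)$, so the claim is exactly a statement that $U(t_2-t_1)$ (resp. $U(t_2-t_1-\tau)$) propagates the weight $|x|^{3^{+}}$ for these profiles.

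For $\mathcal{G}_2$, set $g(\xi,\eta)=\widehat{u}(\xi,\eta,t_1)-\tfrac{\partial}{\partial\xi}\widehat{u}(0,\eta,t_1)\,\xi\phi(\xi)$ and $v=\mathcal{F}^{-1}(g)$, so that $\mathcal{G}_2=\widehat{U(t_2-t_1)v}$. Using the hypothesis $\widehat{u}(0,\eta,t)=0$ one gets $g(0,\eta)=0$, while the subtracted profile is built so that $\partial_\xi g(0,\eta)=0$; equivalently, $\widehat{v}(0,\eta)=0$ and $\partial_\xi\widehat{v}(0,\eta)=0$ for every $\eta\in\mathbb{Z}$. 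These are precisely conditions \eqref{hiphote2} and \eqref{hiphote4} for $k=1$ (and \eqref{hiphote2.1} is vacuous since $k=1$). One then checks $v\in H^{\theta,0}(\mathbb{R}\times\mathbb{T})\cap L^2(|x|^{2\theta}\,dxdy)$: the term $u(t_1)$ lies there by hypothesis, and the correction $\mathcal{F}^{-1}(\partial_\xi\widehat{u}(0,\eta,t_1)\xi\phi(\xi))$ is the product of a Schwartz profile in $x$ with the rapidly decreasing sequence $\partial_\xi\widehat{u}(0,\eta,t_1)=\widehat{-ixu(t_1)}(0,\eta)$ in $\eta$, whose $\langle\eta\rangle^{s_2}$-summability follows from $u(t_1)\in H^{s_1,s_2}\cap L^2(|x|^{3^+})$. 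Theorem \ref{LinearEst} $(iv)$ then yields $U(t_2-t_1)v\in L^2(|x|^{3^+}\,dxdy)$, which is the desired statement for $j=2$.

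For $\mathcal{G}_3$ the same cancellation is engineered termwise: with $h_k(\xi,\eta,\tau)=i\xi\widehat{u^{k+1}}(\xi,\eta,\tau)-i\widehat{u^{k+1}}(0,\eta,\tau)\xi\phi(\xi)$ one computes $h_k(0,\eta,\tau)=0$ and $\partial_\xi h_k(0,\eta,\tau)=0$, so $w_k(\tau):=\mathcal{F}^{-1}(h_k(\cdot,\cdot,\tau))$ satisfies \eqref{hiphote2} and \eqref{hiphote4} with $k=1$. Writing $\mathcal{G}_3=-\int_0^{t_2-t_1}\sum_{k=1}^K\frac{\nu_k}{k+1}\widehat{U(t_2-t_1-\tau)w_k(\tau)}\,d\tau$, I would apply Theorem \ref{LinearEst} $(iv)$ inside the integral and use Minkowski's integral inequality, reducing the claim to a bound on $\sup_{\tau}\big(\|w_k(\tau)\|_{H^{\theta,0}}+\|\langle x\rangle^{\theta}w_k(\tau)\|_{L^2}\big)$ that is finite and integrable in $\tau\in[0,t_2-t_1]$.

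The main obstacle, and the only genuinely technical part, is this last uniform-in-$\tau$ nonlinear estimate, which is where the thresholds $s_1\ge\frac{8\theta+6}{4\theta-3}$ and $s_2>\frac{6\theta}{4\theta-3}$ originate. Since $i\xi\widehat{u^{k+1}}=\widehat{\partial_x(u^{k+1})}$, controlling $\|\langle x\rangle^{\theta}w_k(\tau)\|_{L^2}$ with $\theta\approx\tfrac32$ costs one full $x$-derivative on top of the weight, and this must be balanced against the available smoothness. I would handle it exactly as in the proof of Claim \ref{decayintegraleq} and Lemma \ref{smoothsolutdecay} $(ii)$: distribute the derivative, apply the fractional Leibniz rule \eqref{fLR}, the Sobolev embeddings $H^{\frac{k}{2(k+1)}}(\mathbb{R})\hookrightarrow L^{2(k+1)}(\mathbb{R})$ and $H^{\frac12^{+}}(\mathbb{T})\hookrightarrow L^{\infty}(\mathbb{T})$, and then interpolate the mixed weight-regularity norm via Lemma \ref{InterpLemma} and Proposition \ref{interpo}; the subtracted correction is controlled through the pointwise bound $|\widehat{u^{k+1}}(0,\eta,\tau)|\lesssim\|u(\tau)\|_{L^\infty}^k\|u(\tau)\|_{L^2}$ together with the $\langle\eta\rangle^{s_2}$-summability coming from the regularity in $y$. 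Tracking the resulting exponents through the interpolation (as in \eqref{finaleq1}--\eqref{finaleq3}) is what pins down the stated lower bounds on $s_1$ and $s_2$, and carrying out that bookkeeping carefully is the crux of the argument.
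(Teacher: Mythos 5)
Your proposal is correct and takes essentially the same route as the paper's own proof: both treat $\mathcal{G}_2$ and $\mathcal{G}_3$ as (time-integrated) linear evolutions of profiles constructed to vanish to second order at $\xi=0$, verify the conditions \eqref{hiphote2} and \eqref{hiphote4} with $k=1$, and invoke Theorem \ref{LinearEst} $(iv)$, with $\mathcal{G}_3$ reduced to the weighted nonlinear estimate on $u^k\partial_x u$ obtained exactly as in Claim \ref{decayintegraleq} (the paper's \eqref{eqfinalclaim1}, with the concrete choice $\theta_1=\frac{4\theta+9}{8}>\frac{3}{2}$ in place of your $\frac{3}{2}^{+}$). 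The remaining bookkeeping you defer is also deferred by the paper, which cites the same argument from \eqref{eqinclaim1}.
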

To keep continuity to the main argument leading to the proof of Theorem \ref{Uniquecontprinc} $(ii)$, we will deduce Claim \ref{Zerosolunique} below.  By assumption at times $t_2,t_1$, Claim \ref{Zerosolunique} and \eqref{decomps1}, it follows that $J^{\frac{3}{2}^{+}}(\mathcal{G}_1(u,t_1,t_2))\in L^2(\mathbb{R}\times \mathbb{Z})$. Now, using \eqref{distribderiv1}, we compute the partial derivative of $\mathcal{G}_1(u,t_1,t_2)$ with respect to $\xi$ to get
\begin{equation}\label{decomps2}
 \begin{aligned}
\partial_{\xi}\mathcal{G}_1(u,t_1,t_2)=:\mathcal{G}_{1,1}(u,t_1,t_2)+\mathcal{G}_{1,2}(u,t_1,t_2)+\mathcal{G}_{1,3}(u,t_1,t_2),
 \end{aligned}   
\end{equation}
where 
\begin{equation*}
\begin{aligned}
\mathcal{G}_{1,1}(u,t_1,t_2)=e^{i(t_2-t_1)\omega(\xi,\eta)}\frac{\partial}{\partial \xi}\widehat{u}(0,\eta,t_1)\phi(\xi)-\int_0^{t_2-t_1}e^{i(t_2-t_1-\tau)\omega(\xi,\eta)}\Big(\sum_{k=1}^K 
    \frac{i \nu_k}{k+1}\widehat{u^{k+1}}(0,\eta,\tau)\phi(\xi)\Big)\, d\tau,
\end{aligned}    
\end{equation*}
\begin{equation*}
\begin{aligned}
\mathcal{G}_{1,2}(u,t_1,t_2)= e^{i(t_2-t_1)\omega(\xi,\eta)}\frac{\partial}{\partial \xi}\widehat{u}(0,\eta,t_1)(2i(t_2-t_1)|\xi|)\xi\phi(\xi)+e^{i(t_2-t_1)\omega(\xi,\eta)}\frac{\partial}{\partial \xi}\widehat{u}(0,\eta,t_1)\xi \frac{d}{d\xi}\phi(\xi),
\end{aligned}    
\end{equation*}
and
\begin{equation*}
\begin{aligned}
\mathcal{G}_{1,3}(u,t_1,t_2)= \int_{0}^{t_2-t_1}e^{i(t_2-t_1-\tau)\omega(\xi,\eta)}\Big(\sum_{k=1}^K \frac{i\nu_k}{k+1}\widehat{u^{k+1}}(0,\eta,\tau)\Big)\Big((2i(t_2-t_1-\tau)|\xi|)\xi\phi(\xi)+\xi \frac{d}{d\xi}\phi(\xi)\Big)\, d\tau.
\end{aligned}    
\end{equation*}
We have the following claim.
\begin{claim}\label{Zerosolunique2} Under the same hypothesis of Claim \ref{Zerosolunique}, we have
\begin{equation*}
J_{\xi}^{\frac{1}{2}^{+}}\big(\mathcal{G}_{1,j}(u,t_1,t_2)\big)\in L^2(\mathbb{R}\times \mathbb{Z}),
\end{equation*}
for each $j=2,3$.
\end{claim}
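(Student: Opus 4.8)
The plan is to treat $\mathcal{G}_{1,2}$ and $\mathcal{G}_{1,3}$ separately, exploiting in both cases the \emph{extra factor of $\xi$} carried by the amplitudes $\xi\phi(\xi)$ and $\xi\frac{d}{d\xi}\phi(\xi)$; this vanishing at the origin is exactly what distinguishes these two terms from the singular term $\mathcal{G}_{1,1}$. Throughout I would use the equivalence \eqref{equi1}, so that the statement $J^{\frac{1}{2}^+}_\xi(\mathcal{G}_{1,j})\in L^2(\mathbb{R}\times\mathbb{Z})$ reduces to bounding $\|\mathcal{G}_{1,j}\|_{L^2}$ and $\|\mathcal{D}^{\frac{1}{2}^+}_\xi\mathcal{G}_{1,j}\|_{L^2}$, writing $T=t_2-t_1$ and $b=\tfrac{1}{2}^+\in(0,1)$.

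First I would handle $\mathcal{G}_{1,2}$. Factoring the phase as $e^{iT\omega}=e^{iT\xi|\xi|}e^{iT\sign(\xi)\eta^2}$ and distributing $\mathcal{D}^b_\xi$ across this product and the amplitude via \eqref{prelimneq0}, I obtain three contributions controlled by Lemma \ref{derivexp}: the factor $\mathcal{D}^b_\xi(e^{iT\xi|\xi|})\lesssim\langle T\rangle^{b}\langle\xi\rangle^{b}$ from \eqref{steinderiv1} is harmless against the compactly supported amplitude; the factor $\mathcal{D}^b_\xi(e^{iT\sign(\xi)\eta^2})\lesssim\min\{1,|T|\eta^2\}|\xi|^{-b}$ from \eqref{steinderiv2} produces the only potentially singular weight, but multiplying by the $|\xi|^2$ coming from $(2iT|\xi|)\xi\phi(\xi)$ leaves $|\xi|^{2-b}$, which is bounded and compactly supported; finally $\mathcal{D}^b_\xi$ of the amplitudes themselves is controlled since $\sign(\xi)\xi^2\phi(\xi)\in W^{2,\infty}$ while $\xi\frac{d}{d\xi}\phi(\xi)$ is smooth and supported away from $\xi=0$. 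Using $\min\{1,|T|\eta^2\}\le 1$ I would extract the coefficient and a bound uniform in $\eta$, so that
\[
\|J^{\frac{1}{2}^+}_\xi\mathcal{G}_{1,2}\|_{L^2(\mathbb{R}\times\mathbb{Z})}^2\lesssim\langle T\rangle^{C}\sum_{\eta\in\mathbb{Z}}\big|\partial_\xi\widehat{u}(0,\eta,t_1)\big|^2 .
\]
Since $\partial_\xi\widehat{u}(0,\eta,t_1)=-i\,\widehat{xu}(0,\eta,t_1)$, the one-dimensional Sobolev embedding in $\xi$ gives $|\widehat{xu}(0,\eta,t_1)|\lesssim\|\widehat{xu}(\cdot,\eta,t_1)\|_{H^{\frac{1}{2}^+}_\xi}$, so the $\eta$-sum is dominated by $\|J^{\frac{1}{2}^+}_\xi\widehat{xu}(t_1)\|_{L^2(\mathbb{R}\times\mathbb{Z})}^2\sim\|xu(t_1)\|_{L^2}^2+\||x|^{\frac{1}{2}^+}(xu(t_1))\|_{L^2}^2$, which is precisely the statement $u(t_1)\in L^2(|x|^{3^+}\,dxdy)$ and is therefore finite.

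For $\mathcal{G}_{1,3}$ I would apply Minkowski's integral inequality to move $J^{\frac{1}{2}^+}_\xi$ inside the $\tau$-integral and then, for each fixed $\tau$, repeat verbatim the estimate above with the phase $e^{i(T-\tau)\omega}$ and the coefficient $\widehat{u^{k+1}}(0,\eta,\tau)$ in place of $\partial_\xi\widehat{u}(0,\eta,t_1)$. Summability in $\eta$ uniform in $\tau$ follows from $\sum_\eta|\widehat{u^{k+1}}(0,\eta,\tau)|^2\lesssim\|J^{\frac{1}{2}^+}_\xi\widehat{u^{k+1}}(\tau)\|_{L^2}^2\sim\|u^{k+1}(\tau)\|_{L^2}^2+\||x|^{\frac{1}{2}^+}u^{k+1}(\tau)\|_{L^2}^2$, where the last term is bounded by $\|u(\tau)\|_{L^\infty}^{2k}\||x|^{\frac{1}{2}^+}u(\tau)\|_{L^2}^2$. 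This is finite and uniform on $[0,T]$ because $u\in C([0,T];H^{s_1,s_2})\hookrightarrow L^\infty$ and $\||x|^{\frac{1}{2}^+}u(\tau)\|_{L^2}\lesssim\|\langle x\rangle^{\theta}u(\tau)\|_{L^2}$ for $\theta>1$; integrating the uniform-in-$\tau$ bound over $[0,T]$ closes the estimate.

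I expect the main obstacle to be organizational rather than deep: verifying that the single factor of $\xi$ in the amplitudes exactly absorbs the $|\xi|^{-b}$ singularity generated by differentiating $e^{iT\sign(\xi)\eta^2}$ (this is the quantitative reason these terms, unlike $\mathcal{G}_{1,1}$, land in $H^{\frac{1}{2}^+}_\xi$), and keeping every bound uniform in $\eta$, and in $\tau$ for $\mathcal{G}_{1,3}$, so that the $\ell^2_\eta$ coefficient norms can be factored out. The one point that requires genuine care is the passage from the slice values $\widehat{xu}(0,\eta,t_1)$ and $\widehat{u^{k+1}}(0,\eta,\tau)$ to full weighted norms through the Sobolev embedding in $\xi$, which is exactly where the hypotheses $u(t_1),u(t_2)\in L^2(|x|^{3^+}\,dxdy)$ and $1<\theta<\frac{3}{2}$ enter decisively.
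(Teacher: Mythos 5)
Your proposal is correct, and its skeleton coincides with the paper's: the control of the slice coefficients, namely $\sum_{\eta}|\partial_\xi\widehat{u}(0,\eta,t_1)|^2$ and $\sum_{\eta}|\widehat{u^{k+1}}(0,\eta,\tau)|^2$, by one-dimensional Sobolev embedding in $\xi$ combined with the hypotheses $u(t_1)\in L^2(|x|^{3^{+}}\,dxdy)$ and $u\in C([0,T];L^2(|x|^{2\theta}\,dxdy))$, is exactly the paper's opening pair of displays, and your use of Minkowski's inequality to handle the $\tau$-integral in $\mathcal{G}_{1,3}$ also matches. Where you genuinely diverge is in how the $\xi$-regularity of the propagated amplitude is obtained. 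The paper isolates the amplitude functions $F_1(\xi,\eta)$ and $F_2(\xi,\eta,\tau)$ (coefficient times $(2it|\xi|)\xi\phi(\xi)+\xi\tfrac{d}{d\xi}\phi(\xi)$), notes that the factor $\xi$ makes them vanish at $\xi=0$, so that $\mathcal{F}^{-1}(F_1)$ and $\mathcal{F}^{-1}(F_2)$ satisfy the zero-mean hypothesis \eqref{hiphote1}, and then invokes Theorem \ref{LinearEst} $(iii)$ as a black box; inside that theorem the singular weight $\min\{1,|t|\eta^2\}|\xi|^{-b}$ produced by \eqref{steinderiv2} is tamed by the Hardy-type inequality of Proposition \ref{decaywight}. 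You instead unroll that argument by hand: factor the phase, apply Lemma \ref{derivexp}, and absorb $|\xi|^{-b}$ directly into the explicit vanishing of the amplitudes (leaving $|\xi|^{2-b}$, respectively using that $\xi\tfrac{d}{d\xi}\phi(\xi)$ is supported away from the origin), so that Proposition \ref{decaywight} is never needed. The paper's route is shorter once the linear theory is in place and keeps the nonlinear section stylistically uniform; yours is more self-contained, makes completely transparent the quantitative reason why $\mathcal{G}_{1,2}$ and $\mathcal{G}_{1,3}$ are regular while $\mathcal{G}_{1,1}$ need not be, and yields an explicit power of $\langle t_2-t_1\rangle$ in the final bound. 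Both arguments are sound.
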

We deduce Claim \ref{Zerosolunique2} below. Consequently, given that $J^{\frac{3}{2}^{+}}(\mathcal{G}_1(u,t_1,t_2))\in L^2(\mathbb{R}\times \mathbb{Z})$, Sobolev embedding establishes that $\partial_{\xi}\mathcal{G}_1(u,t_1,t_2)$ is a continuous function in $\xi$ for each $\eta$, this fact combined with Claim \ref{Zerosolunique2} and decomposition \eqref{decomps2} give us $\mathcal{G}_{1,1}(u,t_1,t_2)$ is a continuous function of $\xi$. In particular, for each $\eta\in \mathbb{Z}$,
\begin{equation*}
\begin{aligned}
    \lim_{\xi\to 0^{+}}\mathcal{G}_{1,1}(u,t_1,t_2)= \lim_{\xi\to 0^{-}}\mathcal{G}_{1,1}(u,t_1,t_2).
\end{aligned}
\end{equation*}
Now, using that  $\lim_{\xi \to 0^{+}}\omega(\xi,\eta)=\eta$, $\lim_{\xi \to 0^{-}}\omega(\xi,\eta)=-\eta$ and $\phi(\xi)=1$ in a neighborhood of the origin, the continuity statement above shows that
\begin{equation*}
\begin{aligned}
    \sin((t_2-t_1)\eta^2)\frac{\partial}{\partial \xi}\widehat{u}(0,\eta,t_1)=\int_0^{t_2-t_1}\sin((t_2-t_1-\tau)\eta^2)\Big(\sum_{k=1}^K 
    \frac{i \nu_k}{k+1}\widehat{u^{k+1}}(0,\eta,\tau)\Big)\, d\tau,
\end{aligned}    
\end{equation*}
for all $\eta\in \mathbb{Z}$. This completes the deduction of \eqref{Uniqueident2} in Theorem \ref{Uniquecontprinc} $(ii)$. Now, assuming that $u\in L^{\infty}([0,T_1];L^2(|x|^{3^+} dx dy))$ with $u(0)=u_0\in L^2(|x|^{3+}dxdy)$ for some $0<T_1\leq T$, we have from the previous identity that
\begin{equation}\label{identityzerotime}
\begin{aligned}
    \sin(t\eta^2)\frac{\partial}{\partial \xi}\widehat{u_0}(0,\eta)=\int_0^{t}\sin((t-\tau)\eta^2)\Big(\sum_{k=1}^K 
    \frac{i \nu_k}{k+1}\widehat{u^{k+1}}(0,\eta,\tau)\Big)\, d\tau,
\end{aligned}    
\end{equation}
for all $\eta \in \mathbb{Z}\setminus\{0\}$ and $t\in [0,T_1]$ such that $u(t)\in L^2(|x|^{3^{+}}dxdy)$. Now using that $u\in C([0,T];H^{s_1,s_2}(\mathbb{R}\times \mathbb{T}))\cap C([0,T];L^2(|x|^{2\theta}\,dx dy))$ for some $1<\theta<\frac{3}{2}$, we can apply Sobolev embedding $H^{\frac{1}{2}^{+}}(\mathbb{R})\hookrightarrow L^{\infty}(\mathbb{R})$ in the $\xi$-variable, together with the embedding $L^{2}(\mathbb{Z})\hookrightarrow L^{\infty}(\mathbb{Z})$ in the $\eta$-variable to deduce that the map $\tau\mapsto \sum_{k=1}^K \frac{\nu_k}{k+1}\widehat{u^{k+1}}(0,\eta,\tau)$ is continuous. It turns out that we can multiply \eqref{identityzerotime} by $t^{-1}$, then taking $t\to 0$, it follows
\begin{equation*}
 \eta^2\frac{\partial}{\partial \xi}\widehat{u_0}(0,\eta) =0,  
\end{equation*}
for all $\eta\in \mathbb{Z}\setminus\{0\}$. This completes the deduction of Theorem \ref{Uniquecontprinc} $(ii)$. It remains to deduce Claims \ref{Zerosolunique} and \ref{Zerosolunique2}.

\begin{proof}[Proof of Claim \ref{Zerosolunique}] Roughly, to get the regularity condition stated in Claim \ref{Zerosolunique}, the idea is to apply Theorem \ref{LinearEst} part $(iv)$. We consider the function
\begin{equation*}
F_{1}(\xi,\eta):=\widehat{u}(\xi,\eta,t_1)-\frac{\partial}{\partial \xi}\widehat{u}(0,\eta,t_1)\xi \phi(\xi).    
\end{equation*}
Since $u(t_1)\in L^2(|x|^{3^{+}}dx dy)$, we have $J^{\frac{3}{2}^{+}}_{\xi}\widehat{u}(t_1)\in L^2(\mathbb{R}\times \mathbb{Z})$. Using this fact, we get $\mathcal{F}^{-1}(F_{1})(x,y)\in H^{\frac{3}{2}^{+},0}(\mathbb{R}\times \mathbb{T})\cap L^2(|x|^{3^{+}} dx dy)$. Additionally, since  $F_{1}(0,\eta)=0$ and $\partial_{\xi}F_{1}(0,\eta)=0$ for all $\eta\in \mathbb{Z}$, the previous results show that $\mathcal{F}^{-1}(F_{1})(x,y)$ satisfies the hypothesis of Theorem \ref{LinearEst} $(iv)$. Hence, since
\begin{equation*}
\mathcal{F}^{-1}(\mathcal{G}_2(u,t_1,t_2))=U(t_2-t_1)\mathcal{F}^{-1}(F_1),    
\end{equation*}
Theorem \ref{LinearEst} $(iv)$ yields $\mathcal{F}^{-1}(\mathcal{G}_2(u,t_1,t_2))\in H^{\frac{3}{2}^{+},0}(\mathbb{R}\times \mathbb{T})\cap L^2(|x|^{3^{+}} dx dy)$, from which $J_{\xi}^{\frac{3}{2}^{+}}\big(\mathcal{G}_2(u,t_1,t_2)\big)\in L^2(\mathbb{R}\times \mathbb{Z})$.

On the other hand, given $k=1,\dots,K$ and $1<\theta<\frac{3}{2}$, we let $\theta_1=\frac{4\theta+9}{8}$. It follows that $\theta_1>\frac{3}{2}$ and 
\begin{equation}\label{eqfinalclaim1}
  u^k\partial_x u\in C([0,T];H^{\theta_1,0}(\mathbb{R}\times \mathbb{T}))\cap C([0,T];L^2(|x|^{2\theta_1}\, dx dy)).  
\end{equation}
Setting $\theta_2=\frac{4\theta+3}{4}$, the proof of the previous statement follows by similar arguments in the inference of \eqref{eqinclaim1}, thus we omit its deduction. We consider the function
\begin{equation*}
F_{2,k}(\xi,\eta,\tau):=i\xi \widehat{u^{k+1}}(\xi,\eta,\tau)-i\widehat{u^{k+1}}(0,\eta,\tau)\xi\phi(\xi),    
\end{equation*}
$\xi\in \mathbb{R}$, $\eta\in \mathbb{Z}$, $\tau\in (0,t_2-t_1)$. Given that $\phi\in C^{\infty}_c(\mathbb{R})$ and $\phi$ is equal to $1$ around the origin, we have that $F_{2,k}(0,\eta,\tau)=0$ and $\partial_{\xi}F_{2,k}(0,\eta,\tau)=0$. Moreover, by \eqref{eqfinalclaim1}, it follows $\mathcal{F}^{-1}(F_{2,k})(x,y,\tau)$ satisfies the hypothesis of Theorem \ref{LinearEst} $(iv)$. Consequently, we can argue as in \eqref{integraleqinweights} to conclude $J_{\xi}^{\theta_1}\big(\mathcal{G}_3(u,t_1,t_2)\big)\in L^2(\mathbb{R}\times \mathbb{Z})$.
\end{proof}

\begin{proof}[Proof of Claim \ref{Zerosolunique2}]

We first notice that Sobolev embedding in the $\xi$-variable implies 
\begin{equation*}
\|\frac{\partial}{\partial \xi}\widehat{u}(0,\eta,t_1)\|_{L^2_{\eta}}\lesssim  \|J_{\xi}^{\frac{3}{2}^{+}}\widehat{u}(\xi,\eta,t_1)\|_{L^2_{\xi,\eta}},
\end{equation*}
and together with Plancherel's identity, it is seen that
\begin{equation*}
\begin{aligned}
  \|\widehat{u^{k+1}}(0,\eta,\tau)\|_{L^2_{\eta}}\lesssim  \|J_{\xi}^{\frac{1}{2}^{+}}\widehat{u^{k+1}}(\tau)\|_{L^2_{\xi,\eta}}=&\|\langle x\rangle^{\frac{1}{2}^{+}}u^{k+1}(\tau)\|_{L^2}\\
\lesssim & \|u(\tau)\|_{L^{\infty}}^k\|\langle x\rangle^{\frac{1}{2}^{+}} u(\tau) \|_{L^2}.  
\end{aligned}
\end{equation*}
Now, we define
\begin{equation*}
\begin{aligned}
F_1(\xi,\eta)=\frac{\partial}{\partial \xi}\widehat{u}(0,\eta,t_1)(2i(t_2-t_1)|\xi|)\xi\phi(\xi)+\frac{\partial}{\partial \xi}\widehat{u}(0,\eta,t_1)\xi \frac{d}{d\xi}\phi(\xi)
\end{aligned}    
\end{equation*}
and
\begin{equation*}
\begin{aligned}
F_{2}(\xi,\eta,\tau)=\Big(\sum_{k=1}^K \frac{i\nu_k}{k+1}\widehat{u^{k+1}}(0,\eta,\tau)\Big)\Big((2i(t_2-t_1-\tau)|\xi|)\xi\phi(\xi)+\xi \frac{d}{d\xi}\phi(\xi)\Big),
\end{aligned}    
\end{equation*}
$\xi\in \mathbb{R}$, $\eta\in \mathbb{Z}$, $\tau\in (0,t_2-t_1)$. Using the regularity and decay properties of $\widehat{u}(t_2)$, $\widehat{u}$, $\phi$, and the inequalities deduced at the beginning of the proof, we get that $\mathcal{F}^{-1}(F_1)$ and $\mathcal{F}^{-1}(F_2)$ satisfy the hypothesis of Theorem \ref{LinearEst} $(iii)$. Thus, this theorem and familiar arguments to those in Claim \ref{Zerosolunique} show $J_{\xi}^{\frac{1}{2}^{+}}\big(\mathcal{G}_{1,j}(u,t_1,t_2)\big)\in L^2(\mathbb{R}\times \mathbb{Z})$, for each $j=2,3$.
\end{proof}

%%%%%%%%%%%%%%%%%%%%%%%%%%%%%%%%%%%%%%%%%%%%%%%%%%%%%%%%%%%%%%%%%%%%%%%%%%%%%%%%%%%%%%%%%%%%%%%%%%%%%%%%%%%%%%%%%%%%%%%%%%%%%%%%%%%%%%%%%%%%%%%%%%%%%%%%%%%%

\bibliographystyle{acm}
	%	\nocite{*}
{\small  \bibliography{bibli}}

\end{document}